\tikzset{
	open/.code     = {\tikzset{right hook->, circled};},
	closed/.code   = {\tikzset{right hook->, slashed};},
	open'/.code    = {\tikzset{left hook->, circled};},
	closed'/.code  = {\tikzset{left hook->, slashed};},
	circled/.code  = {\tikzset{markwith = {\draw (0,0) circle (.375ex);}};},
	slashed/.code  = {\tikzset{markwith = {\draw[-] (-.4ex,-.4ex) -- (.4ex,.4ex);}};},
	markwith/.code ={
		\pgfutil@ifundefined%
		{tikz@library@decorations.markings@loaded}%
		{\pgfutil@packageerror{tikz}{You need to say %
				\string\usetikzlibrary{decorations.markings} to use arrows with markings}{}}{}%
		\pgfkeysalso{/tikz/postaction = {
				/tikz/decorate,
				/tikz/decoration={markings, mark = at position 0.5 with {#1}}}
		}
	},
}
\DeclareMathAlphabet{\mathpzc}{OT1}{pzc}{m}{it}
\def\cleardoublepage{\clearpage\if@twoside \ifodd\c@page\else  
	\hbox{}                                                        
	\vspace*{\fill}                                                
	\begin{center}                                                 
		\*                                                             
	\end{center}                                                   
	\vspace{\fill}                                                 
	\thispagestyle{empty}                                          
	\newpage                                                       
	\if@twocolumn\hbox{}\newpage\fi\fi\fi}                         
\renewcommand\part{%
	\if@openright
	\cleardoublepage
	\else
	\clearpage
	\fi
	\thispagestyle{empty}
	\if@twocolumn
	\onecolumn
	\@tempswatrue
	\else
	\@tempswafalse
	\fi
	\null\vfil
	\secdef\@part\@spart}
\g@addto@macro{\UrlBreaks}{\UrlOrds} 
\newsavebox\MBox
\newcommand{\dotminus}{\mathbin{\text{\@dotminus}}}
\newcommand{\@dotminus}{%
	\ooalign{\hidewidth\raise1ex\hbox{.}\hidewidth\cr$\m@th-$\cr}%
}
\newcommand{\into}{\hookrightarrow}
\newcommand{\allora}{\Rightarrow}
\newcommand{\sseq}{\subseteq}
\newcommand{\set}[1]{ \left \{ #1 \right \} }
\newcommand{\mr}{\mathrm}
\newcommand{\mc}[1]{\mathcal{#1}}
\newcommand{\mb}{\mathbb}
\newcommand{\mbbm}{\mathbbm}
\newcommand{\mf}{\mathfrak}
\renewcommand{\ker}{\operatorname{Ker}}
\newcommand{\N}{\mathbb{N}}
\newcommand{\Z}{\mathbb{Z}}
\newcommand{\pro}{\mathbb{P}}
\newcommand{\A}{\mathbb{A}}
\newcommand{\sk}{\mathbbm{k}}
\newcommand{\st}[2]{\mathrel{\raisebox{#1 pt}{$ \mathrel{\stretchto{\mid}{#2 ex}} $}}}
\newcommand{\mo}[1]{\mc O_{#1}}
\newcommand{\catname}[1]{\mathbf{#1}}
\newcommand{\oocatname}[1]{\scaleobj{1.25}{\mathpzc{#1}}}
\newcommand{\colim}[1]{\underset{#1}{\mathrm{colim}}\ }
\newcommand{\epf}{{}_{!}}
\newcommand{\epfs}{{}_{\#}}
\newcommand{\Th}[2]{\mr{Th}_{#1}\left( #2 \right)}
\newcommand{\restrict}[2]{{
		\left.\kern-\nulldelimiterspace 
		#1 
		\vphantom{\big|} 
		\right|_{#2} 
}}
\newcommand{\bigslant}[2]{
	\mathchoice
	{
		{\raisebox{0em}{$#1$}\!\!\!\;\!\!\;\left/\!\!\;\!\!\;\raisebox{-0em}{$#2$}\right.}%
	}
	{
		#1\:\!/\:\!#2
	}
	{
		#1\:\!/\:\!#2
	}
	{
		#1\:\!/\:\!#2
	}
}
\newcommand{\quot}[2]{
	\mathchoice
	{
		{\raisebox{.2em}{$#1$}\!\!\,\left/\!\raisebox{-.2em}{$#2$}\right.}%
	}
	{
		#1\!\:/\!\!\:\:#2
	}
	{
		#1\!\:/\!\!\:\:#2
	}
	{
		#1\!\:/\!\!\:\:#2
	}
}
\DeclareRobustCommand*{\mfaktor}[3][]
{
	{ \mathpalette{\mfaktor@impl@}{{#1}{#3}{#2}} }
}
\newcommand*{\mfaktor@impl@}[2]{\mfaktor@impl#1#2}
\newcommand*{\mfaktor@impl}[4]{
	\settoheight{\faktor@zaehlerhoehe}{\ensuremath{#1#2{#3}}}%
	\settoheight{\faktor@nennerhoehe}{\ensuremath{#1#2{#4}}}%
	\raisebox{-0.5\faktor@zaehlerhoehe}{\ensuremath{#1#2{#3}}}%
	\mkern-4mu\reflectbox{\,$ / $\,}\mkern-5mu%
	\raisebox{0.5\faktor@nennerhoehe}{\ensuremath{#1#2{#4}}}%
}
\newcommand{\bigperp}{%
	\mathop{\mathpalette\bigp@rp\relax}%
	\displaylimits
}
\newcommand{\bigp@rp}[2]{%
	\vcenter{
		\m@th\hbox{\scalebox{\ifx#1\displaystyle2.1\else1.5\fi}{$#1\perp$}}
	}%
}
\newcommand\blfootnote[1]{%
	\begingroup
	\renewcommand\thefootnote{}\footnote{#1}%
	\addtocounter{footnote}{-1}%
	\endgroup
}
\DeclareMathOperator{\Hom}{Hom}
\DeclareMathOperator{\spec}{Spec}
\DeclareMathOperator{\sym}{Sym}
\DeclareMathOperator{\Map}{Map}
\DeclareMathOperator{\iMap}{\underline{Map}}
\DeclarePairedDelimiter{\abs}{\lvert}{\rvert}
\g@addto@macro\bfseries{\boldmath}
\newcommand{\cbigotimes}{\DOTSB\cbigotimes@\slimits@}
\newcommand{\cbigotimes@}{\mathop{\widehat{\bigotimes}}}
\newtheorem{thm}{Theorem}
\numberwithin{thm}{section} 
\newtheorem{co}[thm]{Corollary}
\newtheorem{lemma}[thm]{Lemma}
\newtheorem{pr}[thm]{Proposition}
\theoremstyle{definition}
\newtheorem{defn}[thm]{Definition}
\newtheorem{claim}[thm]{Claim}
\newtheorem{rmk}[thm]{Remark}
\newtheorem{notation}[thm]{Notation}
\newtheorem{construction}[thm]{Construction}
\newtheorem{exa}[thm]{Example}
\newtheorem{disclaimer}[thm]{Disclaimer}
\newtheorem{ithm}{Theorem}
\newtheorem{ipr}[ithm]{Proposition}
\title{Virtual Localisation Formula\\
	for $ SL_{\eta} $-Oriented Theories}
\author{Alessandro D'Angelo }
\date{}
\begin{document}

		\maketitle{}
	\begin{abstract}
		In this paper, we extend the Virtual Localization Formula of Levine to a wide class of motivic ring spectra, obtaining in particular a localization formula for virtual fundamental classes in Witt theory $ \mr{KW} $.  Applying standard tools of $\A^1$-intersection theory to any $ SL_{\eta} $-oriented spectra $ \mr A $, we obtain an additive presentation of $ \mr A(BN) $, for $ N $ the normaliser of the torus in $ SL_2 $. Then we establish an equivariant Atiyah-Bott localization theorem for $ \mr A_N^{\mr{BM}}(X) $ and we conclude with the $N$-equivariant virtual localisation formula. Of independent interest, we also describe the ring structure of $ \mr{KW}(BN) $.
	\end{abstract}
	
	\tableofcontents
	
	\section*{Introduction}
In the presence of a torus action, many intersection theoretic and enumerative problems get simplified, looking at the fixed point locus of the problem. But to do so, one has to precisely relate the equivariant intersection theory to the intersection theory of the fixed points. In algebraic geometry, using Chow groups and the Bott residue theorem of \cite{Edidin-Graham_Atiyah-Bott}, one can get a complete description of equivariant characteristic classes of a scheme $ X $ in terms of invariants of the fixed locus $ X^T $. For algebraic K-theory analogous computations were made in \cite{Thomason_RR_and_Traces}. Often the spaces we are interested in are not smooth and we need to take into account deformation theoretic information to treat them. Schemes (or even algebraic stacks) equipped with a perfect obstruction theory, in the sense of  \cite{Behrend1997}, give rise to virtual fundamental classes and can basically be studied as if they were (almost) smooth (or quasi-smooth, using the terminology from derived geometry).\\

In \cite{Graber-Pandharipande}, Graber-Pandharipande proved a "Virtual Localization Formula" for virtual classes. This  formula relates the virtual fundamental class of a scheme $ X $, equipped with a torus action and a perfect obstruction theory, to a suitable virtual  class associated to the fixed point locus. Extensions of Atiyah-Bott and virtual localization theorems, in the context of motivic homotopy theory, were recently made in \cite{Levine_Atiyah-Bott}, \cite{VLF_Levine} and, with different techniques (applied to algebraic stacks), in \cite{aranha2022localization}. \\

Relying on the foundational works of Ananyevskiy (cf. \cite{Ananyevskiy_PhD_Thesis, Ananyevskiy_Witt_MSp_Reations, Ananyevskiy_SL_oriented}), we are now able to extend most of the results in \cite{Levine_Atiyah-Bott, VLF_Levine} to any $ SL_{\eta} $-oriented ring spectrum. As already noted in \cite{Levine_Atiyah-Bott}, the localization theorem for $\mb G_m$-actions is not interesting in the case of spectra where $ \eta $ is invertible. In those cases, the Euler classes we need to invert will already be zero (see \cref{ch2:_BGm_corollary}). The closest natural candidate for a localization theorem is then $ N $, the normaliser of the torus in $ SL_2 $. Indeed we obtained a nice presentation of the cohomology of $ \mc BN $:

\begin{ipr}[Proposition\ref{ch2:_5.3_MEC}]
	For any $ SL_{\eta} $-oriented ring spectrum, we get the following isomorphisms of graded $ \mr A^{\bullet}(S) $-modules:
	\[ A^{\bullet}(\mc{B}N)\simeq \mr A^{\bullet}(\mc{B}SL_2)\oplus \mr A^{\bullet}(S)  \]
	\[ A^{\bullet}(\mc{B}N; \gamma_N)\simeq \mr A^{\bullet-2}(\mc{B}SL_2)e(\mc T)\oplus \mr A^{\bullet}(S)   \]
	\noindent where $ \mc T $ is the tangent bundle of $\left[\quot{\pro(\sym^2(F))}{SL_2}\right]$ over $ \mc {B}SL_2 $ and $ \gamma_N $ is the generator of $ \mr{Pic}(\mc{B}N) $.
\end{ipr}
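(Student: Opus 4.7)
The plan is to realise $\mc{B}N$ as an open substack of an $\mr{SL}$-oriented projective bundle over $\mc{B}SL_2$ and then run the localisation long exact sequence.

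The geometric input is the classical identification of $SL_2/N$ with the open subscheme $\mr U \subset \pro(\sym^2 F)$ parametrising degree-$2$ divisors on $\pro^1$ with two distinct roots; its complement is the Veronese image of $\pro(F)$. Quotienting by $SL_2$, I would realise $\mc{B}N \simeq [\mr U/SL_2]$ as the open complement of the codimension-one closed substack $Z := [\pro(\mc E)/SL_2]$ inside $X := [\pro(\sym^2 \mc E)/SL_2]$, where $\mc E$ is the tautological rank-$2$ $SL_2$-bundle on $\mc{B}SL_2$.

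Next I would compute $\mr A^{\bullet}(X)$. Because $\det\mc E$ is trivial, the bundle $\sym^2 \mc E$ is $SL$-oriented of rank $3$, so I can apply Ananyevskiy's projective bundle formula for $SL$-oriented odd-rank bundles in an $SL_{\eta}$-oriented theory to obtain a decomposition of the form
\[
\mr A^{\bullet}(X) \;\simeq\; \mr A^{\bullet}(\mc{B}SL_2) \,\oplus\, \mr A^{\bullet-2}(\mc{B}SL_2)\cdot e(\mc T),
\]
where $\mc T$ is the vertical tangent bundle of the $\pro^2$-bundle $X\to\mc{B}SL_2$. For the closed stratum I would use that $Z\simeq \mc{B}B$, where $B$ is the Borel stabilising a line; since the unipotent radical is $\A^1$-contractible, $\mc{B}B\to\mc{B}T\simeq \mc{B}\mb{G}_m$ is an $\A^1$-equivalence, and for an $SL_{\eta}$-oriented theory Ananyevskiy's computations collapse $\mr A^{\bullet}(\mc{B}\mb G_m)$ (as well as its twists by any power of the tautological line bundle) down to $\mr A^{\bullet}(S)$.

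The last step is to analyse the localisation sequence
\[
\cdots\to \mr A^{\bullet-1}(Z;\nu)\xrightarrow{i_*}\mr A^{\bullet}(X)\xrightarrow{j^*}\mr A^{\bullet}(\mc{B}N)\xrightarrow{\partial}\mr A^{\bullet}(Z;\nu)\to\cdots,
\]
where $\nu$ is the normal bundle of the Veronese embedding. The pullback $\pi^*:\mr A^{\bullet}(\mc{B}SL_2)\to \mr A^{\bullet}(\mc{B}N)$ along $\pi:\mc{B}N\to\mc{B}SL_2$ provides a section of $j^*$ applied to the first summand in the projective bundle decomposition, which forces the long exact sequence to break into short exact sequences and makes each of them split. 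Reading off the direct summands then yields $\mr A^{\bullet}(\mc{B}N)\simeq \mr A^{\bullet}(\mc{B}SL_2)\oplus \mr A^{\bullet}(S)$. For the twisted statement, I would rerun the whole argument after tensoring with a line bundle on $X$ restricting to $\gamma_N$ on $\mc{B}N$; the effect is to swap the role of the two summands of $\mr A^{\bullet}(X)$, so that the pullback $\pi^*$ now splits off the shifted summand $\mr A^{\bullet-2}(\mc{B}SL_2)\cdot e(\mc T)$, while $Z$ still contributes an $\mr A^{\bullet}(S)$.

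The main obstacle is tracking the twists and identifying the pushforward $i_*$ explicitly: one needs to compute $\nu$ as an $SL_2$-equivariant line bundle on $Z$, to know precisely which class in $\mr A^{\bullet}(X)$ is hit by the Thom class of $\nu$, and to verify that this class lives in the summand one wishes to kill. Once this $SL_2$-equivariant normal bundle computation (and its twist by $\gamma_N$) is in hand, the rest follows formally from the projective bundle formula and the triviality of $\eta$-inverted cohomology on $\mc{B}\mb G_m$.
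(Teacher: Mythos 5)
Your geometric setup and the computation of the two strata match the paper's: the paper likewise runs the localisation sequence for $\mc BN \hookrightarrow [\pro(\sym^2 F)/SL_2] \hookleftarrow [\pro(F)/SL_2]$ over $\mc BSL_2$, proves $\mr A^\bullet_{SL_2}(\pro(\sym^2 F)) \simeq \mr A^\bullet(\mc BSL_2)$ and $\mr A^\bullet_{SL_2}(\pro(\sym^2 F);\mo{}(1)) \simeq \mr A^{\bullet-2}(\mc BSL_2)\cdot e(\mc T)$ (via fibrewise $\pro^2$-computations and Leray spectral sequences rather than a quoted projective bundle formula), and identifies $\mr A^\bullet_{SL_2}(\pro(F)) \simeq \mr A^\bullet(S)$; your route through $\mc BB \simeq \mc B\mb G_m$ is an acceptable variant of the paper's $\A^1$-bundle $SL_2 \to \A^2\setminus\{0\}$ argument. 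The twist bookkeeping you flag is also manageable: $\iota^*\mo{}(k)\simeq\mo{}(2k)$ and the normal bundle of the Veronese is $\mo{}(2)$, so all twists on the closed stratum are squares and hence trivial for an $SL$-oriented theory.

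The genuine gap is the splitting. The identity $j^*\circ\tilde p^* = \pi^*$ gives, by itself, no section or retraction of anything: a retraction of $\pi^*$ would require an $SL_2$-equivariant point of $\pro^2\setminus C$ or a wrong-way map along the non-proper $\pi$, neither of which exists, so asserting that ``$\pi^*$ provides a section of $j^*$'' presupposes exactly the split injectivity you are trying to prove. The paper's degeneration of the long exact sequence rests on two non-formal inputs absent from your sketch: (i) reduction to the universal theory $\mr{MSL}_\eta$, which is even by Bachmann--Hopkins, so that the odd-degree groups of the strata vanish and the sequence breaks into short exact sequences; and (ii) the explicit symbol class $q_0=\langle\lambda_Q\rangle\in\mr A^0(\mc BN)$ attached to the discriminant section $Q=T_1^2-4T_0T_2$ of $\mo{}(2)$, which satisfies $\partial_0(q_0)=1$ and hence splits $\partial_0$ as a map of $\mr A^\bullet(S)$-modules (together with a class $q_1$ with $\partial_1(q_1)=1$ in the twisted case, whose existence again comes from surjectivity of $\partial_1$ in the universal even case); both are then transported to a general $\mr A$ along the orientation map $\mr{MSL}_\eta\to\mr A$. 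Without some version of this, the localisation sequence need not degenerate and the resulting extensions need not split. Two smaller points: your displayed formula for $\mr A^\bullet(X)$ conflates the untwisted and $\mo{}(1)$-twisted cohomologies (the untwisted group is just $\mr A^\bullet(\mc BSL_2)$), and odd twists on $\mc B\mb G_m$ do not collapse to $\mr A^\bullet(S)$ --- they vanish --- though only even twists occur here. Finally, the statement over a general smooth base $S$ requires a further reduction to the field case (the paper uses a representability lemma for stack cohomology plus the Gersten spectral sequence), which your sketch does not address.
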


With this description at our disposal, we will be able to extend the Atiyah-Bott localization theorem to our context:
\begin{ithm}[Theorem \ref{ch4:_ABL_8.6}]
	Let $ X \in \catname{Sch}_{\bigslant{}{\sk}}^{N} $ be a scheme with an $ N $-action and let $ \mr A \in \mr{SH}(\sk) $ be an $ SL_{\eta} $-oriented spectrum. Let $ \iota: \abs{X} ^{N} \into X  $ be the closed immersion. Let $ L \in \mr{Pic}(X) $ be an $ N $-linearised line bundle. Suppose the $ N $-action  is semi-strict. Then there is a non-zero integer $ M $ such that:
	\[ \iota_*: \mr{A}^{\mr{BM}}_{\bullet, N}\left( \abs{X}^{N}; \iota^*L \right)\left[ \left( M \cdot e \right)^{-1} \right] \stackrel{}{\longrightarrow} \mr{A}^{\mr{BM}}_{\bullet, N}\left( X ; L \right)\left[ \left( M \cdot e \right)^{-1} \right]   \]
	\noindent is an isomorphism. 
\end{ithm}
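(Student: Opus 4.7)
The plan is to adapt the classical Atiyah--Bott strategy to the $ SL_{\eta} $-oriented setting, using the presentation of $ \mr A^{\bullet}(\mc BN) $ from \cref{ch2:_5.3_MEC} to pin down the Euler class $ e $. The equivariant Borel--Moore localisation triangle associated to $ \abs X^N \stackrel{\iota}{\into} X $ and its open complement $ U := X \setminus \abs X^N $ shows that $ \iota_* $ is an isomorphism after inverting $ M\cdot e $ if and only if $ \mr A^{\mr{BM}}_{\bullet, N}(U; L|_U)[(M\cdot e)^{-1}] = 0 $. The theorem therefore reduces to the following vanishing: for any $ N $-scheme $ Y $ with empty fixed locus and semi-strict $ N $-action, $ \mr A^{\mr{BM}}_{\bullet, N}(Y;L)[(M \cdot e)^{-1}] = 0 $ for some non-zero integer $ M $ depending on $ Y $.

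To prove this vanishing I would proceed by Noetherian induction on $ Y $, stratifying by $ N $-orbit type. The role of semi-strictness is exactly to guarantee a finite $ N $-invariant stratification by locally closed subschemes $ Y_\alpha $, each of which étale-locally has the form $ N \times^{H_\alpha} Z_\alpha $ for a proper subgroup $ H_\alpha \sneq N $ (properness being forced by $ Y^N = \emptyset $). Iterating the open/closed localisation sequences reduces the claim to the vanishing on each stratum separately, and the induction/restriction equivalence
\[ \mr A^{\mr{BM}}_{\bullet, N}\left( N\times^{H_\alpha} Z_\alpha; L \right)\simeq \mr A^{\mr{BM}}_{\bullet, H_\alpha}(Z_\alpha; L|_{Z_\alpha}) \]
recasts each stratum-level claim as an $ H_\alpha $-equivariant statement for a proper subgroup of $ N $.

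The core of the argument is then to show that for every proper subgroup $ H \sneq N $ the image of $ e $ in $ \mr A^{\bullet}(\mc BH) $ is annihilated after inverting a non-zero integer $ M_H $. Proper subgroups of $ N $ split into two families: subgroups contained in the maximal torus $ T $, and finite subgroups $ H $ surjecting onto $ N/T \simeq \mb Z/2 $. For the first family, \cref{ch2:_BGm_corollary} already gives the result without any integer inversion, since $ \eta $-invertibility forces Euler classes of non-trivial $ T $-representations to vanish. For the second, one uses the transfer $ \mr{tr}\colon \mr A^{\bullet}(\mc BH) \to \mr A^{\bullet}(\mc BN) $, whose composition with restriction is multiplication by $ [N:H] $, combined with the explicit splitting of $ \mr A^{\bullet}(\mc BN; \gamma_N) $ from \cref{ch2:_5.3_MEC}: on the summand generated by $ e(\mc T) $, the restriction lands in a module annihilated by an integer dividing $ [N:H] $. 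Taking $ M $ to be a common multiple of the $ M_{H_\alpha} $ over the finitely many strata yields the required integer.

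The main obstacle I expect is the stratification step: semi-strictness must produce not just a set-theoretic orbit stratification but a genuinely scheme-theoretic, finite, $ N $-invariant stratification with induced-space models, and the localisation triangles connecting the strata must all be $ \mr A^{\bullet}(\mc BN) $-linear so that inverting $ M\cdot e $ commutes with the dévissage. A secondary difficulty is pinning down $ M_H $ for each finite stabiliser: since $ 1\to T\to N\to \mb Z/2 \to 1 $ is a non-split extension (a lift of the generator of $ \mb Z/2 $ squares to $ -I $ in $ SL_2 $), the possible finite stabilisers are subtle, and the precise computation of their Euler classes relies crucially on the explicit ring structure of $ \mr A^{\bullet}(\mc BN) $ recalled in \cref{ch2:_5.3_MEC}.
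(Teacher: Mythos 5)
Your skeleton (localisation triangle, reduction to a vanishing statement over the locus with no $\bar\sigma$-fixed generic points, dévissage to stabilisers, and the toral/non-toral dichotomy) is the same as the paper's, which runs through the torus-localisation Theorem \ref{ch4:_ABL_7.10} and Proposition \ref{ch4:_ABL_8.5}. However, two of your key steps fail as stated. The transfer argument for a finite stabiliser $H$ surjecting onto $N/T$ is not available: for such $H$ the map $\mc BH\to\mc BN$ has positive-dimensional fibre $N/H\simeq\mb G_m$, so it is not finite and there is no transfer satisfying $\mathrm{tr}\circ\mathrm{res}=[N:H]$. The integer $M$ does not arise from an index at all; it comes from the Key Lemma \ref{ch4:_key_lemma_Bachmann_Hopkins}, where the twisted Euler class $\tilde e^{+}(m)$ of $\widetilde{\mo{}}^{+}(m)$ is expanded in $\mr{MSL}_{\eta}^{\bullet}(BN)$ as $\pm m\cdot e\cdot(1+\cdots)$ (the leading coefficient being detected by comparison with $H\mc W$), so that inverting $m\cdot e$ inverts $\tilde e^{+}(m)$; combined with Lemma \ref{ch4:_ABL_Lemma_7.6}, which shows $\tilde e^{+}(m)$ dies on the relevant homogeneous spaces, this is what annihilates the localised ring. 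Your proposal never produces this input, and without it there is no source for $M$.

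Second, the induction equivalence $\mr A^{\mr{BM}}_{\bullet,N}(N\times^{H}Z;L)\simeq\mr A^{\mr{BM}}_{\bullet,H}(Z;L|_Z)$ applies only to genuinely induced strata, whereas the orbits occurring here include twisted forms (the cases $(c\pm)$ of \cref{ch4:_def_Homogeneous_Cases}) that become induced only after a quadratic extension of the residue field; ``étale-locally induced'' does not suffice to run the reduction, and these cases require different Euler classes ($\tilde e^{+}(m)$ versus $\tilde e^{+}(2m)$). The paper sidesteps this by stratifying so that $X_\alpha\to X_\alpha/N$ is smooth, applying the Leray spectral sequence to reduce to the fibres, and classifying those fibres as $N$-homogeneous spaces over the residue fields via \cref{ch4:_ABL_Lemma_7.4} (together with a Mittag--Leffler argument to pass from the finite-level approximations $N^{(m)}$ to the limit). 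A smaller correction: semi-strictness does not produce the orbit stratification, which exists unconditionally by \cite[Proposition 1.1]{Levine_Atiyah-Bott}; its role is to guarantee $X^{T}_{red}=\abs{X}^{N}\cup X^{T}_{ind}$, so that after torus localisation the excess piece $X^{T}_{ind}$ can be discarded because its equivariant Borel--Moore homology is induced from $\mc B\mb G_m$, where $e$ already vanishes.
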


Finally, for schemes $ X $ equipped with special $ N $-actions, and an $ N $-linearised obstruction theory, we have:

\begin{ithm}[Theorem \ref{ch4:_VLF_for_general_A}]
	Let $ \mr A\in \mr{SH}(\sk) $ be an $ SL_{\eta} $-oriented ring spectrum. Let $ \iota: X \into Y $ be a closed immersion in $ \catname{Sch}_{\bigslant{}{\sk}}^{N} $, with $ Y $ a smooth $ N $-scheme. Let $ \varphi_{\bullet}: \mc E_{\bullet} \rightarrow \mb L_{\bigslant{X}{\sk}} $ be an $ N $-linearised perfect obstruction theory. Then we have:
	\[ \left[X, \varphi \right]_N^{vir}=\sum_{j=1}^{s} \iota_j{}_*\left( \left[ \abs{X}_{j}^{N}, \varphi^{(j)} \right]_N^{vir} \cap e_N\left( N_{\iota_j}^{vir} \right)^{-1} \right) \in \mr{A}^{\mr{BM}}_N\left( X, E_{\bullet} \right)\left[ \left(M \cdot e\right)^{-1} \right] \]
	\noindent where $ \left[-,-\right]_N^{vir} $ denote the appropriate virtual fundamental class.
\end{ithm}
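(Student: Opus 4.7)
The plan is to follow the Graber--Pandharipande strategy for virtual localization, adapted to the $SL_\eta$-oriented $N$-equivariant context. The main engine is the Atiyah--Bott localization theorem (Theorem \ref{ch4:_ABL_8.6}), and the cohomological description of $\mc{B}N$ (Proposition \ref{ch2:_5.3_MEC}) is used to control the Euler classes of the moving pieces.

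First I would invoke Atiyah--Bott to reduce the problem to identifying the class on each fixed component. After inverting $M \cdot e$, the pushforward $\iota_* = \bigoplus_j \iota_j{}_*$ is an isomorphism on the appropriate twisted $N$-equivariant Borel--Moore theory. Hence every class is determined by its preimages on the components $|X|_j^N$, and it suffices to exhibit the unique preimage of $[X,\varphi]_N^{vir}$.

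Next, I would restrict $\varphi_\bullet: \mc E_\bullet \to \mb L_{X/\sk}$ along each $\iota_j$. Since the $N$-action on $X$ is semi-strict, the induced action on $|X|_j^N$ is trivial and $\iota_j^* \mc E_\bullet$ inherits an $N$-equivariant structure which decomposes canonically as the direct sum of a fixed summand $\mc E_\bullet^{(j),\mr{fix}}$ (the trivial-$N$-isotypic piece) and a moving summand $\mc E_\bullet^{(j),\mr{mov}}$ (containing no trivial subrepresentation). The fixed summand yields the perfect obstruction theory $\varphi^{(j)}$ on $|X|_j^N$, via the $N$-equivariant version of the classical Graber--Pandharipande argument, while the moving summand defines the virtual conormal complex, so that $N^{vir}_{\iota_j} \simeq (\mc E_\bullet^{(j),\mr{mov}})^\vee$. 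Using functoriality of virtual pullback applied to the closed embedding $X \into Y$ inside the smooth ambient $N$-scheme, together with multiplicativity of Euler classes under this direct sum, one obtains the self-intersection identity
\[ \iota_j^* [X,\varphi]_N^{vir} = [|X|_j^N, \varphi^{(j)}]_N^{vir} \cap e_N(N^{vir}_{\iota_j}). \]
Combined with the disjointness $\iota_{j'}^* \iota_j{}_* = 0$ for $j \ne j'$, this identifies the preimage of $[X,\varphi]_N^{vir}$ on each component as $[|X|_j^N, \varphi^{(j)}]_N^{vir} \cap e_N(N^{vir}_{\iota_j})^{-1}$, and pushing forward through the Atiyah--Bott isomorphism yields the formula.

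The main obstacle is ensuring that $e_N(N^{vir}_{\iota_j})$ is invertible in the $M\cdot e$-localized theory and that the moving part admits a usable direct-sum decomposition. In the classical ($T$-equivariant, oriented) setting this follows from a character decomposition, each weight contributing an invertible Euler class. In our situation we have only $SL_\eta$-orientations and the structure group is $N$, so the appropriate replacement is to decompose $\mc E_\bullet^{(j),\mr{mov}}$ into irreducible $2$-dimensional $N$-representations, reflecting the fact that $SL_\eta$-oriented theories genuinely detect $SL_2$-structures rather than arbitrary torus weights. Proposition \ref{ch2:_5.3_MEC} then identifies the relevant Euler classes with expressions in the fundamental class $e$, whose invertibility after inverting $M \cdot e$ is built into the Atiyah--Bott localization. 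Verifying that this decomposition is available, possibly after passing to a finite étale cover of $|X|_j^N$, and that the resulting Euler classes patch to a globally defined invertible element is the technical heart of the argument and is where the full strength of the semi-strictness hypothesis is needed.
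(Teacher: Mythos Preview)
Your plan has a genuine gap at the crucial step. You assert the ``self-intersection identity''
\[
\iota_j^* [X,\varphi]_N^{vir} \;=\; [|X|_j^N, \varphi^{(j)}]_N^{vir} \cap e_N(N^{vir}_{\iota_j})
\]
and justify it only by ``functoriality of virtual pullback \ldots\ together with multiplicativity of Euler classes''. But this identity is essentially the entire content of the theorem, and the operation you denote $\iota_j^*$ is not available: $\iota_j\colon |X|_j^N \hookrightarrow X$ is in general \emph{not} a regular embedding (the target $X$ is singular), so there is no refined Gysin map $\iota_j^!$ on Borel--Moore homology to compute, and ordinary pullback along a non-lci closed immersion does not exist in this bivariant setting. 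Knowing from Atiyah--Bott that $\iota_*$ is an isomorphism after inverting $M\cdot e$ does not help you identify the preimage unless you can actually compute something on the fixed locus, and that computation is precisely what is missing.

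The paper's route is different, and this is where the smooth ambient $Y$ does real work rather than serving as decoration. One first applies the Bott residue formula (\cref{ch4:_ABL_9.5}) to $Y$: since $Y$ is smooth, each $Y_j \hookrightarrow Y$ \emph{is} a regular embedding, so $[Y]_N$ can be written explicitly as $\sum_j (\iota_j^Y)_*\bigl([Y_j]_N\cap e_N((\iota_j^Y)^*T_Y^{\mf m})^{-1}\bigr)$. One then takes the refined intersection product of this expression with $[X,\varphi]_N^{vir}$, using compatibility with proper pushforward, reducing the theorem to a componentwise identity
\[
[X,\varphi]_N^{vir}*_{\iota,\iota_j^Y}[Y_j]_N \cap e_N\bigl((\iota_j^Y)^*T_Y^{\mf m}\bigr)^{-1}
\;=\;
[X_j,\varphi^{(j)}]_N^{vir}\cap e_N(N_{\iota_j}^{vir})^{-1}.
\]
This identity is then proved by unwinding the cone construction of the virtual class: the equivariant Vistoli lemma compares $\mf C_{X/Y}$ with $\mf C_{X_j/Y_j}\times i_j^*E_0^{\mf m}$, and two applications of the excess intersection formula separate the contributions of $i_j^*E_0^{\mf m}$ and $i_j^*E_1^{\mf m}$, reassembling them into $e_N(N_{\iota_j}^{vir})$. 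Your proposal skips this entire mechanism; the vague appeal to ``functoriality of virtual pullback'' does not substitute for the cone comparison, which is where the real geometry lies.
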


	\section*{Acknowledgements}

This paper was part of the author's PhD thesis and he would like to thank his supervisor M. Levine for the countless discussions and insights he patiently shared with him: this work could have not been accomplished without his encouragement and inspiration. The author would also like to thank C. Chowdhury for many discussions that helped to improve the paper, and the ESAGA group at the University of Essen for providing a stimulating working environment.

\blfootnote{The author was supported by the ERC through the project QUADAG.  This work is part of a project that has received funding from the European Research Council (ERC) under the European Union's Horizon 2020 research and innovation programme (grant agreement No. 832833).} \blfootnote{\hspace{-2em}\includegraphics[scale=0.08]{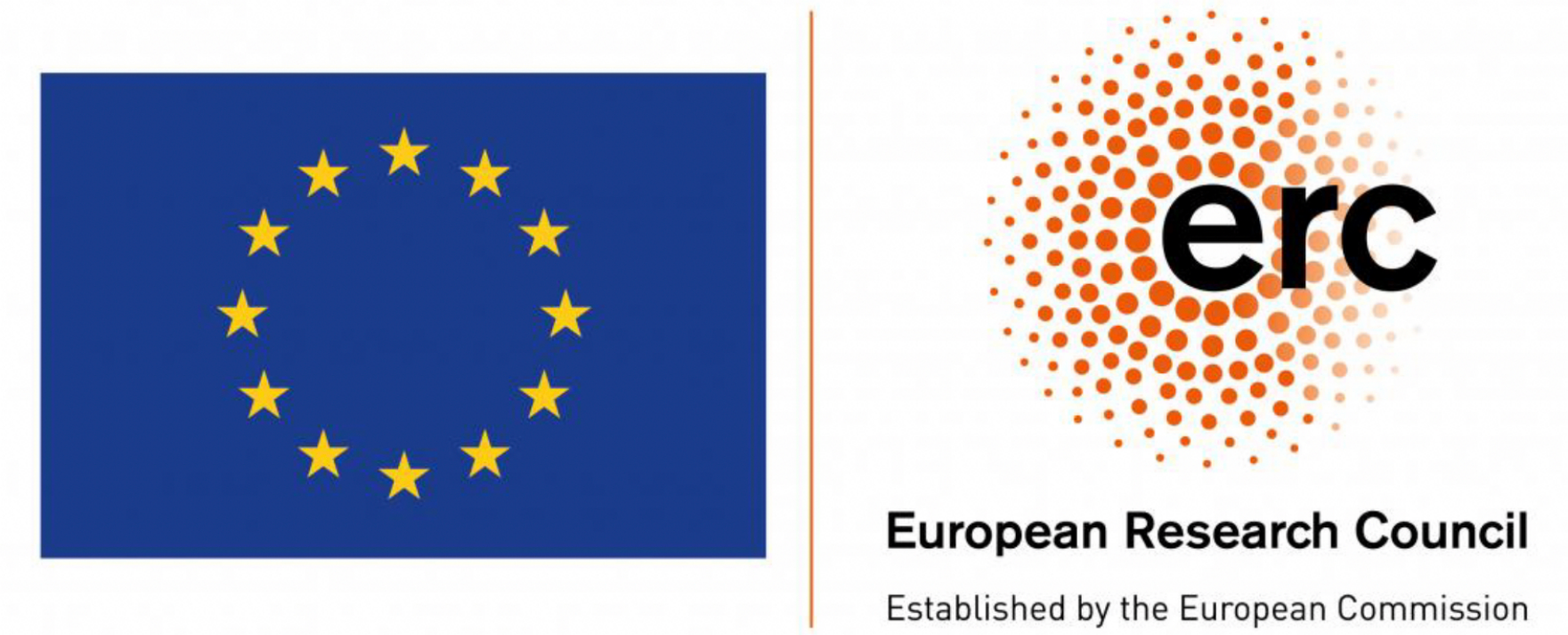}}.\\
\section*{Notations and Conventions}

\begin{enumerate}
	\item The categories $ \catname{Sch}_{\bigslant{}{B}}, \catname{Sch}_{\bigslant{}{B}}^{G} $ will always denote quasi-projective schemes over a base scheme $ B $ of finite Krull dimension, without or with a (left) $ G $-action. If we write $ \catname{Sm}_{\bigslant{}{B}},\catname{Sm}_{\bigslant{}{B}}^{G} $ then we only consider smooth quasi-projective schemes.
	\item We will denote by $ \oocatname{ASt}_{\bigslant{}{S}} $ the $ \infty $-category of algebraic stacks over some base $ S $ (that could be either a scheme or a stack, but for us will always be a scheme). Moreover given $ X \in \catname{Sch}_{\bigslant{}{B}}^{G} $, we will denote the associated quotient stack as $ \left[\bigslant{X}{G}\right] $.
	\item If not specified otherwise, whenever we are working over a field $ \sk $, we will assume it is of characteristic different from $ 2 $. If we work over a general base scheme $ B $, we will always assume $ \dfrac{1}{2} \in \mo{B}^{\times} $.
	\item Recall that a morphism of quasi-projective schemes $ f: X \rightarrow S $ in $ \catname{Sch}_{\bigslant{}{B}} $ is called \textit{lci} (that stands for \textit{local complete intersection}) if there exists a factorization of $ f $ as $ X \stackrel{i}{\into} M \stackrel{p}{\rightarrow} S $ with $ i $ a regular closed immersion and $ p $ a smooth map. In the conventions of \cite{DJK}, these are called \textit{smoothable lci} maps, but we do not need such distinction.
	
	\item If not specified otherwise, $ G $ will always denote a closed sub-group scheme inside $ GL_n $ for some $ n $.
	
	\item Given a group $ S $-scheme $ G $, we will always denote by $ \mf g^{\vee}_S $ the sheaf associated to the co-Lie algebra of $ G $. If the base scheme is clear from the context, we will only write $ \mf g^{\vee} $.
	
	\item In \cite[\S 4]{Morel-Voevodsky}, some ind-schemes were introduced to approximate quotient stacks. For a given algebraic group $ G $, those ind-schemes were denoted in \textit{loc. cit.} as \textit{geometric} classifying spaces $ B_{gm}G $. To distinguish between actual quotient classifying stacks $ [\bigslant{S}{G}]\in \oocatname{ASt}_{\bigslant{}{S}} $, over some base $ S $, and geometric classifying spaces (that are just ind-schemes), we will use a dual notation:
	\[ \mc BG:=\left[\bigslant{S}{G}\right] \in \oocatname{ASt}_{\bigslant{}{S}}\]
	\[  BG:= B_{gm}G \in \mr{Ind}(\catname{Sch}_{\bigslant{}{S}})\]
	We will come back to these notations, with more details, in Chapter 1. In general, we will try to use the calligraphic font $ 	\mc X, \mc Y, \mc BG $, etc., for algebraic stacks.

	\item Given a scheme (or an algebraic stack) $ X $, we will denote its Thomason-Trobaugh K-theory space as $ \mr K(X)=\mr K(\oocatname{Perf}(X)) $, where $ \oocatname{Perf}(X) $ is the infinity category of perfect complexes on $ X $. Given $ X \in \catname{Sch}_{\bigslant{}{B}}^{G} $ with associated quotient stack $ \mc X:= \left[ \bigslant{X}{G} \right] $, the \textit{genuine} equivariant $ \mr K $-theory of $ \mc X $ will be denoted as $ \mr K^G(X):=\mr K(\oocatname{Perf^G}(X))=\mr K(\mc X) $.
	\item Given infinity category $ \oocatname{C} $, we will denote by $ \Map_{\oocatname{C}}(-,-) $ the mapping space of $ \oocatname{C} $. If moreover $ \oocatname{C} $ is closed monoidal, the internal mapping space of $ \oocatname{C} $ we will denoted as $ \iMap_{\oocatname{C}}(-,-) $.
	
	\item When dealing with \textit{bivariant} theories, as known as Borel-Moore homology for us, we will have to take into account \textit{twists} by perfect complexes. For example given a scheme (or an algebraic stack) $ X $ and a perfect complex $ v \in \oocatname{Perf}(X) $, we can define the $ v $-twisted Borel-Moore homology $ \mb E(X,[v] ) $ as in  \cite{DJK}, where $ [v]\in K_0(X)=K_0(\oocatname{Perf}(X)) $. When $ \mc V $ is a locally free sheaf with associated class $ v:=[\mc V]\in K_0(X) $, to the automorphism in $ \mr{SH}(X) $ given by \textit{$ v $-suspension} $ \Sigma^{v} $, it will then correspond the element $ \Th{X}{V}:=\Sigma^{v}\mbbm 1_X $, where $ V:=\mb V_{X}(\mc V):=\spec(\sym^{\bullet}(\mc V^{\vee})) $ is the vector bundle given by $ \mc V $. We will always use calligraphic letters $ \mc V,\mc E $, etc., for perfect complexes and Roman letters for vector bundles.  We are also using calligraphic letters for algebraic stacks, but this should not be source of any confusion since it will be clear from the context to which kind of object we are referring to.

	\item We will stick with the conventions and notation of \cite{DJK} for the general theory of motivic bivariant theories. But from the second section and on, we will use the twisting conventions of \cite{Motivic_Euler_Char}, \cite{VLF_Levine}, \cite{Levine_Atiyah-Bott} that clashes with the one of \cite{DJK} when we  twist by line bundles: what in the former three papers  is denoted $ \mb E^{a,b}\left( X; L \right) $ is actually $ \mb E^{a+2,b+1}\left( \Th{X}{L} \right) $, that in the notation of the latter one is $ \mb E^{a+2,b+1}\left( X; -[\mc L] \right) $ where $ \mc L $ is the invertible sheaf corresponding to the line bundle $ L $. More precisely we have:
	
	\[  \mb E^{a,b}_{\mr{Levine}}\left( X; L \right):=\mb E^{a+2,b+1}\left( \Th{X}{L} \right) =: \mb E^{a+2,b+1}_{\mr{DJK}}\left( X,  -[\mc L] \right)  \]
	
	\noindent Although slightly confusing, both notation have their pros and cons. It should be clear  what we are using from the context and from the fact that we will always use the semi-colon for one and just a comma for the other (and square brackets if we want to stress that we are considering K-theoretic classes). The rule of the thumb should be: if we are working with cohomology theories and if it is a twist by a line bundle (and not by an invertible sheaf), we are using the $ \mb E_{\mr{Levine}}(-;-) $-convention, otherwise we are following \cite{DJK}.
	
\end{enumerate}

\section{Preliminaries}

\subsection{Geometric Approximations}
\label{sec.2:_Factorization_U_j}
We will now work over a base scheme $ S $ and we will denote by $ GL_n $ the group scheme $ GL_{n,S}:=GL_n\times_{\Z} S $ defined over $ S $, where $ GL_n $ is the usual group scheme of invertible $ (n\times n )$-matrices defined over $ \spec(\Z) $. Whenever we will encounter a group scheme $ G $ over $ S $, we will always assume that $ G $ is smooth.\\
We describe a version of the construction of the classifying space of $ G $, found in \cite[\S 4.2]{Morel-Voevodsky}. We can consider  $ V_m=\A^{n(n+m)}_{S}\simeq \Hom(\A^{n+m}_{S}, \A^{n}_{S}) $ equipped with the natural (left) $ GL_n $-action (hence we also have an induced natural $ G $-action).  Once we identify $ V_m $ with the scheme of $ (n,n+m) $-matrices, we can restrict to the open subset $ {E}_mG\sseq V_m $ made of those matrix with rank $ n $. On $ {E}_mG $ we have a free $ GL_n $-action (hence a free $ G $-action too) . We can  define a map $ s_m: {E}_mG \into E_{m+1}G $ sending an element $ B \in  {E}_mG $ to a matrix of the form:

\[ \left(\begin{array}{ccc|c}
	& & &0\\
	& B & &\vdots \\
	& & & 0 \\
\end{array}\right) \]

\begin{notation}\label{ch1:_Notation_quot_ind}
	Since we will closely follow  \cite{Levine_Atiyah-Bott} and \cite{VLF_Levine}, we will adapt the same notation too.  Choose as a base point   $ x_0=(I_n, 0_n, \ldots, 0_n) $ where $ I_n $ is the $ (n\times n) $ identity matrix and $ 0_n $ is the zero vector column of length $ n $.  We will denote by  $ {E}SL_n={E}GL_n $ the presheaf on $ \catname{Sm}_{S} $ given by $ \colim{m} \ {E}_mSL_n $. For any closed subgroup $ G $ of $ GL_n $ or $ SL_n $, we will denote the quotient $ {B}_mG:=\bigslant{{E}_mGL_n}{G} $ whose limit gives us the approximation $ {B}G:=\colim{m} {B}_mG $ for the quotient stack $ \mc BG:=\left[ \bigslant{S}{G} \right] $. For $ G=GL_n $, the spaces $ {B}_mGL_n\simeq \mr{Gr}_S(n, n+j) $ are represented by the Grassmannians. 
\end{notation}

\subsection{BM Motives and Operations on Algebraic Stacks}

We will freely use the six functor formalism developed for algebraic stacks in \cite{ChoDA24}. A summary is given by the following:

\begin{thm}[{\cite[Theorem 4.26]{ChoDA24}}]
	We have a functor:
	\[ \mr{SH}^*_!: Corr\left( \oocatname{ASt} \right)_{lft,all} \longrightarrow \oocatname{Pr}_{stb}^{\mr L} \]
	\noindent extending the analogous functor defined on schemes. This functor encodes the following data:
	
	\begin{enumerate}
		\item For every $ \mc X $ NL-stack, we have the tensor- hom adjunction in $ \mr{SH}(\mc X) $:
		\[ -\otimes - \dashv \iMap_{\mr{SH}(\mc X)}(-,-) \]
		\item For any map $ f: \mc X \longrightarrow \mc Y $ in $ \oocatname{ASt} $, we have a pair of adjoint functors:
		\[ f^*\dashv f^* \]
		\noindent and if $ f $ is smooth we also have:
		\[ f_{\#}\dashv f^* \dashv f_* \]
		\item For $ f: \mc X \longrightarrow \mc Y $ a lft map in $ \oocatname{ASt} $, we have another pair of adjoint functors:
		\[ f_!\dashv f^! \]
		\item Given a cartesian diagram in $ \oocatname{ASt}$: 
		\begin{center}
			\begin{tikzpicture}[baseline={(0,-1)}, scale=1.25]
				\node (a) at (0,1) {$ \mc W $};
				\node (b) at (1, 1) {$ \mc Y $};
				\node (c)  at (0,0) {$  \mc Z$};
				\node (d) at (1,0) {$ \mc X $};
				\node (e) at (0.2,0.75) {$ \ulcorner $};
				\node (f) at (0.5,0.5) {$ \Delta $};

				\path[font=\scriptsize,>= angle 90]
				
				(a) edge [->] node [above ] {$ g $} (b)
				(a) edge [->] node [left] {$ q $} (c)
				(b) edge[->] node [right] {$ p $} (d)
				(c) edge [->] node [below] {$ f $} (d);
			\end{tikzpicture}
		\end{center}
		\noindent where $p$ is lft, we have the base change equivalence:
		\[ p_!f^*\overset{Ex^*_!}{\simeq} g^*q_! \]
		\item For any lft map $f$, the projection formula holds:
		\[ f_!(-)\otimes - \simeq f_!( - \otimes f^*(-))  \]
		\item For any smooth map $f$, the smooth projection formula holds:
		\[ f\epfs(-)\otimes - \simeq f\epfs( - \otimes f^*(-))  \]
	\end{enumerate}
	These functors satisfy the usual compatibilities given by exchange transformations:
	$$ Ex^*_!,Ex^!_*, Ex^*_{\#}, Ex^*_*,Ex_{\# *}, Ex_{!*}, Ex_{!\#}, Ex^{*!} $$
	\noindent and they give rise to localisation fiber sequences like in \cite[Proposition 4.2.1]{Chowdhury24}. Moreover, for a given smooth map $f$, we have the following natural purity equivalence:
	\[ f\epfs \longrightarrow f_!\Sigma^{\mb L_f} \]
\end{thm}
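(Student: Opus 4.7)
The plan is to reduce everything to the scheme case, where the six-functor formalism is already available via \cite{DJK}, by exploiting smooth descent for $ \mr{SH} $. Every NL-stack $ \mc X $ admits a smooth scheme atlas $ p\colon U\longrightarrow \mc X $ whose \v{C}ech nerve $ U_{\bullet}\longrightarrow \mc X $ is a simplicial scheme, and I would define $ \mr{SH}(\mc X):=\lim_{[n]\in \Delta^{op}}\mr{SH}(U_n) $ along smooth pullbacks. Since $ \mr{SH} $ satisfies Nisnevich (hence smooth) descent on $ \catname{Sch} $, this limit does not depend on the chosen atlas and recovers the usual $ \mr{SH}(X) $ when $ \mc X=X $ is a scheme. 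The construction mirrors the one used in \cite{Motivic_Vistoli, Khan-Ravi_Generalised_Coh_Stacks}, which I would take as the starting point.

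First I would encode the whole package using the $ (\infty,2) $-category of correspondences, \`a la Liu--Zheng / Gaitsgory--Rozenblyum / Mann. The scheme-level functor $ \mr{SH}^*_!\colon Corr\left(\catname{Sch}\right)_{lft,all}\longrightarrow \oocatname{Pr}_{stb}^{\mr L} $ from \cite{DJK} is then right Kan extended along $ \catname{Sch}\into \oocatname{ASt} $. Concretely: $ f^* $ for arbitrary $ f $ is obtained from functoriality of the limit, $ f_* $ is its right adjoint (which exists by presentability), and for smooth $ f $ the left adjoint $ f\epfs $ is assembled from the scheme-level $ (-)\epfs $ on atlases, glued via smooth base change on schemes. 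The closed symmetric monoidal structure and the internal hom $ \iMap $ on each $ \mr{SH}(\mc X) $ descend level-wise from the atlas, yielding assertion (1) for any NL-stack.

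Next I would construct the pair $ (f_!,f^!) $ for lft $ f\colon \mc X\longrightarrow \mc Y $. In the representable case, $ f_! $ is defined atlas-locally via the scheme-level $ (-)_! $ and glued using the base change $ Ex^*_! $ already known on schemes. For non-representable lft maps (such as $ \mc BG\longrightarrow S $), the universal property of the correspondence formalism enforces the formula $ f_!\simeq p_!q^* $ for any factorisation $ f=p\circ q $ with $ q $ smooth and $ p $ proper-like; I would take this as a definition and verify coherence by descent. The adjoint $ f^! $ then exists formally, and both projection formulas as well as all the exchange transformations $ Ex^*_!,Ex^!_*,Ex^*_{\#},Ex^*_*,Ex_{!*},Ex_{!\#},Ex^{*!} $ descend from their scheme-level counterparts by functoriality of the Kan extension. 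The localisation fiber sequences pull back similarly.

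The main obstacle will be the purity equivalence $ f\epfs \simeq f_!\Sigma^{\mb L_f} $ for a smooth map of stacks. On schemes this is Ayoub / CD-type relative purity, realised by trivialising $ \mb L_f $ locally as a vector bundle. For smooth but non-representable $ f $, the cotangent complex $ \mb L_f $ is a genuine shifted perfect complex, so $ \Sigma^{\mb L_f} $ has to be defined via K-theoretic Thom twists as in \cite{DJK}. Checking that the scheme-level purity natural transformation assembles into a coherent $ \infty $-categorical equivalence on $ \oocatname{ASt} $, not merely level-wise on atlases but compatibly with all simplicial data and with the full $ Corr $-functoriality, is the most delicate step, and where I expect most of the technical work to lie.
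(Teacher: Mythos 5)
You should first be aware that the paper does not prove this statement at all: it is imported verbatim as a black box from \cite{ChoDA24} (and \cite{Chowdhury24}), so there is no internal proof to compare against. Your sketch does follow the broad strategy of that reference --- define $ \mr{SH}(\mc X) $ as $ \lim_{n\in\Delta}\mr{SH}(U_n) $ over the \v{C}ech nerve of an atlas, get $ f^*\dashv f_* $ and the closed monoidal structure by descent, and treat purity for non-representable smooth maps via $ K $-theoretic Thom twists of the shifted perfect complex $ \mb L_f $ --- so as a plan it is pointed in the right direction.

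However, two steps as written are genuinely gapped. First, your formula $ f_!\simeq p_!q^* $ for a factorisation $ f=p\circ q $ with $ q $ smooth does not type-check: if $ f\colon\mc X\to\mc Y $ factors through $ \mc Z $, then $ q^*\colon\mr{SH}(\mc Z)\to\mr{SH}(\mc X) $ points the wrong way, so $ p_!q^* $ is not a functor $ \mr{SH}(\mc X)\to\mr{SH}(\mc Y) $. The correct inputs are: for smooth $ f $ one \emph{defines} $ f_!:=f\epfs\Sigma^{-\mb L_f} $ (this is exactly the purity equivalence in the statement, read as a definition), and for a general representable lft $ f $ one factors into an open immersion followed by a proper map and sets $ f_!=p_*j\epfs $; the content is then that these local definitions glue. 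Second, ``right Kan extend the functor $ Corr(\catname{Sch})_{lft,all}\to\oocatname{Pr}^{\mr L}_{stb} $ along $ \catname{Sch}\into\oocatname{ASt} $'' is not an operation one can simply perform: the target category of correspondences acquires new objects \emph{and} new horizontal ($ ! $-direction) morphisms, and producing a functor out of an $ (\infty,2) $-category of correspondences with all exchange coherences is precisely the hard theorem (the Liu--Zheng/Mann gluing machinery, or the explicit construction in \cite{ChoDA24}); a Kan extension of the $ * $-pullback part alone does not manufacture the exceptional functoriality. Your final paragraph correctly flags purity coherence as delicate, but the same degree of delicacy already sits in the construction of $ f_! $ itself, which your proposal treats as formal.
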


\begin{defn}
	The adjoint equivalences:
	\[ \Sigma^{\mc E}:=p\epfs s_*: \mr{SH}(\mc X)\leftrightarrows \mr{SH}(\mc X): s^!p^*=:\Sigma^{-\mc E} \]
	\noindent are called \textit{Thom transformations}. We will denote by:
	\[ \Th{\mc X}{E}:=\Sigma^{\mc E}\mbbm 1_{\mc X} \in \mr{Pic}(\mr{SH}(\mc X)) \]
	\noindent the \textit{Thom space} of $ E $, with inverse $ \Sigma^{-\mc E}\mbbm 1_{\mc X} $.
\end{defn}

\begin{notation}
	Let:
	\[ J_{Bor}: \mr K \longrightarrow \mr{Pic}(\mr{SH}^{\triangleleft}) \]
	\noindent be the \textit{Borel J-homomorphism} as defined in \cite[\S 4.1]{ChoDA24}. For a given NL-stack $ \mc X $ and a given $ v \in \mr K_0(\mc X) $, we will denote the associated automorphism of $ \mr{SH}(\mc X) $ as $ \Sigma^{v} $, with inverse $ \Sigma^{-v} $. 
\end{notation}

\begin{defn}\label{ch1:_BM_Mot_and_thy_lci_ind_schemes}
	Let $ g: \mc X \rightarrow \mc B $ be a lft map of NL-stacks and $ w \in \mr K_0(\mc X) $ and let $ \mb E \in \mr{SH}(\mc B) $.
	
	\begin{enumerate}
		\item [{$\left( \begin{array}{c}
				\mr{\textit{BM}}\\
				\mr{\textit{Motive}}
			\end{array} \right)$}]  The twisted Borel-Moore motive over $ \mc B $ is defined as:
		\[ \left( \bigslant{\mc X}{\mc B} \right)^{\mr{BM}}(w):= g_! \Sigma^{w}  \mbbm 1_{\mc X} \]

		\item [{$\left( \begin{array}{c}
				\mr{\textit{BM}}\\
				\mr{\textit{Homology}}
			\end{array} \right)$}]  The twisted Borel-Moore homology over $ \mc B $ is defined as:
		\[ \mb E^{\mr{BM}}\left(\bigslant{\mc X}{\mc B}, w\right):=\iMap_{\mr{SH}(\mc B)}\left( \left( \bigslant{\mc X}{\mc B} \right)^{\mr{BM}}(w), \mb E \right) \]
		\noindent The BM-homology groups will be then defined as:
		\[  \mb E^{\mr{BM}}_{a,b}\left(\bigslant{\mc X}{\mc B}, w\right):=\pi_0\left( \Sigma^{-a,-b} \mb E^{\mr{BM}}\left(\bigslant{\mc X}{\mc B}, w\right) \right) \]
		
		\item [{$\left( \begin{array}{c}
				\mr{\textit{Generalised}}\\
				\mr{\textit{Cohomology}}
			\end{array} \right)$}]  The twisted  cohomology of $ \mc X $ is defined as:
		\[ \mb E\left( \mc X, w \right):=\iMap_{\mr{SH}(\mc B)}\left( \mbbm 1_{\mc B}, g_*\Sigma^{w}g^*\mb E  \right)\]
		\noindent and its twisted cohomology groups as:
		\[  \mb E^{a,b}\left( \mc X, w \right):=\pi_0\left( \Sigma^{a,b}  \mb E\left( \mc X, w \right) \right) \]
		
	\end{enumerate}
\end{defn}

As for the case of schemes (cf. \cite{DJK}), we can talk about smooth pushforwards, proper pullbacks and Gysin maps for maps of algebraic stacks, and associated operations in BM-homology and cohomology. 

\begin{defn}\label{ch1:_def_equiv_op}
	Let $ \pi_{\mc X}: \mc X \rightarrow \mc B $ and $ \pi_{\mc Y}: \mc Y \rightarrow \mc B $ be lft maps in  $ \oocatname{ASt}_{\bigslant{}{B}} $ and let $ f: \mc X \rightarrow \mc Y $ be a map between them. 
	
	\begin{enumerate}
		\item [{$\left( \begin{array}{c}
				\mr{\textit{SPf}}\\
			\end{array} \right)$}] If $ f $ is smooth and $ v \in \mr K_0(\mc Y) $, then we have a smooth pushforward map between BM-motives:
		\[ f_!: \left(  \bigslant{\mc X}{\mc B}\right)^{\mr{BM}}(v+ \mb L_f) \longrightarrow \left(  \bigslant{\mc Y}{\mc B}\right)^{\mr{BM}}(v)  \]
		\noindent induced by $ f_!\Sigma^{\mb L_f}f^*\stackrel{\mf p_f}{\simeq}  f_{\#}f^* \stackrel{\eta_{\#}^{*}(f)}{\longrightarrow} Id $.
		
		\item [{$\left( \begin{array}{c}
				\mr{\textit{PPb}}\\
			\end{array} \right)$}] If $ f $ is representable and proper, then we have a proper pullback map between BM-motives:
		\[ f^*:\left(  \bigslant{\mc Y}{\mc B}\right)^{\mr{BM}}(v) \longrightarrow \left(  \bigslant{\mc X}{\mc B}\right)^{\mr{BM}}(v)  \]

		\item [{$\left( \begin{array}{c}
				\mr{\textit{GPf}}\\
			\end{array} \right)$}] If $ f $ is smooth and it admits a section $ s: \mc Y \rightarrow \mc X $, then we have a natural transformation:
		\[ Id\simeq (f\circ s)_! (f\circ s)^!=f_!s_!s^!f^! \stackrel{\eta_!^!(s)}{\longrightarrow} f_! f^!\simeq f_!\Sigma^{\mb L_f} f^* \]
		This natural transformation induces then a \textit{Gysin pushforward} on BM-motives:
		\[ s_!: \left(  \bigslant{\mc Y}{\mc B}\right)^{\mr{BM}}(v) \longrightarrow \left(  \bigslant{\mc X}{\mc B}\right)^{\mr{BM}}(v+\mb L_f)   \]
		
		When $ f $ is a vector bundle, then by homotopy invariance the smooth pushforward $ f_! $ is an isomorphism on BM-motives with inverse given by the Gysin  pushforward $ s_! $ (the same argument in \cite[Lemma 2.2]{levine2017intrinsic} works verbatim).
	\end{enumerate}
	
	The operations we just defined on BM-motives will respectively induce smooth pullbacks, proper pushforwards and Gysin pullbacks on BM-homology as in the case of schemes. A more detailed discussion can be found in \cite{Motivic_Vistoli}. 
\end{defn}

\section{$SL_{\eta}$-Oriented Theories}
\subsection{$ SL $- and $ SL_{\eta}$-Orientations }\label{ch2:_SL_orientations}
Recall that  $ S $ is our general base scheme. We have a very special element in $ \mr H^{-1,-1}(S) $, the algebraic Hopf map:
\[ \eta: \A^2_S\setminus \set{0} \longrightarrow \pro^1_S \]
\noindent sending $ (x,y)\mapsto [x:y] $, giving us an element $ \eta: \Sigma_{\mb G_m}\mbbm 1_{S} \rightarrow \mbbm 1_{S} \in \mr{H}^{-1,-1}(S)  $. For any motivic ring spectrum $ \mb E $, via the unit map $ \mu: \mbbm 1_{S} \rightarrow \mb E $, we get a corresponding element $ \eta_{\mb E} \in \mb E^{-1,-1}(S) $. 

\begin{defn}
	A motivic ring spectrum $ \mb E \in \mr{SH}(S) $ is said to be $ \eta $-invertible if  multiplication by $ \eta_{\mb E} $, $ -\times \eta_{\mb E}: \mb E^{0,0}(S)\rightarrow \mb E^{-1,-1}(S) $, is an isomorphism.
\end{defn}

\begin{defn}[{\cite[Def.1]{Ananyevskiy_PhD_Thesis}}]
	An $ SL_n $-vector bundle $ (E, \theta) $ on some $ X \in \catname{Sch}_{\bigslant{}{S}} $ is the data given by a vector bundle $ E $ of rank $ n $, together with a trivialization of the determinant, $ \theta: \det(E)\stackrel{\sim}{\rightarrow} \mo{X} $. We will often denote the $ SL_n $-vector bundle by just the underlying vector bundle $ E $. If there is no need to specify the rank of the bundle, we will say that $ E $ is a $ SL $-vector bundle.
\end{defn}

After Panin-Walter, we will use the following definition:
\begin{defn}[{cf. \cite{Ananyevskiy_SL_oriented}}]\label{ch2:_SL_orientation_def}
	Let $ \catname{C} $ be a full subcategory of $ \catname{Sch}_{\bigslant{}{S}} $. Given a ring spectrum $ \mb E \in \mr{SH}(S) $, an \textit{SL-orientation with respect to} $ \catname{C} $ for $ \mb E $ is a rule which assigns to each $ SL_n $-vector bundle $ V $, over $ X \in \catname{C} $, an element:
	\[ \mr{th}(V) \in \mb A^{2n,n}(\Th{X}{V}) \]
	\noindent with the following properties:
	\begin{enumerate}
		\item For any isomorphism $ \varphi: V_1\rightarrow V_2 $ of $ SL $-vector bundles over $ X \in \catname{C} $, we have:
		\[ \varphi^*\mr{th}(V_2)=\mr{th}(V_1) \]
		\noindent where $ \varphi^* $ is the pullback map induced by $ \varphi $.
		\item For any morphism $ f: X \rightarrow Y $ in $ \catname{C} $, and  $ V $ an $ SL $-vector bundle over $ Y $, we have:
		\[ f^*\mr{th}(V) =\mr{th}(f^*V)\]
		\item For $ \mc V_1, V_2 $  $ SL $-vector bundles on some $ X\in \catname{C} $, we have:
		\[ \mr{th}(V_1\oplus V_2)=p_1^*\mr{th}(V_1)\cup p_2^*\mr{th}(V_2) \]
		\noindent where $ p_i: V_1 \oplus V_2 \rightarrow V_i $ are the projection maps.
		\item We have:
		\[ \mr{th}(\A^1_S)=\Sigma_{T}1\simeq[ \Sigma_{T} \mbbm 1_S \stackrel{\Sigma_T u_{\mb E}}{\longrightarrow } \Sigma_{T} \mb E] \in \mb E^{2,1}(\pro^1_S) \]
		\noindent where $ u_{\mb E}: \mbbm 1_S \rightarrow \mb E $ is the unit map of the ring spectrum.
	\end{enumerate}
	We refer to the elements $ \mr{th}(V) $ as \textit{Thom classes}. If a ring spectrum $ \mb E $ has a normalised $ SL $-orientation with respect to $ \catname{C}:=\catname{Sm}_{\bigslant{}{S}} $, we simply say that $ \mb E $ has an $ SL $-orientation, and we will say that $ \mb E $ is $ SL $-oriented. If $ \catname{C}:=\catname{Sch}_{\bigslant{}{S}} $, then a normalised $ SL $-orientation with respect to $ \catname{C} $ will be called an \textit{absolute $ SL $-orientation}, and $ \mb E $ will be said to be absolutely $ SL $-oriented (following the conventions in \cite{Déglise_Fasel_Borel_Char}).
	
\end{defn}

We do not really need to distinguish between orientations and absolute orientations, thanks to the following proposition proved in \cite[Corollary 1.24]{KW-Euler}:
\begin{pr}\label{ch2:_abs_SL_or_on_eta_inv}
	Let $ \mr A \in \mr{SH}(\sk) $ be an $ \eta $-invertible motivic ring spectrum. Then we have a one to one correspondence between the following data:
	\begin{enumerate}
		\item $ SL $-orientations on $ \mr A $;
		\item maps of ring spectra $ \varphi: \mr{MSL}\longrightarrow \mr A $ such that $ \varphi(\mr{th}_{\mr{MSL}}(V))=\mr{th}_{\mr A}(V)   $ for any $ V $ vector $ SL $-bundle over $ X\in \catname{Sm}_{\bigslant{}{\sk}} $;
		\item absolute $ SL $-orientations.
	\end{enumerate}
\end{pr}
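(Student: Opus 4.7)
The plan is to exhibit three natural maps among the data sets (1), (2), (3) and show they compose to identities. The content of the proposition is really about the uniqueness of the $SL$-orientation map $\varphi$; existence is already known from Panin--Walter and Ananyevskiy, and $\eta$-invertibility is the precise hypothesis needed to kill the obstruction to uniqueness.

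First I would construct the three structure maps. The map $(3)\to (1)$ is restriction of an absolute $SL$-orientation to $\catname{Sm}_{\bigslant{}{\sk}}$, which trivially satisfies the axioms of \cref{ch2:_SL_orientation_def}. For $(2)\to(3)$, given a ring spectrum map $\varphi\from \mr{MSL}_{\sk}\to \mr A$, I would define Thom classes by
\[
\mr{th}_{\mr A}^{\varphi}(V) := \varphi_{*}\bigl(\mr{th}_{\mr{MSL}}(V)\bigr)\in \mr A^{2n,n}(\Th{X}{V})
\]
for any $SL_n$-bundle $V$ over any $X\in \catname{Sch}_{\bigslant{}{\sk}}$. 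Functoriality in $X$, invariance under isomorphisms of $SL$-bundles, multiplicativity and normalization all follow from the corresponding properties of the canonical $\mr{MSL}$-Thom classes $\mr{th}_{\mr{MSL}}(V)=\Sigma^{V}u_{\mr{MSL}_X}$, because $\varphi$ is a morphism of ring spectra sending the unit of $\mr{MSL}$ to the unit of $\mr A$. For $(1)\to(2)$, the existence of a ring spectrum map $\varphi$ compatible with Thom classes on smooth schemes is \cite[Theorem~4.7, Lemma~4.9]{Ananyevskiy_Witt_MSp_Reations}; the assignment is well-defined provided such $\varphi$ is unique.

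The hard part is the uniqueness of $\varphi$. By \cite[Theorem~5.8]{Panin-Walter_MSL_MSp}, and the Milnor tower computing maps out of $\mr{MSL}_{\sk}$, the obstruction to uniqueness lies in a $\lim^{1}$-group of the form $\lim^1_n \mr A^{2n-1,n}(\mr{MSL}_{n}^{fin})$, where $\mr{MSL}_n^{fin}$ are Thom spaces of the tautological bundle $\oocatname{T}(n,n^2)$ over the special linear Grassmannian $SGr(n,n^2)$. By a cofinality argument one may work instead with the subsystem indexed by even indices, $\mr{MSL}_{2n}^{fin}=\Th{SGr(2n,4n^2)}{\oocatname{T}(2n,4n^2)}$. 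Exploiting $\eta$-invertibility to collapse the bigrading to the single-index $\mr A^{\bullet}$, and then applying the Thom isomorphism afforded by the $SL$-orientation of $\mr A$, I would identify
\[
\mr A^{\bullet}(\mr{MSL}_{2n}^{fin})\;\simeq\;\mr A^{\bullet-4n}\bigl(SGr(2n,4n^2)\bigr).
\]
Ananyevskiy's computation of $\mr A^{\bullet}(SGr(2n,k))$ in \cite[Theorem~9]{Ananyevskiy_PhD_Thesis} then shows that the transition maps $i_n^{*}$ in the inverse system are surjective, so the Mittag--Leffler condition holds and the $\lim^1$ vanishes. This is the step I expect to be the main obstacle, since it is the only place where $\eta$-invertibility is used in a nontrivial way.

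Finally, I would verify that the three maps assemble into a bijection. The composition $(1)\to(2)\to(3)\to(1)$ returns the original $SL$-orientation: by construction $\mr{th}_{\mr A}^{\varphi}(V)=\varphi(\mr{th}_{\mr{MSL}}(V))=\mr{th}_{\mr A}(V)$ for $V$ over a smooth $X$, exactly the compatibility defining $\varphi$ in (2). Going $(2)\to(3)\to(1)\to(2)$ reproduces $\varphi$ because Ananyevskiy's map associated to the restricted orientation is characterised by the same Thom-class compatibility that $\varphi$ satisfies, and this characterisation is unique by the $\lim^1$ computation above. Finally $(3)\to(1)\to(2)\to(3)$ reproduces the original absolute orientation because the absolute Thom classes induced from $\varphi$ agree with the given ones on smooth schemes, and absolute orientations are determined by their smooth restriction together with the universal construction on $\mr{MSL}$.
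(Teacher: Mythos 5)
Your proposal is correct and follows essentially the same route as the paper: the paper itself only cites \cite[Corollary 1.24]{KW-Euler} for this statement, but the underlying argument there is exactly yours — existence of $\varphi$ from Panin--Walter/Ananyevskiy, uniqueness by showing the $\lim^1$ obstruction of \cite[Theorem 5.8]{Panin-Walter_MSL_MSp} vanishes via cofinality, the Thom isomorphism for the tautological bundles over $SGr(2n,4n^2)$, and Ananyevskiy's computation giving surjective transition maps. (Only a cosmetic quibble: in the single-graded $\eta$-inverted convention the Thom shift for a rank-$2n$ bundle is $\bullet-2n$, not $\bullet-4n$.)
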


Recall from \cite{Panin-Walter} that there exists a spectrum $ \mr{BO}_{S} \in SH(S) $,  whenever $ \dfrac{1}{2} \in \mo{S}^{\times} $, that represents Hermitian K-theory\footnote{There are recent works towards possible extension to more general schemes where $ 2 $ is not invertible in the ring of regular functions. It is worth mentioning for example \cite{Kumar_PhD}.}. 
\begin{defn}
	Let  $ \mr{KW}_{S}:=\mr{BO}_{S,\eta} $ be the Witt theory (absolute) spectrum defined by inverting the element $ \eta \in \mr{BO}^{-1,-1}_{S}(S) $ as done in detail in \cite[\S6 and Theorem 6.5]{Ananyevskiy_Witt_MSp_Reations}.  
\end{defn}
\begin{rmk}
	The spectrum $ \mr{BO}_S $ is $ Sp $-oriented (cf. \cite{Panin-Walter}) and hence $ SL $-oriented. This induces an $ SL $-orientation on $ \mr{KW} $, and indeed $ \mr{KW} $ will be our main example and focus point as an $ SL_{\eta} $-oriented theory.
\end{rmk}

\subsection*{Thom Isomorphism and Euler Classes}


For any $ SL $-oriented theory, we can then talk about Euler classes $ e(E,\theta) $ for $ SL $-bundles $ E $.

\begin{notation}
	We will adopt the convention of \cite{Motivic_Euler_Char} for twisted cohomology theories. That means that given $ \mb E $ an $ SL $-oriented theory and $ L \rightarrow X $ a line bundle over some $ X \in \catname{Sch}_{\bigslant{}{S}} $, we denote the $ L $-twisted $ \mb E $- cohomology by:
	\[ \mb E^{a,b}\left( X; L \right):=\mb E^{a+2,b+1}\left( \Th{X}{L} \right) \]
	Similarly, given a vector bundle $ V\rightarrow X $, we will denote the $ L $-twisted $ \mb E $-cohomology on $ \Th{X}{V} $ as:
	\[ \mb E^{a,b}\left( \Th{X}{V}; L \right):=\mb E^{a+2,b+1}\left( \Th{X}{V}\otimes \Th{X}{L} \right)\simeq \mb E^{a+2,b+1}\left( \Th{X}{V\oplus L} \right) \]
\end{notation}

Given any vector bundle $ V $ of rank $ r $ on $ X \in \catname{Sch}_{\bigslant{}{S}} $, if $ L:=\det(V) $, we can construct the associated $ SL $-vector bundle given by $ V \oplus L^{-1} $ with its canonical trivialization of the determinant $ \omega_{can}: V\oplus L^{-1}\rightarrow \mo{X} $.

\begin{defn}\label{ch2:_def_twisted_Thom_class}
	Let $ \catname{C} $ be a full subcategory of $ \catname{Sch}_{\bigslant{}{S}} $ and let $ \mb E \in \mr{SH}(S) $ be a ring spectrum with an $ SL $-orientation with respect to $ \catname{C} $. Let $ p: V \rightarrow X $ be a rank $ r $ vector bundle on $ X \in \catname{C} $ with determinant $ L:=\det(V) $.
	\begin{enumerate}
		\item 	We define the Thom class in $ L^{-1} $-twisted cohomology by:
		\[ \mr{th}(V):=\mr{th}_{V\oplus L^{-1}}\in \mb E^{2r,r}(\Th{X}{V}; L^{-1}):=\mb E^{2r+2,r+1}(\Th{X}{V\oplus L^{-1}}) \]

		\item Let $ s_{0,L}: X \oplus L^{-1} \stackrel{s_0\oplus Id}{\longrightarrow} V\oplus L^{-1} $ be the map induced by the zero section $ s_0 $ of $ V $, then we define the (twisted) Euler class as:
		\[ e(E):=s_{0,L}^*\mr{th}^{\varphi}(V) \in \mb E^{2n+2,n+1}(\Th{X}{L^{-1}})=\mb E^{2n,n}(X; L^{-1}) \]
		
	\end{enumerate}
\end{defn}

\begin{pr}[Twisted Thom Isomorphism]\label{ch2:_Twisted_Thom_SL}
	Let $ p:V\rightarrow X $ be a rank $ r $ vector bundle over a scheme $ X $, and let $ \mb E\in \mr{SH}(S) $ be a $ SL $-oriented ring spectrum together with an $ SL $-orientation map $ \varphi $. Then we have an isomorphism:
	\[ \vartheta_{V}^{\varphi}:= p^*(-) \cup \mr{th}^{\varphi}(V): \mb E^{*,*}(X; \det(V)) \longrightarrow \mb E^{*+2r,*+r}(\mr{Th}(V)) \]
\end{pr}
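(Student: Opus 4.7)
My plan is to factor $\vartheta_V^{\varphi}$ as a composite of two isomorphisms, each coming from the untwisted $SL$-Thom isomorphism applied to a distinguished $SL$-bundle. The key observation is that the auxiliary bundle $W := V \oplus L^{-1}$ has canonically trivial determinant $\det W = L \otimes L^{-1} \simeq \mo{X}$, so it is naturally an $SL$-bundle of rank $r+1$, and by construction $\mr{th}^{\varphi}(V)$ is precisely the $SL$-Thom class $\mr{th}_W^{\varphi}$ (cf.\ \cref{ch2:_def_twisted_Thom_class}).

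First I would invoke the standard $SL$-Thom isomorphism, which is immediate from the properties of the $SL$-orientation in \cref{ch2:_SL_orientation_def}: for any pointed space $Y$ over $X$, cupping with $\mr{th}_W^{\varphi}$ induces a natural isomorphism
\[ \mb E^{*, *}(Y) \xrightarrow{\sim} \mb E^{*+2(r+1), *+r+1}(Y \wedge_X \Th{X}{W}). \]
Specialising to $Y := \Th{X}{L}$ and identifying $\Th{X}{L} \wedge_X \Th{X}{W} \simeq \Th{X}{V \oplus L \oplus L^{-1}}$, together with the definition $\mb E^{*,*}(X; L) = \mb E^{*+2, *+1}(\Th{X}{L})$, yields
\[ \mb E^{*,*}(X; L) \xrightarrow{\sim} \mb E^{*+2r+4, *+r+2}\bigl(\Th{X}{V \oplus L \oplus L^{-1}}\bigr). \]

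Next I would apply the $SL$-Thom isomorphism a second time, now to the metabolic rank-two $SL$-bundle $L \oplus L^{-1}$, which provides a canonical equivalence $\Th{X}{L \oplus L^{-1}} \simeq \Sigma^{4,2} X_+$ in the $\mb E$-module category. Smashing over $X$ with $\Th{X}{V}$ then produces an isomorphism
\[ \mb E^{*+2r+4, *+r+2}\bigl(\Th{X}{V \oplus L \oplus L^{-1}}\bigr) \simeq \mb E^{*+2r, *+r}(\Th{X}{V}). \]
Composing with the previous step gives the desired isomorphism, and identifying the resulting composite with the explicit formula $p^*(-) \cup \mr{th}^{\varphi}(V)$ is a routine diagram chase using the multiplicativity property (3) of the $SL$-orientation and the naturality of the cup product.

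The main obstacle will be the metabolic cancellation step, i.e.\ ensuring that the $SL$-Thom class of $L \oplus L^{-1}$ coincides, via the orientation map $\varphi: \mr{MSL} \to \mb E$, with the standard suspension class of the trivial $SL_2$-bundle $\mo{X}^{\oplus 2}$ coming from property (4) of \cref{ch2:_SL_orientation_def}. Once this coherence is secured, the remainder of the argument reduces to bookkeeping on Thom spectra and the twisted cohomology conventions.
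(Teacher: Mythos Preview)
Your proposal is correct and takes essentially the same approach as the paper's (commented-out) proof, which likewise reduces everything to the untwisted $SL$-Thom isomorphisms for the two auxiliary $SL$-bundles $V \oplus L^{-1}$ and $L \oplus L^{-1}$; the paper phrases the combination at the level of spectrum suspensions (deriving $\Sigma^{[\mo{}^{r}]-[\mc V]}\mb E \simeq \Sigma^{[\mo{}]-[\mc L]}\mb E$ and deferring details to \cite[\S 3.10]{Levine_Raksit_Gauss_Bonnet}), while you work directly with Thom-space cohomology, but the content is identical. One small correction: your flagged ``obstacle'' is overstated --- you do not need the $SL$-Thom class of $L \oplus L^{-1}$ to agree with that of the trivial bundle $\mo{X}^{\oplus 2}$, only that it induces a Thom isomorphism, and this is automatic from the $SL$-orientation (e.g.\ by the Mayer--Vietoris reduction in \cite[Lemma~3.7]{Ananyevskiy_Thom_iso}).
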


\begin{construction}
	\label{ch2:_tautological_symbol}
	We will now construct a \textit{symbol} element associated to a section of a line bundle, using the construction of a symbol associated to an invertible function on a scheme $ X $ as done in \cite[Definition 6.1]{Ananyevskiy_SL_oriented}. For simplicity we will restrict to the case of a scheme, but the same procedure will work for any NL-stack without changing a word. Recall from \textit{loc. cit.} that given $ u \in \Gamma(X, \mo{X}^{\times}) $, for $ X \in \catname{Sm}_{\bigslant{}{S}} $ and $ \mb E \in \mr{SH}(X) $, we have a well defined element $ \langle u\rangle \in \mb E^{0,0}(X) $ induced by the multiplication by $ u $ on $ T=\bigslant{\A^1_X}{\mb G_{m,X}} $. This element $ \langle u \rangle $ is  called the \textit{symbol} associated to $ u $. Consider now a line bundle $ p:L\rightarrow X $, and consider $ \lambda: X \rightarrow L $ a section. Denote by $ \mc Z(\lambda) $ the vanishing locus of $ \lambda $:
	\begin{center}
		\begin{tikzpicture}[baseline={(0,1)}, scale=1.5]
			\node (a) at (0,1) {$ \mc Z(\lambda) $};
			\node (b) at (1, 1) {$ X $};
			\node (c)  at (0,0) {$  X $};
			\node (d) at (1,0) {$ L $};
			\node (e) at (0.25,0.75) {$ \ulcorner $};
			\node (f) at (0.5,0.5) {$  $};

			\path[font=\scriptsize,>= angle 90]
			
			(a) edge [closed] node [above ] {$ \iota_{\lambda} $} (b)
			(a) edge [closed] node [left] {$  $} (c)
			(b) edge[closed] node [right] {$ s_0 $} (d)
			(c) edge [closed] node [below] {$ \lambda $} (d);
		\end{tikzpicture}
	\end{center}
	\noindent Let $ j_{\lambda}:U(\lambda)\into X $ be the open complement in $ X $ of $ \mc Z(\lambda) $. Then $ \lambda $ induces a non vanishing section $ j^*\lambda: U \rightarrow j^*_{\lambda}L^{\times} $ of $ j^*_{\lambda}L $. But this means we can trivialise $ j_{\lambda}^*L $, i.e. we have:
	\[ \tau_{j^*_{\lambda}}: \mb A^{1}_{U(\lambda)}\stackrel{\sim}{\longrightarrow} j^*_{\lambda}L \]
	\noindent with associated inverse:
	\[ \left( \tau_{j^*_{\lambda}} \right)^{-1}: j^*_{\lambda}L \stackrel{\sim}{\longrightarrow} \A^1_{U(\lambda)} \]
	Taking the associated Thom spaces, we get:
	
	\[ \mr{Th}(\tau_{j^*_{\lambda}}^{-1}):\Th{U(\lambda)}{j^*_{\lambda}L}\simeq\Sigma^{j^*_{\lambda}\mc  L} \mbbm 1_{U(\lambda)} \longrightarrow \Th{U(\lambda)}{\A^1}\simeq \Sigma^{\mo{}}\mbbm 1_{U(\lambda)} \]
	Twisting by $ \Sigma^{-\mo{}} $, we have:
	
	\[  \Sigma^{-\mo{}}\mr{Th}(\tau_{j^*_{\lambda}}^{-1}): \Sigma^{-\mo{}}\Sigma^{j^*_{\lambda}\mc  L} \mbbm 1_{U(\lambda)} \longrightarrow \mbbm 1_{U(\lambda)} \]
	\begin{defn}\label{ch2:_symbol_of_line_bundle_section}
		With the same notation above, let $ \mb E\in \mr{SH}(S) $ be a ring spectrum with unit $ u: \mbbm 1 \rightarrow \mb E $, then we define the $ \mb E $-\textit{symbol} associated to $ \lambda: X \rightarrow L $ to be:
		\[ \langle \lambda \rangle_{\mb E}:=u\circ \Sigma^{-\mo{}}\mr{Th}(\tau_{j^*_{\lambda}}^{-1}): \Sigma^{-\mo{}}\Th{U(\lambda)}{j^*_{\lambda}L} \rightarrow \mb E \in \mb E^{0,0}(U(\lambda); j^*_{\lambda}L) \]
		
	\end{defn}

	\begin{exa}
		Consider $ p:L\rightarrow X $ a line bundle over $ X $. Let:
		\[ t_{can}: L\rightarrow p^*L \]
		\noindent be the tautological section. Then $ U(t_{can})=L^{\times}=L\setminus 0 $ and for any $ \mb E\in \mr{SH}(S) $ we get:
		\[ \langle t_{can}\rangle \in \mb E^{0,0}(L^{\times}; L) \]
		Consider  $ X=BGL_n $ and $ L=\mo{}(1) $. Then $ L^{\times}\simeq BSL_n $ and we get:
		\[ \langle t_{can}\rangle \in \mb E^{0,0}(BSL_n; \mo{}(1)) \]
		Sometimes we will refer to $ \langle t_{can}\rangle $ as the \textit{tautological symbol} associated to $ L $.
	\end{exa}

\end{construction}

\subsection{$ SL $-Orientations for Algebraic Stacks}\label{ch2:_orientations_Stacks}
We will now present an easy way to get Thom classes and Euler classes on algebraic stacks. The methods used here can be adapted to most of the common $ G $-orientations used in the literature, but since we will need to specialise to $ SL $-oriented spectra anyway, we will only talk about those.\\

Consider $ \oocatname{U_n}\rightarrow \mc BSL_n $ the universal bundle over $ \mc BSL_n $ (the one corresponding, under Yoneda, to the identity map of $ \mc BSL_n $.  In \cite[Proposition 1.38]{KW-Euler}, we proved the following:

\begin{pr}\label{ch2:_Univ_Thom_Iso}
	Let $ \mb E \in \mr{SH}(S) $ be an $ SL $-oriented ring spectrum. Then we have a natural equivalence of mapping spectra:
	\[ \tau: \mb E(\mc BSL_n)\longrightarrow \Sigma^{2n,n}\mb E(\Th{\mc BSL_n}{\oocatname{U_n}}) \]
\end{pr}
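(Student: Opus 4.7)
My plan is to reduce the statement for the algebraic stack $\mc{B}SL_n$ to the sequence of its smooth geometric approximations $B_mSL_n$ from \cref{ch1:_Notation_quot_ind}, where the $SL$-orientation of \cref{ch2:_SL_orientation_def} directly supplies Thom isomorphisms, and then pass to the homotopy limit using the identification of stack cohomology with ind-scheme cohomology of \cite[Proposition 3.33]{Motivic_Vistoli}.

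The key steps, in order, would be: (i) observe that the universal bundle $\oocatname{U_n} \to \mc{B}SL_n$ restricts to an $SL_n$-bundle $U_n^{(m)}$ on each $B_mSL_n$, and that under the transition maps $s_m: B_mSL_n \hookrightarrow B_{m+1}SL_n$ one has $s_m^*U_n^{(m+1)} \simeq U_n^{(m)}$ as $SL_n$-bundles; (ii) extract from the $SL$-orientation of $\mb E$ the Thom classes $\mr{th}(U_n^{(m)}) \in \mb E^{2n,n}(\Th{B_mSL_n}{U_n^{(m)}})$ and the corresponding Thom isomorphisms $\tau_m$ at the spectrum level, which one can realize via the equivalence of endofunctors $\Sigma^{U_n^{(m)}}\pi_m^*\mb E \simeq \Sigma^{2n,n}\pi_m^*\mb E$ in $\mr{SH}(B_mSL_n)$ afforded by the orientation (with $\pi_m$ the structure map to $S$); (iii) use axioms (1) and (2) of \cref{ch2:_SL_orientation_def} to check that the $\tau_m$ assemble coherently under the $s_m$; (iv) pass to the homotopy limit, invoking \cite[Proposition 3.33]{Motivic_Vistoli} for the identification $\mb E(\mc{B}SL_n) \simeq \holim{m}\mb E(B_mSL_n)$ together with the analogous identification for the Thom spaces (which holds because the Thom space functor commutes with the filtered colimit along the closed immersions $s_m$), and conclude. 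Naturality in $\mb E$ is inherited levelwise from naturality of the Thom classes under morphisms of $SL$-oriented ring spectra.

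The main obstacle will be coherence at the level of the $\infty$-category of spectra: one must ensure the $\tau_m$ form a genuine morphism of towers so that the limit map is an equivalence rather than only a levelwise isomorphism on homotopy groups, and that no spurious $\lim^1$ obstruction appears. This is cleanest to address within the six-functor formalism for algebraic stacks of \cite{ChoDA24}: the compatible Thom classes on the approximations glue to a single equivalence of Thom transformations $\Sigma^{\oocatname{U_n}}\pi^*\mb E \simeq \Sigma^{2n,n}\pi^*\mb E$ in $\mr{SH}(\mc{B}SL_n)$, where $\pi: \mc{B}SL_n \to S$ is the structure map; applying $\pi_*$ and taking global sections then yields $\tau$ directly on the stack, bypassing the need for an ad hoc Milnor-sequence analysis.
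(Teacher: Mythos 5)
Your reduction to the finite approximations $B_mSL_n$ correctly locates the difficulty, but it does not resolve it: the step where ``the compatible Thom classes on the approximations glue to a single equivalence of Thom transformations in $\mr{SH}(\mc BSL_n)$'' is precisely the assertion that requires proof, and the axioms of \cref{ch2:_SL_orientation_def} do not supply it. The pullback compatibility $s_m^*\mr{th}(U_n^{(m+1)})=\mr{th}(U_n^{(m)})$ is an identity of cohomology \emph{classes}, i.e.\ it holds on $\pi_0$ of the mapping spectra with no specified homotopies; it therefore produces a compatible system in the tower of groups $\mb E^{2n,n}(\Th{B_mSL_n}{U_n^{(m)}})$ but not a point of the limit spectrum, and the obstruction to lifting lives in $\lim^1 \mb E^{2n-1,n}(\Th{B_mSL_n}{U_n^{(m)}})$ --- the same kind of $\lim^1$ group that obstructs uniqueness of orientation maps $\mr{MSL}\to\mb E$ elsewhere in the paper. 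Invoking the six-functor formalism of \cite{ChoDA24} does not make this disappear, because the input it would need (a homotopy-coherent family of Thom classes, not just a $\pi_0$-compatible one) is exactly what is missing.

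The paper's proof avoids the tower entirely by exploiting the universality of $\mr{MSL}$. By construction, $\pi_{\mc BSL_n}\epfs\Th{\mc BSL_n}{\oocatname{U_n}}$ is the colimit of the Thom spectra of the tautological bundles over the special linear Grassmannians, i.e.\ it \emph{is} the term $\mr{MSL}_n$ of the $\mr{MSL}$-prespectrum; composing the canonical bonding map $\Sigma^{-2n,-n}\mr{MSL}_n\to\mr{MSL}$ with a ring spectrum map $\varphi:\mr{MSL}\to\mb E$ (which exists for any $SL$-oriented theory by \cite{Ananyevskiy_Witt_MSp_Reations}) yields a single class $\mr{th}_{\oocatname{U_n}}\in\mb E^{2n,n}(\Th{\mc BSL_n}{\oocatname{U_n}})$ with no limit argument at all. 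One then sets $\tau=\mr{th}_{\oocatname{U_n}}\cup p^*(-)$ and checks it is an equivalence by pulling back along the atlas $a:S\to\mc BSL_n$, which is conservative since $SL_n$ is special; there $a^*\oocatname{U_n}\simeq\A^n_S$ is trivial and $a^*\tau$ is the standard suspension equivalence. To salvage your approach you would have to either construct the coherence data by hand or route through $\mr{MSL}$ in this way.
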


\begin{defn}
	We define the \textit{canonical} Thom class of $ \oocatname{U_n} \rightarrow \mc BSL_n $ as the element:
	\[ \mr{th}(\oocatname{U_n}):=\tau(1_{\mc BSL_n}) \in \mb E^{2n,n}(\Th{\mc BSL_n}{\mc U_n}) \]
	\noindent where $ 1_{\mc BSL_n} \in\mb E^{0,0}(\mc BSL_n) $ is the identity element in the $ \mb E $-cohomology of $ \mc BSL_n $.
\end{defn}

Now, let $ \mc X \in \oocatname{ASt}_{\bigslant{}{S}} $ be an algebraic stack. Let $ v:  V \rightarrow \mc X $ be a vector bundle of rank $ n $ with trivialised determinant. The vector bundle $  V $ is classified by a map $ f_V $ such that:
\begin{center}
	\begin{tikzpicture}[baseline={(0,0)}, scale=1.5]
		\node (a) at (0,1) {$  V $};
		\node (b) at (2, 1) {$ \oocatname{U_n} $};
		\node (c)  at (0,0) {$  \mc X $};
		\node (d) at (2,0) {$ \mc BSL_n $};
		\node (e) at (0.2,0.8) {$ \ulcorner $};
		\node (f) at (0.5,0.5) {$  $};

		\path[font=\scriptsize,>= angle 90]
		
		(a) edge [->] node [above ] {$  $} (b)
		(a) edge [->] node [left] {$  $} (c)
		(b) edge[->] node [right] {$  $} (d)
		(c) edge [->] node [below] {$ f_{ V} $} (d);
	\end{tikzpicture}
\end{center}

\begin{defn}
	We define the Thom class of the special linear vector bundle $  V \rightarrow \mc X $ with values in a $ SL $-oriented ring spectrum $ \mb E\in \mr{SH}(S) $ as:
	\[ \mr{th}({ V}):= f_{ V}^*\mr{th}({\oocatname{U_n}})\in \mb E^{2n,n}\left(  \Th{\mc X}{ V} \right) \]
	\noindent where $ f_{ V}: \mc X \rightarrow \mc BSL_n $ is the map classifying the special linear bundle $  V $.
\end{defn}

We can hence define:

\begin{defn}
	The Thom class for $  V $ in $  L^{-1} $-twisted $ \mb E $-cohomology is the element:
	\[ \mr{th}_{ V}:=\mr{th}_{ V\oplus  L^{-1}}\in \mb E^{2n,n}(\Th{ \mc X}{ V};  L^{-1}):=\mb E^{2n+2,n+1}\left( \Th{\mc X}{ V\oplus  L^{-1}} \right) \]
\end{defn}

\begin{defn}
	In the same notation as above, we defined the $ \mb E $-valued Euler class of a vector bundle $  V \rightarrow \mc X $ of rank $ n $ as:
	\[ e( V):=s^*\mr{th}_{ V} \in \mb E^{2n, n}(X;  L^{-1}):=\mb E^{2n+2,n+1}(\Th{\mc X}{ L^{-1}}) \]
	\noindent where $ s^* $ is the pullback map induced by the zero section $ s_0: \mc X \rightarrow  V $.
\end{defn}

\section{$ SL_{\eta} $-Oriented Theories on $BN$}
\subsection{The Additive Structure of $ SL_{\eta}$-Oriented Theories on $BN$}


Closely following  \cite[\S 5]{Motivic_Euler_Char}, the aim of this section is to compute $ \mr A(\mc BN) $ for $ \mr A $ an $ SL_{\eta} $-oriented theory and $N$ the normaliser of the standard torus in $SL_2$.

Let $ T $ be the standard diagonal torus $ T\sseq SL_2 $. Note that $ T\simeq \mb G_m $, where for $ R $ a ring, we map $ t \in \mb G_m(R)= R^{\times} $ to the diagonal matrix:
\[ \left( \begin{matrix}
	t & 0 \\
	0 & t^{-1}
\end{matrix} \right) \]
We often simply write this matrix as $ t $, when there is no cause of confusion.
Notice that $ N $ is generated by $ T $ plus the element:
\[  \sigma:=\left(\begin{matrix*}[c]
	0 &1 \\
	-1 & 0
\end{matrix*}\right) \] 
As showed by the computation in \cite[\S 2]{Motivic_Euler_Char}, we have:
\[ \quot{SL_2}{N}\simeq \quot{GL_2}{N'}\simeq \pro^2 \setminus C \]
\noindent where $ N' $ is the normaliser of the  diagonal torus in $ GL_2 $ and $ C $ is the conic defined by the equation $ Q:=T_1^2-4T_0T_2 $. \\


For the reader convenience, let us recall how to view the left $ SL_2 $ action on $ \bigslant{SL_2}{N} $ under the above identification. Consider $ F=\A^2 $ with the standard left $ SL_2 $ action. We get a map:
\[ sq: \pro(F)\longrightarrow \pro\left( \sym^2(F) \right) \]
\noindent induced by the squaring map:
\[ \begin{array}{cccc}
	sq: & F & \longrightarrow & \sym^2(F) \\
	& v & \mapsto & v^2
\end{array} \]
This map is constructed in the following standard way. The map $ \varphi: F \rightarrow F \otimes F $ sending $ v \mapsto v \otimes v $ is $ SL_2 $-equivariant (where the $ SL_2 $-action on $ F\otimes F $ is the diagonal one given by $ g \cdot(v\otimes w):= g \cdot v  \otimes g \cdot w  $). Post-composing $ \varphi $ with the quotient map $ F \otimes F \rightarrow \sym^2(F) $ that sends $ a \otimes b$ to $ ab $,  we get the $ SL_2 $-equivariant map $ sq $ we wanted.

Using $ sq $ we can identify $ C\sseq \pro^2 $ with $ sq(\pro(F))\sseq \pro\left( \sym^2(F) \right) $. Since $ sq $ is $ SL_2 $-invariant, this means that $ C $ is $ SL_2 $-invariant. In particular $ Q $ is $ SL_2 $-invariant up to a scalar, so considering the multiplication morphism $  SL_2\ni g\cdot : C \rightarrow C $ with associated map on the global section denoted by $ (g\cdot)^* $, we get:

\[ \begin{array}{ccc}
	SL_2 & \longrightarrow & \mb G_m \\
	g & \mapsto & \frac{(g\cdot)^*Q}{Q}
\end{array} \]
\noindent But we know $ SL_2 $ is a simple algebraic group, so it only admits a trivial  character and hence we get that $ Q $ is actually $ SL_2 $-invariant. As an ind-scheme approximation for $ \mc BN $ we choose as in \cite[\S2, \S 5]{Motivic_Euler_Char}:
\[ BN:=\quot{SL_2}{N}\times^{SL_2} EGL_2\simeq \bigslant{ EGL_2}{N}\simeq \bigslant{ ESL_2}{N} \]



We have that $ SL_2 $ is special, so $ {E}SL_2 \longrightarrow BSL_2 $ is a Zariski locally trivial bundle and so it is $ BN \longrightarrow BSL_2 $ (cf. \cite[\S 2]{Motivic_Euler_Char}). From the description of $ \bigslant{SL_2}{N} $ we recalled above, we can realise $ BN $ as an open subscheme of the $ \pro^2 $-bundle $ \pro\left( \sym^2(F) \right)\times^{SL_2} {E}SL_2\rightarrow  BSL_2 $, with closed complement $ \pro(F)\times^{SL_2}{E}SL_2 $:
\begin{center}
	\begin{equation}
		\begin{tikzpicture}[baseline={(0,0)}, scale=2.5]
			\node (a) at (0,0.5) {$ BN $};
			\node (b) at (2, 0.5) {$ \pro(\sym^2(F))\times^{SL_2} {E}SL_2 $};
			\node (c)  at (4,0.5) {$ \pro(F)\times^{SL_2} {E}SL_2 $};
			\node (d) at (2,0) {$ BSL_2 $};

			\path[font=\scriptsize,>= angle 90]
			
			(a) edge [open] node [above ] {$  $} (b)
			(c) edge [closed'] node [above] {$  $} (b)
			(b) edge[->] node [right] {$  $} (d)
			(a) edge [->] node [below] {$  $} (d)
			(c) edge [->] node [below] {$ $} (d);
		\end{tikzpicture}
	\end{equation}
\end{center}

Remember that we have a short exact sequence:
\[ 1 \rightarrow T \longrightarrow N \longrightarrow \set{\pm 1} \rightarrow 1 \]
\noindent where $ T $ is our torus in $ SL_2 $. The normaliser as we already said is generated by $ T $ and the element $ \sigma $, so sending $ T $ to $ 1 $ and $ \sigma $ to  $ -1 $ give us a representation $ \rho^{-}: N \longrightarrow \mb G_m $. \\

The description we just gave for the approximating ind-scheme $ BN $ is also useful to give a different presentation of the quotient stack $ \mc BN $. Indeed, since $ \bigslant{SL_2}{N}\simeq \pro(\sym^2(F))\setminus \pro(F) $ we have:

\begin{equation}\label{chap.2_fig_loc_seq_BN}
	\begin{tikzpicture}[baseline={(0,0)}, scale=2.5]
		\node (a) at (0,0.5) {$ \mc BN=\left[ \bigslant{\left(\bigslant{SL_2}{N}\right) }{SL_2}\right] $};
		\node (b) at (2, 0.5) {$ \left[ \bigslant{\pro(\sym^2(F))}{SL_2} \right] $};
		\node (c)  at (4,0.5) {$\left[ \bigslant{\pro(F)}{SL_2} \right] $};
		\node (d) at (2,0) {$ \mc BSL_2 $};

		\path[font=\scriptsize,>= angle 90]
		
		(a) edge [open] node [above ] {$ j $} (b)
		(c) edge [closed'] node [above] {$ \iota $} (b)
		(b) edge[->] node [right] {$ p_2 $} (d)
		(a) edge [->] node [below] {$ p $} (d)
		(c) edge [->] node [below] {$ \bar{p}$} (d);
	\end{tikzpicture}
\end{equation}

Since $ \quot{SL_2}{N}\simeq \pro^2\setminus C $, a line bundle over $ \mc BN $ can be described as a line bundle over $ \pro^2\setminus C $ together with a $ SL_2 $-linearisation. The line bundle $ \mo{\pro^2}(1) $ is equipped with a natural $ SL_2 $-linearisation (induced by the natural action of $ SL_2 $ on $ \pro^2 $), hence its restriction to $ \pro^2\setminus C $ gives us a well defined element $ \gamma_N \in \mr{Pic}^{SL_2}(\pro^2\setminus C)\simeq \mr{Pic}(\mc BN) $.
\begin{lemma}
	Let $ \sk $ be a field. The Picard group $ \mr{Pic}(\mc BN) $ of $ \mc BN \in \oocatname{ASt}_{\bigslant{}{\sk}} $ is generated by the line bundle $ \gamma_N \in \mr{Pic}^{SL_2}(\pro^2\setminus C)\simeq \mr{Pic}(\mc BN)  $ coming from $ \mo{\pro^2}(1) $, wioth its natural $ SL_2 $-linearisation. Moreover $ \mr{Pic}(\mc BN)\simeq \bigslant{\Z}{2\Z} $.
\end{lemma}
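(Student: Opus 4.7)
My approach is to compute $\mathrm{Pic}(\mathcal{B}N)$ via the open–closed decomposition \eqref{chap.2_fig_loc_seq_BN} combined with the projective bundle formula. The open immersion $j:\mathcal{B}N \hookrightarrow [\mathbb{P}(\mathrm{Sym}^2(F))/SL_2]$ has as closed complement the smooth codimension-one substack $\mathcal{D}:=[\mathbb{P}(F)/SL_2]$. Working equivariantly on the smooth $SL_2$-scheme $\mathbb{P}(\mathrm{Sym}^2(F))$ (or, equivalently, directly on the smooth stack), the standard divisor-class/restriction localization sequence for Picard groups yields the right-exact sequence
\[
\mathbb{Z}\cdot[\mathcal{O}(\mathcal{D})] \;\longrightarrow\; \mathrm{Pic}\bigl([\mathbb{P}(\mathrm{Sym}^2(F))/SL_2]\bigr) \;\xrightarrow{\;j^*\;}\; \mathrm{Pic}(\mathcal{B}N) \;\longrightarrow\; 0.
\]
It then remains to compute the middle term and the divisor class of $\mathcal{D}$.

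For the middle term I would invoke the projective bundle formula: since $[\mathbb{P}(\mathrm{Sym}^2(F))/SL_2] \to \mathcal{B}SL_2$ is a (Zariski-locally trivial) $\mathbb{P}^2$-bundle, we get
\[
\mathrm{Pic}\bigl([\mathbb{P}(\mathrm{Sym}^2(F))/SL_2]\bigr) \;\simeq\; \mathrm{Pic}(\mathcal{B}SL_2) \oplus \mathbb{Z}\cdot[\mathcal{O}(1)].
\]
One has $\mathrm{Pic}(\mathcal{B}SL_2)\simeq \mathrm{Hom}(SL_2,\mathbb{G}_m) = 0$ since $SL_2$ is simply connected semisimple and admits no nontrivial characters, so the middle group is $\mathbb{Z}\cdot[\mathcal{O}(1)]$.

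For the divisor class, the squaring map $sq$ realises $\mathbb{P}(F)\hookrightarrow \mathbb{P}(\mathrm{Sym}^2(F))$ as the conic $C=\{Q=0\}$, where $Q\in H^0(\mathbb{P}(\mathrm{Sym}^2(F)),\mathcal{O}(2))$ is $SL_2$-invariant (as recalled in the excerpt); hence the $SL_2$-equivariant isomorphism $\mathcal{O}(\mathcal{D})\simeq \mathcal{O}(2)$ provided by $Q$ shows $[\mathcal{O}(\mathcal{D})] = 2\cdot[\mathcal{O}(1)]$ in the equivariant Picard group. Substituting, the localization sequence becomes $\mathbb{Z}\xrightarrow{\,\cdot 2\,}\mathbb{Z}\to \mathrm{Pic}(\mathcal{B}N)\to 0$, so $\mathrm{Pic}(\mathcal{B}N)\simeq \mathbb{Z}/2\mathbb{Z}$, with generator $j^*\mathcal{O}(1)=\gamma_N$.

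The only potentially delicate point is invoking the localization sequence and the projective bundle formula in the stacky setting, but both reduce cleanly to their $SL_2$-equivariant scheme-theoretic counterparts via the identification $\mathrm{Pic}([X/SL_2])=\mathrm{Pic}^{SL_2}(X)$; no genuine obstacle arises beyond citing the standard references.
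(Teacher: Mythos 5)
Your proof is correct, but it takes a genuinely different route from the paper's. The paper's argument is purely group-theoretic: since $\mc BN=[\spec(\sk)/N]$, it identifies $\mr{Pic}(\mc BN)\simeq \mr{Pic}^N(\sk)\simeq \Hom(N,\mb G_m)$ (citing Brion), and the relation $\sigma t\sigma^{-1}=t^{-1}$ forces every character to kill the torus, leaving only the trivial and the sign character, whence $\mr{Pic}(\mc BN)\simeq \Z/2\Z$; that $\gamma_N$ is the non-trivial class is then checked separately by observing that $\mo{}(1)$ has no nowhere-vanishing section on $\pro^2\setminus C$, while $Q$ provides one for $\mo{}(2)$. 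You instead run the excision sequence for equivariant Picard groups on the compactification $[\pro(\sym^2(F))/SL_2]$, compute the middle term via the projective bundle formula together with $\Hom(SL_2,\mb G_m)=0$, and use the $SL_2$-invariance of $Q$ (established in the paper just before the lemma) to identify the divisor class of the boundary with $2[\mo{}(1)]$. Both arguments are sound. Yours produces the generator and the $2$-torsion in a single stroke, since $\gamma_N=j^*\mo{}(1)$ visibly generates the cokernel of multiplication by $2$; the price is the equivariant intersection-theoretic input, namely the right-exactness of $\mr{Pic}^{SL_2}(\pro(\sym^2(F)))\to\mr{Pic}^{SL_2}(\pro^2\setminus C)$, which is cleanest to justify via the identification $\mr{Pic}^G=A^1_G$ for smooth varieties and the localization sequence for equivariant Chow groups — worth making explicit rather than calling it "standard". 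The paper's computation is more elementary and self-contained. One cosmetic remark: $SL_2$ has no non-trivial characters simply because it is perfect (the paper says "simple"); "simply connected" plays no role here.
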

\begin{proof}
	Since we work over a field, by \cite[Proposition 2.10]{Brion_lin_pic}, we can identify $ \mr{Pic}(\mc BN)=\mr{Pic}^N(\sk)\simeq \oocatname{X}(N)=\Hom(N, \mb G_m) $, where $ \oocatname{X}(N) $ denotes the character group. Let $ \chi \in \oocatname{X}(N)$ and let $ t $ be the parameter of the diagonal torus $ T\sseq N $ and $ \sigma:=\left(\begin{matrix*}[c]
		0 &1 \\
		-1 & 0
	\end{matrix*}\right)\in N $. Then we must have that $ \chi(t)=t^n $ for some integer $ n $ while $ \chi(\sigma)\in \set{\pm 1} $. But we also have:
	\[ \chi\left( \sigma\left(\begin{matrix*}[c]
		t &0 \\
		0 & t^{-1}
	\end{matrix*}\right) \right)=\chi\left( \left(\begin{matrix*}[c]
		t &0 \\
		0 & t^{-1}
	\end{matrix*}\right)\sigma \right) \] 
	This implies:
	\[ \chi(\sigma)\cdot t^{n}=t^{-n}\cdot \chi(\sigma) \]
	and hence $ n=0 $. Therefore there can be only two characters for the group $ N $: the trivial one sending $ \sigma $ to the identity and the non trivial one sending $ \sigma $ to $ -1 $. This proves that $ \mr{Pic}(\mc BN)\simeq \bigslant{\Z}{2\Z} $. To see that $ \gamma_N $ is the generator, it is enough to notice that the pullback of $ \mo{\pro^2}(1) $ on $ \pro^2\setminus C $, with its $ SL_2 $-trivialization, cannot be trivial since there are no non-vanishing global sections of $ \mo{}(1) $ on $ \pro^2\setminus C $. Thus $ \gamma_N $ must be the generator. Moreover we do have a non-vanishing global section for $ \mo{}(2) $, given by $ Q=T_1^2-4T_0T_2 $, and hence $ \gamma_N $ is indeed a $ 2 $-torsion element as expected.
\end{proof}

%
%
%
\begin{rmk}
	The representation $ \rho^{-}: N \rightarrow \mb G_m $ sending $ \sigma $ to $ -1 $ corresponds exactly to the line bundle $ \gamma_N$ generating $ Pic(BN) $. 
\end{rmk}

\begin{notation}
	To make the notation more compact, we will write:
	\[ \widetilde{-}:=  \left[ \bigslant{-}{G} \right]  \]
	\noindent to denote the quotient stack of $ G $-equivariant objects, whenever the group $ G $ is clear from the context.
\end{notation}

\begin{lemma}[{\cite[Lemma 5.1]{Motivic_Euler_Char}}]\label{ch2:_Lemma_5.1_MEC}
	Let $ F $ be the tautological rank two representation of $ SL_2 $ and let $ \mr A \in \mr{SH}(S) $ be an $ \eta $-invertible spectrum. Then the structure map $ \pi_{\widetilde{\pro(F)}}: \widetilde{\pro(F)}\rightarrow S $ induces an isomorphism:
	\[ \pi_{\widetilde{\pro(F)}}^*:\mr{A}^{\bullet}(S)  \stackrel{\sim}{\longrightarrow} \mr{A}_{SL_2}^{\bullet}\left( \pro(F)\right) \]
	\end{lemma}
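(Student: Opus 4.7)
The plan is to identify the quotient stack $\widetilde{\pro(F)}$ with a simpler classifying stack, and then exploit $\eta$-invertibility to compute its cohomology.

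First, I would observe that $SL_2$ acts transitively on $\pro(F) \simeq \pro^1$, with the stabilizer at $[1:0]$ being the standard Borel subgroup $B \subset SL_2$ of upper-triangular matrices. This yields an $SL_2$-equivariant isomorphism $\pro(F) \simeq SL_2/B$, and after passing to quotient stacks
$$ \widetilde{\pro(F)} = [\pro(F)/SL_2] \simeq \mc BB. $$

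Next, I would use the Levi decomposition $B = T \ltimes U$ with $T$ the diagonal torus and $U \simeq \mb G_a$ the unipotent radical. The projection $B \twoheadrightarrow T$ induces a map $\mc BB \to \mc BT$; on geometric approximations this is a tower of $\mb G_a$-torsors, which are Zariski-locally trivial $\A^1$-bundles. Hence the map is a motivic weak equivalence by $\A^1$-homotopy invariance (applied level-wise to the ind-approximations and then passing to colimits, using the conventions of \cref{ch1:_Notation_quot_ind}). This reduces the claim to the computation $\mr A^\bullet(\widetilde{\pro(F)}) \simeq \mr A^\bullet(\mc BT) = \mr A^\bullet(\mc B\mb G_m) \stackrel{?}{\simeq} \mr A^\bullet(S)$.

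The crux is this last identification for $\eta$-invertible $\mr A$. Using the approximation $B_m\mb G_m \simeq \pro^{m-1}$ and the motivic cofiber sequences $\pro^{m-1} \hookrightarrow \pro^m \to S^{2m,m}$, the Puppe connecting map $S^{2m,m} \to \Sigma \pro^{m-1}$, composed with the projection onto $S^{2m-1,m-1}$, has essential component given by the motivic Hopf map $\eta$. Since $\eta$ acts invertibly on $\mr A$, the induced connecting homomorphism in $\mr A$-cohomology is an isomorphism in the relevant (single-graded) degree. Induction on $m$ then yields $\widetilde{\mr A}^\bullet(\pro^{2k}) = 0$ and $\widetilde{\mr A}^\bullet(\pro^{2k+1}) \simeq \mr A^{\bullet-2k-1}(S)$; the extra summand in the odd case is annihilated by the restriction $\mr A^\bullet(\pro^{2k+1}) \to \mr A^\bullet(\pro^{2k})$, so the inverse system $(\mr A^\bullet(\pro^m))_m$ is essentially constant equal to $\mr A^\bullet(S)$ and has vanishing $\lim^1$.

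The hard part will be this third step: carefully identifying the Puppe attaching maps with stable classes containing $\eta$ as their essential factor, and verifying the $\lim^1$ vanishing in the tower. The first two steps are essentially formal given transitivity of the $SL_2$-action, the Levi decomposition of $B$, and standard $\A^1$-homotopy arguments for classifying stacks.
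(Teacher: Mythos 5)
Your proposal is correct, but it takes a genuinely different route from the paper. The paper never passes through the Borel subgroup or $\mc B\mb G_m$: it uses the first-row map $SL_2 \to \A^2\setminus\set{0}$, which is an $SL_2$-equivariant $\A^1$-bundle, to identify $\left[\bigslant{(\A^2\setminus\set{0})}{SL_2}\right] \simeq \left[\bigslant{SL_2}{SL_2}\right] = S$ up to $\A^1$-homotopy, and then observes that the $SL_2$-equivariant Hopf map $\A^2\setminus\set{0} \to \pro(F)$ induces, levelwise on \v Cech nerves, the pullback along $\eta \times \mr{Id}$, which is invertible because $\eta$ acts invertibly on $\mr A$. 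This localizes the use of $\eta$-invertibility to a single map and avoids any computation of $\mr A^{\bullet}(\pro^n)$ or any $\lim^1$ analysis. Your route — $\widetilde{\pro(F)} \simeq \mc BB \simeq \mc B\mb G_m$ via the Levi decomposition, followed by the identification $\mr A^{\bullet}(\mc B\mb G_m) \simeq \mr A^{\bullet}(S)$ — is also valid and is essentially Haution's theorem (\cref{ch2:_Haution_BGm}); the paper proves the ingredient you need independently later (\cref{ch2:_computation_pro_spaces} and \cref{ch2:_BGm_corollary}), so there is no circularity. What your approach buys is a conceptual reduction to a known statement about $\mc B\mb G_m$; what it costs is the full computation of $\mr A^{\bullet}(\pro^m)$ together with the Mittag--Leffler/$\lim^1$ bookkeeping over the tower of approximations, which the paper's argument sidesteps entirely. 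Two minor points to tighten: the map realizing $\mc BB \simeq \mc BT$ is most naturally $B_mT = \bigslant{E_m}{T} \to \bigslant{E_m}{B} = B_mB$, an affine bundle with fibers $\bigslant{B}{T} \cong U \cong \A^1$ (the direction you wrote is harmless but should be justified); and the passage from levelwise $\A^1$-equivalences of ind-scheme approximations to statements about $\mr A$ of the quotient stack should be handled as the paper does, via the \v Cech-nerve description of $\mr A(\mc X)$.
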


	\begin{proof}
		We will denote by $ \pi_{-}: - \rightarrow S $ the structure maps of our schemes and stacks. Give to $ F=\A^2 $  the standard  left $ SL_2 $-action, and equip $ \A^2\setminus \set{0} $ with the induced (left) action.  Equip $ SL_2 $ with the left $ SL_2 $-action coming from matrix multiplication, then we have a $ SL_2 $-equivariant map:
		\[ \begin{array}{cccc}
			r: &  SL_2 & \longrightarrow & \A^2 \setminus \set{0} \\
			& \left(\begin{matrix}
				a & b \\
				c & d
			\end{matrix}\right) & \mapsto & (a,b)
		\end{array} \]
		\noindent that realises $ SL_2 $ as a ($ SL_2 $-equivariant) $ \A^1 $-bundle over $ \A^2\setminus \set{0} $, with zero section map $ s_0: \A^2\setminus \set{0}\rightarrow SL_2 $. This means that once we pass to the quotient stacks we get an $ \A^1 $-bundle map:
		\[ \tilde{r}: \left[ \quot{SL_2}{SL_2} \right]\simeq S \longrightarrow \left[ \quot{\A^2\setminus \set{0}}{SL_2} \right]=:\widetilde{\A^2\setminus\set{0}} \]
		\noindent with zero section $ \tilde{s_0}: \widetilde{\A^2\setminus\set{0}} \rightarrow S $. By homotopy invariance, we get an equivalence of mapping spectra:
		\[ \tilde{s_0}^*: \mr A(S)=\iMap_{\mr{SH}(S)}(\mbbm 1_S, \mr A) \stackrel{\sim}{\longrightarrow} \mr A\left( \widetilde{\A^2\setminus \set{0}} \right)=\iMap_{\mr{SH}(S)}\left(\mbbm 1_S, \pi_{\widetilde{\A^2\setminus \set{0}}}{}_*\pi_{\widetilde{\A^2\setminus \set{0}}}^*\mr A\right) \]
		On the other hand, the algebraic Hopf map:
		\[ \eta: \A^2\setminus \set{0} \rightarrow \pro(F) \]
		\noindent induces a map on the quotients stacks:
		\[ \tilde{\eta}: \widetilde{\A^2\setminus \set{0}} \rightarrow \widetilde{\pro(F)}:=\left[ \quot{\pro(F)}{SL_2} \right] \]
		\noindent  Hence we get a well defined map:
		\[ \tilde{\eta}^*: \mr A\left(\widetilde{\pro(F)}\right)\longrightarrow \mr A\left( \widetilde{\A^2\setminus \set{0}} \right) \]
		By \cite[Remark 3.38]{Motivic_Vistoli}, for any NL-stack $ \mc X \in \oocatname{ASt}_{\bigslant{}{S}}^{NL} $ and any NL-atlas $ X \rightarrow \mc X $, we can compute the cohomology of $ \mc X $ using its \v Cech nerve $ X_{\mc X}^{n}:=\check{C}_n\left( \bigslant{X}{\mc X} \right) $:
		\begin{align*}
			\mr A(\mc X)&=\Map_{\mr{SH}(S)}(\mbbm 1_S, \pi_{\mc X}{}_*\pi_{\mc X}^*\mr A)\simeq\\ 
			& \simeq \Map_{\mr{SH}(S)}(\mbbm 1_S, \lim_{n \in \Delta}\pi_{X_{\mc X}^n}{}_*\pi_{X_{\mc X}^n}^*\mr A)\simeq \\
			& \simeq \lim_{n \in \Delta} \Map_{\mr{SH}(S)}(\mbbm 1_S, \pi_{X_{\mc X}^n}{}_*\pi_{X_{\mc X}^n}^*\mr A)\simeq \\
			& \simeq \lim_{n \in \Delta} \mr A(X_{\mc X}^n)
		\end{align*}
		\noindent For $ \widetilde{\pro(F)}=\left[ \bigslant{\pro(F)}{SL_2} \right] $ a NL-atlas is given by $ \pro(F)\rightarrow \widetilde{\pro(F)} $ with \v Cech nerve given levelwise by $ \check{C}_n\left( \bigslant{\pro(F)}{\widetilde{\pro(F)} } \right)=\pro(F)\times SL_n^{n} $, hence:
		\[ \mr A\left( \widetilde{\pro(F)} \right)\simeq \lim_{n \in \Delta} \mr A\left( \pro(F)\times SL_2^{n} \right) \]
		
		\noindent Similarly, for $ \widetilde{\A^2\setminus \set{0}}  $ we get:
		\[ \mr A\left( \widetilde{\A^2\setminus \set{0}}\right)\simeq \lim_{n \in \Delta} \mr A\left( (\A^2\setminus\set{0})\times SL_2^{n} \right) \]
		
		\noindent Therefore the map $ \tilde{\eta}^* $ restricted levelwise on the \v Cech nerves gives us a map:
		\[ \tilde{\eta}_n^*: \mr A\left( \pro(F)\times SL_2^n \right) \longrightarrow \mr A\left( (\A^2\setminus\set{0})\times SL_2^n \right) \]
		\noindent that is just the pullback map along $ \eta_n=\eta\times Id: (\A^2\setminus\set{0})\times SL_2^n \rightarrow \pro(F)\times SL_2^n $. But since $ \eta $ is invertible in $ \mr A $, all the maps $ \tilde\eta_n^* $ are invertible and hence: 
		\[ \tilde{\eta}^*: \mr A\left(\widetilde{\pro(F)}\right)\longrightarrow \mr A\left( \widetilde{\A^2\setminus \set{0}} \right)  \]
		\noindent is an equivalence. \\

		Now consider the following commutative diagram of quotient stacks:
		
		\begin{center}
			\begin{tikzpicture}[baseline={(0,0)}, scale=1.5]
				\node (a) at (0,1) {$ \widetilde{\A^2\setminus \set{0}} $};
				\node (b) at (0, 0) {$ S $};
				\node (c)  at (1.5,1) {$ \widetilde{\pro(F)} $};

				\path[font=\scriptsize,>= angle 90]
				
				(a) edge [->] node [left ] {$  \tilde{s}_0 $} (b)
				(a) edge [->] node [above] {$ \tilde{\eta} $} (c)
				(c) edge[->] node [below right] {$ \pi_{\widetilde{\pro(F)}} $} (b);
			\end{tikzpicture}
		\end{center}
		This means that we have:
		\[ (\tilde{s}_0)^*\simeq \tilde{\eta}^*\pi_{\widetilde{\pro(F)}}^* \]
		And hence, since both $ (\tilde{s}_0)^* $ and $ \tilde{\eta}^* $ are equivalences, the pullback map:
		
		\[ (\pi_{\widetilde{\pro(F)}})^*\simeq (\tilde{\eta}^*)^{-1}(\tilde{s}_0)^*: \mr A(S)\stackrel{}{\longrightarrow} \mr A\left( \widetilde{\pro(F)} \right)  \]
		\noindent is an equivalence too.

		\end{proof}

		To prove the full statement of the additive presentation of $ \mr A^{\bullet}(BN) $, for an $ SL_{\eta} $-oriented $ \mr A $, we will need various intermediate steps.

		\begin{lemma}[{\cite{Marc_Notes_Gen}}]
			\label{ch2:_SL_eta_Lemma_1}
			Given $ A\in \mr{SH}(S) $ and $ \eta $-invertible spectrum, then the pullback of the structure map $ \pi_{\pro^2_S}: \pro^2_S \rightarrow S $ induces an isomorphism: 
			\[ \mr A^{\bullet}(S)  \longrightarrow \mr A^{\bullet}(\pro_{S}^2) \]
		\end{lemma}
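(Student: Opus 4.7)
The strategy is to combine the open--closed decomposition $\pro^2_S = \A^2_S \sqcup \pro^1_S$ (with $\pro^1_S$ the hyperplane at infinity) with the $\eta$-invertibility of $\mr A$, reducing the claim to the vanishing of the $\mr A$-cohomology of a Thom space whose defining attaching map turns out to be (a suspension of) the algebraic Hopf $\eta$.

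First I would apply motivic purity to the open--closed decomposition above to obtain a cofiber sequence in $\mr{SH}(S)$
\[
\Sigma^\infty_+ \A^2_S \longrightarrow \Sigma^\infty_+ \pro^2_S \longrightarrow \Th{\pro^1_S}{\mo{\pro^1_S}(1)},
\]
using that the normal bundle of $\pro^1_S \hookrightarrow \pro^2_S$ is $\mo{\pro^1_S}(1)$. By $\A^1$-invariance $\Sigma^\infty_+ \A^2_S \simeq \mbbm 1_S$, and since the structure map $\pi_{\pro^2_S}$ retracts the open inclusion, the sequence splits as
\[
\Sigma^\infty_+ \pro^2_S \;\simeq\; \mbbm 1_S \,\oplus\, \Th{\pro^1_S}{\mo{\pro^1_S}(1)},
\]
with the first summand hit by $\pi_{\pro^2_S}^*$. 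The lemma therefore reduces to showing $\mr A^{\bullet}\bigl(\Th{\pro^1_S}{\mo{\pro^1_S}(1)}\bigr)=0$.

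For the vanishing, I would write the Thom space as the cofiber of the inclusion of the associated $\mb G_m$-bundle into the total space
\[
\Sigma^\infty_+ \bigl(\mo{\pro^1_S}(1)\setminus 0\bigr) \longrightarrow \Sigma^\infty_+ \mo{\pro^1_S}(1) \longrightarrow \Th{\pro^1_S}{\mo{\pro^1_S}(1)}.
\]
By $\A^1$-invariance the middle term is $\Sigma^\infty_+ \pro^1_S$, and the standard identification realises $\mo{\pro^1_S}(1)\setminus 0 \simeq \A^2_S\setminus 0$, under which the projection becomes (up to sign and $\A^1$-equivalence) the algebraic Hopf map $\eta\colon \A^2_S\setminus 0 \to \pro^1_S$. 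Passing to reduced summands, the decompositions $(\A^2_S\setminus 0)_+ \simeq \mbbm 1_S \vee \Sigma^{3,2}\mbbm 1_S$ and $\pro^1_{S,+}\simeq \mbbm 1_S \vee \Sigma^{2,1}\mbbm 1_S$ identify $\eta_+$ with the identity on the $\mbbm 1_S$-summand and with $\Sigma^{2,1}$ of the stable Hopf $\eta\colon \Sigma^{1,1}\mbbm 1_S \to \mbbm 1_S$ on the reduced summand. Since $\mr A$ is $\eta$-invertible, $\eta_+ \wedge 1_{\mr A}$ is an equivalence, whence $\Th{\pro^1_S}{\mo{\pro^1_S}(1)}\wedge \mr A \simeq 0$, as desired.

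The main obstacle is the identification in the third paragraph of the $\mb G_m$-bundle $\mo{\pro^1_S}(1)^\times \to \pro^1_S$ with the Hopf map: one has to be a bit careful since $\mo{\pro^1_S}(1)^\times$ and $\mo{\pro^1_S}(-1)^\times = \A^2_S \setminus 0$ differ as $\mb G_m$-bundles, but both project onto $\pro^1_S$ via $\eta$ (up to sign) after passing to $\mr{SH}(S)$. Modulo this classical identification, every other step is a formal manipulation of split cofiber sequences.
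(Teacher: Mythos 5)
Your argument is correct and is essentially the paper's proof in different packaging: both rest on decomposing $\pro^2_S$ into the affine chart $\A^2_S$ and the locus at infinity $\pro^1_S$, and on identifying the resulting gluing/boundary data over $\A^2_S\setminus\{0\}$ with the Hopf map $\eta$, which becomes invertible upon mapping into $\mr A$. The paper phrases this as a Zariski Mayer--Vietoris pushout showing $\pro^2_S\simeq \A^2_S$ in $\mr{SH}(S)[\eta^{-1}]$, whereas you use the localization--purity cofiber sequence and kill the Thom summand $\Th{\pro^1_S}{\mo{\pro^1_S}(1)}$; these are the same homotopy pushout read from the closed and open sides, respectively.
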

		\begin{proof}
			We can cover $ \pro^2_{S} $ by two opens $ U,V $ where $ U=\pro^2_{S}\setminus \set{p} $ with $ p=[1:0:0] $, and $ V=\pro^2_{S}\setminus\set{x_0=0}\simeq \A^2_{S} $. Then we have a Mayer-Vietoris diagram:
			\begin{center}
				\begin{tikzpicture}[baseline={(0,-1)}, scale=1.5]
					\node (a) at (0,1) {$ U\cap V\simeq\A^2\setminus \set{0} $};
					\node (b) at (2, 1 ) {$ U $};
					\node (c)  at (0,0) {$  V\simeq \A^2_{S} $};
					\node (d) at (2,0) {$ \pro^2_{S} $};
					\node (e) at (0.2,0.8) {$  $};
					\node (f) at (0.5,0.5) {$  $};

					\path[font=\scriptsize,>= angle 90]
					
					(a) edge [->] node [above ] {$  $} (b)
					(a) edge [->] node [left] {$  $} (c)
					(b) edge[->] node [right] {$ j_U $} (d)
					(c) edge [->] node [below] {$ j_V $} (d);
				\end{tikzpicture}
			\end{center}
			We can identify $ U $ with the bundle $ \mo{\pro^1}(1)\rightarrow \pro^1_{S} $ that sends $ [x_0:x_1:x_2]\mapsto [0:x_1:x_2] $, hence $ U $ is $ \A^1 $-equivalent to $ \pro^1_{S} $. Then up to $ \A^1 $-invariance, the Mayer-Vietoris diagram becomes:
			\begin{center}
				\begin{tikzpicture}[baseline={(0,-1)}, scale=1.5]
					\node (a) at (0,1) {$ \A^2\setminus \set{0} $};
					\node (b) at (2, 1) {$ \pro^1_{S} $};
					\node (c)  at (0,0) {$  \A^2_S $};
					\node (d) at (2,0) {$ \pro^2_{S} $};
					\node (e) at (0.2,0.8) {$  $};
					\node (f) at (0.5,0.5) {$  $};

					\path[font=\scriptsize,>= angle 90]
					
					(a) edge [->] node [above ] {$ \eta $} (b)
					(a) edge [->] node [left] {$  $} (c)
					(b) edge[->] node [right] {$  $} (d)
					(c) edge [->] node [below] {$ j_V $} (d);
				\end{tikzpicture}
			\end{center}
			Hence $ \pro^2_{S} $ is $ \A^1 $-equivalent to $ \pro^1_{S}\amalg_{\A^2\setminus \set{0}} \A^2_S $. But if we invert the (unstable) Hopf map, we can replace $ \A^2_S\setminus \set{0} $ with $ \pro^1_{S} $ and hence $ \pro^2_{S} $ becomes equivalent to the $ \A^2_S $ in $ \mr{SH}(S)[\eta^{-1}] $. Consider the following commutative diagram:
			
			\begin{center}
				\begin{tikzpicture}[baseline={(0,0)}, scale=1]
					\node (a) at (0,1) {$ \A^2 $};
					\node (b) at (2, 1) {$ \pro^2_{S} $};
					
					\node (d) at (2,0) {$ S $};
					\node (e) at (0.2,0.8) {$  $};
					\node (f) at (0.5,0.5) {$  $};

					\path[font=\scriptsize,>= angle 90]
					
					(a) edge [->] node [above ] {$ j_V $} (b)
					(a) edge [->] node [below left] {$ \pi_{\A^2} $} (d)
					(b) edge[->] node [right] {$ \pi_{\pro^2} $} (d);
				\end{tikzpicture}
			\end{center}
			
			Then we have that $ \pi_{\A^2}^*\simeq j_V^*\pi_{\pro^2_S}^* $, but $ \pi_{\A^2}^* $ is an equivalence by homotopy invariance and $ j_V^* $ becomes an equivalence once we invert $ \eta $, therefore $ \pi_{\pro^2_S}^* $ is an equivalence too in $ \mr{SH}(S)[\eta^{-1}] $. This means that we have an isomorphism:
			\[ \pi_{\pro^2_S}^*: \mr A^{\bullet}(S) \stackrel{\sim}{\longrightarrow} \mr A^{\bullet}(\pro^2_S) \]
			\noindent as claimed.
		\end{proof}

		\begin{lemma}[{\cite{Marc_Notes_Gen}}]
			\label{ch2:_SL_eta_Lemma_2}
			Let $ \mr A\in \mr{SH}(S) $ be an $SL [\eta^{-1}] $-oriented spectrum. Let $ T \rightarrow \pro^2_{S} $ be the tangent bundle of $ \pro^2_S $ and let $ e(T)\in \mr A^{2}(\pro^2_S; \mo{\pro^2}(3))\simeq  \mr A^{2}(\pro^2_S; \mo{\pro^2}(1))   $ be its Euler class. The cup product with $ e(T) $ induces an isomorphism:
			\[ \mr A^{n-2}(S)\simeq \mr A^{n}(\pro^2_S;  \mo{\pro^2}(1))\simeq   \mr A^{n}(\pro^2_S; \mo{\pro^2}(3))\] 
		\end{lemma}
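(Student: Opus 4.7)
The plan is to split the claimed chain of isomorphisms into two independent assertions and handle them separately.

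\textbf{Part 1 (second iso).} In any $SL_{\eta}$-oriented theory, twisting by the square of a line bundle is canonically trivial: for any line bundle $M$ on $X$, the rank-one bundle $M^{\otimes 2}$ carries a canonical $SL$-structure (its determinant $M^{\otimes 2}$ is a tautological square), so the associated twisted Thom isomorphism (\cref{ch2:_Twisted_Thom_SL}) yields $\mr A^{\bullet}(X;L)\simeq \mr A^{\bullet}(X;L\otimes M^{\otimes 2})$ for any $L$. I would apply this with $X=\pro^2_S$, $L=\mo{\pro^2}(1)$ and $M=\mo{\pro^2}(1)$ to obtain the stated identification $\mr A^n(\pro^2_S;\mo{\pro^2}(1))\simeq \mr A^n(\pro^2_S;\mo{\pro^2}(3))$; the same reasoning shows that all the $a$ priori different twists by odd powers of $\mo{\pro^2}(1)$ in which $e(T)$ naturally lives agree.

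\textbf{Part 2 (first iso).} By \cref{ch2:_SL_eta_Lemma_1}, the structural pullback $\pi^{*}\colon \mr A^{n-2}(S)\to \mr A^{n-2}(\pro^2_S)$ is an isomorphism, so it is enough to prove that cup product with $e(T)$ gives an isomorphism $\mr A^{n-2}(\pro^2_S)\to \mr A^{n}(\pro^2_S;\mo{\pro^2}(1))$. My strategy is a Gauss--Bonnet argument combined with the projection formula. For the smooth proper map $\pi\colon \pro^2_S\to S$, the pushforward $\pi_{*}$ (with the twist given by the relative tangent, whose determinant is $\mo{\pro^2}(3)\sim \mo{\pro^2}(1)$ after Part 1) is an isomorphism by smooth-proper duality combined with \cref{ch2:_SL_eta_Lemma_1}. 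The projection formula then yields
\[
\pi_{*}\bigl(\pi^{*}(\alpha)\cup e(T)\bigr)=\alpha\cdot \pi_{*}\bigl(e(T)\bigr)=\alpha\cdot \chi(\pro^2_S/S).
\]
If $\chi(\pro^2_S/S)\in \mr A^{0}(S)$ is a unit, then the composition $\pi_{*}\circ(-\cup e(T))\circ \pi^{*}$ is an isomorphism, and since $\pi^{*}$ and $\pi_{*}$ are isomorphisms individually, this forces $-\cup e(T)$ to be an isomorphism as well.

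\textbf{Main obstacle.} The delicate point is verifying that $\chi(\pro^2_S/S)$ is a unit in $\mr A^{0}(S)$. For $\mr A=\mr{KW}$ this follows from the classical computation $\chi_{\mr{KW}}(\pro^{2})=\langle 1\rangle \in W(\sk)$ (the $GW$-class $2\langle 1\rangle+\langle -1\rangle$ becomes $\langle 1\rangle$ modulo the hyperbolic form). For a general $SL_{\eta}$-oriented $\mr A$ one transfers this via the ring map $\mr{MSL}\to \mr A$ furnished by \cref{ch2:_abs_SL_or_on_eta_inv}. Alternatively, a more geometric route is to use the localization sequence for the smooth conic $C\simeq \pro(F)\hookrightarrow \pro^2$ with open complement $\pro^2\setminus C\simeq SL_2/N$: the closed stratum contributes via \cref{ch2:_Lemma_5.1_MEC} (whose proof already manipulates the analogous $SL_2$-equivariant picture), and one can read off the Euler characteristic from the boundary map of the sequence.
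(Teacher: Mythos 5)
Your overall strategy is sound and the final step (projection formula plus the unit--ness of $\pi_*e(T)=\chi(\pro^2_S/S)$, verified through Gauss--Bonnet and transferred along $\mr{MSL}\to\mr A$) is exactly how the paper concludes. Where you genuinely diverge is in how the intermediate fact ``$\pi_*\colon \mr A^{n}(\pro^2_S;\mo{\pro^2}(1))\to\mr A^{n-2}(S)$ is an isomorphism'' is obtained. The paper gets it from the localization sequence for a rational point $p\into\pro^2_S$: the open complement is $\A^1$-equivalent to $\mo{\pro^1}(1)$, and after inverting $\eta$ the Thom space $\Th{\pro^1}{\mo{}(1)}$ becomes contractible (the inclusion $\mo{}(1)\setminus 0\simeq\A^2\setminus\set{0}\to\pro^1$ is the Hopf map), so $\iota_*$ is an isomorphism and $\pi_*$ is its inverse since $\pi_*\iota_*=\id$. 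You instead appeal to smooth--proper duality. That route does work, but be aware it is not purely formal: duality identifies the counit $\pi_\#\mbbm 1\to\mbbm 1$ (which \cref{ch2:_SL_eta_Lemma_1} shows is an equivalence in $\mr{SH}(S)[\eta^{-1}]$) with one direction of an ambidextrous adjunction, while the Gysin pushforward is induced by the unit going the other way; the two differ by multiplication by $\chi(\pro^2_S/S)$, so the unit--ness of the Euler characteristic is already needed at this step and your ``main obstacle'' is doing double duty. The paper's localization argument buys this for free, at the cost of the small geometric computation with $\Th{\pro^1}{\mo{}(1)}$.

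Two minor imprecisions. First, in Part 1 the bundle $M^{\otimes 2}$ does \emph{not} carry a canonical $SL$-structure (its determinant is $M^{\otimes 2}$, which is not canonically trivial); the correct mechanism for $\mr A^{\bullet}(X;L)\simeq\mr A^{\bullet}(X;L\otimes M^{\otimes 2})$ is Ananyevskiy's theorem that for $SL$-oriented theories the Thom isomorphism of a bundle depends only on its rank and determinant, applied to $L\oplus M^{\otimes 2}$ versus $L\oplus\mo{X}$ (or the cited statement that twists only depend on $\mr{Pic}/2$). Second, the comparison between the $\mr A$-valued Euler class $e(T)$ coming from the $SL$-orientation and the sphere-spectrum class $e^{\mb S}(T)$ entering Gauss--Bonnet deserves a word; the paper handles this (and the reduction from $S$ to residue fields) via the Leray spectral sequence before invoking the computation $\chi(\pro^2)=\langle 1\rangle+h\mapsto\langle 1\rangle$ in $W$.
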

		\begin{proof}
			Consider the localization sequence associated to a point $ \set{p} \into \pro^2 $:
			\[ \mr A^{\bullet-2}(p) \stackrel{\iota_*}{\longrightarrow }\mr A^{\bullet}(\pro^2_S; \mo{}(1))\stackrel{j^*}{\longrightarrow} \mr A^{\bullet}(\pro^2_S\setminus \set{p}; \mo{}(1)) \]
			\noindent where we identified the twist by $ \mo{}(3) $ with the one by $ \mo{}(1) $ since $ \mr A $ is $ SL $-oriented. As in \cref{ch2:_SL_eta_Lemma_1},homotopy invariance gives us an  isomorphism:
			\[  \mr A^{\bullet}(\pro^2_S\setminus \set{p}; \mo{}(1))\simeq   \mr A^{\bullet}(\pro^1_S; \mo{}(1)) \]
			By definition we have $  \mr A^{\bullet}(\pro^1_S; \mo{}(1)) =\mr A^{\bullet+1}(\Th{\pro^1_S}{\mo{}(1)})  $. But by homotopy invariance, $ \mo{}(1) $ can be identified with $ \pro^1_S $ and after inverting $ \eta $, we can also identify $ \A^2_S \setminus \set{0}\simeq \mo{}(1)\setminus \set{0}\sseq \mo{}(1) $ with $ \pro^1_S $. Hence $ \Th{\pro^1}{\mo{}(1)} $ becomes just a (pointed) point, so $ A^{\bullet}(\Th{\pro^1}{\mo{}(1)}) =0$. Then from the localization sequence we get an isomorphism:
			\[ \iota_*:\mr A^{\bullet-2}(S)\stackrel{\sim}{\longrightarrow} \mr A^{\bullet}(\pro^2_{S}; \mo{}(1)) \]
			The pushforward map $ \pi_{\pro^2}{}_*: \mr A^{\bullet}(\pro^2_S; \mo{}(1) ) \longrightarrow \mr A^{\bullet}(S) $, with $ \pi_{\pro^2_S}: \pro^2_S \rightarrow S $ the structure map,  is an isomorphism too since $ \pi_{\pro^2_S}{}_*\iota_*=Id $. 
			
			Now we make the following claim: if $ T $ is the tangent bundle of $ \pro^2_S $, then $ \pi_{\pro^2_S}e(T) $ is a unit in $ \mr A^0(S) $. To see this, we use the Leray spectral sequences for $ \mr A^{\bullet}(S) $ and $ \mr A^{\bullet}(\pro^2_S;\mo{}(1)) $ (cf. \cite[\S 4]{Asok-Deglise-Nagel}) to reduce to computations along the fibers $ \mr A^{\bullet}(\kappa(x)) $ and $ \mr A^{\bullet}(\pro^2_{\kappa(x)}) $. By the motivic Gau\ss-Bonnet theorem (cf. \cite{Levine_Raksit_Gauss_Bonnet} or \cite[4.6.1]{DJK}), we have that $ \chi\left( \bigslant{\pro^2_{\kappa(x)}}{\kappa(x)} \right)=\pi_{\kappa(x)}{}_*(e^{\mb S}(T_{\pro^2_{\kappa(x)}})) $, where $ e^{\mb S}(\cdot) $ denotes the Euler characteristic relative to the sphere spectrum. We know that the Euler-characteristic of $ \pro^2_{\kappa(x)} $ is equal to $ \langle2\rangle+\langle-1\rangle \in \mr{GW(\kappa(x))} $ by the computation in \cite[Ex.1.8]{Hoyois_Quadratic_Lefschetz}, but this means that  $ \pi_{\kappa(x)}{}_*(e(T_{\pro^2_{\kappa(x)}}))$ is the image of $ \langle1\rangle \in \mr W(\kappa(x))  $  under the unit map $ \mr W(\kappa(x))\rightarrow \mr A^0(\kappa(x)) $. So we have that $ \pi_{\pro^2_{\kappa(x)}}{}_*(\iota_{\kappa(x)}{}_* 1_{\pro^2})=\pi_{\pro^2}{}_*(e(T_{\pro^2})) $ defines an isomorphism and is indeed a unit. Therefore:
			\[ \left(\cdot\cup e(T_{\pro^2_{\kappa(x)}}) \right) \circ \pi_{\pro^2_{\kappa(x)}}^*: \mr A^{\bullet-2}(\kappa(x)) \longrightarrow \mr A^{\bullet}(\pro^2_{\kappa(x)}; \mo{}(1)) \]
			\noindent defines an isomorphism too, as we wanted.
			
		\end{proof}
		
		We will now go on a brief excursus computing $ SL_{\eta} $-oriented theories of $ \pro^n_S $, just for the sake of completeness. The following lemma (as well as the last two lemmas before) is already known (cf. \cite[Theorem 2]{Ananyevskiy_Pushforwards_Eta_Inverted} for a more general statement):
		
		\begin{lemma}\label{ch2:_computation_pro_spaces}
			Given $ \mr A\in \mr{SH}(S) $ with $ \mr A $ an $ SL_{\eta} $-oriented spectrum. Then we have:
			\[ \mr A^{\bullet}(\pro^{2k+1})\simeq \mr A^{\bullet}(S) \oplus \mr A^{\bullet-2k-1}(S) \]
			\[ \mr A^{\bullet}(\pro^{2k+1}; \mo{}(1))\simeq 0 \]
			\[ \mr A^{\bullet}(\pro^{2n})\simeq \mr A^{\bullet}(S) \]
			\[   \mr A^{\bullet}(\pro^{2n}; \mo{}(1))\simeq \mr A^{\bullet-2n}(S)\]
		\end{lemma}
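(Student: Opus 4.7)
The proof proceeds by induction on $n$, treating the untwisted cohomology $\mr A^{\bullet}(\pro^n)$ and the twisted cohomology $\mr A^{\bullet}(\pro^n; \mo{}(1))$ together. The base cases are $\pro^0 = S$ (trivial), $\pro^2$ (by Lemmas \ref{ch2:_SL_eta_Lemma_1} and \ref{ch2:_SL_eta_Lemma_2}), and $\pro^1$: the untwisted case follows from the split cofiber sequence $\A^1_+\to \pro^1_+ \to S^{2,1}$ together with the $\eta$-invertibility of $\mr A$, while the twisted vanishing $\mr A^{\bullet}(\pro^1;\mo{}(1)) = 0$ is obtained from the Thom-space identification $\Th{\pro^1}{\mo{}(1)}\simeq \pro^2/\A^2$ (coming from the inclusion $\pro^1\hookrightarrow \pro^2$ as the hyperplane at infinity, with normal bundle $\mo{\pro^1}(1)$) combined with $\mr A^{\bullet}(\pro^2) \simeq \mr A^{\bullet}(S)$ via the split restriction to a point.

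For the inductive step, I would use two different localisation sequences, one for each case. For the untwisted case $\mr A^{\bullet}(\pro^n)$, consider the codimension-one embedding $\iota: \pro^{n-1}\hookrightarrow \pro^n$ with open complement $\A^n$ and normal bundle $\mo{\pro^{n-1}}(1)$. Applying the Gysin isomorphism together with the self-duality $\mr A^{\bullet}(X;L)\simeq \mr A^{\bullet}(X;L^{-1})$ of line-bundle twists for $SL$-oriented theories, the supports term becomes $\mr A^{\bullet-1}(\pro^{n-1};\mo{}(1))$. The structure map $\pi_{\pro^n}:\pro^n\to S$ splits the restriction to $\A^n\simeq S$, so the localisation long exact sequence collapses to the split short exact sequence
\[ 0\to \mr A^{\bullet-1}(\pro^{n-1}; \mo{}(1))\to \mr A^{\bullet}(\pro^n)\to \mr A^{\bullet}(S)\to 0, \]
and the inductive hypothesis on the twisted case for $\pro^{n-1}$ yields the untwisted formula.

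For the twisted case $\mr A^{\bullet}(\pro^n;\mo{}(1))$, the clean move is to use a codimension-two embedding: take $\iota:\pro^{n-2}\hookrightarrow \pro^n$, $[x_0{:}\cdots{:}x_{n-2}]\mapsto [x_0{:}\cdots{:}x_{n-2}{:}0{:}0]$, with open complement $U = \pro^n\setminus \pro^{n-2}$ and normal bundle $\mo{\pro^{n-2}}(1)^{\oplus 2}$, whose determinant $\mo{}(2)$ cancels against the $\mo{}(1)$-twist up to the self-duality above. The key geometric observation is that the projection $\pi_U:U\to \pro^1$, $[x_0{:}\cdots{:}x_n]\mapsto [x_{n-1}{:}x_n]$, is a Zariski-locally trivial $\A^{n-1}$-bundle admitting the section $s:[a{:}b]\mapsto [0{:}\cdots{:}0{:}a{:}b]$; since the fibers of $\pi_U$ have trivial Picard group, $\mo{\pro^n}(1)|_U$ is the pullback of a line bundle from $\pro^1$, and testing along $s$ identifies it with $\pi_U^*\mo{\pro^1}(1)$. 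Homotopy invariance then gives $\mr A^{\bullet}(U;\mo{\pro^n}(1)|_U)\simeq \mr A^{\bullet}(\pro^1;\mo{\pro^1}(1)) = 0$, so the twisted localisation sequence collapses to the Gysin isomorphism
\[ \mr A^{\bullet-2}(\pro^{n-2}; \mo{}(1))\xrightarrow{\sim} \mr A^{\bullet}(\pro^n; \mo{}(1)), \]
from which the twisted formula follows by induction on $\pro^{n-2}$.

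The main obstacle is the geometric identification of $U$ as an affine bundle over $\pro^1$ together with the isomorphism $\mo{\pro^n}(1)|_U \simeq \pi_U^*\mo{\pro^1}(1)$, which is unwound using the explicit affine trivialisations over the charts $\{x_{n-1}\neq 0\}$ and $\{x_n\neq 0\}$ and the vanishing of $\mr{Pic}(\A^{n-1})$; once this is established, everything else is a direct unwinding of Gysin isomorphisms and homotopy invariance.
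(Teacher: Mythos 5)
Your proof is correct, but it takes a genuinely different route from the paper's. The paper runs a single induction using the point-complement localisation $\set{p}\into\pro^m$, identifying $\pro^m\setminus\set{p}$ with the total space of $\mo{\pro^{m-1}}(1)$ and hence, up to $\A^1$-homotopy, with $\pro^{m-1}$; both the twisted and untwisted statements are read off from that one sequence, and in the odd twisted case the paper must argue that the boundary map $\mr A^{\bullet}(\pro^{2n};\mo{}(1))\to\mr A^{\bullet-2n}(S)$ is an isomorphism. You instead separate the two cases: for the untwisted statement you remove a hyperplane (the classical cell decomposition, with supports term $\mr A^{\bullet-1}(\pro^{n-1};\mo{}(1))$ and an evident splitting by the structure map), and for the twisted statement you remove a codimension-two linear subspace, so that the open complement $U\simeq \mo{\pro^1}(1)^{\oplus (n-1)}$ carries the twist $\pi_U^*\mo{\pro^1}(1)$ and its twisted cohomology vanishes by the $\pro^1$ base case; the localisation sequence then degenerates to a Gysin isomorphism $\mr A^{\bullet-2}(\pro^{n-2};\mo{}(1))\simeq \mr A^{\bullet}(\pro^n;\mo{}(1))$ with no boundary-map analysis at all. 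This is arguably cleaner than the paper's treatment of the twisted case, at the cost of the extra (elementary) geometric input identifying $\pro^n\setminus\pro^{n-2}$ and $\mo{\pro^n}(1)|_U$; for the latter, rather than invoking the Picard group of the fibres (which is delicate over a non-reduced base $S$), it is safer to read off $\mo{\pro^n}(1)|_U\simeq \pi_U^*\mo{\pro^1}(1)$ directly from the transition cocycle $x_n/x_{n-1}=\pi_U^*(b/a)$ on the two charts you already mention. All degree shifts and parities check out, and the induction (stepping by one in the untwisted case, by two in the twisted case) closes on your stated base cases.
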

		\begin{proof}
			We will proceed by induction. Let us start considering $ n=1 $, then take the localization sequence associated to $ p \into \pro^1 $ with open complement given by $ \A^1 $:
			\[ \ldots \rightarrow \mr A^{\bullet-1}(S)\stackrel{(\iota_1)_*}{\rightarrow} \mr A^{\bullet}(\pro^1_S) \stackrel{j_1^*}{\rightarrow} \mr A^{\bullet}(S) \stackrel{\partial}{\rightarrow} \mr A^{\bullet}(S) \rightarrow \ldots\]
			\noindent Then the projection map $ \pi_{\pro^1}: \pro^1_S \rightarrow S $ induces the pullback map $ \pi^* $ that exhibits $ j_1^* $ as a split surjection, telling us that $ \partial=0 $ and that:
			\[ \mr A^{\bullet}(\pro^{1})\simeq \mr A^{\bullet}(S) \oplus \mr A^{\bullet-1}(S) \]
			For the twisted theory of $ \pro^1 $, we already saw in the proof of \cref{ch2:_SL_eta_Lemma_2} that:
			\[ \mr A^{\bullet}(\pro^{1}; \mo{}(1))\simeq 0 \]
			Moreover \cref{ch2:_SL_eta_Lemma_1} and \cref{ch2:_SL_eta_Lemma_2} already took care of the case $ n=2 $. Let us assume we know the result for any $ m<2n+1 $, then we just need to show it also holds for $ 2n+1 $ and $ 2n+2 $. Let us start with $ \pro^{2n+1}_S $, and again consider $ p \into \pro^{2n+1} $. Similarly to what we did in \cref{ch2:_SL_eta_Lemma_1}, we can identify $ \pro^{2n+1}\setminus\set{p} $ with $ \mo{\pro^{2n}}(1) $ and hence, up to $ \A^1 $-equivalence, with $ \pro^{2n} $. Now the localization sequence associated to $ p \into \pro^{2n+1} $ reads as:
			\[ \ldots \rightarrow\mr A^{\bullet-2n-1}(S)\stackrel{(\iota_{2n+1})_*}{\rightarrow} \mr A^{\bullet}(\pro^{2n+1}_S) \stackrel{j_{2n+1}^*}{\rightarrow} \mr A^{\bullet}(\pro^{2n}) \stackrel{\partial_{2n+1}}{\rightarrow} \mr A^{\bullet-2n}(S) \rightarrow \ldots \]
			By induction hypothesis, we have $ \mr A^{\bullet}(\pro^{2n}) =\mr A^{\bullet}(S)  $, and once again the projection map $ \pi_{\pro^{2n+1}}: \pro^{2n+1}_S \rightarrow S $ induces a map $ \pi_{\pro^{2n+1}}^* $ that makes $ j_{2n+1}^* $ into a split surjection, giving us:
			\[ \mr A^{\bullet}(\pro^{2n+1})\simeq \mr A^{\bullet}(S) \oplus \mr A^{\bullet-2n-1}(S) \]
			Considering again the localization sequence associated to $ p \into \pro^{2n+1} $, but this time with the twist by $ \mo{}(1) $, we get:
			\[ \ldots \mr A^{\bullet-2n-1}(S)\stackrel{(\iota(1)_{2n+1})_*}{\rightarrow} \mr A^{\bullet}(\pro^{2n+1}_S;\mo{}(1)) \stackrel{j(1)_{2n+1}^*}{\rightarrow} \mr A^{\bullet}(\pro^{2n};\mo{}(1)) \stackrel{\partial_{2n+1}(1)}{\rightarrow} \mr A^{\bullet-2n}(S) \rightarrow \ldots  \]
			\noindent By induction hypothesis $ \partial_{2n+1}(1) $ must be an isomorphism and thus $ \mr A^{\bullet}(\pro^{2n+1}_S;\mo{}(1)) \simeq 0 $. The case $ 2n+2 $ is completely similar, and we will leave it to the reader.
		\end{proof}
		
		\begin{co}\label{ch2:_BGm_corollary}
			For any $ \eta $-inverted spectrum $ \mr A \in \mr{SH}(S) $, we have:
			\[ \mr A^{\bullet}\left( {B}\mb G_{m,S} \right)\simeq 	\mr A^{\bullet}(S) \]
			\[  \mr A^{\bullet}\left( {B}\mb G_{m,S}; \mo{}(1) \right)\simeq 	0 \]
		\end{co}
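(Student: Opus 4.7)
The plan is to reduce the computation of $\mr A^\bullet(B\mb G_{m,S})$ (and of its $\mo{}(1)$-twisted version) to the finite-dimensional calculations already carried out in \cref{ch2:_computation_pro_spaces}. Using the geometric classifying space construction recalled in \cref{ch1:_Notation_quot_ind}, we have $E_m\mb G_m \simeq \A^{m+1}_S\setminus \{0\}$, hence $B_m\mb G_m \simeq \pro^m_S$, and $B\mb G_{m,S} = \colim{m}\pro^m_S$ with transition maps given by the standard linear inclusions $\iota_m:\pro^m_S \hookrightarrow \pro^{m+1}_S$. A Milnor short exact sequence then provides
\[ 0 \to \lim{}^1_m \mr A^{\bullet - 1}(\pro^m_S; \mc L) \to \mr A^\bullet(B\mb G_{m,S}; \mc L) \to \lim_m \mr A^\bullet(\pro^m_S; \mc L) \to 0, \]
with $\mc L$ either trivial or the tautological line bundle $\mo{}(1)$ on $B\mb G_{m,S}$, which restricts to $\mo{\pro^m}(1)$ on each finite-level approximation.

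For the untwisted statement, I would analyze the transition maps $\iota_m^\ast$ using the splittings produced in \cref{ch2:_computation_pro_spaces}. In each $\mr A^\bullet(\pro^m_S)$, the first $\mr A^\bullet(S)$-summand is the image of the structure map pullback $\pi^\ast$, and is therefore preserved identically by $\iota_m^\ast$ by naturality. The extra $\mr A^{\bullet-2k-1}(S)$-summand appearing at the odd level $\pro^{2k+1}_S$ originates, via the localization sequence used to prove \cref{ch2:_computation_pro_spaces}, as the image $(\iota_p)_\ast$ of a pushforward by a closed point $\iota_p:\{p\}\hookrightarrow \pro^{2k+1}_S$. Choosing $p$ to lie outside the hyperplane $\pro^{2k}_S$, base change along the empty intersection shows that $\iota_{2k}^\ast \circ (\iota_p)_\ast = 0$, so this extra summand is killed by the transition map $\iota_{2k}^\ast$. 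Consequently the inverse system is Mittag-Leffler with eventually-constant image equal to $\mr A^\bullet(S)$, giving $\lim = \mr A^\bullet(S)$ and $\lim^1 = 0$.

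For the twisted statement, \cref{ch2:_computation_pro_spaces} already provides $\mr A^\bullet(\pro^{2k+1}_S; \mo{}(1)) = 0$ for every $k\geq 0$. Infinitely many terms in the inverse system therefore vanish, which forces both the limit and the $\lim^1$ to vanish: any compatible family is zero on the odd-indexed subsystem, and the Mittag-Leffler condition is trivially satisfied because every transition map targeting a zero group has zero image.

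The only subtlety I foresee is justifying the Milnor sequence for cohomology of this particular ind-scheme and verifying that $\mo{}(1)$ on $B\mb G_{m,S}$ restricts levelwise to the tautological line bundle $\mo{\pro^m}(1)$; both are standard consequences of the geometric classifying space formalism. Beyond that, the argument is mostly bookkeeping on top of \cref{ch2:_computation_pro_spaces}.
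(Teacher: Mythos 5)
Your proposal is correct, and it rests on the same key input as the paper's proof (the parity-dependent computations of \cref{ch2:_computation_pro_spaces} for $\pro^m_S$), but it handles the passage to the limit differently. The paper simply observes that the even-dimensional projective spaces form a cofinal subsystem of $\pro^\infty$, so for the untwisted statement the inverse system of mapping spectra is levelwise constant equal to $\mr A(S)$ via $\pi_{\pro^{2k}}^*$; for the twisted statement it uses the cofinal odd-dimensional subsystem, where every term is already zero. This sidesteps the Milnor sequence and any analysis of transition maps. You instead keep the full system, invoke the $\lim$--$\lim^1$ sequence, and verify Mittag-Leffler by showing that the extra summand $\mr A^{\bullet-2k-1}(S)\subset \mr A^{\bullet}(\pro^{2k+1}_S)$ is killed by the restriction to a hyperplane avoiding the point $p$ that produces it. That verification is correct (the transition map factors through the open complement of $p$, so it annihilates $\mathrm{im}((\iota_p)_*)=\ker(j^*)$, and it is the identity on the $\pi^*$-summand by naturality), but it is extra work the cofinality argument makes unnecessary; on the other hand your version makes explicit \emph{why} the system is Mittag-Leffler, which is the kind of bookkeeping one would need if the cofinal subsystems were not so conveniently behaved. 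Both arguments are valid; the paper's is shorter.
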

		\begin{proof}
			Recall from \cite{Morel-Voevodsky}, that the model for $ B\mb G_m $ is given by $ \pro^{\infty} $. We have that:
			\[ {B}\mb G_{m}\simeq\pro^{\infty}=\colim{m} \ \pro^m \]
			For the untwisted case, we can take the colimit over the even dimensional projective spaces (since this is a cofinal system). Then the structure map $ \pi_{\mc B\mb G_m}: \mc B\mb G_m\rightarrow S $, under the identification in \cite[Proposition 3.33]{Motivic_Vistoli}, induces a map:
			\[ \pi_{\mc B\mb G_m}^*: \mr A^{\bullet}(S) \rightarrow \mr A^{\bullet}(\mc B\mb G_m)\simeq \mr A^{\bullet}(B\mb G_m)\simeq \lim_k \mr A^{\bullet}(\pro^{2k}) \]
			If we consider $ \mr A^{\bullet}(S) $ as a limit spectrum over a constant pro-system, then the map $ \pi_{\mc B\mb G_m}^* $ levelwise becomes $ \pi_{\pro^{2k}}^*: \mr A(S)\rightarrow \mr A(\pro^{2k}) $. By  \cref{ch2:_computation_pro_spaces}, $ \pi_{\pro^{2k}}^* $ is an equivalence for every $ k $ and therefore $  \pi_{\mc B\mb G_m}^* $ is an equivalence too, proving the first claim of our corollary. For the second claim, we just write $ B\mb G_m $ as a colimit of odd dimensional projective spaces and again by \cref{ch2:_computation_pro_spaces} we get:
			\[ \mr A^{\bullet}(B\mb G_m; \mo{}(1))\simeq \lim_{k} \mr A^{\bullet}(\pro^{2k+1}; \mo{}(1))=0 \]
		\end{proof}
		The previous corollary can also be deduced from the following stronger result proved in \cite[Theorem 6.1.3]{Haution_Odd_VB}:
		\begin{thm}[Haution]\label{ch2:_Haution_BGm}
			The map $ B\mb G_m \rightarrow S $ is induces an isomorphism in $ \mr{SH}(S)[\eta^{-1}] $. 
		\end{thm}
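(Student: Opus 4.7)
The plan is to bootstrap from the cohomological calculation in Corollary~\ref{ch2:_BGm_corollary} to an equivalence in $\mr{SH}(S)[\eta^{-1}]$. The key structural input is that inverting $\eta$ trivialises the $\mb G_m$-suspension, so that $S^{a,b}$ becomes equivalent to the simplicial sphere $S^{a-b,0}$, and consequently $\mr{SH}(S)[\eta^{-1}]$ is compactly generated by the integer shifts $\Sigma^n \mbbm 1_S$ of the unit. Detecting equivalences in $\mr{SH}(S)[\eta^{-1}]$ thus reduces to computing mapping spectra into shifts of $\mbbm 1_S[\eta^{-1}]$.

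Writing $\pi : B\mb G_m \to S$ for the structure map, I would first reformulate the claim as the assertion that the counit $\Sigma^{\infty}_+ B\mb G_m = \pi_{\#} \mbbm 1_{B\mb G_m} \to \mbbm 1_S$ is an equivalence in $\mr{SH}(S)[\eta^{-1}]$. By the compactness of the generators, this amounts to proving that for every $n \in \mb Z$ the induced map on $[\Sigma^{n,0}\mbbm 1_S, -]$ is an isomorphism; via the approximation $B\mb G_m = \colim{m}\ \pro^m_S$ and Spanier--Whitehead duality, this is precisely the vanishing and identification statements of Corollary~\ref{ch2:_BGm_corollary} applied to the $\eta$-inverted sphere ring spectrum $\mbbm 1_S[\eta^{-1}]$.

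A cleaner route, sidestepping the duality bookkeeping (and the associated $\lim^1$-control at the ind-scheme level), is to work directly on the cellular filtration of $\pro^{\infty}$. Each two-step subquotient $(\pro^{m+1})_+/(\pro^{m-1})_+$ sits in a cofiber sequence $S^{2m,m} \to \pro^{m+1}/\pro^{m-1} \to S^{2m+2,m+1}$ whose attaching map is, up to a unit, the algebraic Hopf map $\eta$ itself. Consequently each such two-step subquotient is contractible in $\mr{SH}(S)[\eta^{-1}]$, and an induction on the cofinal subsystem of even-dimensional projective spaces yields $\Sigma^{\infty}_+\pro^{2n} \simeq \mbbm 1_S$ for all $n$, hence the claim in the colimit.

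The main obstacle is the identification of the attaching maps as unit multiples of $\eta$. For the initial case this follows from the standard geometric presentation of the Hopf map $\A^2 \setminus \{0\} \to \pro^1$; propagating the identification to every cell requires the detailed analysis in Haution's \cite{Haution_Odd_VB}, which leverages the $\mb Z/2$-equivariance encoded in the two-step structure of the cellular filtration (morally arising from the antipodal action on the tautological line bundle) to force the attaching maps to be uniformly divisible by $\eta$. Together with the compactness reduction of the first paragraph, this yields the theorem.
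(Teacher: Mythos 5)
First, for context: the paper does not prove this statement at all — it is quoted from Haution \cite[Theorem 6.1.3]{Haution_Odd_VB} as a stronger, external result that would subsume \cref{ch2:_BGm_corollary}. So your proposal is an attempt at an independent proof, and it contains a step that fails as stated. The decisive problem is the claim that \emph{every} two-step subquotient $\pro^{m+1}/\pro^{m-1}$ has attaching map a unit multiple of $\eta$ and is therefore contractible in $\mr{SH}(S)[\eta^{-1}]$. This is false for even $m$: the boundary map $S^{2m+2,m+1}\to S^{2m+1,m}$ is $m_{\epsilon}\cdot\eta$ with $m_{\epsilon}=\sum_{i=1}^{m}\langle(-1)^{i-1}\rangle$, and it depends only on the image of $m_{\epsilon}$ in the Witt ring, which is a unit for odd $m$ but zero for even $m$; in the latter case $\pro^{m+1}/\pro^{m-1}$ splits as $S^{2m,m}\vee S^{2m+2,m+1}$ and certainly survives $\eta$-inversion. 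Indeed, if your blanket claim were true, the same induction run over odd-dimensional projective spaces would give $\Sigma^{\infty}_{+}\pro^{2k+1}\simeq \mbbm 1_S$, contradicting \cref{ch2:_computation_pro_spaces}, which exhibits the nontrivial summand $\mr A^{\bullet-2k-1}(S)$. Your induction over the even cofinal system happens to use only the subquotients $\pro^{2n+2}/\pro^{2n}$ (i.e.\ odd $m=2n+1$), for which the claim is correct, so the argument can be repaired by restricting the assertion to those — but then the identification of exactly those attaching maps is the entire content of the theorem, and you defer it to Haution rather than prove it. As written, the proposal therefore imports the hard part rather than supplying it.

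The reduction in your first paragraph is also unsound: $\mr{SH}(S)[\eta^{-1}]$ is \emph{not} compactly generated by integer shifts of $\mbbm 1_S$ — the localizing subcategory generated by the unit is only the cellular part — and mapping \emph{into} $\mbbm 1_S[\eta^{-1}]$ (which is what \cref{ch2:_BGm_corollary} computes, being cohomology with coefficients in a single spectrum) cannot detect equivalences in any case. What actually saves this route is that $\Sigma^{\infty}_{+}B\mb G_m$ and $\mbbm 1_S$ are both cellular, so that bigraded homotopy sheaves detect the equivalence; converting that to the cohomological statements of \cref{ch2:_BGm_corollary} then requires Spanier--Whitehead duality for each $\pro^m$ and the $\lim^1$ control over the colimit, i.e.\ precisely the bookkeeping you hoped to sidestep. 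Either fix the cellular-filtration argument by proving the attaching-map computation for odd $m$, or carry out the duality argument honestly; at present neither branch is complete.
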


		Let $ \theta: T \longrightarrow \pro(\sym^2(F)) $ be the tangent bundle of $ 	\pro(\sym^2(F)) $. 
		We have an induced map:
		\[ \oocatname{T}:=\left[\bigslant{T}{SL_2}\right] \rightarrow \widetilde{\pro(\sym^2(F))}:=\left[ \bigslant{\pro(\sym^2(F))}{SL_2} \right] \]
		\noindent where $ \oocatname{T} $ is the relative tangent bundle of $ \widetilde{\pro(\sym^2(F))} $ over $ \mc BSL_2 $.

		\begin{lemma}[{\cite[Lemma 5.2]{Motivic_Euler_Char}}]\label{ch2:_Lemma_5.2_MEC}
			Let $ \mr A \in \mr{SH}(S) $ be an $ SL_{\eta} $-oriented motivic spectrum. Then:
			\begin{enumerate}
				\item [$ (a) $]  The map $ \tilde p: \widetilde{\pro(\sym^2(F))} \longrightarrow \mc BSL_2 $ induces an isomorphism:
				\[ \tilde p^*: \mr{A}^{\bullet}\left( \mc BSL_2 \right) \stackrel{\sim}{\longrightarrow} \mr{A}^{\bullet}_{SL_2}\left( \pro(\sym^2(F)) \right) \]
				\item [$ (b) $]  $ \mr{A}^{\bullet}_{SL_2}\left( \pro(\sym^2(F)); \mo{}(1) \right) $ is a free $ \mr{A}^{\bullet}(\mc BSL_2) $-module generated by $ e(\oocatname{T}) $.
			\end{enumerate}
		\end{lemma}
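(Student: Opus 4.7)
Both parts follow from a single Leray-type argument, exploiting the fact that $\tilde p: \widetilde{\pro(\sym^2(F))} \to \mc BSL_2$ is Zariski-locally a trivial $\pro^2$-bundle (since $SL_2$ is special, $ESL_2 \to BSL_2$ is Zariski-locally trivial, and hence so is the associated $\pro(\sym^2(F))$-bundle). My approach is to reduce to a pointwise computation over $\mc BSL_2$, which is already handled by Lemmas \ref{ch2:_SL_eta_Lemma_1} and \ref{ch2:_SL_eta_Lemma_2}.

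\textbf{Part (a).} First I would pass to the geometric approximation, replacing $\mc BSL_2$ by $B_m SL_2$ and forming the $\pro^2$-bundle $\pro(\sym^2(F)) \times^{SL_2} ESL_2 \to B_m SL_2$, then taking the limit in $m$ (using the standard identification of cohomology of the ind-scheme with that of the stack). On each finite level, I would run the Leray spectral sequence of \cite[\S 4]{Asok-Deglise-Nagel}:
\[ E_2^{p,q} = \mr A^p\bigl(\mc BSL_2,\, R^q\tilde p_{*}\mr A\bigr) \Longrightarrow \mr A^{p+q}_{SL_2}\bigl(\pro(\sym^2(F))\bigr). \]
The Zariski sheaves $R^q \tilde p_* \mr A$ are computed on stalks, where $\tilde p$ trivializes into the projection $\pro^2 \to \mr{pt}$. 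By Lemma \ref{ch2:_SL_eta_Lemma_1}, pullback along this structure map is an isomorphism on $\mr A$-cohomology, so $R^0 \tilde p_* \mr A \simeq \mr A_{\mc BSL_2}$ and $R^q \tilde p_* \mr A = 0$ for $q > 0$. The spectral sequence collapses and the edge map is precisely $\tilde p^*$, yielding the desired isomorphism. (Alternatively, one can argue by Mayer–Vietoris over a Zariski trivialization of the bundle, inductively on the opens, with the same fiber-wise input.)

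\textbf{Part (b).} The same Leray spectral sequence is run for $\mo{}(1)$-twisted cohomology. On each fiber $\pro^2$, Lemma \ref{ch2:_SL_eta_Lemma_2} provides an isomorphism
\[ (-)\cup e(T_{\pro^2}): \mr A^{\bullet-2}(S) \xrightarrow{\;\sim\;} \mr A^{\bullet}(\pro^2; \mo{}(1)). \]
The key observation is that the restriction of the global relative tangent bundle $\oocatname{T}$ to a fiber is precisely $T_{\pro^2}$, so its Euler class $e(\oocatname{T}) \in \mr A^{2}_{SL_2}(\pro(\sym^2(F)); \mo{}(1))$ restricts fiber-wise to $e(T_{\pro^2})$. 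Therefore $R^q\tilde p_*(\tilde p^* \mr A \otimes \Th{}{\mo{}(1)})$ is concentrated in a single degree and generated by the class of $e(\oocatname{T})$, and the spectral sequence collapses to give that the map
\[ \tilde p^{*}(-)\cup e(\oocatname{T}): \mr A^{\bullet-2}(\mc BSL_2) \longrightarrow \mr A^{\bullet}_{SL_2}\bigl(\pro(\sym^2(F)); \mo{}(1)\bigr) \]
is an isomorphism, exhibiting the target as a free rank-$1$ $\mr A^{\bullet}(\mc BSL_2)$-module generated by $e(\oocatname{T})$.

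\textbf{Main obstacle.} The main delicate point is the rigorous set-up of the Leray spectral sequence on the stack $\mc BSL_2$, or equivalently on its ind-scheme approximation $BSL_2$: one must work in a sufficient level of generality that the twists behave well and that the collapse is genuine (not just on the $E_2$-page of each finite approximation, but in the limit). A subsidiary technical point is verifying the fiber-wise identification of the twist: the determinant of $T_{\pro^2}$ is $\mo{}(3)$, and the identification $\mo{}(3) \simeq \mo{}(1)$ in twisted $\mr A$-cohomology (provided by the $SL_\eta$-orientation) must be compatible with the $SL_2$-equivariant structure, so that $e(\oocatname{T})$ lives in the twist coming from $\gamma_N$ and restricts correctly on each fiber.
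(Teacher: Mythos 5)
Your proposal is correct and follows essentially the same route as the paper: both reduce to the fiberwise computations of \cref{ch2:_SL_eta_Lemma_1} and \cref{ch2:_SL_eta_Lemma_2} via the Leray spectral sequence of Asok--D\'eglise--Nagel and the observation that $e(\oocatname{T})$ restricts to $e(T_{\pro^2})$ on each fiber. The only cosmetic difference is how the stack is handled -- the paper runs the spectral sequence on $\pro(\sym^2(F))\times Z$ for the terms $Z=SL_2^n$ of the \v Cech nerve of the atlas and takes the limit over $\Delta$, whereas you use the finite-level ind-scheme approximations $B_mSL_2$ -- and both your stated obstacles (the limit over approximations and the equivariant identification of the twist $\mo{}(3)\simeq\mo{}(1)$) are indeed the points the paper addresses.
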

		
		\begin{proof}
			\begin{enumerate}
				\item[$ (a) $] Let us consider the cohomological motivic Leray spectral sequence as in \cite[\S 4]{Asok-Deglise-Nagel}, applied to $ p: \pro(\sym^2(F))\times Z \rightarrow Z $ for any scheme $ Z \in \catname{Sch}_{\bigslant{}{S}} $. Then we have:
				\[ E_1^{p,q}=\bigoplus_{x \in Z^{(p)}} \mr{A}^{p+q}\left( p^{-1}(x); N_{x} \right) \allora \mr{A}^{p+q}\left( \pro(\sym^2(F))\times Z\right) \]
				\noindent where $ p^*N_{x} $ is the normal bundle of the inclusion $ \iota_x: p^{-1}(x) \into {\pro_m(\sym^2(F))} $, that is, the pullback of the normal bundle $ N_{x} $ of $ x: \spec(\kappa(x)) \into Z $. Since $ \mr{A} $ is $ SL $-oriented, we actually have a spectral sequence of the form:
				\[ E_1^{p,q}=\bigoplus_{x \in Z^{(p)}} \mr{A}^{q}\left( p^{-1}(x); \det(p^*N_{x}) \right) \allora \mr{A}^{p+q}\left( \pro(\sym^2(F))\times Z\right) \]
				\noindent The bundle $ p: {\pro(\sym^2(F))}\times Z \rightarrow Z $ has fibers $ p^{-1}(x)\simeq \pro^2_{\kappa(x)} $ for all $ x \in Z $.

				By \cref{ch2:_SL_eta_Lemma_2}, we have:
				\[ p_{x}^*: \mr{A}^{q}(x; \det(N_{x})) \stackrel{\sim}{\longrightarrow} \mr{A}^{q}\left( \pro^2_{\kappa(x)}; \det(p^*N_{m,x}) \right)  \]
				But we also have the Gersten spectral sequence for $ Z $:
				\[ E_1^{p,q}=\bigoplus_{x \in Z^{(p)}} \mr{A}^{q}(\kappa(x); \det(N_{x})) \allora \mr{A}^{p+q}(Z) \]
				\noindent and by the functoriality of the Leray spectral sequences\footnote{For $ Z $, the Leray spectral sequence is just the Gersten one.} \cite[Proposition 4.2.10]{Asok-Deglise-Nagel} we get that  the pullback map:
				\[ p^*: \mr{A}^{\bullet}(Z)  \stackrel{\sim}{\longrightarrow} \mr{A}^{\bullet}({\pro(\sym^2(F))\times Z}) \]
				
				\noindent is an isomorphism, since the induced map $ p_{x}^* $ on the $ E_1 $-terms is so. This implies that:
				\begin{equation}\label{ch2:_eq_A(P(Sym2F))_Computation}
					\begin{array}{ccc}
						\mr A( \pro(\sym^2(F))\times Z)&\hspace{-0.5em}=&\hspace{-.5em}\Map_{\mr{SH}(S)}(\mbbm 1_S, (\pi_{ \pro(\sym^2(F))\times Z}){}_*(\pi_{\ \pro(\sym^2(F))\times Z})^*\mr A)\simeq \\
						&\hspace{-.5em}\simeq &\hspace{-13.75em}\Map_{\mr{SH}(S)}(\mbbm 1_Z, \pi_{Z}^*\mr A)=\\
						&\hspace{-0.5em}=& \hspace{-20.75em}\mr A(Z)
					\end{array}
				\end{equation}
				
				The pullback map:
				\[ \tilde p^*:\mr A(\mc BSL_2)  \longrightarrow\mr A(\widetilde{\pro(\sym^2(F))}) \]
				\noindent by \cite[Remark 3.38]{Motivic_Vistoli} gives us a map of limits of mapping spectra over the \v Cech nerves:
				\[ \tilde p^*:  \lim_{n \in \Delta} \mr A\left( SL_2^{n} \right)\longrightarrow\lim_{n \in \Delta} \mr A\left( \pro(\sym^2(F))\times SL_2^{n} \right)  \]
				\noindent But by the computation \eqref{ch2:_eq_A(P(Sym2F))_Computation} we made above, this is a levelwise equivalence and thus we get an isomorphism:
				
				\[ \tilde p^*: \mr{A}^{\bullet}\left( \mc BSL_2 \right) \stackrel{\sim}{\longrightarrow} \mr{A}^{\bullet}_{SL_2}\left( {\pro(\sym^2(F))} \right)  \]
				
				\noindent where we identified $ \mr A( \widetilde{\pro(\sym^2(F)) }) $ with the $ SL_2 $-equivariant cohomology by \cite[Proposition 3.33]{Motivic_Vistoli}.

				\item[$ (b) $]  By a similar argument, for any scheme $ Z \in \catname{Sch}_{\bigslant{}{S}} $, we can use again the Leray spectral sequence converging to $ \mr{A}^{p+q}\left( {\pro(\sym^2(F))}\times Z; \mo{}(1) \right) $ and the isomorphisms:
				\[ \mr{A} ^{\bullet}\left( \pro_{\kappa(x)}^2; \mo{}(1) \right)\simeq \mr A^{\bullet-2}(\kappa(x))\cdot e(T_{\pro^2_{\kappa(x)}}) \]
				\noindent from \cref{ch2:_SL_eta_Lemma_2}. So this time the spectral sequence is telling us that $ e(\oocatname{T}_Z) \in \mr{A}^2\left( {\pro(\sym^2(F))} \times Z \right) $, where $ \oocatname{T}_Z $ is the tangent bundle of $ {\pro(\sym^2(F))}\times Z $ over $ Z $, is a generator for $ \mr{A}^{\bullet}\left( {\pro(\sym^2(F))}\times Z; \mo{}(1) \right) $ as a free $ \mr{A}^{\bullet}\left( Z \right) $-module. Therefore we get an equivalence of mapping spectra:
				\begin{equation}\label{ch2:_eq_P(Sym2F)_Twisted}
					\mr A(Z) \stackrel{\sim}{\longrightarrow} \Sigma^{3}\mr A(\Th{\pro(\sym^2(F))}{\mo{}(1)}\times Z)
				\end{equation}
				\noindent where $ \Sigma^{3}\mr A(\Th{\pro(\sym^2(F))}{\mo{}(1)}\times Z)= \Sigma^2\mr A(\pro(\sym^2(F))\times Z; \mo{}(1)) $ by definition. \\
				By \cite[Remark 3.38]{Motivic_Vistoli} , using the \v Cech nerve of $ \widetilde{\pro(\sym^2(F))}  $, we can rewrite:
				\[ \mr A(\widetilde{\pro(\sym^2(F))})\simeq \lim_{n \in \Delta} \mr A(\pro(\sym^2(F))\times SL_2^{n}) \]
				\noindent The isomorphism of \eqref{ch2:_eq_P(Sym2F)_Twisted} tells us that:
				\[ \mr A(\widetilde{\pro(\sym^2(F))})\simeq \lim_{n \in \Delta} \Sigma^{-2} \mr A(SL_2^{n}) \cdot e(\oocatname{T}_{SL_2^{n}}) \]
				\noindent But for each $ n $, the bundles $ \oocatname{T}_{SL_2^{n}} $ is the pullback of the relative tangent bundle $ \oocatname{T} $ of $ \widetilde{\pro(\sym^2(F))}) $ over $ \mc BSL_2 $, hence $ e(\oocatname{T}) $ gets pulled back to $ e(\oocatname{T}_{SL_2^{n}}) $ along the maps of the \v Cech nerve.  Since we also have that  $ \mr A(\mc BSL_2)\simeq \lim_{n \in \Delta} \mr A(SL_2^{n})  $, we get:
				\[ \mr A(\widetilde{\pro(\sym^2(F))})\simeq \Sigma^{-2}\left(\lim_{n \in \Delta}  \mr A(SL_2^{n}) \right)\cdot e(\oocatname{T})\simeq \Sigma^{-2}\mr A(\mc BSL_2)\cdot e(\oocatname{T})  \]
				After identifying $ \mr A(\widetilde{\pro(\sym^2(F))}) $ with the equivariant cohomology, we get our claim.
			\end{enumerate}

		\end{proof}
		
		Now we want to use the localization sequence relative to:
		
		\begin{equation}\label{ch2:_Loc_Seq_BN}
			\begin{tikzpicture}[baseline={(0,0.5)}, scale=2.5]
				\node (a) at (0,0.5) {$ \mc BN $};
				\node (b) at (2, 0.5) {$  \widetilde{\pro(\sym^2(F))} $};
				\node (c)  at (4,0.5) {$ \widetilde{\pro(F)}  $};
				\node (d) at (2,0) {$ \mc BSL_2 $};

				\path[font=\scriptsize,>= angle 90]
				
				(a) edge [open] node [above ] {$ j $} (b)
				(c) edge [closed'] node [above] {$ \iota $} (b)
				(b) edge[->] node [right] {$ p_2 $} (d)
				(a) edge [->] node [below] {$ p $} (d)
				(c) edge [->] node [below] {$ \bar{p}$} (d);
			\end{tikzpicture}
		\end{equation}

		\noindent to compute  $ \mr A^{\bullet}(BN) $. But first we will need the following:
		\begin{defn}
			Let $ \mb E \in \mr{SH}(S) $, we say that $ \mb E $ is even if $ \mb E^{a,b}(\sk)=0 $ for all  fields $ \sk $ and for all integers $ a,b $ such that $ a-b $ is odd.
		\end{defn}
		\begin{rmk}\label{ch2:_MSL_eta_is_even}
			If $ \mr A \in \mr{SH}(S) $ is $ \eta $-invertible, since we follow the same convention $ \mr A^{n}(\cdot):=\mr A^{n,0}(\cdot)\simeq \mr A^{n+i,i}(\cdot) $ as in \cite{Ananyevskiy_PhD_Thesis}, then $ \mr A $ is even if and only if $ \mr A^{n}(\sk)=0 $ for all fields $ \sk $ and all odd $ n $. In particular by \cite[8.11]{Bachmann_Hopkins}, $ \mr{MSL}_{\eta} $ is even. 
		\end{rmk}
		
		
		\begin{exa}
			Witt theory $ \mr{KW} $ and Witt cohomology $ H\mc W $ are also examples of even spectra.
		\end{exa}
		\begin{rmk}\label{ch2:_rmk_even_BLS2_theories}
			Let us assume that our base scheme $ S=\sk $ is a field. It follows from \cite[Theorem 10]{Ananyevskiy_PhD_Thesis} that, for $ \mr A $ an $ SL_{\eta} $-oriented even spectrum, we have $ \mr A^{n}({B}SL_k)=0 $ for any integer $ k $ and any odd integer $ n $. Hence using \cref{ch2:_Lemma_5.1_MEC},\cref{ch2:_Lemma_5.2_MEC}, for those kind of spectra we have that:
			\[ \mr A^{n}_{SL_2}\left( \pro(F) \right)\simeq0 \]
			\[ \mr A^{n}_{SL_2}\left( \pro(\sym^2(F)) \right)\simeq0 \]
			\[ \mr A^{n}_{SL_2}\left( \pro(\sym^2(F)); \mo{}(1) \right)\simeq0 \]
			\noindent whenever $ n $ is odd (and always working over a field $ \sk $).
		\end{rmk}

		\begin{rmk}
			In the following proposition we are considering quotient stacks over a base field $ \sk $. We should write $ \mc BG_\sk $ to stress this and to avoid confusion with the classifying stack $ \mc BG_S $ over some more general base $ S $, but since it will be clear from the context we will just denote them as $ \mc BG $ (and the same applies to the ind-schemes $ BG $).
		\end{rmk}
		
		\begin{pr}[{\cite[Proposition 5.3]{Motivic_Euler_Char}}]\label{ch2:_5.3_MEC_field}
			Let $ \sk $ be a field and consider $ \mc BN$, $ \widetilde{\pro(\sym^2(F))}$, $ \widetilde{\pro(F)} \in \oocatname{ASt}_{\bigslant{}{\sk}}^{NL} $. Let $ \mc T $ be the tangent bundle of $\widetilde{\pro(\sym^2(F))}$ over $ \mc {B}SL_2 $ and let $ \gamma_N $ be the generator of $ \mr{Pic}(\mc {B}N) $. For any $ SL_{\eta} $-oriented ring spectrum $ \mr A\in \mr{SH}(\sk) $ and any integers $ n, k $, we get split exact sequences:
			\[ 0 \rightarrow A^{2n}_{SL_2}(\pro(\sym^2(F)); \mo{}(k)) \stackrel{j^*}{\longrightarrow} \mr A^{2n}(\mc {B}N; \mo{}(k)) \stackrel{\partial}{\longrightarrow} \mr A^{2n}(\sk) \rightarrow 0 \]
			\noindent yielding the following isomorphisms of graded $ \mr A^{\bullet}(\sk) $-modules:
			\[ A^{\bullet}(\mc {B}N)\simeq \mr A^{\bullet}(\mc {B}SL_2)\oplus \mr A^{\bullet}(\sk)  \]
			\[ A^{\bullet}(\mc {B}N; \gamma_N)\simeq \mr A^{\bullet-2}(\mc {B}SL_2)e(\oocatname{T})\oplus \mr A^{\bullet}(\sk)   \]
		\end{pr}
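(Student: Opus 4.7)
The plan is to apply the Gysin long exact sequence for the closed immersion $\iota: \widetilde{\pro(F)} \hookrightarrow \widetilde{\pro(\sym^2(F))}$ from diagram \eqref{chap.2_fig_loc_seq_BN}, collapse it to the stated short exact sequence using the universality of $\mr{MSL}$, and then split it using the freeness of the cokernel. The codimension-one immersion $\iota$ has normal bundle $\mo{\pro(F)}(4)$ via the $SL_2$-equivariant squaring $\pro(F)\simeq C\subset \pro(\sym^2(F))$, and the restricted twist $\mo{}(k)|_{\pro(F)} = \mo{\pro(F)}(2k)$ is also a tensor square of a line bundle; both are therefore $SL$-trivial. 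Combining the $SL$-oriented Thom isomorphism with the $\eta$-invertible single-grading convention $\mr A^{\bullet}\simeq\mr A^{\bullet,0}$, and using \cref{ch2:_Lemma_5.1_MEC} to identify $\mr A^{\bullet}_{SL_2}(\pro(F))$ with $\mr A^{\bullet}(\sk)$, I obtain a long exact sequence
\[
\cdots \to \mr A^{p-1}(\sk) \xrightarrow{\iota_*} \mr A^{p}_{SL_2}(\pro(\sym^2(F));\mo{}(k)) \xrightarrow{j^*} \mr A^{p}(\mc BN;\mo{}(k)) \xrightarrow{\partial} \mr A^{p}(\sk) \xrightarrow{\iota_*} \cdots
\]
in which $\iota_*$ raises the single grading by one while $\partial$ preserves it.

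The key step is to prove that $\iota_*$ vanishes identically. By the projection formula and \cref{ch2:_Lemma_5.1_MEC}, $\iota_*$ acts as cup product with the single class $\iota_*(1) \in \mr A^{1}_{SL_2}(\pro(\sym^2(F));\mo{}(k))$; hence it suffices to show that $\iota_*(1)=0$. By \cref{ch2:_Lemma_5.2_MEC} this target equals $\mr A^{1}(\mc BSL_2)$ for $k$ even and $\mr A^{-1}(\mc BSL_2)\cdot e(\oocatname{T})$ for $k$ odd, and both groups vanish for $\mr A = \mr{MSL}_\eta$ by \cref{ch2:_MSL_eta_is_even} together with \cref{ch2:_rmk_even_BLS2_theories}. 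For a general $SL_\eta$-oriented $\mr A$, the unique $SL_\eta$-orientation ring map $\varphi:\mr{MSL}_\eta\to \mr A$ from \cref{ch2:_abs_SL_or_on_eta_inv} preserves Thom classes, hence commutes with Gysin pushforwards, so $\iota_*(1)_{\mr A} = \varphi(\iota_*(1)_{\mr{MSL}_\eta}) = 0$. This collapses the long exact sequence into the stated short exact sequence in every degree.

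Once the short exact sequence is established, since $\mr A^{\bullet}(\sk)$ is a free rank-one graded module over itself, it splits in the category of graded $\mr A^{\bullet}(\sk)$-modules; an explicit section in the untwisted case is provided by the tautological symbol $\langle Q\rangle\in \mr A^{0}(\mc BN)$ of the $SL_2$-invariant discriminant $Q=T_1^2-4T_0T_2$, a non-vanishing section of $\mo{\pro^2}(2)=\gamma_N^{\otimes 2}$ on $\mc BN$, since the standard boundary computation for tautological symbols along the zero locus of their defining section (see \cref{ch2:_tautological_symbol}) gives $\partial(\langle Q\rangle)=1$. Combining the split short exact sequence with \cref{ch2:_Lemma_5.2_MEC}(a) for $k$ even and \cref{ch2:_Lemma_5.2_MEC}(b) for $k$ odd produces the two asserted direct sum decompositions of graded $\mr A^{\bullet}(\sk)$-modules.

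The central obstacle is the vanishing of $\iota_*$ for a general $SL_\eta$-oriented $\mr A$: such an $\mr A$ need not itself be even, so the long exact sequence does not a priori collapse. Bridging this gap via the universality of $\mr{MSL}$ together with the evenness of $\mr{MSL}_\eta$ from \cite{Bachmann_Hopkins} is the key new ingredient that the author highlights in the introduction as the reason this proof diverges from \cite{Motivic_Euler_Char}.
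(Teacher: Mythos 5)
Your proposal is correct and follows essentially the same route as the paper: the same localization sequence for $\widetilde{\pro(F)}\into\widetilde{\pro(\sym^2(F))}$, collapsed by passing to the universal case $\mr{MSL}_{\eta}$ (where evenness kills the odd groups of $\mc BSL_2$) and transported to a general $\mr A$ along the Thom-class-preserving orientation map $\varphi$, then split using the symbol of $Q=T_1^2-4T_0T_2$. The only difference is mechanical: you kill $\iota_*$ directly by reducing it via the projection formula to the single class $\iota_*(1)$ and pushing its vanishing forward along $\varphi$, whereas the paper transports the explicit elements $q_0=\langle\lambda_Q\rangle$, $q_1$ with $\partial(q_i)=1$ and lets the resulting split surjectivity of $\partial$ (as an $\mr A^{\bullet}(\sk)$-module map) force $\iota_*=0$; both are valid, and note that the identity $\partial(\langle Q\rangle)=1$ you quote as "standard" is actually proved in the paper in two steps (invertibility via Mayer--Vietoris and \cite[Lemma 6.4]{Ananyevskiy_SL_oriented}, then exact value by restriction to a trivializing dense open), though for the module isomorphisms alone your abstract splitting onto the free quotient already suffices.
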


		\begin{proof}
			Let us begin noticing via the inclusion $ \iota $, the line bundle $ \mo{\pro(\sym^2(F))}(k) \in \mr{Pic}^{SL_2}(\pro(\sym^2(F))) $ pulls back to $ \iota^*\mo{\pro(\sym^2(F))}(k)\simeq\mo{\pro(F)}(2k) $ over $ \pro(F) $ and that the normal bundle of $ \pro(F) \into \pro(\sym^2(F)) $ is isomorphic to $ \mo{\pro(F)}(2) $, so that all the twists will get trivialised over $ \pro(F) $.\\
			Using the localization sequence associated to \cref{ch2:_Loc_Seq_BN}, we get:
			\begin{equation}\label{ch2:_additive_LES}
				\ldots \rightarrow \mr A^{2n-1}_{SL_2}(\pro(F)) \stackrel{\iota_k{}_*}{\rightarrow} A^{2n}_{SL_2}(\pro(\sym^2(F)); \mo{}(k)) \stackrel{j^*_k}{\longrightarrow} \mr A^{2n}(\mc {B}N; \mo{}(k)) \stackrel{\partial_k^{\mr A}}{\longrightarrow} \mr A^{2n}_{SL_2}(\pro(F)) \rightarrow \ldots 
			\end{equation}
			
			Now let us for a moment work with the universal $ SL_{\eta} $-oriented ring spectrum (cf. \cite[Theorem 4.7]{Ananyevskiy_Witt_MSp_Reations}), that is, $ \mr{MSL}_{\eta} $; we will drop the super-script on the boundary maps for the moment and we will use it again at the end for the general case. By \cref{ch2:_MSL_eta_is_even}, we know $ \mr{MSL}_{\eta} $ is even and hence, by \cref{ch2:_rmk_even_BLS2_theories}, our long exact sequence above gives rise to short exact sequences involving only even terms:
			\[ 0\rightarrow \mr{MSL}_{\eta,SL_2}^{2n}(\pro(\sym^2(F)); \mo{}(k)) \stackrel{j^*_k}{\longrightarrow} \mr{MSL}_{\eta} ^{2n}(\mc {B}N; \mo{}(k)) \stackrel{\partial_k}{\longrightarrow} \mr{MSL}_{\eta, SL_2} ^{2n}(\pro(F)) \rightarrow 0 \]
			We have that $ \mr{Pic}^{SL_2}(\pro(\sym^2(F)))=\Z $ is generated by $ \mo{}(1) $, while $ j^*\mo{}(1)\simeq \gamma_N $ is the generator of $ \mr{Pic}(\mc {B}N) $, so, by lemma \cref{ch2:_Lemma_5.2_MEC} we can further reduce to the following two kinds of exact sequences:
			\begin{equation}\label{ch2:_additive_MSL_eq}
				0\rightarrow \mr{MSL}_{\eta}^{2n}(\mc {B}SL_2) \stackrel{j^*}{\longrightarrow} \mr{MSL}_{\eta}^{2n}(\mc {B}N) \stackrel{\partial_0}{\longrightarrow} \mr{MSL}_{\eta}^{2n}(\sk) \rightarrow 0
			\end{equation}
			\begin{equation}\label{ch2:_additive_twisted_MSL_eq}
				0\rightarrow \mr{MSL}_{\eta}^{2n-2}(\mc {B}SL_2)\cdot e(\oocatname{T}) \stackrel{j^*}{\longrightarrow}  \mr{MSL}_{\eta}^{2n}(\mc {B}N;\gamma_N) \stackrel{\partial_1}{\longrightarrow} \mr{MSL}_{\eta}^{2n}(\sk) \rightarrow 0
			\end{equation}
			
			
			Recall we wrote $ \mc BN $ as $ \left[ \bigslant{(\pro^2 \setminus C)}{SL_2} \right]  $, where $ C $ was the conic given by the zero locus of the section $ Q=T_1^2-4T_0T_2 $ of $ \mo{\pro^2}(2) $. Applying our \cref{ch2:_tautological_symbol} to the section $ \lambda_{Q}: \left[ \bigslant{\pro^2}{SL_2} \right] \rightarrow \mo{\left[ \bigslant{\pro^2}{SL_2} \right]}(2) $ induced by $ Q $, we get a well defined element $  \langle {\lambda_Q} \rangle \in \mr{MSL}_{\eta}^{0}(\mc BN) $. To get an $ \mr A^{\bullet}(\sk) $-module splitting for \cref{ch2:_additive_MSL_eq} and \cref{ch2:_additive_twisted_MSL_eq}, it is enough to send $ 1 \in  \mr{MSL}_{\eta}^{0}(\sk) $ to some elements $ q_0^{\mr{MSL_{\eta}}} , q_1^{\mr{MSL_{\eta}}}  $ such that their boundary will be $ 1 $ again. 
			For  \cref{ch2:_additive_twisted_MSL_eq}, we are just content to choose any $ q_1^{\mr{MSL_{\eta}}}  $ such that $ \partial_1(q_0^{\mr{MSL_{\eta}}} )=1 $ (this will exists by surjectivity of $ \partial_1 $). For \cref{ch2:_additive_MSL_eq}, we can make a clever choice: set $ q_0^{\mr{MSL_{\eta}}} :=\langle {\lambda_Q}\rangle $ constructed before. First let us show that $ \partial_0(q_0^{\mr{MSL_{\eta}}} ) $ is invertible. By a Mayer-Vietoris argument we can reduce to the case of a trivial vector bundle, hence to the case where $ \lambda_Q $ is just the standard coordinate section $ t $ of $ \A^1 $. But in this case $ \partial(\langle t \rangle)=\eta $ by \cite[Lemma 6.4]{Ananyevskiy_SL_oriented} applied levelwise  to the \v Cech nerves. Via the isomorphism $ \mr{MSL}_{\eta}^{-1,-1}(\sk)\simeq \mr{MSL}_{\eta}^{0}(\sk) $ given by multiplication with $ \eta^{-1} $, the boundary $ \partial(\langle t \rangle) $  is really sent to $ 1 $ and hence $ \partial_0(q_0^{\mr{MSL_{\eta}}} )  $ is invertible. Let us prove that this boundary is not only invertible, but it is indeed 1. 
			Let $ U $ be some dense open of $ \pro^2 $ where $ \mo{}(2) $ gets trivialised, let $ V $ be the open dense subset of $ \pro(F) $ corresponding to $ U\cap C $ under the identification $ C\simeq\pro(F) $. Let $ j_V: V\into \pro(F)\rightarrow \widetilde{\pro(F)} $ be the map defined by the composition of the open immersion $ V \into \pro(F) $ together with the standard quotient map given by the atlas of $ \widetilde{\pro(F)} $. Since $ \mr{MSL}_{\eta}^{\bullet}(\sk)\simeq \mr{MSL}_{\eta}^{\bullet}(\widetilde{\pro(F)}) $ via the pullback along the structure map $ \pi_{\widetilde{\pro(F)}} $, we can identify $ j_V^* $ with $ \pi_{V}^* $, where $ \pi_V: V \rightarrow \spec(\sk) $ is the structure map of $ V $. Since on $ V $ we can trivialise $ \mo{}(2) $, we get that $ j_V^*(\partial_0(q_0^{\mr{MSL_{\eta}}} ) -1)=0 $. But $ \pi_V^* $ (and hence $ j_V^* $) is injective, so this implies that $ \partial_0(q_0^{\mr{MSL_{\eta}}} ) =1 $ as claimed.
			\newline

			For a general $ SL_{\eta} $-oriented ring spectrum $ \mr A $, by \cite[Theorem 4.7, Lemma 4.9]{Ananyevskiy_Witt_MSp_Reations}, we have a map $ \varphi^{SL}_{\eta}: \mr{MSL}_{\eta} \rightarrow \mr  A $ of $ SL $-oriented ring spectra. Consider again the long exact sequence \cref{ch2:_additive_LES} and consider elements $ q_0^{\mr A}:=\varphi^{SL}_{\eta}(q_0^{\mr{MSL_{\eta}}} ), q_1^{\mr A}:=\varphi^{SL}_{\eta}(q_1^{\mr{MSL_{\eta}}} ) $. Set $ i \in \set{0,1} $. Since $ \partial_i^{\mr A}(q_i^{\mr A})=\partial_i^{\mr A}(\varphi^{SL}_{\eta}(q_i^{\mr{MSL}_{\eta}} ))=\varphi^{Sl}_{\eta}(\partial_i^{\mr{MSL}_{\eta}}(q_i^{\mr{MSL_{\eta}}} )) $ and since $ \varphi^{SL}_{\eta} $ is a map of ring spectra, sending $ 1_{\mr{MSL}_{\eta}}\in \mr{MSL}_{\eta}^0(\sk) $ to $ 1_{\mr A} \in \mr A^0(\sk) $, we have that the boundaries of $ q_i^{\mr A} $ are both 1. Hence $ \partial_i^{\mr A} $ are split surjective maps of $ \mr A^{\bullet}(\sk) $-modules as in the $ \mr{MSL}_{\eta} $ case and we are done.

		\end{proof}
		Let us generalise the previous proposition to the case of a general smooth $ \sk $-scheme $ S $. But before doing that we will need to prove the following lemma:
		
		\begin{lemma}\label{ch2:_Repr_Stack-Coh}
			Let $ \mc X\in \oocatname{ASt}_{\bigslant{}{B}}^{NL} $ be a smooth NL-stack and let $ v \in \mr{K}_0(\mc X) $.  Then there exists a spectrum $ \mb E_{\mc X,v} \in \mr{SH}(B) $ such that there is a natural equivalence of functors:
			\[ \theta:  \mb E_{\mc X,v}(-)\simeq\mb E(\mc X \times_B -, v): \catname{Sm}_{\bigslant{}{B}}\longrightarrow \mr{SH}(B)  \]
		\end{lemma}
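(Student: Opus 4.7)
The plan is to define the candidate spectrum via the pushforward along $\pi_{\mc X}$ and then verify the representability statement using smooth base change applied to a single cartesian square. Concretely, let $\pi_{\mc X} : \mc X \to B$ denote the structure map (which is smooth by hypothesis, so $\pi_{\mc X}^*$ admits both adjoints in the formalism of \cite{ChoDA24}), and set
\[
\mb E_{\mc X,v} \;:=\; \pi_{\mc X *}\,\Sigma^{v}\,\pi_{\mc X}^{*}\mb E \;\in\; \mr{SH}(B).
\]
For $Y\in\catname{Sm}_{\bigslant{}{B}}$ with structure map $\pi_{Y}:Y\to B$, the functor $\mb E_{\mc X,v}(-)$ sends $Y$ to $\pi_{Y*}\pi_{Y}^{*}\mb E_{\mc X,v}\in\mr{SH}(B)$, exactly in the spirit of the generalised cohomology of \cref{ch1:_BM_Mot_and_thy_lci_ind_schemes}. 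Evaluating at $Y=B$ gives $\mb E_{\mc X,v}(B)\simeq \mb E(\mc X,v)$, which is the expected compatibility.

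The key computation uses the cartesian square of NL-stacks with projections $p_{1}:\mc X\times_{B}Y\to \mc X$ and $p_{2}:\mc X\times_{B}Y\to Y$. Since $\pi_{Y}$ is smooth, the exchange transformation $Ex^{*}_{*}$ listed in \cite[Theorem 4.26]{ChoDA24} provides a smooth base change equivalence $\pi_{Y}^{*}\pi_{\mc X *}\simeq p_{2*}p_{1}^{*}$. Combining this with the naturality of the Borel J-homomorphism, which yields $p_{1}^{*}\Sigma^{v}\simeq \Sigma^{p_{1}^{*}v}p_{1}^{*}$, and with the trivial identity $p_{1}^{*}\pi_{\mc X}^{*}\simeq p_{2}^{*}\pi_{Y}^{*}$, I get
\[
\pi_{Y}^{*}\mb E_{\mc X,v} \;\simeq\; p_{2*}\,\Sigma^{p_{1}^{*}v}\,p_{2}^{*}\pi_{Y}^{*}\mb E .
\]
Applying $\pi_{Y*}$ and identifying $(\pi_{Y}\circ p_{2})_{*}=\pi_{Y*}p_{2*}$ and $(\pi_{Y}\circ p_{2})^{*}=p_{2}^{*}\pi_{Y}^{*}$ produces the required equivalence $\pi_{Y*}\pi_{Y}^{*}\mb E_{\mc X,v}\simeq \mb E(\mc X\times_{B}Y,\,p_{1}^{*}v)$.

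Naturality in $Y$ reduces to the naturality of $Ex^{*}_{*}$ with respect to composition of cartesian squares: given $f:Y'\to Y$ in $\catname{Sm}_{\bigslant{}{B}}$, the square $\mc X\times_{B}Y'\to \mc X\times_{B}Y$ obtained by base change gives the expected compatibility with both pullbacks. The main obstacle is therefore not the formal manipulation itself but rather ensuring that smooth base change $Ex^{*}_{*}$ is genuinely an equivalence in the stack setting (and not merely a transformation); once this is granted from the six-functor formalism of \cite{ChoDA24}, the rest is a mechanical chain of adjunctions. A technical care-point is that $v\in\mr K_{0}(\mc X)$ is implicitly being identified with $p_{1}^{*}v\in \mr K_{0}(\mc X\times_{B}Y)$ on the right-hand side, but this is the standard convention and is compatible with the naturality of the J-homomorphism used above.
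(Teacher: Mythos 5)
Your proposal is correct, and it is essentially the paper's argument viewed through the adjoint functors: you take the same candidate spectrum $\pi_{\mc X *}\Sigma^{v}\pi_{\mc X}^{*}\mb E$ (the paper writes it as $\iMap_{\mr{SH}(B)}(\mbbm 1_B,\pi_{\mc X *}\Sigma^{v}\pi_{\mc X}^{*}\mb E)$, which is the same object) and the same cartesian square. The difference is only in which side of the adjunction the computation is carried out: the paper rewrites the \emph{representing} object, showing $\pi_{\mc X\#}\Sigma^{-v}\pi_{\mc X}^{*}\pi_{Y\#}\mbbm 1_{Y}\simeq \pi_{\mc X\times Y\#}\Sigma^{-p_{1}^{*}v}\mbbm 1_{\mc X\times Y}$ via the exchange $Ex^{*}_{!}$ together with two applications of purity for the representable smooth maps $\pi_{Y}$ and $p_{1}$, and then concludes by the $(f_{\#},f^{*})$ adjunction; you instead apply smooth base change $\pi_{Y}^{*}\pi_{\mc X *}\simeq p_{2*}p_{1}^{*}$ directly to the spectrum side. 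These two computations are right adjoints of one another, so your route is legitimate and in fact avoids the purity bookkeeping; the only point to make explicit is that $Ex^{*}_{*}$ is an equivalence for the square in question, which follows by passing to right adjoints in the equivalence $Ex^{*}_{\#}\colon p_{1\#}p_{2}^{*}\simeq \pi_{\mc X}^{*}\pi_{Y\#}$ supplied by the formalism of \cite{ChoDA24} for the smooth representable map $\pi_{Y}$ — exactly the care-point you flag, and it is indeed available.
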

		
		\begin{proof}
			Let $ Y\in \catname{Sm}_{\bigslant{}{B}} $. Denote by $ \pi_{\mc X}: \mc X \rightarrow B $ and by $ \pi_{Y}: Y \rightarrow B $ the structure maps of $ \mc X $ and $ Y $. We claim that:
			\[  \mb E_{\mc X,v}:=\iMap_{\mr{SH}(B)}(\mbbm 1_B, \pi_{\mc X}{}_*\Sigma^{v}\pi_{\mc X}^*\mb E) \in \mr{SH}(B)  \]
			\noindent  is the spectrum we are looking for, where $ \iMap $ denotes the internal mapping space in $ \mr{SH}(S) $. Consider the following Tor-independent cartesian square:
			\begin{center}
				\begin{tikzpicture}[baseline={(0,1)}, scale=1.5]
					\node (a) at (0,1) {$ \mc X \times Y$};
					\node (b) at (1, 1) {$ Y $};
					\node (c)  at (0,0) {$  \mc X $};
					\node (d) at (1,0) {$ S $};
					\node (e) at (0.2,0.8) {$ \ulcorner  $};
					\node (f) at (0.5,0.5) {$  $};

					\path[font=\scriptsize,>= angle 90]
					
					(a) edge [->] node [above ] {$ p_2 $} (b)
					(a) edge [->] node [left] {$ p_1 $} (c)
					(b) edge[->] node [right] {$ \pi_{Y} $} (d)
					(c) edge [->] node [below] {$ \pi_{\mc X} $} (d);
				\end{tikzpicture}
			\end{center}
			Notice that since the square is Tor-independent we have $ p_2^*\Omega_X\simeq \mb L_{\bigslant{\mc X\times Y}{\mc X}} $, moreover $ p_1 $ is clearly a smooth representable map. By \cite[Theorem 4.26]{ChoDA24} we have $ Ex_!^*:\pi_{\mc X}^*\pi_Y\epf \stackrel{}{\simeq} p_1\epf p_2^* $ and hence we get:
			\begin{align}\label{ch2:_eq_Smooth_BC}
				\begin{split}
					\pi_{\mc X}\epfs \Sigma^{-v}\pi_{\mc X}^* \pi_{Y}\epfs \mbbm 1_Y & \simeq \pi_{\mc X}\epfs\Sigma^{-v} \pi_{\mc X}^* \pi_{Y}\epf\Sigma^{\Omega_Y} \mbbm 1_Y\simeq\\
					&\stackrel{Ex_!^*}{\simeq} \pi_{\mc X}\epfs p_1\Sigma^{-p_1^*v}\epf p_2^*\Sigma^{\Omega_Y} \mbbm 1_Y\simeq \\
					& \simeq \pi_{\mc X}\epfs p_1\epf\Sigma^{-p_1^*v}\Sigma^{p_2^*\Omega_Y} p_2^* \mbbm 1_Y\simeq \\
					& \simeq \pi_{\mc X}\epfs p_1\epf \Sigma^{-p_1^*v}\Sigma^{\mb L_{\bigslant{\mc X\times Y}{\mc X}}} p_2^* \mbbm 1_Y\simeq \\
					& \simeq \pi_{\mc X}\epfs p_1\epfs \Sigma^{-p_1^*v}p_2^* \mbbm 1_Y\simeq\\
					& 	\simeq \pi_{\mc X \times Y}\epfs \Sigma^{-p_1^*v}\mbbm 1_{\mc X\times Y}
				\end{split}
			\end{align}
			\noindent where we used purity (for representable maps) twice, once for $ \pi_Y $ and once for $ p_1 $. But this means that:
			
			\begin{align*}
				\begin{split}
					\mb E_{\mc X,v}(Y)&= \Map_{\mr{SH}(B)}\left( \mbbm 1_B, \pi_{Y}{}_*\pi_{Y}^*\mb E_{\mc X,v} \right)\simeq \\
					&\simeq \Map_{\mr{SH}(B)}\left( \pi_{Y}\epfs \mbbm 1_Y, \mb E_{\mc X,v} \right)=\\
					& \simeq \Map_{\mr{SH}(B)}\left( \pi_{Y}\epfs \mbbm 1_Y, \iMap_{\mr{SH}(B)}(\mbbm 1_B, \pi_{\mc X}{}_*\Sigma^{v}\pi_{\mc X}^*\mb E) \right)\simeq\\
					& \simeq \Map_{\mr{SH}(B)}\left( \pi_{Y}\epfs \mbbm 1_Y, \pi_{\mc X}{}_*\Sigma^{v}\pi_{\mc X}^*\mb E \right)\simeq\\
					& \simeq \Map_{\mr{SH}(B)}\left(  \pi_{\mc X}\epfs\Sigma^{-v}\pi_{\mc X}^*\pi_{Y}\epfs \mbbm 1_Y,\mb E \right)\simeq\\
					& \stackrel{\text{\eqref{ch2:_eq_Smooth_BC}}}{\simeq}  \Map_{\mr{SH}(B)}\left(  \pi_{\mc X \times Y}\epfs \Sigma^{-p_1^*v} \mbbm 1_{\mc X\times Y} ,\mb E \right)\simeq\\
					&\simeq \mb E(\mc X \times Y, v)
				\end{split}
			\end{align*}
			This identification $ \mb E_{\mc X,v}(Y)\simeq\mb E(\mc X \times_B Y, v) $ is moreover functorial in the $ Y $, where $ f: Y_1\rightarrow Y_2 $ is sent to the pullback map:
			\[ f^*:   \mb E_{\mc X,v}(Y_2)\simeq\mb E(\mc X \times_B Y_2, v) \longrightarrow \mb E_{\mc X,v}(Y_1)\simeq\mb E(\mc X \times_B Y_1, v)  \]
			Hence we have a natural equivalence of functors:
			\[  \theta:  \mb E_{\mc X,v}(-)\simeq\mb E(\mc X \times_B -, v): \catname{Sm}_{\bigslant{}{B}}\longrightarrow \mr{SH}(B)  \]
		\end{proof}
		
		\begin{pr}\label{ch2:_5.3_MEC}
			Let $ S \in \catname{Sm}_{\bigslant{}{\sk}} $. For any $ SL_{\eta} $-oriented ring spectrum $ \mr A \in \mr{SH}(S) $ and any integers $ n, k $, we get split exact sequences:
			\[ 0 \rightarrow A^{2n}_{SL_2}(\pro(\sym^2(F)); \mo{}(k)) \stackrel{j^*}{\longrightarrow} \mr A^{2n}(\mc {B}N; \mo{}(k)) \stackrel{\partial}{\longrightarrow} \mr A^{2n}(S) \rightarrow 0 \]
			\noindent yielding the following isomorphisms of graded $ \mr A^{\bullet}(S) $-modules:
			\[ A^{\bullet}(\mc {B}N_S)\simeq \mr A^{\bullet}(\mc {B}SL_{2,S})\oplus \mr A^{\bullet}(S)  \]
			\[ A^{\bullet}(\mc {B}N_S; \gamma_N)\simeq \mr A^{\bullet-2}(\mc {B}SL_{2,S})e(\oocatname{T})\oplus \mr A^{\bullet}(S)   \]
			\noindent where $ \oocatname{T} $ is the tangent bundle of $\widetilde{\pro(\sym^2(F))}$ over $ \mc {B}SL_{2,S} $ and $ \gamma_N $ is the generator of $ \mr{Pic}(\mc {B}N) $.
		\end{pr}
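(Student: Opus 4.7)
The plan is to mimic the proof of \cref{ch2:_5.3_MEC_field} over $S$, with one key observation that allows us to avoid any evenness hypothesis on $\mr A$ over $S$: once we exhibit elements $q_k \in \mr{MSL}_\eta^0(\mc{B}N_S; \mo{}(k))$ with $\partial_k(q_k) = 1$ for $k \in \{0,1\}$, the $\mr A^\bullet(S)$-linear map $s_k(x) := p^*(x) \cup \varphi^{SL}_\eta(q_k)$ is a section of $\partial_k^{\mr A}$ in every degree. This forces surjectivity of $\partial_k^{\mr A}$ in every degree and, by exactness of the $S$-version of the long exact sequence \eqref{ch2:_additive_LES}, also forces the preceding $\iota_*$ to vanish in every degree, so the LES automatically collapses to the claimed short exact sequences.

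First I would check that \cref{ch2:_Lemma_5.1_MEC} and \cref{ch2:_Lemma_5.2_MEC} generalise verbatim to the base $S$. Their proofs use only the Leray spectral sequence of \cite{Asok-Deglise-Nagel} applied to the smooth projections $\widetilde{\pro(F)}_S \to \mc{B}SL_{2,S}$ and $\widetilde{\pro(\sym^2(F))}_S \to \mc{B}SL_{2,S}$, together with the fibrewise $\pro^1$- and $\pro^2$-calculations of \cref{ch2:_SL_eta_Lemma_1} and \cref{ch2:_SL_eta_Lemma_2}; smoothness of $S$ is all that is required for these spectral sequence arguments to go through unchanged.

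Next I would construct $q_0$ and $q_1$ for the universal $SL_\eta$-oriented spectrum $\mr{MSL}_\eta$ and then transport the result to a general $\mr A$ via the ring-spectrum orientation map $\varphi^{SL}_\eta : \mr{MSL}_\eta \to \mr A$ in $\mr{SH}(S)$. For $q_0$, take the tautological symbol $\langle \lambda_Q \rangle$ of \cref{ch2:_tautological_symbol} attached to $Q = T_1^2 - 4T_0T_2$; the Mayer--Vietoris argument of the field case is entirely local on $\pro^2_S$ and depends only on the universal identity $\partial(\langle t \rangle) = \eta$ from \cite[Lemma 6.4]{Ananyevskiy_SL_oriented}, so it transfers to $S$ without modification. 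For $q_1$, where the field-case proof invoked evenness of $\mr{MSL}_\eta$ to conclude surjectivity of $\partial_1$, I would instead set $q_1 := \pi^*(q_1^\sk)$ where $q_1^\sk \in \mr{MSL}_\eta^0(\mc{B}N_\sk; \gamma_N)$ is the element produced in \cref{ch2:_5.3_MEC_field} and $\pi : S \to \sk$ is the structure morphism; naturality of the localisation triangle under base change then yields $\partial_1(q_1) = \pi^*(\partial_1(q_1^\sk)) = \pi^*(1) = 1$.

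The main technical point is exactly this base-change naturality: pulling back the localisation triangle of \eqref{chap.2_fig_loc_seq_BN} along $\pi : S \to \sk$ must yield the analogous triangle over $S$ and must identify the boundary maps, which is a standard base-change assertion within the six-functor formalism of \cite{ChoDA24}. Granted this, the sections $s_k$ split the collapsed short exact sequences, and the $S$-analog of \cref{ch2:_Lemma_5.2_MEC} identifies the left-hand kernels as $\mr A^\bullet(\mc{B}SL_{2,S})$ in the untwisted case and as $\mr A^{\bullet-2}(\mc{B}SL_{2,S}) \cdot e(\oocatname{T})$ in the $\gamma_N$-twisted case, completing the proof.
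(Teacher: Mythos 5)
Your proposal is correct, but it takes a genuinely different route from the paper at the decisive step. The paper does not argue directly with the localisation sequence over $S$: after defining the candidate maps $(j^*,\sigma_0)$ and $(j^*(-)e(\oocatname{T}),\sigma_1)$ with $q_1:=g^*q_{1,\sk}$ (exactly as you do), it invokes \cref{ch2:_Repr_Stack-Coh} to represent $S\mapsto \mr A(\mc BN_S)$, $S\mapsto \mr A(\mc BSL_{2,S})$, etc.\ by motivic spectra and then compares the resulting Gersten/Leray spectral sequences over $S$, using the field case \cref{ch2:_5.3_MEC_field} on the $E_1$-pages to conclude that the candidate maps are isomorphisms. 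You instead stay with the long exact sequence over $S$ and note that once $\partial_k(q_k)=1$ is known, the $\mr A^{\bullet}(S)$-linear section makes $\partial_k$ surjective in every degree, hence $\iota_*=0$ and $j^*$ is injective in every degree, so the sequence collapses with no evenness input; this is the same formal collapse the paper already uses in the field case for a general (non-even) $\mr A$, transplanted to $S$. Your route eliminates \cref{ch2:_Repr_Stack-Coh} and the spectral-sequence comparison, at the cost of actually verifying $\partial_k(q_k)=1$ over $S$, which you correctly reduce to base-change compatibility of the localisation boundary along $\mc BN_S\to \mc BN_{\sk}$ (standard in the formalism of \cite{ChoDA24}); for uniformity you may as well obtain $q_0$ by pullback from the field case too, since the injectivity argument upgrading ``$\partial_0(q_0)$ is a unit'' to ``$\partial_0(q_0)=1$'' is cleanest over a field. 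Both routes rely equally on the $S$-versions of \cref{ch2:_Lemma_5.1_MEC} and \cref{ch2:_Lemma_5.2_MEC} to identify the outer terms.
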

		
		\begin{proof}
			Since $ S $ is smooth, using \cite[Corollary 3.9]{Motivic_Vistoli}, from \eqref{ch2:_Loc_Seq_BN} we get a localization sequence for $ \mc BN_S, \widetilde{\pro_S(\sym^2(F))}:= \widetilde{\pro(\sym^2(F))}\times S $ and $ \widetilde{\pro_S(F)}:=\widetilde{\pro(F)}\times S $.  For any integer $ k $ we get:
			\[ 	\ldots \rightarrow  A^{2n}_{SL_2}(\pro_S(\sym^2(F)); \mo{}(k)) \stackrel{j^*_k}{\longrightarrow} \mr A^{2n}(\mc {B}N_S; \mo{}(k)) \stackrel{\partial_k^{\mr A}}{\longrightarrow} \mr A^{2n}_{SL_2}(\pro_S(F)) \rightarrow \ldots  \]
			Using the $ SL $-orientation, we can just consider $ k=0,1 $. Recall that $ \mc BN = \left[ \bigslant{\pro^2 \setminus C}{SL_2} \right]  $, where $ C $ is the conic given by the zero locus of the section $ Q=T_1^2-4T_0T_2 $ of $ \mo{\pro^2}(2) $. Applying our \cref{ch2:_tautological_symbol} to the section $ \lambda_{Q}: \left[ \bigslant{\pro^2}{SL_2} \right] \rightarrow \mo{\left[ \bigslant{\pro^2}{SL_2} \right]}(2) $ induced by $ Q $, we get a well defined element $   q_0  \in \mr{A}_{}^{0}(\mc BN_S) $ (cf. proof \cref{ch2:_5.3_MEC_field}). The structure map $ \pi_S: S \rightarrow \sk $ induces a map $ g: \mc BN_S\rightarrow \mc BN_\sk $ and hence a pullback map:
			\[ g^*: \mr A^{\bullet}(\mc BN_{\sk}; \gamma_N)  \longrightarrow \mr A^{\bullet}(\mc BN_S; \gamma_N)\]
			Consider $ q_{1,\sk} \in \mr A^{0}(\mc BN_{\sk}; \gamma_N)  $ constructed in the proof of \cref{ch2:_5.3_MEC_field}, and set:
			\[ q_1:=g^* q_{1,\sk} \mr A^{0}(\mc BN_{S}; \gamma_N) \]
			We then have two maps:
			
			\begin{equation}\label{ch2:_eq_BN_S}
				\begin{array}{cccc}
					(j^*, \sigma_0): & \mr A^{\bullet}(\mc BSL_{2,S}) \oplus \mr A^{\bullet}(S)   & \longrightarrow & \mr A^{\bullet}(\mc BN_S)
				\end{array}
			\end{equation}
			
			\begin{equation}\label{ch2:_eq_BN_S_twisted}
				\begin{array}{cccc}
					(j^*(-) e(\oocatname{T}), \sigma_1): & \mr A^{\bullet-2}(\mc BSL_{2,S}) \oplus \mr A^{\bullet}(S) & \longrightarrow & \mr A^{\bullet}(\mc BN_S; \gamma_N) 
				\end{array}
			\end{equation}
			\noindent where $ \sigma_0: \mr A(S) \rightarrow \mr A(\mc BN) $ sends $ 1 \mapsto q_0 $ and $ \sigma_1: \mr A(S)\rightarrow \mr A(\mc BN; \gamma_N) $ sends $ 1 \mapsto q_1 $.\\
			
			We want now to apply the homotopy Leray (or Gersten) spectral sequence of \cite{Asok-Deglise-Nagel} to both sides of \eqref{ch2:_eq_BN_S} and \eqref{ch2:_eq_BN_S_twisted}, but the cautious reader might object that we are dealing with algebraic stacks and not schemes any more. But by \cref{ch2:_Repr_Stack-Coh}, we can then apply the results in \cite{Asok-Deglise-Nagel} to the motivic spectra $ \mr A_{\mc BN} $ and $ \mr A_{\mc BSL_{2}} $ representing $ \mr A(\mc BN_S) $ and $ \mr A(\mc BSL_{2,S}) $ for $ S \in \catname{Sm}_{\bigslant{}{\sk}} $. Namely, we have that:
			\[ \mr A_{\mc BSL_2}(S)=\mr A(BSL_{2,S}) \]
			\[ \mr A_{\mc BN}(S)=\mr A(\mc BN_S) \]
			\[ \mr A_{\mc BN, \gamma_N}(S)=\mr A(\mc BN_S; \gamma_N) \]
			\noindent and we can apply the results in \cite{Asok-Deglise-Nagel} to these spectra. By \cite[Theorem 4.2.9]{Asok-Deglise-Nagel} (with $ f=Id $ in \textit{loc. cit.}), we have spectral sequences:
			\[ E_1^{p,q}:=\bigoplus_{s \in S^{(p)}} \mr A^{q}_{\mc BN}(\kappa(s)) \allora \mr A^{p+q}_{\mc BN}(S)=\mr A(\mc BN_S)   \]
			\[ 'E_1^{p,q}:=\bigoplus_{s \in S^{(p)}} \left( \mr A^{q}_{\mc BSL_2}(\kappa(s)) \oplus \mr A^{q}(\kappa(s)) \right) \allora \mr A^{p+q}(\mc BSL_{2,S})\oplus \mr A^{p+q}(S) \]
			
			The map $ (j^*, \sigma_0): \mr A^{\bullet}(\mc BSL_{2,S}) \oplus \mr A^{\bullet}(S)    \longrightarrow \mr A^{\bullet}(\mc BN_S) $ induces a map between spectral sequences $ E_1 $ and $ 'E_1 $, but the latter is an isomorphism by \cref{ch2:_5.3_MEC_field} and hence $ (j^*, \sigma_0) $ is an isomorphism too. By a similar argument, applying again \cref{ch2:_5.3_MEC_field} at the level of spectral sequences, we get that:
			\[ (j^*(-) e(\oocatname{T}), \sigma_1):  \mr A^{\bullet-2}(\mc BSL_{2,S}) \oplus \mr A^{\bullet}(S)  \longrightarrow  \mr A^{\bullet}(\mc BN_S; \gamma_N)  \]
			\noindent is an isomorphism too and we are done.
			
		\end{proof}
		
		\begin{rmk}\label{ch2:_rmk_mult_struct}
			\begin{enumerate}
				\item From \cite[Theorem 10]{Ananyevskiy_PhD_Thesis} (plus \cite[Proposition 3.32, Proposition 3.33]{Motivic_Vistoli}), we have that $ \mr A^{\bullet}(\mc BSL_2)\simeq \mr A^{\bullet}(S)\llbracket e \rrbracket $, where $ e=e(E_2) $ is the Euler class of the bundle associated to the tautological  rank two bundle on $ \mc BSL_2 $. Denoting by $ p: \mc BN \rightarrow \mc BSL_2 $, from the computations we just made, we then have that $ \mr A^{\bullet}(\mc BN)\simeq \mr A^{\bullet}(S)\llbracket p^*e \rrbracket \oplus q_0^{\mr A}\cdot \mr A^{\bullet}(S) $. Then if $ \xi: S \rightarrow \mc BN $ is the base-point of $ \mc BN $, the projection of $ \mr A^{\bullet}(\mc BN; \cdot) $ onto the second factor $ \mr A^{\bullet}(S) $ in our previous theorem, is just $ \xi^* $. Hence computing:
				\[ \partial_1((1+q_0^{\mr A})\cdot e(\oocatname{T}) )=\partial_0(q_0^{\mr A})\cdot \xi^*e(\oocatname{T}) =1 \cdot e(\xi^*\oocatname{T})=0 \]
				\noindent since $ \oocatname{T} $ gets trivialised once pulled back to the base-point. Since $  \partial_1((1+q_0^{\mr A})\cdot e(\oocatname{T}) )=0 $, $  (1+q_0^{\mr A})\cdot e(\oocatname{T})  $ must live in the factor $  \mr A^{-2}(\mc BSL_2)\cdot e(\oocatname{T})  $ of $ \mr A^{2}(\mc BN; \gamma_N) $.  Thus we have that $ (1+q_0^{\mr A})\cdot e(\oocatname{T}) = \lambda_1^{\mr A}\cdot e(\oocatname{T}) $ for some $ \lambda_1^{A} \in \mr A^{0}(\mc BSL_2) $. This is particular true for $ \mr{MSL}_{\eta} $, and hence, by universality of $ \mr{MSL} $ as $ SL $-oriented theory,  we have $ \lambda_1^A=\varphi^{SL}_{\eta}(\lambda_1^{\mr{MSL}_{\eta}}) $. 
				\item For $ A=H\mc W $, our $ q_0^{H\mc W} $ is exactly $ \langle \bar{q} \rangle $ constructed in \cite[\S 5]{Motivic_Euler_Char} (under the identification of \cite[Proposition 3.33]{Motivic_Vistoli}).
			\end{enumerate}
			
		\end{rmk}

		\subsection{The Multiplicative Structure of $ \mr{KW}^{\bullet}(BN) $}
		The results in the previous subsection gave us the additive description of  $ \mr{A}^{\bullet}(\mc BN_S) $ for an $ SL_{\eta} $-oriented ring spectrum $ \mr A $ and $ S $ a smooth $ \sk $-scheme. From now on, up to the end of this chapter, if not otherwise specified, we will always work over a smooth $ \sk $-scheme $ S $, so we will omit the subscript from the notation.\\
		
		Let us now proceed with the computation of the multiplicative structure of $ \mr{KW}^{\bullet}\left( \mc BN \right) $ and the $ \mr{KW}^{\bullet}\left( \mc BSL_2 \right) $-module structure of $ \mr{KW}^{\bullet}\left(  \mc BN; \gamma_N \right) $. Let $ q_0:=q_0^{\mr{KW}} $, $ q_1:=q_1^{\mr{KW}} $ be the elements constructed in the proof of \cref{ch2:_5.3_MEC}.  We have that $ q_0^2=1 $ (cf. \cite[6.2]{Ananyevskiy_SL_oriented}). 
		
		
		Recall from \cref{ch2:_rmk_mult_struct} (for $ \mr A=\mr{KW} $), we have that $ \mr{KW}^{\bullet}(\mc BN)\simeq \mr{KW}^{\bullet}(S)\llbracket p^*e \rrbracket \oplus q_0\cdot \mr{KW}^{\bullet}(S) $.\\
		
		
		The only relations  left to compute are: \vspace{-0.75em}\newline
		\begin{minipage}{0.23\textwidth}
			\[  q_0\cdot p^*e \in \mr{KW}^{2}\left( BN \right)    \]
		\end{minipage}
		\hfill
		\begin{minipage}{0.28\textwidth}
			\[  q_0\cdot j^*e(\oocatname{T}) \in \mr{KW}^{2}\left( \mc BN ; \gamma_N \right)   \]
		\end{minipage}
		\hfill
		\begin{minipage}{0.23\textwidth}
			\[ q_0\cdot q_1 \in q_1 \cdot \mr{KW}^{0}(S)  \]
		\end{minipage}\vspace{0.25em}\\
		
		To do this we can consider the inclusion $ \mb G_m \into N $ and the corresponding map $ \mc B\mb G_m \longrightarrow \mc BN $, where $ \mc B\mb G_m\simeq \left[\quot{\left(\quot{SL_2}{\mb G_m}\right)}{SL_2}\right] $. The description of $ \mc BN $ as the quotient  $ \left[\bigslant{ \left(\pro\left( \sym^2(F) \right) \setminus \pro(F)\right)}{SL_2}\right] $, gives us a section $ j^*Q $ of $ j^*\mo{}(2) $ coming from the section $ Q:=T_1^2-4T_0T_2 $ of $ \mo{\pro(\sym^2F)}(2) $, via the restriction along $ j:\mc BN \into \widetilde{\pro(\sym^2)(F)}  $.\\
		
		Given an algebraic stack $ X $, a  line bundle $ \mb V(\mc L) $ on $ X $ and a section $ s $ of $ \mc L^{\otimes 2} $, we can construct another algebraic stack $ X(\sqrt{s}) $ as the fiber product:
		
		\begin{center}
			\begin{tikzpicture}[baseline={(0,2)}, scale=2]
				\node (a) at (0,1) {$ X(\sqrt{s}) $};
				\node (b) at (1, 1) {$L $};
				\node (c)  at (0,0) {$ X $};
				\node (d) at (1,0) {$ L^{\otimes 2} $};
				\node (e) at (0.2,0.8) {$ \ulcorner $};

				\path[font=\scriptsize,>= angle 90]
				
				(a) edge [->] node [above ] {$  $} (b)
				(a) edge [->] node [above] {$  $} (c)
				(b) edge[->] node [right] {$ sq $} (d)
				(c) edge [closed] node [below] {$ \sigma $} (d);
			\end{tikzpicture}
		\end{center}
		\noindent where $ sq: L \longrightarrow L^{\otimes 2} $ is the squaring map and $ \sigma: X \longrightarrow L^{\otimes 2} $ is the map induced by the section $ s $. 
		
		
				%
				%
				%
				%

		In particular, on $ \mc BN $ we have a section $ j^*Q $ of $ j^*\mo{}(2)= j^*\mo{}(1)^{\otimes 2} $ and we can consider the stack $ \mc BN(\sqrt{j^*Q}) $. 
		Similarly to what was showed in \cite[End of Sect. \S2 and \S5]{Motivic_Euler_Char}, we can identify $ \mc B\mb G_m \simeq \mc BN(\sqrt{Q}) \rightarrow \mc BN $ as a double cover. 
		
		\begin{lemma}\label{ch2:_Lemma_5.4_MEC_field_pt1}
			Consider $ S=\spec(\sk) $. Then we have $ (1+q_0)\cdot p^*e=0 $ in $ \mr{KW}^{2}\left( \mc BN \right) $.
		\end{lemma}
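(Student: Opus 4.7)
The plan is to use the étale double cover $f\colon \mc B\mb G_m \simeq \mc BN(\sqrt{j^*Q}) \to \mc BN$ described immediately before the statement, together with the projection formula for finite étale maps.

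First, I would show that $f^*(p^*e) = 0$ in $\mr{KW}^2(\mc B\mb G_m)$. By \cref{ch2:_Haution_BGm}, the structure morphism $\pi\colon \mc B\mb G_m \to \spec(\sk)$ induces an isomorphism in $\mr{SH}(\sk)[\eta^{-1}]$, so any class in $\mr{KW}^{\bullet}(\mc B\mb G_m)$ is pulled back from the base and is detected by restriction to the geometric point $\xi\colon \spec(\sk) \to \mc B\mb G_m$. The composition $p \circ f \circ \xi$ factors through the base point of $\mc BSL_2$, which pulls the tautological rank-two bundle back to the trivial bundle $\mo{\sk}^{\oplus 2}$; since the Euler class of a trivial bundle of positive rank vanishes in any $SL$-oriented theory, one obtains $f^*(p^*e) = 0$.

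Next, since $f$ is a finite étale morphism of NL-stacks, $f_! \simeq f_*$ in the six-functor formalism of \cite{ChoDA24}, and the projection formula gives
\[ p^*e \cdot f_*(1_{\mc B\mb G_m}) \;=\; f_*\bigl(f^*(p^*e)\bigr) \;=\; 0 \]
in $\mr{KW}^2(\mc BN)$. Hence the lemma reduces to identifying $f_*(1_{\mc B\mb G_m})$ as a unit multiple of $1 + q_0$ in $\mr{KW}^0(\mc BN) = W(\mc BN)$.

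For this last step, I would unwind $f$ using its description as the quadratic étale algebra $f_*\mo{\mc B\mb G_m} \simeq \mo{\mc BN} \oplus \gamma_N$, whose multiplication is induced by $Q\colon \gamma_N^{\otimes 2} \xrightarrow{\sim} \mo{\mc BN}$. A direct computation in the local basis $\{1, t\}$, where $t$ is a local generator of $\gamma_N$ with $t^2 = Q$, shows that the associated trace form is the diagonal form $\langle 2, 2Q\rangle = \langle 2\rangle\bigl(1 + \langle Q\rangle\bigr) = \langle 2\rangle(1 + q_0)$, where $\langle Q\rangle = q_0$ is the symbol of \cref{ch2:_tautological_symbol}. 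Since $\langle 2 \rangle$ is a unit in $W(\mc BN)$, one can cancel it in the identity $\langle 2\rangle(1 + q_0)\cdot p^*e = 0$ to conclude. The main obstacle in this plan is precisely the identification of the Witt-theoretic transfer $f_*(1_{\mc B\mb G_m})$ with the symbolic class $\langle 2\rangle(1 + q_0)$; this should be reduced, via a generic point or Mayer--Vietoris argument, to the classical fact that the trace form of the quadratic étale extension $F[t]/(t^2 - a)$ of a field $F$ represents $\langle 1, a\rangle$ in $W(F)$.
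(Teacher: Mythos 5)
Your argument is correct, and it replaces the central mechanism of the paper's proof with a cleaner one. The paper also works with the double cover $\varphi\colon \mc B\mb G_m\to\mc BN$ and also feeds in the identity $\varphi_*\langle 1\rangle=\langle 2\rangle(1+q_0)$ (this is exactly \cref{ch2:_Pushforward_1}, proved separately and cited inside the proof of the present lemma), but it does \emph{not} invoke the projection formula. Instead it builds the descended cone $\mf C\sseq \hat E_2$ from $(\mo{}(1)\times 0)\cup(0\times\mo{}(-1))$, uses its normalisation $\mf C^{N}\simeq\mo{\mc B\mb G_m}(1)$ and the tautological symbol $\langle t_{can}\rangle$ to exhibit $\langle 2\rangle(1+q_0)$ as a boundary for the localisation sequence of the zero section of $\hat E_2$, whence $(s_0)_*\varphi_*\langle1\rangle=0$ and then $s_0^*(s_0)_*\varphi_*\langle1\rangle=\langle2\rangle(1+q_0)\cdot p^*e=0$ by self-intersection. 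Your route — $f^*(p^*e)=0$ on $\mc B\mb G_m$ (via \cref{ch2:_Haution_BGm}, or directly since $\varphi^*\hat E_2$ splits into line bundles whose Euler classes vanish for $SL_\eta$-oriented theories), then $p^*e\cdot f_*(1)=f_*(f^*(p^*e))=0$ by the projection formula for the finite étale representable $f$ — is shorter and avoids the cone geometry entirely; the projection formula is available in the formalism of \cite{ChoDA24} and is used elsewhere in the paper, so nothing is lost. The one place where your sketch is too optimistic is the identification $f_*(1)=\langle2\rangle(1+q_0)$: a generic-point or Mayer--Vietoris reduction to the trace form of $F[t]/(t^2-a)$ only determines $f_*(1)$ modulo the ideal generated by $p^*e$ (restriction to any dense open of the atlas kills $p^*e$), and since $\mr{KW}^0(\mc BN)\simeq(\mr{KW}^{\bullet}(\sk)\llbracket p^*e\rrbracket)^0\oplus q_0\cdot\mr{KW}^0(\sk)$ has nonzero higher terms $a_i(p^*e)^{2i}$, those would contaminate the product $f_*(1)\cdot p^*e$. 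Killing them is the actual content of \cref{ch2:_Pushforward_1}, which needs the finite-level kernel computation of \cref{ch2:_Pushforward_pt0} together with the comparison with $H\mc W$; since that lemma is exactly the input the paper itself uses here, you may simply cite it, but you should not present it as a routine local computation.
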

		
		\begin{proof}
			We will freely use the notation employed in the proof of  \cref{ch2:_5.3_MEC} and we will closely follow the proof in \cite[Lemma 5.4]{Motivic_Euler_Char}. Let us recall here the underlying geometry of our objects. We have a rank two tautological bundle $ {E}_2=\left[\bigslant{F}{SL_2}\right]\longrightarrow \mc BSL_2 $ and its pullback $ \hat{E}_2:=p^*{E}_2 \longrightarrow \mc BN $. The class $ e \in \mr{KW}^{2}\left( \mc BSL_2 \right) $ was the Euler class of $ {E_2} $, and so we have $ p^*e=e(\hat{E}_2) $. We have the double cover $ \varphi: \mc B\mb G_m \longrightarrow \mc BN $. The pullback $ \varphi^*\hat{E}_2 $ splits as $ \varphi^*\hat{E}_2\simeq \mo{}(1)\oplus \mo{}(-1) $ corresponding to the decomposition of $ F $ into the eigenspaces relative to $ t $ and $ t^{-1} $ under the $ \mb G_m $-action. The $ \bigslant{\Z}{2\Z}\simeq \bigslant{N}{\mb G_m} $-action sends $ t \mapsto t^{-1} $ and thus swaps the two factors $ \mo{}(1) $ and $ \mo{}(-1) $. Considering the cone $  \left(\mo{}(1)\times 0\right) \cup \left( 0\times \mo{}(-1) \right) \sseq \varphi^*\hat{E}_2 $, we get by descent under the $ \bigslant{\Z}{2\Z} $-action the corresponding cone $ \mf C \sseq \hat{E}_2 $. We have a map $ \mf \nu: \mf C^{N}:= \mo{}(1) \longrightarrow \mf C $, induced by the normalization of the atlas of $ \mf C $\footnote{All the geometry we have done so far is the geometry of $ SL_2 $-quotient stacks, we can just work with their standard atlas and then by descent pass to the $ SL_2 $-quotients.}. So we have the following commutative diagram:
			\begin{center}
				\begin{tikzpicture}[baseline={(0,0)}, scale=2]
					\node (a) at (0,1) {$ \mf C^{N}:= \mo{}(1) $};
					\node (b) at (1, 1) {$ \varphi^*\hat{E}_2 $};
					\node (c)  at (0,0) {$ \mf C $};
					\node (d) at (1,0) {$ \hat{E}_2 $};
					\node (e) at (2,1) {$ \mc B\mb G_m $};
					\node (f) at (2,0) {$ \mc BN $};

					\path[font=\scriptsize,>= angle 90]
					
					(a) edge [->] node [above ] {$ {\iota}_{\mf C^{N}} $} (b)
					(a) edge [->] node [left] {$ \nu $} (c)
					(b) edge[->] node [left] {$ \hat{\varphi} $} (d)
					(c) edge [->] node [below] {$ \iota_{\mf C} $} (d)
					(b) edge [->] node [above] {$ \rho $} (e)
					(d) edge [->] node [below] {$ r $} (f)
					(e) edge [->] node [right] {$ \varphi $} (f);
				\end{tikzpicture}
			\end{center}
			\noindent Considering the localization sequences associated to: \\

			\begin{minipage}[h]{0.35\textwidth}
				\begin{center}
					\begin{tikzpicture}[baseline={(0,0)}, scale=1]
						\node (c) at (0,0) {$ 0_{\mf C}=\mc{B}N $};
						\node (b) at (2, 0) {$ \mf C  $};
						\node (a)  at (4,0) {$ \mf C \setminus 0=: \ \mf C^{\circ} $};

						\path[font=\scriptsize,>= angle 90]
						
						(a) edge [open'] node [above ] {$  $} (b)
						(c) edge [closed] node [above] {$  $} (b);
					\end{tikzpicture}\\
					\begin{tikzpicture}[baseline={(0,0)}, scale=1]
						\node (c) at (0,0) {$ 0_{\mf C^{N}}=\mc{B}\mb G_m $};
						\node (b) at (2, 0) {$ \mf C^{N}  $};
						\node (a)  at (5,0) {$ \mf C^{N} \setminus 0=:\ (\mf C^{N})^{\circ}\simeq \mf C^{\circ} $};

						\path[font=\scriptsize,>= angle 90]
						
						(a) edge [open'] node [above ] {$  $} (b)
						(c) edge [closed] node [above] {$  $} (b);
					\end{tikzpicture}
				\end{center}
			\end{minipage}
			\hfill
			\begin{minipage}[h]{0.37\textwidth}
				\begin{center}
					\begin{tikzpicture}[baseline={(0,0)}, scale=1]
						\node (c) at (0,0) {$ 0_{\hat{E}_2}=\mc{B}N $};
						\node (b) at (2.5, 0) {$ \hat{E}_2 $};
						\node (a)  at (4.5,0) {$ \hat{E}_2\setminus 0=:\ \hat{E}_2^{\circ} $};

						\path[font=\scriptsize,>= angle 90]
						
						(a) edge [open'] node [above ] {$  $} (b)
						(c) edge [closed] node [above] {$ s_0  $} (b);
					\end{tikzpicture}\\
					\begin{tikzpicture}[baseline={(0,0)}, scale=1]
						\node (c) at (0,0) {$ 0_{\varphi^*\hat{E}_2}=\mc{B}\mb G_m $};
						\node (b) at (2.5, 0) {$ \varphi^*\hat{E}_2  $};
						\node (a)  at (4.5,0) {$ \varphi^*\hat{E}_2^{\circ} $};

						\path[font=\scriptsize,>= angle 90]
						
						(a) edge [open'] node [above ] {$  $} (b)
						(c) edge [closed] node [above] {$ \sigma_0 $} (b);
					\end{tikzpicture}
				\end{center}
			\end{minipage}\\
			
			\noindent we get the following diagram:

			\begin{center}
				\begin{equation}\label{ch2:_big_diagram_loc_seq_BG_BN}
					\begin{tikzpicture}[baseline=(current  bounding  box.south), scale=0.5]
						
						\matrix (m) [matrix of math nodes, row sep=2 em,
						column sep=2 em]{
							\mr{KW}^{0,0}\left( \mf C^{\circ}; \mo{}(-1) \right)  & &  \mr{KW}^{-1,-1}\left( \mc B\mb G_m\right) &\\
							& \mr{KW}^{2,1}\left( \varphi^*\hat{E}_2^{\circ}\right) & & \mr{KW}^{-1,-1}\left( \mc B \mb G_m\right) & \\
							\mr{KW}^{0,0}\left( \mf C^{\circ}; \mo{}(-1) \right)  & &   \mr{KW}^{-1,-1}\left( \mc BN\right)   &\\ 
							&\mr{KW}^{2,1}\left( \hat{E}_2^{\circ} \right)& &  \phantom{c} \mr{KW}^{-1,-1}\left( \mc BN\right)   \\};
						\path[-stealth]
						(m-1-1) edge [->] node [above] {$ \partial_{\mf C^{N}} $} (m-1-3) edge[->] node [left] {$ \iota_{\bigslant{\mf C^N}{\varphi^*E}} {}_* $} (m-2-2)
						edge [->] node [left] {$ \hat \varphi_* $}  (m-3-1)
						(m-1-3) edge [->] node [below] {\phantom{ciao}$ \varphi_* $} (m-3-3) edge  [->]  node [right] {$ Id $} (m-2-4)
						(m-2-2) edge [-,line width=6pt,draw=white] (m-2-4) edge [->] node [above] {$ \partial_{\varphi^*\hat E_2}$}  (m-2-4) edge [right hook->] (m-4-2)
						(m-3-1) edge [->] node [above] {\phantom{ciao} $ \partial_{\mf C} $} (m-3-3)
						edge [->] node [left] {$ \iota_{\bigslant{\mf C}{\hat E_2}} $}  (m-4-2)
						(m-4-2) edge [->] node [below] {$ \partial_{\hat{E}_2} $}  (m-4-4)
						(m-3-3) edge [->] node [right] {$ Id $}  (m-4-4)
						(m-2-2) edge [-,line width=6pt,draw=white] (m-4-2)
						(m-2-2) edge [->] (m-4-2)
						(m-2-4)edge [->] (m-4-4);
					\end{tikzpicture}
				\end{equation}
			\end{center}
			
			\noindent where we used the fact that the normal bundle of $ \mf C^{N} $ inside $ \varphi^*\hat{E}_2 $ is $ N_{\bigslant{\mf C^{N}}{\varphi^*\hat{E}_2}}\simeq \restrict{\rho^*\mo{\mc B\mb G_m}(-1)}{\mf C^{N}} $. 
			
			Let $ \pi: \mf C^{N}\simeq \mo{\mc B\mb G_m}(1) \rightarrow\mc  B\mb G_m $ be the line bundle map, then we can consider on $ \mo{\mf C^{N}}(1)\simeq \pi^*\mo{}(1) $ the tautological section $ can: \mf C^{N}\simeq \mo{ \mc B\mb G_m}(1) \longrightarrow \mo{\mf C^{N}}(1) $\footnote{For a line bundle $ p:L \rightarrow X $, we get a tautological section on $ p^*L \rightarrow L $ from the fact that for every $ y=p(l) $ we have $ (p^*L)_l\simeq L_y \ni l $, so the section in this case is just $ l \mapsto (l,l) $.}. By \cref{ch2:_symbol_of_line_bundle_section}, we get a well defined element $ \langle t_{can} \rangle \in \mr{KW}^{0,0}(\mf C^{\circ}; \mo{}(1)) $. \\
			By a Mayer-Vietoris argument, to compute $ \partial_{\mf C^{N}}(\langle t_{can}\rangle) $ we can restrict to trivialising open subsets, and we have that $ \partial_{\mf C^{N}}(\langle t_{can}\rangle) =\eta \in \mr{KW}^{-1,-1}(\mo{\mc B\mb G_m}(1)) $. But by homotopy invariance we get $ \mr{KW}^{-1,-1}(\mo{\mc B\mb G_m}(1)) \simeq \mr{KW}^{-1,-1}(\mc B\mb G_m) $. Using the isomorphism induced by $ \eta $, we can identify $ \mr{KW}^{-1,-1}(\mc B\mb G_m)\simeq \mr{KW}^0(\mc B\mb G_m) $: under this isomorphism $ \eta $ is sent to $ 1 $, so we have $ \partial_{\mf C^{N}}(\langle t_{can}\rangle) =1 \in \mr{KW}^{0}(\mc B\mb G_m) $.\\

			If we push forward through $ \varphi_* $ the boundary of $ \langle t_{can} \rangle  $, we get $ \varphi_*\langle 1 \rangle= \langle 2 \rangle (1+ q_0) $ by \cref{ch2:_Pushforward_1} (see below).


			Using the commutativity of the diagram \cref{ch2:_big_diagram_loc_seq_BG_BN}, we see that  $ \langle 2 \rangle (1+ q_0 )=\varphi_*\left( \partial_{\mf C^{N}}\left( \langle t_{can }\rangle  \right) \right)\simeq \partial_{\hat{E}_2}\left( \hat \varphi_*(\iota_{\mf C^{N}})_* \langle t_{can} \rangle  \right) $ is a boundary in $ \mr{KW}^{0}\left( BN \right) $, so it is sent to zero via $ (s_0)_* $, where $ s_0: BN \rightarrow \hat E_2 $ is the zero section of the bundle. But if we post-compose with the pullback map $ s_0^* $, we also get $ (s_0)^*(s_0)_*\varphi_*\langle 1 \rangle=0 $, and spelling out the element on the left we have $ \langle 2 \rangle (1+ q_0 )\cdot e(\hat{E}_2)=0 $ as we wanted.
			
		\end{proof}
		
		We need to complete the previous proof, but before doing that we need another technical lemma.  Recall that by \cite[Corollary 4]{Ananyevskiy_PhD_Thesis}, for $ m=2k+1 $ and for any $ SL_{\eta} $-oriented ring spectrum $ \mr A\in \mr{SH}(\sk) $, we have:
		\[ \mr{A}^{\bullet}(B_mSL_2)=\bigslant{\mr{A}^{\bullet}(\sk) [e]}{(e^{m-1})}\]
		\noindent  where $ e $ is the Euler class of the tautological bundle. 
		\begin{lemma}\label{ch2:_Pushforward_pt0}
			Let $ \mr A \in \mr{SH}(\sk) $ be a $ SL_{\eta} $-oriented ring spectrum.  Denote by $ \nu_m: B_mN\rightarrow \mc BN $ the natural map to the quotient stack and by $ p: \mc BN\rightarrow \mc BSL_2 $ the structure map over $ \mc BSL_2 $. Let $ e $ be the Euler class of the tautological bundle over $ \mc BSL_2 $. Then,  for each odd integer $ m $,the pullback map:
			\[ \nu_m^*: \mr A^{\bullet}(\mc BN)\simeq \mr A^{\bullet}(\sk)\llbracket p^*e\rrbracket \oplus  q_0\cdot \mr A^{\bullet}(\sk) \longrightarrow \mr A^{\bullet}(B_mN) \]
			\noindent is surjective with kernel $ \ker(\nu_m^*)=(p^*e^{m-1})\cdot \mr A^{\bullet}(\sk)\llbracket p^*e \rrbracket $.
		\end{lemma}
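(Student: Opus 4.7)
The plan is to replay the argument of Proposition \ref{ch2:_5.3_MEC} at finite level $m$ and then identify $\nu_m^*$ as the natural truncation map. Let $B_m\widetilde{\pro(\sym^2F)}:= \pro(\sym^2F)\times^{SL_2}E_mSL_2$ and $B_m\widetilde{\pro(F)}:= \pro(F)\times^{SL_2}E_mSL_2$, so that we get a Zariski-locally trivial $\pro^2$-bundle over $B_mSL_2$ with closed complement $B_m\widetilde{\pro(F)}\cong B_m\widetilde{\pro(F)}$ and open complement $B_mN$. The diagram in \eqref{chap.2_fig_loc_seq_BN} has a finite-level analogue, and all the maps $\nu_m$ (for $N$, $SL_2$, and the $\pro^i(\cdot)$-bundles) fit into a commutative ladder with the limit diagram.

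First I would establish the finite-level versions of Lemmas \ref{ch2:_Lemma_5.1_MEC} and \ref{ch2:_Lemma_5.2_MEC}: the Leray/Gersten spectral sequence arguments used there are applied fibrewise, so they produce the isomorphisms
\[ \mr A^{\bullet}(B_mSL_2)\stackrel{\sim}{\to}\mr A^{\bullet}_{SL_2}(\pro(F)\times^{SL_2}E_mSL_2)\text{ (via }\pi^*\text{)} \]
and
\[ \mr A^{\bullet}(B_mSL_2)\stackrel{\sim}{\to}\mr A^{\bullet}_{SL_2}(\pro(\sym^2F)\times^{SL_2}E_mSL_2), \]
together with the free rank-one statement with generator $e(\oocatname{T}_m)$ in the $\mo{}(1)$-twist. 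These are compatible with the pullback along $\nu_m$ and give, via the same localization sequence as in \eqref{ch2:_additive_LES}, short exact sequences
\[ 0\to \mr A^{\bullet}(B_mSL_2)\stackrel{j_m^*}{\to}\mr A^{\bullet}(B_mN)\stackrel{\partial_0^{(m)}}{\to}\mr A^{\bullet}(\sk)\to 0. \]

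Next, I would split $\partial_0^{(m)}$ by $\nu_m^*q_0$: the construction of $q_0$ from the tautological symbol $\langle\lambda_Q\rangle$ only depends on the section $Q$ of $\mo{}(2)$ on $\widetilde{\pro^2}$, and pulls back to $B_mN$; the computation of $\partial_0(q_0)=1$ (via reduction to the coordinate section $t$ of $\A^1$ on a trivialising open, Mayer--Vietoris and $\partial(\langle t\rangle)=\eta$) is local on the base, hence works at each finite level. Therefore
\[ \mr A^{\bullet}(B_mN)\simeq \mr A^{\bullet}(B_mSL_2)\oplus q_0^{(m)}\cdot \mr A^{\bullet}(\sk), \]
where $q_0^{(m)}=\nu_m^*q_0$. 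Substituting the known presentation $\mr A^{\bullet}(B_mSL_2)=\mr A^{\bullet}(\sk)[e]/(e^{m-1})$ from \cite[Cor.~4]{Ananyevskiy_PhD_Thesis} and pulling $e$ back to $p^*e$ on $\mc BN$ gives the target presentation.

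Finally, the map $\nu_m^*$ fits into the commutative diagram
\[ \begin{array}{ccc}
\mr A^{\bullet}(\sk)\llbracket p^*e\rrbracket\oplus q_0\cdot\mr A^{\bullet}(\sk) & \stackrel{\sim}{\to} & \mr A^{\bullet}(\mc BN)\\
\downarrow & & \downarrow\nu_m^* \\
\mr A^{\bullet}(\sk)[p^*e]/((p^*e)^{m-1})\oplus q_0^{(m)}\cdot\mr A^{\bullet}(\sk) & \stackrel{\sim}{\to} & \mr A^{\bullet}(B_mN)
\end{array} \]
where the left-hand vertical map is the obvious truncation on the first factor and the identity on the second (this is commutative because both decompositions are built from the same natural data $p^*e$ and $q_0$, which are compatible with $\nu_m^*$). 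Surjectivity and the identification of $\ker(\nu_m^*)$ with $(p^*e)^{m-1}\cdot\mr A^{\bullet}(\sk)\llbracket p^*e\rrbracket$ are then immediate. The one subtle point, which I would regard as the main obstacle, is verifying that the finite-level Leray spectral sequence arguments of Lemmas \ref{ch2:_Lemma_5.1_MEC}--\ref{ch2:_Lemma_5.2_MEC} genuinely go through in $\mr{SH}(\sk)$ at each finite Grassmannian stage, but this follows from the same $\eta$-inverted computations of $\mr A(\pro^n)$ combined with the representability argument of Lemma \ref{ch2:_Repr_Stack-Coh}.
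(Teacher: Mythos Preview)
Your overall strategy is natural, but there is a real gap at the finite-level analogue of Lemma~\ref{ch2:_Lemma_5.1_MEC}. You write
\[
\mr A^{\bullet}(B_mSL_2)\stackrel{\sim}{\longrightarrow}\mr A^{\bullet}\bigl(\pro(F)\times^{SL_2}E_mSL_2\bigr)
\]
and justify it by ``the Leray/Gersten spectral sequence arguments used there, applied fibrewise''. Two problems. First, Lemma~\ref{ch2:_Lemma_5.1_MEC} is not proved by a Leray argument at all; it uses the $\A^1$-bundle $SL_2\to\A^2\setminus\{0\}$ together with invertibility of $\eta$. Second, the fibres of $\pro_m(F)\to B_mSL_2$ are $\pro^1$, and for an $SL_\eta$-oriented spectrum one has $\mr A^{\bullet}(\pro^1_{\kappa(x)})=\mr A^{\bullet}(\kappa(x))\oplus\mr A^{\bullet-1}(\kappa(x))$ (Lemma~\ref{ch2:_computation_pro_spaces}), so the fibrewise comparison is \emph{not} an isomorphism on $E_1$-pages. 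Notice also that your displayed isomorphism is inconsistent with your own short exact sequence, whose right-hand term is $\mr A^{\bullet}(\sk)$, not $\mr A^{\bullet}(B_mSL_2)$; and even the alternative statement $\mr A^{\bullet}(\pro_m(F))\simeq\mr A^{\bullet}(\sk)$ is not established by your method. Without control of $\mr A^{\bullet}(\pro_m(F))$ you cannot conclude that your splitting map is an isomorphism, and the ``obvious truncation'' identification of $\nu_m^*$ collapses.

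The paper sidesteps this entirely: it never computes $\mr A^{\bullet}(\pro_m(F))$. Instead it proves directly that the pushforward $(\iota_m)_*\colon \mr A^{\bullet}(\pro_m(F))\to\mr A^{\bullet+1}(\pro_m(\sym^2 F))$ is zero. The point is that cupping with $e(T_{\pi_m})$ is an isomorphism $\mr A^{\bullet}(\pro_m(\sym^2 F))\to\mr A^{\bullet+2}(\pro_m(\sym^2 F);\mo{}(1))$ (this \emph{does} follow from Leray, since those fibres are $\pro^2$), while $\iota_m^*e(T_{\pi_m})=0$ because $\iota_m^*T_{\pi_m}$ sits in an extension of two line bundles and line-bundle Euler classes vanish for $SL_\eta$-oriented spectra; the projection formula then forces $(\iota_m)_*=0$. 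This yields the short exact sequence in every degree, and one then only needs the comparison map $\psi_m=(p_m^*,\,q_{0,m}\cdot\pi_{B_mN}^*)$ to be \emph{injective} to conclude $\ker(\nu_m^*)\simeq\ker(\sigma_m^*)$ from the commuting square, bypassing any direct computation of $\mr A^{\bullet}(B_mN)$.
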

		
		\begin{proof}
			Denote for short:
			\[ \pro_k(F):=\pro(F)\times^{SL_2}E_mSL_2 \]
			\[ \pro_m(\sym^2(F)):=\pro(\sym^2(F))\times^{SL_2} E_mSL_2 \]
			\noindent and let $ j_m: B_mN \into \pro_m(\sym^2(F)) $ and $ \iota_m: \pro_m(F)\into \pro_m(\sym^2(F)) $ be the associated open and closed immersions. Let:
			\[ \pi_m: \pro_m(\sym^2(F))\longrightarrow B_mSL_2 \]
			\noindent be the structure map over $ B_mSL_2 $.
			
			We will start proving the following:
			\begin{claim}\label{ch2:_claim_iota_m_is_zero}
				The map induced by proper pushforward along $ \iota_m $:
				\[ 	(\iota_m)_*: \mr A^{\bullet}(\pro_m(F))\longrightarrow \mr A^{\bullet+1}(\pro_m(\sym^2(F))) \]
				\noindent is the zero-map.
			\end{claim}
			\textit{Proof of the Claim.} Consider $ T_{\pi_m} $ the relative tangent bundle of $ \pi_m: \pro_m(\sym^2(F)) \rightarrow B_mSL_2 $. The fibers of $ \pi_m $ are $ \pi_m^{-1}(x)\simeq \pro^2_{\kappa(x)} $, hence comparing the Leray spectral sequences it is easy to see that:
			\[ \pi_m^*: \mr{A}^{\bullet}(B_mSL_2)\stackrel{\sim}{\longrightarrow} \mr{A}^{\bullet}\left( \pro(\sym^2(F)) \times^{SL_2} E_mSL_2 \right) \]
			\noindent is an equivalence. Indeed, the map between spectral sequences, induced by $ \pi_m^* $ above, is an isomorphism  by \cref{ch2:_SL_eta_Lemma_1}. We also have an isomorphism given by:
			\[ (-\cup e(T_{\pi_m}))\circ \pi_m^*: \mr{KW}^{\bullet-2}(B_mSL_2) \stackrel{\sim}{\longrightarrow} \mr{A}^{\bullet}\left( \pro(\sym^2(F)) \times^{SL_2} E_mSL_2 \right) \]
			This follows again from the comparison of Leray spectral sequences, using the fact that on the fibers the map induced by $ (-\cup e(T_{\pi_m}))\circ \pi_m^* $ is an isomorphism (cf. end of proof of \cref{ch2:_SL_eta_Lemma_2}). In particular we get that the map:
			\[ (-\cup e(T_{\pi_m})): \mr{A}^{\bullet}(\pro_m(\sym^2(F))) \longrightarrow \mr{A}^{\bullet+2}(\pro_m(\sym^2(F)); \mo{}(1)) \]
			\noindent is an isomorphism. But $ i_m^*T_{\pi_m} $ fits in a short exact sequence:
			\[ 0\rightarrow T_{\pi_m\circ i_m} \rightarrow i_m^*T_{\pi_m} \rightarrow N_{i_m}\rightarrow 0 \]
			\noindent where $ N_{i_m} $ is the normal bundle of the closed immersion. This implies that $ i_m^*e(T_{\pi_m})=0 $, indeed $ T_{\pi_m\circ i_m} $ and $ N_{i_m} $ are line bundles and Euler classes of line bundles are trivial for $ SL_{\eta} $-oriented spectra (see \cite[Lemma 4.3]{Motivic_Euler_Char}). Then by the projection formula, for any $ x \in \mr{KW}^{\bullet-1} $ we have:
			\[ (i_m)_*(x)\cup e(T_{\pi_m})=(i_m)_*(x\cup i_m^*e(T_{\pi_m}))=0 \]
			But $ (-\cup e(T_{\pi_m})) $ is an isomorphism, in particular it is injective and this implies that we must have $ i_m^*(x)=0 $ as claimed.
			\begin{flushright}
				$ \stackrel{\text{(Claim 1)}}{\blacksquare} $
			\end{flushright}

			From \cref{ch2:_claim_iota_m_is_zero}, it follows that for each integer $ k $ the localization sequence: \begin{equation}\label{ch2:_eq_Loc_Sec_BmN}
				\ldots \stackrel{(\iota_m)_*}{\rightarrow} \mr A^{k}(\pro_m(\sym^2(F)))\stackrel{j_m^*}{\longrightarrow} \mr A^{k}(B_mN)\stackrel{\partial_m}{\longrightarrow} \mr A^{k}(\pro_m(F))\stackrel{(\iota_m)_*}{\rightarrow} \ldots  
			\end{equation}
			\noindent  splits into short exact sequences:
			\[ 0 \rightarrow \mr A^{k}(\pro_m(\sym^2(F)))\stackrel{j_m^*}{\longrightarrow} \mr A^{k}(B_mN)\stackrel{\partial_m}{\longrightarrow} \mr A^{k}(\pro_m(F))\rightarrow 0 \]
			Let $ x_0 $ be the base point of $ B_mSL_2 $ and consider $ y_0 \in \pi_m^{-1}(x_0) $ the $ \sk $-point $ y_0: \spec(\sk) \rightarrow \pro_m(\sym^2(F)) $ given by $ [1:0] $ in the fiber of $ x_0 $. Then the map:
			\begin{equation}\label{ch2:_eq_split_inj_BmN}
				\pi_{\pro_m(\sym^2(F))}^*: \mr A^{\bullet}(\sk) \longrightarrow \mr A^{\bullet}(\pro_m(\sym^2(F))) 
			\end{equation}
			\noindent is injective, and splits via $ y_0^* $. Let $ q_0:=q_0^{\mr A}\in \mr A^0(\mc BN) $ be the element we used to get the splitting of \cref{ch2:_5.3_MEC_field} and denote by $ q_{0,m} $ its pullback to $ B_mN $. Recall that the map $ p_m:  B_mN \rightarrow  B_mSL_2 $ is given by the composition of $ j_m $ and $ \pi_m $ (and similarly for $ p: \mc BN\rightarrow \mc BSL_2 $). Then by \eqref{ch2:_eq_Loc_Sec_BmN} and \eqref{ch2:_eq_split_inj_BmN}, we deduce that the map:
			\[ \psi_m:=(p_m^*, q_{0,m}\cdot\pi_{B_mN}^*):  \mr A^{\bullet}(B_mSL_2)\oplus \mr A^{\bullet}(\sk) \longrightarrow \mr A^{\bullet}(B_mN) \]  
			\noindent is injective. Therefore, denoting by  $ \sigma_m: B_mSL_2 \rightarrow \mc BSL_2 $ the natural map to the quotient stack,  we get a commutative diagram:
			
			\begin{equation}\label{ch2:_eq_PF_diagram}
				\begin{tikzpicture}[baseline={(0,0)}, scale=2]
					\node (a) at (0,1) {$ \mr{A}^{\bullet}(\mc BSL_2)\oplus \mr A^{\bullet}(\sk)$};
					\node (b) at (2, 1) {$ \mr{A}^{\bullet}(\mc BN) $};
					\node (c)  at (0,0) {$  \mr{A}^{\bullet}(B_mSL_2)\oplus \mr A^{\bullet}(\sk)$};
					\node (d) at (2,0) {$ \mr{A}^{\bullet}(B_mN) $};
					\node (e) at (0.2,0.8) {$   $};
					\node (f) at (0.5,0.5) {$  $};

					\path[font=\scriptsize,>= angle 90]
					
					(a) edge [->] node [above ] {$ \sim $} (b)
					(a) edge [->] node [left] {$ (\sigma_m^*, Id) $} (c)
					(b) edge[->] node [right] {$ \nu_m^* $} (d)
					(c) edge [right hook->] node [below] {$ \psi_m $} (d);
				\end{tikzpicture}
			\end{equation}
			But this implies that $ \ker(\nu_m^*)\simeq \ker(\sigma_m^*) $. For $ m $ odd, by \cite[Corollary 4, Theorem 10]{Ananyevskiy_PhD_Thesis}, we have:
			\[ \ker(\sigma_m^*)\simeq (e^{m-1})\cdot \mr A^{\bullet}(\sk)\llbracket e \rrbracket  \]
			\noindent and we are done.
			
		\end{proof}
		
		As promised, let us now complete the proof of \cref{ch2:_Lemma_5.4_MEC_field_pt1} where we used the following computation:
		\begin{lemma}\label{ch2:_Pushforward_1}
			Let $ S=\spec(\sk) $ and let $ \varphi: \mc B\mb G_m \longrightarrow \mc BN $ be the double cover we already introduced. Then we have:
			\[ \varphi_*\langle 1 \rangle=\langle 2\rangle (1+q_0) \in \mr{KW}^0(\mc BN) \]
		\end{lemma}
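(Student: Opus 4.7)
The plan is to compute $\varphi_*\langle 1\rangle$ by a Mayer-Vietoris / local trivialisation argument, reducing the problem to the universal transfer formula for a degree two cover obtained by extracting a square root of a regular section.

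Recall that $\varphi: \mc B\mb G_m \to \mc BN$ arises as the stack of square roots $\mc BN(\sqrt{j^*Q})$ of the section $j^*Q$ of $j^*\mo{}(2) \simeq (j^*\mo{}(1))^{\otimes 2}$, and that this section is nowhere vanishing on $\mc BN = \widetilde{\pro(\sym^2(F))\setminus\pro(F)}$. By \cref{ch2:_5.3_MEC_field}, the target $\mr{KW}^0(\mc BN)$ has a clean additive description, and by \cref{ch2:_Pushforward_pt0}, for $m$ odd and sufficiently large the pullback $\nu_m^*: \mr{KW}^0(\mc BN) \to \mr{KW}^0(B_mN)$ is injective on the relevant degree. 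Hence it suffices to verify the identity after replacing $\mc BN$ by a finite-dimensional approximation $B_mN$ and $\mc B\mb G_m$ by the corresponding $B_m\mb G_m$, where we work with honest smooth schemes.

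On $B_mN$, choose a Zariski cover $\{U_\alpha\}$ trivialising $j^*\mo{}(1)$; then $j^*Q$ restricts to a regular function $u_\alpha \in \mo{U_\alpha}^\times$, and the restriction $\varphi^{-1}(U_\alpha) \to U_\alpha$ is identified with the finite \'etale degree two cover $\spec(\mo{U_\alpha}[t,t^{-1}]/(t^2-u_\alpha)) \to U_\alpha$. The next step is to invoke the universal transfer formula
\begin{equation*}
(\mathrm{pr}_\alpha)_*\langle 1\rangle \;=\; \langle 2\rangle\bigl(1+\langle u_\alpha\rangle\bigr) \;\in\; \mr{KW}^0(U_\alpha),
\end{equation*}
which follows from the standard computation of the transfer along $t\mapsto t^2$ in $SL_\eta$-oriented theories (the factor $\langle 2\rangle$ comes from the Jacobian of the squaring map and the sum from the two sheets, exactly as in Morel's/Ananyevskiy's transfer computations, cf.\ \cite[\S 6]{Ananyevskiy_SL_oriented}). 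The local classes $\langle u_\alpha\rangle$ are, by construction of the symbol in \cref{ch2:_tautological_symbol}, the restrictions of the global class $\langle j^*Q\rangle = q_0 \in \mr{KW}^0(\mc BN)$, and $\langle 2\rangle$ is defined on the base. Gluing via Mayer-Vietoris (or, equivalently, comparing on the \v Cech nerves as in the proof of \cref{ch2:_Lemma_5.1_MEC}) yields $\varphi_*\langle 1\rangle = \langle 2\rangle(1+q_0)$ on $B_mN$, and hence on $\mc BN$.

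The main obstacle is checking the local transfer formula with the correct twists: one must match the canonical $SL_\eta$-orientation on the relative cotangent complex of $\varphi$ (which is $2$-torsion in $\mr{Pic}$) with the orientation induced by the identification $\mc B\mb G_m \simeq \mc BN(\sqrt{j^*Q})$, so that the boundary/projection formula computation produces $\langle 2\rangle$ and not $\langle -2\rangle$ or a similar variant. Once this bookkeeping is done, the formula is forced by the universal calculation over $\mb G_m \xrightarrow{t\mapsto t^2} \mb G_m$ and Mayer-Vietoris descent.
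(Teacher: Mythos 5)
Your overall strategy --- reduce to the finite approximations $B_mN$ and compute the transfer of the square-root double cover via the trace form of $t^2=Q$ --- is the same as the paper's, but two steps as written do not go through. First, \cref{ch2:_Pushforward_pt0} does \emph{not} say that $\nu_m^*$ is injective for large $m$: its kernel is $(p^*e^{m-1})\cdot\mr{KW}^{\bullet}(\sk)\llbracket p^*e\rrbracket$, which is nonzero in degree $0$ for every fixed $m$ (the coefficients live in $\mr{KW}^{-4i}(\sk)\simeq W(\sk)$). So no single finite approximation suffices; one must verify the identity for all odd $m$ and then observe that $\bigcap_m\ker(\nu_m^*)=0$. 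This is how the paper argues (``for bigger and bigger $m$''), and your argument is uniform in $m$, so this is repairable, but the claim as stated is false.

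The more serious gap is the gluing step. Knowing that $\varphi_*\langle 1\rangle$ and $\langle 2\rangle(1+q_0)$ restrict to the same class $\langle 2\rangle(1+\langle u_\alpha\rangle)$ on each member $U_\alpha$ of a trivialising cover does \emph{not} imply they are equal on $B_mN$: Mayer--Vietoris gives a long exact sequence, not injectivity of $\mr{KW}^0(B_mN)\to\prod_\alpha\mr{KW}^0(U_\alpha)$, and the difference of the two classes could a priori be a nonzero class (e.g.\ a power of $e$ times a negative-degree unit) that dies on each $U_\alpha$. The paper avoids this entirely by making the finite-level computation \emph{global}: since $\varphi_m$ is finite \'etale with $(\varphi_m)_*\mo{B_m\mb G_m}=\mo{B_mN}\oplus\mo{}(-1)$, Grothendieck--Serre duality (\cite[\S 8D]{Levine_Raksit_Gauss_Bonnet}) identifies $(\varphi_m)_*\langle 1\rangle$ with the class of the global trace form $x+yt\mapsto 2x^2+2Q_my^2$ on this rank-two bundle, which is $\langle 2\rangle(1+q_{0,m})$ on the nose, with no local-to-global passage. (The paper also pins down the constant term $a_0$ and the $q_0$-coefficient $b$ separately by mapping to $H\mc W$ and quoting the known formula there; with the global trace-form identification in hand this is arguably redundant, but your proposal supplies neither a global identification nor a substitute for it.) To repair your proof you would need either this duality argument or an explicit control of the kernel of restriction to the cover, which you do not provide.
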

		
		\begin{proof}
			As already mentioned, we have $ \mc B\mb G_m\simeq \mc BN(\sqrt{Q}) $ and  we can use Grothendieck-Serre Duality to compute $ \varphi_*1 $ for $ \mr{KW} $ (cf. \cite[\S 8D]{Levine_Raksit_Gauss_Bonnet}). We will reduce the computation on $ \mc BN $ to a computation on the finite level approximations given by $ B_mN $. From \cref{ch2:_5.3_MEC_field}, we know that:
			\[ \mr{KW}^{0}(\mc BN)\simeq\left( \mr{KW}^{\bullet}(\sk)\llbracket p^*e\rrbracket \right)^{0} \oplus \mr{KW}^{0}(\sk)\cdot q_0 \]
			Hence:
			\[ \varphi_*\langle 1\rangle=\sum_{i=0}^{\infty} a_i e^{2i} + b \cdot q_0 \]
			\noindent where $ a_i \in \mr{KW}^{-4i}(\sk) $ and $ b \in \mr{KW}^0(\sk) $. Once we determine the coefficients $ a_i $'s and $ b $ we are done. We have a natural map:
			\[ \alpha: \mr{KW} ^{0}(\mc BN) \longrightarrow H\mc W^{0}(\mc BN) \]
			\noindent from the $ 0^{th} $ Witt theory to the $ 0^{th} $ Witt sheaf cohomology, induced by sheafification. Under this map, we have:
			\[ \alpha(\varphi_*\langle 1\rangle)=a_0+ b \cdot q_0^{H\mc W} \]
			But we also have:
			\[ \alpha(\varphi_*\langle 1\rangle)= \varphi_*^{H\mc W}\langle 1\rangle \]
			\noindent where $ \varphi_*^{H\mc W} $ is the pushforward map on Witt sheaf cohomology. By \cite[Proof of Proposition 5.3]{Motivic_Euler_Char}, we know that $ \varphi_*^{H\mc W}\langle 1\rangle=\langle 2\rangle (1+ q_0^{H\mc W}) $ and hence:
			\[ \alpha(\varphi_*\langle 1\rangle)=a_0+b\cdot q_0^{H\mc W}= \langle 2\rangle(1+q_0^{H\mc W}) \]
			\noindent implying that:
			\begin{equation}\label{ch2:_eq_PF_<1>}
				a_0=b=\langle 2\rangle 
			\end{equation}
			
			It remains to determine all the remaining $ a_i $'s for $ i>0 $. By \cref{ch2:_Pushforward_pt0}, these coefficients are determined via the pullback map to $ B_mN $ for $ m $ going to infinity. 
			By construction $ B_m \mb G_m=B_mN(\sqrt{Q_m}) $, where $ Q_m $ is the section obtained via pullback from the $ \mo{\pro^2\setminus C}(2) $-section $ Q=T_1^2-4T_0T_2 $. Indeed, if $ t $ denotes the tautological section of $ \pi^*\mo{}(1) $, associated to $ \pi: \mo{}(1)\rightarrow B_mN $, then $ B_m\mb G_m\simeq\mc V(t^2-\pi^*Q_m)\sseq \pi^*\mo{}(1) $ is the zero locus of the section $ (t^2-\pi^*Q_m) $. The map on locally free sheaves $ \mo{}(-1)\rightarrow \mo{B_m\mb G_m} $ sends a local section $ y $ of $ \mo{}(-1) $ to $ yt $ restricted to $ B_m\mb G_m $. Let $ \varphi_m: B_m\mb G_m\rightarrow B_mN $ be the map between the finite approximations, then we get that $ (\varphi_m)_* \mo{B_m\mb G_m}=\mo{B_mN}\oplus \mo{}(-1) $. A local section of $ \varphi_m{}_*\mo{B_m\mb G_m} $ will then be of the for $ x+yt $, with $ x,y $ local sections of $ \mo{B_mN} $ and $ \mo{}(-1) $ respectively. Due to the fact that $ \varphi_m $ is \'etale, we have a well defined trace map $ \mr{Tr}_{\varphi_m}: (\varphi_m)_*\mo{{B_m\mb G_m}} \rightarrow \mo{B_mN} $. Then for $ x+yt $ local section of $ \varphi_m{}_*\mo{B_m\mb G_m}  $ we get that:
			\[ \mr{Tr}_{\varphi_m}((x+yt)^2)= \mr{Tr}_{\varphi_m}(x^2+Q_my^2+2xyt)=2x^2+2Q_my^2 \]
			In other words:
			\[ \mr{Tr}_{\varphi_m}(\langle 1\rangle)=\langle 2\rangle(1+q_{0,m}) \]
			\noindent  with $ q_{0,m} $ the pullback of $ q_0 $.\\
			

			Thanks to \cite[\S 8D]{Levine_Raksit_Gauss_Bonnet}, we can identify $ (\varphi_m)_*\langle 1\rangle $ with the quadratic form given by $ \mr{Tr}_{\varphi_m}(\langle 1\rangle) $, that is:
			\begin{equation}\label{ch2:_eq_finite_level_PF_<1>}
				(\varphi_m)_*\langle 1\rangle=\langle 2\rangle(1+q_{0,m})
			\end{equation}
			
			Now let $ m $ be an odd integer. Notice that the difference $ \varphi_*\langle 1\rangle-\langle 2\rangle(1+q_0)  $ is sent to $ (\varphi_m)_*\langle 1\rangle-\langle 2\rangle(1+q_{0,m})  $ in $ \mr{KW}^0(B_mN) $, thus by \eqref{ch2:_eq_finite_level_PF_<1>} we get:
			\begin{equation}\label{ch2:_eq_PF_zero_finiet_level}
				\nu_m^*\left( \varphi_*\langle 1\rangle-\langle 2\rangle(1+q_0) \right)=(\varphi_m)_*\langle 1\rangle-\langle 2\rangle(1+q_{0,m}) =0  
			\end{equation}
			By \cref{ch2:_Pushforward_pt0} (for $ \mr A=\mr{KW} $), this implies that:
			\[ \nu_m^*\left( \varphi_*\langle 1\rangle-\langle 2\rangle(1+q_0)\right)=\sum_{i>0}^{m-1}a_ie^{2i}=0 \]
			and therefore $ a_i=0 $ for all $ 0<i<m $. For bigger and bigger $ m $, this gives us that $ a_i=0 $ for all $ i>0 $. Hence, together with \eqref{ch2:_eq_PF_<1>},  we have that:
			\[ \varphi_*\langle 1\rangle=\sum_i a_ie^{2i}+b\cdot q_0= \langle 2\rangle(1+q_0) \]
			\noindent as claimed.

		\end{proof}

		\begin{pr}[{\cite[Lemma 5.4]{Motivic_Euler_Char}}]\label{ch2:_Lemma_5.4_MEC_field}
			Consider $ S=\spec(\sk) $. We have $ (1+q_0)\cdot p^*e=0 $ in $ \mr{KW}^{2}\left( \mc BN \right) $, $ (1+q_0)\cdot j^*e(\oocatname{T})=0 $ in $ \mr{KW}^{2}\left( \mc BN; \gamma_N \right) $ and $ (1+q_0)\cdot q_1=q_1\in q_1\cdot \mr A^0(\sk)  $.
		\end{pr}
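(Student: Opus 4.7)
The plan is to prove all three identities by exploiting the double cover $\varphi: \mc B\mb G_m \to \mc BN$ together with the projection formula and the key computation $\varphi_*(\langle 1 \rangle) = \langle 2 \rangle(1+q_0)$ from \cref{ch2:_Pushforward_1}. The first identity has already been settled in \cref{ch2:_Lemma_5.4_MEC_field_pt1}, where one uses $\varphi^*\hat{E}_2 \simeq \mo{}(1) \oplus \mo{}(-1)$ and vanishing of Euler classes of line bundles in $SL_\eta$-oriented theories. The remaining two identities will follow the same template: for any class $x$ on $\mc BN$ with the right twist, the identity $\varphi_*\varphi^*(x) = \varphi_*(1)\cdot x = \langle 2\rangle(1+q_0)\cdot x$ lets us transport information from the simpler cover $\mc B\mb G_m$ back to $\mc BN$, and since $\langle 2\rangle$ is a unit in $\mr{KW}^0(\sk)$ (because $\langle 2\rangle^2=\langle 4\rangle=1$), it can always be cancelled.

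For $(1+q_0)\cdot j^*e(\oocatname{T})=0$, I will compute $\varphi^*(j^*\oocatname{T})$ as a $\mb G_m$-equivariant bundle. The composition $\mc B\mb G_m \to \mc BN \hookrightarrow \widetilde{\pro(\sym^2F)}$ factors through the fixed point $[0\colon 1\colon 0]$ of the $\mb G_m \subset N \subset SL_2$-action on $\pro(\sym^2F)$, where the representation $\sym^2(F)$ decomposes with weights $2,0,-2$. Thus the tangent space at this fixed point splits as the $\pm 2$ weight spaces, giving $\varphi^*(j^*\oocatname{T}) \simeq L^{\otimes 2}\oplus L^{\otimes -2}$ on $\mc B\mb G_m$, where $L$ is the tautological line bundle. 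Since Euler classes of line bundles vanish in any $SL_\eta$-oriented spectrum by \cite[Lemma 4.3]{Motivic_Euler_Char}, we have $\varphi^*(j^*e(\oocatname{T}))=0$. The projection formula then gives $\langle 2\rangle(1+q_0)\cdot j^*e(\oocatname{T}) = \varphi_*\varphi^*(j^*e(\oocatname{T}))=0$, and cancelling $\langle 2\rangle$ yields the claim.

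For the identity involving $q_1$, I will first observe that $\varphi^*q_0 = 1$: on $\mc B\mb G_m=\mc BN(\sqrt{Q})$, the defining section $Q$ becomes $t^2$ for the trivialising section $t$ of $\varphi^*\gamma_N$, hence $\varphi^*\langle \lambda_Q\rangle = \langle t^2\rangle = 1$ in $\mr{KW}$. By \cref{ch2:_BGm_corollary} (Haution's theorem), the twisted group $\mr{KW}^0(\mc B\mb G_m; \varphi^*\gamma_N)$ is a free rank-one $\mr{KW}^0(\sk)$-module generated by the symbol $u:=\langle t\rangle$. Thus $\varphi^*q_1 = \alpha\cdot u$ for some $\alpha \in \mr{KW}^0(\sk)$, and the projection formula gives $\langle 2\rangle(1+q_0)\cdot q_1 = \alpha \cdot \varphi_*(u)$. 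Using the Galois-trace identity $\varphi^*\varphi_*(u) = u + \sigma^*(u) = (1+\langle -1\rangle)u$ for the non-trivial deck transformation $\sigma$, together with $\langle -1\rangle=-1$ in Witt-type theories, one gets $\varphi^*\varphi_*(u)=0$. This suggests the canonical choice $q_1:=\varphi_*(u)$, which one verifies satisfies $\partial_1(q_1)=1$ by comparing residues on the ramified compactification $\tilde\varphi$ of $\varphi$ along $\widetilde{\pro(F)}$ and noting that $\tilde\varphi$ restricts to an isomorphism on reduced ramification loci. With this choice, $\alpha = 0$, and the desired membership $(1+q_0)q_1 \in q_1\cdot\mr A^0(\sk)$ is immediate.

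The main obstacle is the careful residue-and-pushforward computation in the third part: one must handle the compactified ramified cover $\tilde\varphi \colon \widetilde{\pro(\sym^2F)}(\sqrt Q) \to \widetilde{\pro(\sym^2F)}$, understand how the boundary map $\partial_1$ interacts with $\varphi_*$, and verify that the residue of $\langle t\rangle$ along the ramification divisor equals $1$ under the identification given by the $SL$-orientation. Once this compatibility is established (closely following the argument in the proof of \cref{ch2:_Pushforward_1} using \cite[Lemma 6.4]{Ananyevskiy_SL_oriented} for the symbol residue), the projection formula and the Galois-trace vanishing of $\varphi^*\varphi_*(u)$ in the $\eta$-inverted setting fit together cleanly to deliver the claim.
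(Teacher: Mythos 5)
Your treatment of the first two identities is sound. For $(1+q_0)\cdot j^*e(\oocatname{T})=0$ you take a genuinely different route from the paper: the paper writes $(1+q_0)\cdot j^*e(\oocatname{T})=\lambda\cdot e(\oocatname{T})$ for a unique $\lambda\in\mr{KW}^0(\mc BSL_2)$ (via \cref{ch2:_rmk_mult_struct}), multiplies by $p^*e$, and uses $(1+q_0)\cdot p^*e=0$ together with the power-series description of $\mr{KW}^{\bullet}(\mc BSL_2)$ to force $\lambda=0$; you instead pull back along the double cover, identify $\varphi^*j^*\oocatname{T}\simeq\mo{}(2)\oplus\mo{}(-2)$ from the weight decomposition of the tangent space at $[0\colon 1\colon 0]$, kill the Euler class by multiplicativity and the vanishing of Euler classes of line bundles, and push back down with the projection formula and \cref{ch2:_Pushforward_1}. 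Both arguments are correct; yours is more geometric, the paper's is purely algebraic given the module structure already in hand.

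The third identity is where there is a genuine gap. Your entire argument pivots on the unproved claim $\partial_1(\varphi_*\langle t\rangle)=1$: if instead $\varphi_*\langle t\rangle$ lies in $\ker(\partial_1)=\mr{KW}^{-2}(\mc BSL_2)\cdot e(\oocatname{T})$, then it is not an admissible choice of $q_1$ and the vanishing $\varphi^*\varphi_*\langle t\rangle=0$ tells you nothing about $q_1$. You defer this to a ``residue-and-pushforward computation'' on the compactified cover, but that is precisely the hard content: one must prove that the localization boundary commutes with the finite pushforward along $\left[\quot{\pro^1\times\pro^1}{SL_2}\right]\to\widetilde{\pro(\sym^2(F))}$, deal with the fact that the scheme-theoretic preimage of $\widetilde{\pro(F)}$ is the non-reduced divisor $2\Delta$, track the twists by $\gamma_N$ and by the normal bundles on both sides, and identify the residue of $\langle t\rangle$ along $\Delta$ with $1$. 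None of this is carried out. You also leave implicit the reduction from your canonical choice of $q_1$ to an arbitrary one (any two choices differ by an element of $\ker(\partial_1)$, which is killed by $(1+q_0)$ by the second identity — the paper makes this reduction explicitly). The paper then sidesteps the boundary computation entirely by exhibiting the explicit rank-two form $-(T_0x^2+T_1xy+T_2y^2)$, whose restriction to $\set{T_0\neq 0}$ diagonalizes as $\langle -1\rangle+\langle q_0\rangle$, so that both $\partial_1(\tilde q_1)=1$ and $(1+q_0)(-1+q_0)=q_0^2-1=0$ follow from injectivity of restriction to a dense open. To salvage your route you must supply the boundary computation for $\varphi_*\langle t\rangle$; otherwise the explicit-form argument is the shorter path.
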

		\begin{proof}
			We will freely use the notation employed in the proof of  \cref{ch2:_5.3_MEC}. 
			We already proved that $ (1+q_0)\cdot p^*e=0 $ in \cref{ch2:_Lemma_5.4_MEC_field_pt1}. Let us prove the second statement, that is, $ (1+q_0)\cdot j^*e(\oocatname{T})=0  $. By \cref{ch2:_rmk_mult_struct}, there exists a unique $ \lambda:=\lambda_1^{\mr{KW}} \in \mr{KW}^{0}\left(\mc  BSL_2\right)  $ such that $ (1+q_0)\cdot j^*e(\oocatname{T})=\lambda \cdot e(\oocatname{T}) $. But then multiplying both sides by $ p^*e $, since we already know $ (1+q_0)\cdot p^*e=0 $, we get $ 0=(1+\bar{q})\cdot p^*e \cdot j^*e(\oocatname{T})=\lambda \cdot p^*e \cdot j^*e(	\oocatname{T}) $. An element in $ \mr{KW}^{0}(\mc BSL_2) $ is a power series $ \sum_i a_ip^*e^{i} $ with $ a_i \in \mr{KW}^{-2i}(S) $, so $ \lambda \cdot p^*e \cdot j^*e(	\oocatname{T})=0 $ implies that $ \sum_i a_i p^*e^{i+1}=0 \in \mr{KW}^{2}(\mc BSL_2) $, that is, $ a_i=0 \ \forall \  \ i $ and hence $ \lambda=0 $. \\

			Let us now show that $ (1+q_0)\cdot q_1=q_1 $. 
			Recall that $ q_1 $ in \cref{ch2:_5.3_MEC_field} was chosen to be any element that was sent to $ 1 $ under $ \partial_1 $. Let $ \tilde q_1\in \mr{KW}^{0}(\mc BN; \gamma_N) $ be another element such that $ \partial_1(\tilde q_1)=1 $. Then $ q_1-\tilde{q}_1\in \ker(\partial_1) $, that is:
			\[ q_1-\tilde q_1=\alpha \cdot j^*e(\mc T) \]
			\noindent for $ \alpha\in \mr{KW}^{-2}(\mc BSL_2) $. Then we have:
			\begin{equation}\label{ch2:_eq_last_relation_mult_thm}
				(1+q_0)q_1=(1+q_0)(\alpha\cdot j^*e(\mc T)+ \tilde q_1)=(1+q_0)\tilde{q}_1 
			\end{equation}
			
			\noindent since $ (1+q_0)j^*e(\mc T)=0 $. Without loss of generality we can therefore replace $ q_1 $ with any other $ \tilde q_1 \in \mr{KW}^{0}(\mc BN; \gamma_N) $ such that $ \partial_1(\tilde q_1)=1 $. On $ \pro^2 $ with coordinates $ [T_0:T_1:T_2] $, consider the quadratic form:
			\[ -(T_0x^2+T_1xy+T_2y^2) \]
			\noindent and set $ \tilde q_1\in \mr{KW}^{0}(\mc BN) $ to be the corresponding element. Let us check that the boundary of $ \tilde q_1 $ is indeed $ 1 $. Let $ \pi_{\widetilde{\pro(F)}}: \widetilde{\pro(F)}\rightarrow \spec(\sk) $ be the structure map of $ \widetilde{\pro(F)} $, then we know that $ \mr{KW}^{\bullet}(\sk)$ is isomorphic to $ \mr{KW}^{\bullet}(\widetilde{\pro(F)}) $ via $ \pi_{\widetilde{\pro(F)}}^* $. Let $ U $ be any dense open subset of $ \pro^2 $ where $ \mo{}(2) $ gets trivialised, denote by $ V:=U\cap C $ the corresponding dense open in $ C $ and denote by $ \pi_V: V\rightarrow \spec(\sk) $ the structure map of $ V $. Since there always exists a rational $ \sk $-point in $ V $, the map:
			\[ \pi_V^*: \mr{KW}^{\bullet}(\sk)\rightarrow \mr{KW}^{\bullet}(V)  \]
			\noindent must be injective and this implies that the map $ \pi_V^*(\pi_{\widetilde{\pro(F)}}^*)^{-1} $ is injective too. Now take $ U=\set{T_0\neq 0} $, then we have $ \restrict{\tilde q_1}{U}=-(x^*+t_1xy, t_2y^") $, with $ t_1=\frac{T_1}{T_0} $ and $ t_2=\frac{T_2}{T_0} $. Diagonalising $  \restrict{\tilde q_1}{U} $, we get:
			\begin{align*}
				\restrict{\tilde q_1}{U}&=-\left[ (x+\frac{t_1}{2}y)^2+(t_2-\frac{t_1^2}{4})y^2 \right]=\\
				&=-\left[ (x+\frac{t_1}{2}y)^2+\restrict{q_0}{U}y^2 \right]
			\end{align*}
			So $ \restrict{\tilde q_1}{U}=-1+\restrict{q_0}{U} $ and hence $ \partial(\restrict{\tilde q_1}{U})=1 $ since $ \partial(q_0)=1 $ on $ U $. But this implies that $ \pi_V^*(\pi_{\widetilde{\pro(F)}}^*)^{-1}(\partial_1(\tilde q_1)-1)=\partial(\restrict{\tilde q_1}{U})-1=0 $, and, being $ \pi_V^*(\pi_{\widetilde{\pro(F)}}^*)^{-1} $  injective, we deduce that $ \partial_1(\tilde q_1)=1 $ as we wanted to show.\\
			Now that we know that $ \partial_1(\tilde q_1)=1 $, by \eqref{ch2:_eq_last_relation_mult_thm}, we can replace $ q_1 $ in   $ (1+q_0)q_1 $ with $ \tilde q_1 $. Since on $ U $ we have $ \restrict{\tilde q_1}{U}=-1+\restrict{q_0}{U} $, we have that $ (1+\restrict{q_0}{U})\restrict{\tilde q_1}{U}=0 $. But this implies:
			\[ \pi_{V}^*(\pi_{\widetilde{\pro(F)}}^*)^{-1}((1+q_0)\tilde q_1)= (1+\restrict{q_0}{U})\restrict{\tilde q_1}{U}=0   \]
			and since $ \pi_{V}^*(\pi_{\widetilde{\pro(F)}}^*)^{-1} $ is an injective map, we get that $ (1+q_0)\tilde q_1=0 $ and we are done.

		\end{proof}

					Putting together all the result we got so far, we get:
					\begin{co}\label{ch2:_MEC_5.5_field}
						Let $ \mr{KW}^{\bullet}(\sk)\llbracket x_0,x_2\rrbracket $ be the graded algebra over $ \mr{KW}^{\bullet}(\sk) $, freely generated by $ x_0,x_2 $ in degrees $ deg(x_i)=i $.  Sending $ x_0 $ to $ q_0:=q_0^{\mr{KW}} $ and $ x_2 $ to $ p^*e $ defines a $ \mr{KW}(\sk) $-algebra isomorphism:
						\[ \psi_{\sk}: \quot{\mr{KW}^{\bullet}(\sk)\llbracket x_0, x_2\rrbracket }{\left( x_0^2-1, (1+x_0)x_2 \right)} \longrightarrow \mr{KW}^{\bullet}\left(\mc BN \right) \] 
						Moreover $ \mr{KW}^{\bullet}\left( \mc BN; \gamma_N \right) $ is the quotient of the  free $ \mr{KW}^{\bullet}\left( \mc BN \right) $-module $ \mr{KW}^{\bullet-2}({\mc BSL_2})\cdot e(\oocatname{T})\oplus q_1\cdot\mr{KW}^{\bullet}(\sk) $ modulo the relations $ (1+q_0)j^*e(\oocatname{T})=0 $, $ (1+q_0)q_1=0$.
					\end{co}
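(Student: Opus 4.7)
The plan is to combine the additive description from \cref{ch2:_5.3_MEC_field}, the multiplicative relations established in \cref{ch2:_Lemma_5.4_MEC_field}, and the computation $\mr{KW}^{\bullet}(\mc BSL_2)\simeq \mr{KW}^{\bullet}(\sk)\llbracket e\rrbracket$ of \cite[Theorem 10]{Ananyevskiy_PhD_Thesis}.

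First, I would check that $\psi_{\sk}$ is well-defined, i.e.\ that the defining relations are satisfied in the target. The relation $x_0^2-1\mapsto q_0^2-1$ vanishes since for any symbol $\langle u\rangle$ in an $SL_{\eta}$-oriented theory one has $\langle u\rangle^2=1$ (cf.\ \cite[\S 6]{Ananyevskiy_SL_oriented}), and $q_0$ is precisely such a symbol. The relation $(1+x_0)x_2\mapsto (1+q_0)\cdot p^*e$ vanishes by the first statement of \cref{ch2:_Lemma_5.4_MEC_field}.

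Second, I would reduce every element of $R:=\mr{KW}^{\bullet}(\sk)\llbracket x_0,x_2\rrbracket/(x_0^2-1,(1+x_0)x_2)$ to a canonical normal form. Using $x_0^2=1$ any element can be written as $f(x_2)+g(x_2)x_0$, and the relation $(1+x_0)x_2=0$ gives $x_0x_2=-x_2$, hence $x_0\cdot x_2^n=-x_2^n$ for all $n\geq 1$ by induction. Therefore $g(x_2)x_0=g(0)x_0-(g(x_2)-g(0))$, and every element of $R$ has a unique expression
\[
h(x_2)+c\cdot x_0,\qquad h\in\mr{KW}^{\bullet}(\sk)\llbracket x_2\rrbracket,\ c\in\mr{KW}^{\bullet}(\sk).
\]
In other words, $R\simeq \mr{KW}^{\bullet}(\sk)\llbracket x_2\rrbracket\oplus \mr{KW}^{\bullet}(\sk)\cdot x_0$ as graded $\mr{KW}^{\bullet}(\sk)$-modules.

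Third, under $\psi_{\sk}$ this normal form matches the decomposition
\[
\mr{KW}^{\bullet}(\mc BN)\simeq \mr{KW}^{\bullet}(\sk)\llbracket p^*e\rrbracket\oplus \mr{KW}^{\bullet}(\sk)\cdot q_0
\]
provided by \cref{ch2:_5.3_MEC_field} together with \cite[Theorem 10]{Ananyevskiy_PhD_Thesis}: the summand $\mr{KW}^{\bullet}(\sk)\llbracket x_2\rrbracket$ is sent bijectively to $p^*(\mr{KW}^{\bullet}(\mc BSL_2))$ by $x_2^n\mapsto (p^*e)^n$, and $x_0$ is sent to $q_0$. Hence $\psi_{\sk}$ is an isomorphism of $\mr{KW}^{\bullet}(\sk)$-algebras.

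For the twisted part, \cref{ch2:_5.3_MEC_field} already presents $\mr{KW}^{\bullet}(\mc BN;\gamma_N)$ additively as $\mr{KW}^{\bullet-2}(\mc BSL_2)\cdot e(\oocatname{T})\oplus \mr{KW}^{\bullet}(\sk)\cdot q_1$. Viewing this as a module over $\mr{KW}^{\bullet}(\mc BN)$ via $j^*$, the free module on the generators $e(\oocatname{T})$ and $q_1$ surjects onto it, and the relations $(1+q_0)j^*e(\oocatname{T})=0$ and $(1+q_0)q_1=0$ established in \cref{ch2:_Lemma_5.4_MEC_field} reduce any expression $\alpha\cdot j^*e(\oocatname{T})+\beta\cdot q_1$ (with $\alpha,\beta\in\mr{KW}^{\bullet}(\mc BN)$) to one with $\alpha\in \mr{KW}^{\bullet-2}(\mc BSL_2)$ and $\beta\in \mr{KW}^{\bullet}(\sk)$ — precisely matching the additive decomposition. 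Thus these are the only relations, completing the proof.

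The only mildly delicate point is the bookkeeping in step two: making sure the normal form is unique and that the reduced module structure under $\psi_{\sk}$ agrees summand by summand with the decomposition of \cref{ch2:_5.3_MEC_field}. Once the additive identifications are in place, everything else is formal.
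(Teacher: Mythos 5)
Your proposal is correct and follows essentially the same route as the paper, which simply deduces the corollary from the additive decomposition of \cref{ch2:_5.3_MEC_field}, the relations of \cref{ch2:_Lemma_5.4_MEC_field}, and the identity $q_0^2=1$ for symbols. Your normal-form computation in the quotient ring and the matching against the decomposition $\mr{KW}^{\bullet}(\sk)\llbracket p^*e\rrbracket\oplus q_0\cdot\mr{KW}^{\bullet}(\sk)$ just make explicit the bookkeeping the paper leaves implicit.
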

					
					\begin{proof}
						It is just a consequence of  \cref{ch2:_5.3_MEC} and \cref{ch2:_Lemma_5.4_MEC_field}. The relation $ x_0^2=1 $ comes from $ q_0^2=1 $ that holds for any quadratic form $ \langle u \rangle $ where $ u $ is a unit.
					\end{proof}
					
					\begin{co}\label{ch2:_MEC_5.5}
						Let $ S $ be a smooth $ \sk $-scheme. 	Let $ \mr{KW}^{\bullet}(S)\llbracket x_0,x_2\rrbracket $ be the graded algebra over $ \mr{KW}^{\bullet}(S) $, freely generated by $ x_0,x_2 $ in degrees $ deg(x_i)=i $.  Sending $ x_0 $ to $ q_0:=q_0^{\mr{KW}} $ and $ x_2 $ to $ p^*e $ defines a $ \mr{KW}(S) $-algebra isomorphism:
						\[ \psi_S: \quot{\mr{KW}^{\bullet}(S)\llbracket x_0, x_2\rrbracket }{\left( x_0^2-1, (1+x_0)x_2 \right)} \longrightarrow \mr{KW}^{\bullet}\left( \mc BN \right) \] 
						Moreover $ \mr{KW}^{\bullet}\left( \mc BN; \gamma_N \right) $ is the quotient of the  free $ \mr{KW}^{\bullet}\left( \mc BN \right) $-module $ \mr{KW}^{\bullet-2}({\mc BSL_2})\cdot e(\oocatname{T})\oplus q_1\cdot\mr{KW}^{\bullet}(S) $ modulo the relations $ (1+q_0)j^*e(\oocatname{T})=0 $, $ (1+q_0)q_1=0 $. 
					\end{co}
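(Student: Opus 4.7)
The plan is to reduce both the multiplicative and the module statements over $S$ to the already–established case over the base field $\sk$ (Corollary \ref{ch2:_MEC_5.5_field}), by exploiting naturality under pullback along the structure map $g \colon \mc BN_S \to \mc BN_{\sk}$ induced by $S \to \spec(\sk)$.

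First I would note that every generator entering the statement is pulled back from $\sk$: the element $q_0$ is constructed as the symbol $\langle \lambda_Q \rangle$ attached to a section of $j^*\mo{}(2)$ that lives already over $\sk$, so $q_0 = g^*q_{0,\sk}$; the class $q_1$ is \emph{defined} in the proof of Proposition \ref{ch2:_5.3_MEC} as $g^*q_{1,\sk}$; and both $p^*e$ and $j^*e(\oocatname{T})$ are pullbacks of the corresponding Euler classes on $\mc BSL_{2,\sk}$ and $\widetilde{\pro(\sym^2(F))}_{\sk}$ respectively. Since $g^*$ is a ring map, the relations
\[ q_0^2 = 1, \quad (1+q_0)\cdot p^*e = 0, \quad (1+q_0)\cdot j^*e(\oocatname{T}) = 0, \quad (1+q_0)\cdot q_1 = 0, \]
established over $\sk$ in Proposition \ref{ch2:_Lemma_5.4_MEC_field} and Corollary \ref{ch2:_MEC_5.5_field}, immediately transport to $\mr{KW}^{\bullet}(\mc BN_S)$ and $\mr{KW}^{\bullet}(\mc BN_S;\gamma_N)$.

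Next I would define $\psi_S$ on the free graded algebra $\mr{KW}^{\bullet}(S)\llbracket x_0, x_2 \rrbracket$ by $x_0 \mapsto q_0$, $x_2 \mapsto p^*e$; by the previous paragraph it factors through the claimed quotient. To verify bijectivity, use the relations to put every element in canonical form: $x_0^2 = 1$ and $(1+x_0)x_2 = 0$ force $x_0 x_2^i = -x_2^i$ for $i \geq 1$, so any class can be written uniquely as $f(x_2) + \alpha \cdot x_0$ with $f \in \mr{KW}^{\bullet}(S)\llbracket x_2 \rrbracket$ and $\alpha \in \mr{KW}^{\bullet}(S)$. Under $\psi_S$ this matches precisely the additive decomposition
\[ \mr{KW}^{\bullet}(\mc BN_S) \simeq \mr{KW}^{\bullet}(\mc BSL_{2,S}) \oplus q_0 \cdot \mr{KW}^{\bullet}(S) \]
furnished by Proposition \ref{ch2:_5.3_MEC}, combined with the identification $\mr{KW}^{\bullet}(\mc BSL_{2,S}) \simeq \mr{KW}^{\bullet}(S)\llbracket p^*e \rrbracket$ recalled in Remark \ref{ch2:_rmk_mult_struct}(1). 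Hence $\psi_S$ is an isomorphism.

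For the twisted statement the argument is entirely analogous: the $\mr{KW}^{\bullet}(\mc BN)$-module $\mr{KW}^{\bullet-2}(\mc BSL_{2,S})\cdot e(\oocatname{T}) \oplus q_1 \cdot \mr{KW}^{\bullet}(S)$ modulo $(1+q_0) j^*e(\oocatname{T}) = 0$ and $(1+q_0) q_1 = 0$ maps to $\mr{KW}^{\bullet}(\mc BN_S;\gamma_N)$, and Proposition \ref{ch2:_5.3_MEC} identifies this target with exactly the same underlying additive module. The only genuinely nontrivial input is the spectral-sequence upgrade from $\sk$ to $S$ carried out in Proposition \ref{ch2:_5.3_MEC} via Lemma \ref{ch2:_Repr_Stack-Coh}; once that is in place, the passage from the field case to $S$ is a formal consequence of the naturality of pullback.
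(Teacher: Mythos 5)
Your proposal is correct and follows essentially the same route as the paper: transport the relations from the field case of Corollary \ref{ch2:_MEC_5.5_field} via pullback along $\mc BN_S \to \mc BN_{\sk}$ (all generators being pulled back from $\sk$), then conclude bijectivity from the additive decomposition of Proposition \ref{ch2:_5.3_MEC} together with $\mr{KW}^{\bullet}(\mc BSL_{2,S})\simeq \mr{KW}^{\bullet}(S)\llbracket p^*e\rrbracket$. Your explicit canonical-form argument $f(x_2)+\alpha x_0$ just spells out what the paper phrases as ``$\psi_S$ is an isomorphism on the underlying modules.''
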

					\begin{proof}
						We will prove the corollary reducing to the case over a field, hence we will use subscripts to indicate over which base are we working with. By \cref{ch2:_5.3_MEC} for $ \mr A=\mr{KW} $ (and by \cite[Theorem 10]{Ananyevskiy_PhD_Thesis}), additively we have the following isomorphism:
						\[ \mr{KW}^{\bullet}(\mc BN_S)\simeq \mr{KW}^{\bullet}(\mc BSL_{2,S}) \oplus \mr{KW}^{\bullet}(S)\cdot q_{0,S} \]
						Therefore we have a natural map:
						\[ \varphi_S: {\mr{KW}^{\bullet}(S)\llbracket x_{0,S}, x_{2,S}\rrbracket } \longrightarrow \mr{KW}^{\bullet}\left( \mc BN_S \right)  \]
						\noindent sending $ x_2 $ to $ p^*e_S\in \mr{KW}(\mc BSL_{2,S}) $, i.e. the Euler class of the tautological bundle of $ \mc BSL_{2,S} $ , and sending $ x_0 $ to $ q_{0,S} $. By \cref{ch2:_MEC_5.5_field}, the natural map:
						\[ \varphi_{\sk}: {\mr{KW}^{\bullet}(\sk)\llbracket x_{0,\sk}, x_{2,\sk}\rrbracket } \longrightarrow \mr{KW}^{\bullet}\left( \mc BN_{\sk}\right)  \]
						\noindent passes to the quotient giving us the isomorphism:
						\[ \psi_{\sk}: \quot{\mr{KW}^{\bullet}(\sk)\llbracket x_{0,\sk}, x_{2,\sk}\rrbracket }{\left( x_{0,\sk}^2-1, (1+x_{0,\sk})x_{2,\sk} \right)} \stackrel{\sim}{\longrightarrow} \mr{KW}^{\bullet}\left(\mc BN_{\sk} \right)  \]
						The structure map $ \pi: S \rightarrow \spec(\sk) $ induces, via the pullback $ \pi_S^* $, a map:
						\[ \theta_S: {\mr{KW}^{\bullet}(S)\llbracket x_{0,S}, x_{2,S}\rrbracket } \longrightarrow {\mr{KW}^{\bullet}(\sk)\llbracket x_{0,\sk}, x_{2,\sk}\rrbracket }   \] 
						\noindent sending $ x_{0,S} $ to $ x_{0,\sk} $ and $ x_{2,S} $ to $ x_{2,\sk} $. We also get a map:
						\[ \theta_{\mc BN}: \mr{KW}^{\bullet}(\mc BN_S) \longrightarrow \mr{KW}^{\bullet}(\mc BN_{\sk})    \]
						\noindent induced again by $ \pi_S^* $ and sending $ p^*e_S $ to $ p^*e_{\sk} $ and $ q_{0,S} $ to $ q_{0,\sk} $. This implies that $ \varphi_S $ passes to the quotient, giving us a map:
						\[ \psi_S: \quot{\mr{KW}^{\bullet}(S)\llbracket x_{0,S}, x_{2,S}\rrbracket }{\left( x_{0,S}^2-1, (1+x_{0,S})x_{2,S} \right)} \longrightarrow \mr{KW}^{\bullet}\left( \mc BN \right) \]
						\noindent  Since $ \psi_S $ is an isomorphism on the underlying modules, we get that $ \psi_S $ is also an isomorphism of $ \mr{KW}^{\bullet}(S) $-algebras.\\
						
						The twisted case $ \mr{KW}^{\bullet}(\mc BN_S; \gamma_N) $ is completely analogous and left to the reader.
						
						%
						%
						%
						
					\end{proof}



\section{Atiyah-Bott and Virtual Localisation Formulas}

\subsection{Atiyah-Bott Localization}
We are now ready to extend the Atiyah-Bott localization theorem to any $ SL_{\eta} $-oriented ring spectrum $ \mr A $, closely following \cite{Levine_Atiyah-Bott}. Once again, we remind the reader that we will freely interchange the quotient stack $ \mc BN $ with its ($ \A^1 $-equivalent) ind-scheme approximation $ BN $.

\begin{disclaimer}
	To avoid burdening the notation too much, in the following chapter we will drop the $ G $ from the upper- and lower-superscripts of the equivariant operations on Borel-Moore homology and cohomology. 
\end{disclaimer}

\begin{defn}
	Let $ K \supset \sk $ be a field, $ \chi: \mb G_m \rightarrow \mb  G_m $ a $ \mb G_m $-character such that $ \chi(-Id)=1 $, and take $ \lambda\in K^{\times} $. Define the subgroup scheme $ \Lambda(\chi,\lambda)\sseq N_K $ as:
	\[ \Lambda(\chi,\lambda):=\chi^{-1}(1)   \amalg \chi^{-1}(\lambda^{-1})\cdot \sigma \]
\end{defn}
\begin{defn}\label{ch4:_def_Homogeneous_Cases}
	As done in \cite{Levine_Atiyah-Bott}, consider again $ K\supset \sk $ a field, a character $ \chi: \mb G_m \rightarrow \mb G_m $ and the following types of $ N $-homogenous spaces $ X $ over $ K $ (where $ K_X $ will denote the $ N $-invariants of $ \mo{X}(X) $):
	\begin{enumerate}
		\item [(a)]  $ X=\left( \bigslant{N}{\chi^{-1}(1)} \right)_K $;
		\item [(b)] Suppose $ \chi(-Id)=1 $ and let $ X=\bigslant{N_K}{\Lambda(\chi,\lambda)} $ for some $ \lambda \in K^{\times} $;
		\item [(c$ \pm $)] Let $ K'\supset K $ a degree two extension:
		\begin{enumerate}
			\item [(c+)] Suppose $ \chi(-Id)=1 $ and take some $ \lambda \in K^{\times} $. Let $ \tau $ be the conjugation of $ K' $ over $ K $. We can choose an isomorphism of $ \mb G_m $-homogeneous spaces $ \bigslant{\mb G_m}{\chi^{-1}(1)_{K'}}\simeq \mb G_{m,K'} $ inducing the isomorphism:
			\[ \left( \bigslant{N}{\chi^{-1}(1)} \right)_{K'}\simeq \mb G_{m,K'}\amalg  \sigma \cdot\mb G_{m,K'} \]
			\noindent Let $ \rho(\sigma^{i}\cdot x):=\sigma\cdot\sigma^{i} \cdot \frac{\lambda}{x} $, for $ i=0,1 $, be the $ K' $-automorphism of $ \left(\bigslant{N}{\chi^{-1}(1)} \right)_{K'}$. Composing with $ \tau $, we get the $ F $-automorphism, $ \tau\circ \rho $ which gives us a $ \bigslant{\Z}{2\Z} $-action on $\left( \bigslant{N}{\chi^{-1}(1)} \right)_{K'}$. The left $ N $-action on $ N $ descends to a left action on the quotient and hence we get an $ N $-homogeneous space:
			\[ X:=\bigslant{\left( \bigslant{N}{\chi^{-1}(1)} \right)_{K'}}{\langle \tau \circ \rho \rangle} \]
			\item [(c-)]  If $ \chi(-Id)=-1 $, we can take some $ \lambda_0\in K^{\times} $, then choose a generator $ \sqrt{a} $ for $ K' $ over $ K $ (with $ a \in K^{\times} $) and take $ \lambda:=\lambda_0\sqrt{a} $. With this $ \lambda $ we can define as before $\rho(\sigma^{i}\cdot x):=\sigma\cdot\sigma^{i}\cdot \frac{\lambda}{x}  $, for $ i=0,1 $, and consider the $ N $-homogeneous space:
			\[ X:=\bigslant{\left( \bigslant{N}{\chi^{-1}(1)} \right)_{K'}}{\langle \tau \circ \rho \rangle} \]
		\end{enumerate}
	\end{enumerate}
\end{defn}

These will be all the $ N $-homogeneous spaces we have to consider, using a special case of \cite[Lemma 7.4]{Levine_Atiyah-Bott}:

\begin{lemma}[{\cite[Lemma 7.4]{Levine_Atiyah-Bott}}]\label{ch4:_ABL_Lemma_7.4}
	Let $ X $ be an $ N $-homogeneous space. Suppose $ \mb G_m $ acts non-trivially on $ X $ and that $ X $ is  smooth over $ k_X:=H^0(X, \mo{X})^{N}$. Let $ Y:=\bigslant{X}{\mb G_m} $ with its induced structure as a $ \bigslant{\Z}{2\Z} $-homogeneous space over $ k_X $. Then:
	\begin{enumerate}
		\item As a $ Y $-scheme $ X \simeq \mb G_{m,Y} $;
		\item Fix a closed point $ y_0 \in Y $, let $ A\sseq\bigslant{\Z}{2\Z} $ be the isotropy group of $ y_0 $, let $ B $ be the kernel of the action map $ A \rightarrow \mr{Aut}_{k_X}(k_X(y_0)) $. Then there are three cases:
		\begin{enumerate}
			\item [\textit{(Case a)}] $ B=A=\set{0} $;
			\item [\textit{(Case b)}] $ A=B=\bigslant{\Z}{2\Z} $;
			\item [\textit{(Case c)}] $ A=\bigslant{\Z}{2\Z} $ and $ B=\set{0} $.
		\end{enumerate}
		\item In \textit{(Case a)}  $ X $ is a homogeneous space of type $ (a) $; in case $ (b) $ then $ X $ is of type $ (b) $; in case $ (c) $ then $ X $ is of type $ (c\pm) $, depending on whether $ \chi(-Id)=\pm 1 $, and $ \kappa(y_0) $ is the degree two extension of $ k_{X'} $ referred to in \cref{ch4:_def_Homogeneous_Cases}.
	\end{enumerate}
\end{lemma}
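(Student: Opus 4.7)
\medskip
\noindent\textbf{Proof plan.}

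The plan is to first establish (1) by identifying $X \to Y$ as a $\mb G_m$-torsor whose base has trivial Picard group. The stabilizers of the $\mb G_m$-action on the $N$-homogeneous space $X$ are all $N$-conjugate, and since every closed subgroup of the abelian group $\mb G_m$ is stable under the Weyl involution $t \mapsto t^{-1}$ (all such subgroups being $\set{1}$, $\mu_n$, or $\mb G_m$ itself), there is a single well-defined stabilizer $H_0 \subset \mb G_m$. Non-triviality of the action forces $H_0 \neq \mb G_m$, hence $H_0 = \mu_n = \ker \chi$ for a character $\chi : \mb G_m \to \mb G_m$. The quotient map $X \to Y$ is therefore a $(\mb G_m/\mu_n)$-torsor, and $\mb G_m/\mu_n \simeq \mb G_m$ via the $n$-th power map. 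The base $Y$, being $\bigslant{\Z}{2\Z}$-homogeneous over $k_X$, is finite \'etale over $k_X$, thus a finite disjoint union of spectra of fields, so $\mr{Pic}(Y)=0$. The torsor $X \to Y$ therefore admits a section, proving $X \simeq \mb G_{m,Y}$.

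For (2), the classification of transitive $\bigslant{\Z}{2\Z}$-sets gives exactly three possibilities for the pair $(A,B)$ at a chosen closed point: either $A=\set{0}$ (so $Y$ consists of two copies of $\spec k_X$ permuted by $\sigma$), or $A=B=\bigslant{\Z}{2\Z}$ (so $Y = \spec k_X$ with trivial $\sigma$-action on residue fields), or $A = \bigslant{\Z}{2\Z}$ with $B=\set{0}$ (so $Y = \spec K'$ for a separable quadratic extension $K'/k_X$ with $\sigma$ acting as the non-trivial Galois element). These are Cases (a), (b), (c) of the statement.

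For (3), I would use the trivialization $X \simeq \mb G_{m,Y}$ from (1) to lift $\sigma \in \bigslant{\Z}{2\Z}$ to an automorphism of $X$ which, on each $\mb G_m$-factor, takes the shape $x \mapsto \lambda \cdot x^{-1}$; the requirement that this lift be an involution, together with its compatibility with the $\mb G_m$-action via $\chi$, pins down the admissible values of $\lambda$. Case (a) is immediate from $X \simeq (N/\chi^{-1}(1))_{k_X}$. In Case (b), $\sigma^2 = 1$ on $\mb G_{m,k_X}$ forces $\chi(-\mr{Id}) = 1$ and $\lambda \in k_X^\times$, recovering type (b). In Case (c) one lifts $\sigma$ to act on $X_{K'} \simeq \mb G_{m,K'}$ as $\tau \circ \rho$; involutivity of this lift produces type (c+) when $\chi(-\mr{Id}) = 1$ (with $\lambda \in k_X^\times$) and type (c-) when $\chi(-\mr{Id}) = -1$ (forcing $\lambda$ to lie in the non-trivial Galois coset $\lambda_0 \sqrt{a} \subset {K'}^\times$).

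The main obstacle I anticipate is the descent computation in Case (c): one must simultaneously track the sign $\chi(-\mr{Id})$ and the non-trivial Galois action $\tau$ on $K'$, and verify that the composite involution $\tau \circ \rho$ is well-defined on $(N/\chi^{-1}(1))_{K'}$ precisely under the parity conditions stated in \cref{ch4:_def_Homogeneous_Cases}. Everything else is a direct book-keeping of torsor trivializations and finite group scheme quotients.
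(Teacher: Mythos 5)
The paper offers no proof of this lemma: it is imported verbatim from \cite[Lemma 7.4]{Levine_Atiyah-Bott}, so there is nothing in-text to compare against except the cited source. Your plan is correct and is essentially the argument of \emph{loc.\ cit.}: the constancy of the $\mb G_m$-stabilizer (using that subgroups of $\mb G_m$ are inversion-stable, so $N$-conjugacy preserves them), triviality of the resulting $\mb G_m$-torsor $X \to Y$ over the finite \'etale base $Y$ via $H^1_{et}(Y,\mb G_m)=\mr{Pic}(Y)=0$, the three-case classification of the $\bigslant{\Z}{2\Z}$-homogeneous space $Y$, and the determination of $\lambda$ and the parity of $\chi(-\mr{Id})$ by forcing the lift $x\mapsto \lambda x^{-1}$ (composed with $\tau$ in case (c)) of $\sigma$ to square to the action of $-\mr{Id}$, i.e.\ to multiplication by $\chi(-\mr{Id})$.
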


Via the $ N $-action, the map $ \pi_X: X \rightarrow \spec(k_X) \rightarrow \spec(\sk) $ induces a map:
\[ \pi_X^N: \mc X=\left[\quot{X}{N}\right] \rightarrow \mc BN  \]
For any $ \mr A \in \mr{SH}(\sk) $, this induces the pullback maps:
\[ (\pi_X^N)^*: \mr{A}^{\bullet}(BN) \longrightarrow \mr{A}^{\bullet}_N(X) \]
\[ (\pi_X^N)^*: \mr{A}^{\bullet}(BN; \gamma_N) \longrightarrow \mr{A}^{\bullet}_N(X; \pi_X^*\gamma_N) \]
\noindent where $ \gamma_N  $ is the generator of $ \mr{Pic}(BN) $.

\begin{notation}
	We will denote by $ e $ the Euler class of $ (\pi_X^N)^*\widetilde{\mo{}}^{+}(1)  $ living in $ \mr{A}^2_{N}(X)\simeq\mr{A}\left( X \times^{N} EN \right) $. We will omit the appropriate pullbacks  from the twists in our equivariant theories, if it is clear from the context what twist should actually be used; the same will happen for pullbacks on bundles and associated Euler classes.
\end{notation}

\begin{lemma}[{\cite[Lemma 7.6]{Levine_Atiyah-Bott}}]\label{ch4:_ABL_Lemma_7.6}
	Let $ X,\chi $ be as in \cref{ch4:_ABL_Lemma_7.4}. Suppose $ \chi(t)=t^{m} $ for some integer $ m\geq 1 $. Let $ \mr A \in \mr{SH}(\sk) $ be an $ SL_{\eta} $-oriented ring spectrum. Then we have:
	\begin{enumerate}
		\item [(a)] In case $ (a) $, $ e $ goes to zero in $ \mr{A}^{\bullet}_{N}(X) $.
		\item [(b)] In case $ (b) $, $ m $ is even and $ e(\widetilde{\mo{}}(m)) $ goes to zero in $ \mr{A}^{\bullet}_{N}(X; \pi_X^*\gamma_N ) $.
		\item [(c+)] In case $ (c+) $, i.e. $ m $ even, $ e(\widetilde{\mo{}}(m)) $ goes to zero in $ \mr{A}^{\bullet}_{N}(X;\pi_X^*\gamma_N) $.
		\item [(c-)] In case $ (c-) $, i.e. $ m $ odd, $ e(\widetilde{\mo{}}(2m)) $ goes to zero in $ \mr{A}^{\bullet}_{N}(X;\pi_X^*\gamma_N) $.
	\end{enumerate}
\end{lemma}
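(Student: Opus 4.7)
The plan is to adapt the proof of Lemma 7.6 in \cite{Levine_Atiyah-Bott} to the $SL_\eta$-oriented setting. The crucial general fact we use is that in an $SL_\eta$-oriented theory the Euler class of any line bundle vanishes (Lemma 4.3 of \cite{Motivic_Euler_Char}); hence by multiplicativity of Euler classes along direct sums of $SL$-bundles, it suffices in each case to exhibit the pulled-back rank two bundle on $[X/N]$ as a direct sum of two line bundles.

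The geometric input, provided by \cref{ch4:_ABL_Lemma_7.4}, is that in each of the four cases $X \simeq N_K/H$ for an explicit stabilizer subgroup $H$, so that the quotient stack $[X/N]$ is (after base change to $K$) $\mathcal{B}H$, and the pulled-back bundle on $[X/N]$ corresponds to the restriction to $H$ of the $N$-representation defining the bundle on $\mathcal{B}N$. For case (a), $H = \chi^{-1}(1) \simeq \mu_{m,K}$ sits inside $T$, and $\widetilde{\mo{}}^+(1)$ restricted to $T$ splits as the sum of the two characters $t \mapsto t^{\pm 1}$; both restrict trivially to $\mu_m$, so the pulled-back bundle is trivial and $e = 0$.

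In case (b), $H$ is an extension $1 \to \mu_m \to H \to \bigslant{\Z}{2\Z} \to 1$, with $m$ even by the parity hypothesis $\chi(-\mathrm{Id})=1$. I would first restrict $\widetilde{\mo{}}(m)$ to $\mu_m \subset T$, where it becomes trivial (the two characters $t^{\pm m}$ of $T$ restrict to the trivial character of $\mu_m$); the residual $\bigslant{\Z}{2\Z}$-action is then given by a matrix with eigenvalues $\pm 1$, and diagonalising it produces a splitting into two line bundles on $\mathcal{B}H$, each associated to a character of $\bigslant{\Z}{2\Z}$. The Euler class then vanishes by the multiplicativity argument above.

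Cases (c$\pm$) proceed analogously but require a Galois descent. Over the degree-two extension $K'/K$ the stabilizer becomes a subgroup of the type appearing in case (a) or (b), so the earlier arguments give vanishing of the Euler class in $\mr{A}^\bullet_N(X_{K'}; \pi_X^*\gamma_N)$; one then descends by the usual transfer/restriction argument for the étale cover $X_{K'} \to X$. The main obstacle I anticipate is case (c-), where $\chi(-\mathrm{Id}) = -1$ forces us to consider $\widetilde{\mo{}}(2m) = \widetilde{\mo{}}(m)^{\otimes 2}$ rather than $\widetilde{\mo{}}(m)$: here the parity and the Galois twist interact, and one needs to verify that squaring is exactly what is needed to make the relevant $K'$-representation Galois invariant and descend to $K$. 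Carefully tracking the interplay between the $\gamma_N$-twist and the Galois action is the most delicate point of the proof.
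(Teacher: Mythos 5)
Your treatment of cases (a) and (b) — split the pulled-back rank-two bundle on the relevant classifying stack into a sum of line bundles, then invoke vanishing of line-bundle Euler classes plus multiplicativity — is exactly the content the paper isolates: its own proof is a one-line reduction to \cite[Lemma 7.6]{Levine_Atiyah-Bott}, observing that only vanishing of odd-rank Euler classes and multiplicativity are used there, and both persist for any $SL_{\eta}$-oriented spectrum. One small slip in (a): the characters $t\mapsto t^{\pm1}$ do \emph{not} restrict trivially to $\mu_m$ in general, but this is harmless since the splitting alone already kills the Euler class.

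Cases (c$\pm$) are where your proposal has a genuine gap. First, $\widetilde{\mo{}}(2m)$ is \emph{not} $\widetilde{\mo{}}(m)^{\otimes 2}$: the notation $\widetilde{\mo{}}(k)$ denotes the rank-two $N$-representation induced from the character $t^k$ of $T$, so both bundles have rank two whereas the tensor square has rank four. Second, and more seriously, the proposed Galois descent does not close. For the degree-two \'etale cover $p\colon X_{K'}\to X$ the composite $p_*p^*$ is multiplication by the trace form $p_*(1)=\langle 1\rangle+\langle a\rangle$, which is a zero divisor in $W(K)$, and $p^*$ itself is not injective for $\eta$-inverted theories; so knowing the Euler class dies after base change to $K'$ does not let you conclude it dies over $K$. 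The argument that actually works (and is the one in \emph{loc.\ cit.}\ that the paper defers to) exploits the structure of case (c) directly: the descent datum $\tau\circ\rho$ involves $\sigma$ and hence swaps the two weight eigenspaces of the restriction of $\widetilde{\mo{}}(m)$ (resp.\ $\widetilde{\mo{}}(2m)$ in case (c$-$), the doubling being needed precisely so that the relevant character kills $-\mathrm{Id}$). Consequently the restriction of the bundle to $\left[\bigslant{X}{N}\right]$ is the pushforward $\pi_*L$ of a line bundle along a degree-two \'etale cover, and $e(\pi_*L)=\pi_*(e(L))=0$ because Euler classes of line bundles vanish in any $SL_{\eta}$-oriented theory; no inversion of the trace form is required.
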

\begin{rmk}
	In the cases $ (b) $ and $ (c+) $, the pullback of  $ \gamma_N $ gets trivialised. Hence the Euler classes, considered in the previous lemma, live in $ \mr{A}^{\bullet}_{N}(X)  $.
\end{rmk}
\begin{proof}
	The same proof used in \cite[Lemma 7.6]{Levine_Atiyah-Bott} works verbatim for a generic $ \mr{A} $, since  only  the vanishing of Euler classes for odd rank vector bundles and multiplicativity of Euler classes was used there, and these properties still hold true for any $ SL_{\eta} $-oriented spectrum.
\end{proof}

\begin{notation}
	From now on, we will denote the Euler classes of the $ \widetilde{\mo{}}^{\pm}(m) $ as $ \tilde{e}^{\pm}(m):= e \left( \widetilde{\mo{}}^{\pm}(m)\right) $.
\end{notation}

\begin{rmk}
	Consider $ X \in \catname{Sch}_{\bigslant{}{\sk}}^{N} $, $ L \in \mr{Pic}(X) $ an $ N $-linearised line bundle, and let $ \mr A \in \mr{SH}(\sk) $ be an $ SL_{\eta} $-oriented ring spectrum.. Then products in  $ \mr A $-cohomology induce a structure of graded commutative ring on $ \mr{A}^{\bullet}_{N}\left( X \right) $ and a structure of $ \mr{A}^{\bullet}_{N}(X) $-module on $ \mr{A}^{\bullet}_{N}(X;L) $. Identifying $ \mr{A}^{\bullet}_{N}(X;L^{\otimes 2})\simeq \mr{A}^{\bullet}_{N}(X) $, we get a commutative graded product:
	\[ \mr{A}^{\bullet}_{N}(X;L)\otimes \mr{A}^{\bullet}_{N}(X;L)  \longrightarrow \mr{A}^{\bullet}_{N}(X)  \]
	For $ s \in \mr{A}^{2\bullet}_{N}(X) $, we say that an element $ x \in \mr{A}^{2\bullet}_{N}(X;L)[s^{-1}] $ is invertible if there exists $ y \in \mr{A}^{2\bullet}_{N}(X;L)[s^{-1}]  $ such that $ xy=1 \in \mr{A}^{2\bullet}_{N}(X)[s^{-1}]   $. A homogeneous element $ x \in \mr{A}^{2\bullet}_{N}(X;L) $ is invertible if and only if $ x^2\in \mr{A}^{2\bullet}_{N}(X) $ is invertible in the usual sense as an element of the commutative graded ring.
\end{rmk}

\begin{lemma}[Key Lemma]\label{ch4:_key_lemma_Bachmann_Hopkins}
	Let $ \mr A \in \mr{SH}(\sk) $ be an $ SL_{\eta} $-oriented ring spectrum. For any odd integer $ m $, the Euler class $ \tilde{e}^{+}(m) $ is invertible in:
	\[ \mr A^{\bullet}(\mc BN)\left[ (m\cdot e)^{-1} \right] \]
	For any even integer $ m=2n $, the Euler class $ \tilde{e}^{+}(m) $ is invertible in:
	\[ \mr A^{\bullet}(\mc BN; \gamma_N)\left[ (m\cdot e)^{-1} \right] \]
\end{lemma}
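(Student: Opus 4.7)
The plan is to reduce first to the universal case $\mr A = \mr{MSL}_\eta$, and then to exploit the explicit additive structure of \cref{ch2:_5.3_MEC} together with the evenness of $\mr{MSL}_\eta$.

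By \cref{ch2:_abs_SL_or_on_eta_inv}, any $SL_\eta$-oriented ring spectrum $\mr A$ receives a ring map $\varphi: \mr{MSL}_\eta \to \mr A$ which sends Thom and Euler classes to Thom and Euler classes; in particular $\varphi(e) = e$ and $\varphi(\tilde{e}^+(m)) = \tilde{e}^+(m)$. Since $\varphi$ extends to a ring map on the corresponding localizations at $(m\cdot e)^{-1}$, it suffices to prove the statement for $\mr A = \mr{MSL}_\eta$.

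By \cref{ch2:_MSL_eta_is_even}, $\mr{MSL}_\eta$ is even, so the $q_0$-summand of $\mr{MSL}_\eta^2(\mc BN)$ in the presentation of \cref{ch2:_5.3_MEC} vanishes in degree $2$ (since $\mr{MSL}_\eta^2(S) = 0$); analogously, the $q_1$-summand of $\mr{MSL}_\eta^2(\mc BN;\gamma_N)$ vanishes. Therefore, using $\mr{MSL}_\eta^\bullet(\mc{B}SL_2) \simeq \mr{MSL}_\eta^\bullet(S)[[e]]$, we may write
\[ \tilde{e}^+(m) = \alpha_m(e)\cdot e \in \mr{MSL}_\eta^2(\mc BN) \qquad (m \text{ odd}) \]
or
\[ \tilde{e}^+(m) = \beta_m(e)\cdot e(\oocatname{T}) \in \mr{MSL}_\eta^2(\mc BN;\gamma_N) \qquad (m \text{ even}) \]
for some power series $\alpha_m,\beta_m \in \mr{MSL}_\eta^0(S)[[e]]$. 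The central computation is to identify the constant terms $\alpha_m(0) = \beta_m(0) = m$. This follows from the recursive formulas for Euler classes of irreducible rank two $N$-representations obtained from the twisted symplectic orientation developed earlier in the paper (these formulas say that $\tilde{e}^+(m)$ differs from $m\cdot e$ (resp.\ $m\cdot e(\oocatname{T})$) by a power series in $e$ supported in strictly positive powers). Combined with \cref{ch4:_ABL_Lemma_7.6}, which forces specific vanishings of $\tilde{e}^\pm(m)$ on homogeneous $N$-spaces, this pins down the leading coefficient.

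To conclude, observe that inverting $m\cdot e$ in any commutative ring automatically inverts $m$ and $e$ separately, since
\[ m \cdot \bigl(e\cdot(m\cdot e)^{-1}\bigr) = 1 = e \cdot\bigl(m\cdot(m\cdot e)^{-1}\bigr). \]
Hence in $\mr{MSL}_\eta^\bullet(\mc BN)[(m\cdot e)^{-1}]$ the element $\alpha_m(e) = m + e\cdot\alpha_m'(e)$ has invertible constant term and is therefore a unit in the $e$-adically complete ring of power series; since $e$ is also a unit after localization, $\tilde{e}^+(m) = \alpha_m(e)\cdot e$ is a unit. For the even case, an identical argument applies once we note that in the twisted module $\mr{MSL}_\eta^\bullet(\mc BN;\gamma_N)[(m\cdot e)^{-1}]$, the class $e(\oocatname{T})$ is ``invertible'' in the twisted sense, because $e(\oocatname{T})^2$ is a power series in $e$ with leading term a unit multiple of $e^2$, which becomes a unit after localization. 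The main obstacle is precisely this identification of the leading term $\alpha_m(0) = m$, since naive restriction to $\mc B T$ annihilates $e$ (by \cref{ch2:_BGm_corollary}) and is thus uninformative; the recursive formulas in the twisted symplectic setup circumvent this issue by computing directly in $\mr{KW}^\bullet(\mc BN)$ and then transferring the result to $\mr{MSL}_\eta$ via the universal $SL$-orientation.
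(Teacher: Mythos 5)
Your overall architecture is the same as the paper's: reduce to the universal case $\mr{MSL}_{\eta}$, expand $\tilde{e}^{+}(m)$ in the additive presentation of \cref{ch2:_5.3_MEC}, identify the leading coefficient as $\pm m$, and conclude because $m$ plus higher-order terms in $e$ is a unit once $m\cdot e$ is inverted. The final unit argument and the handling of the twisted case via squaring are fine. But the step you yourself call ``the central computation'' --- that $\alpha_m(0)=\pm m$ in $\mr{MSL}_{\eta}^{0}(\sk)$ --- is not actually established, and the route you propose for it does not work. The universal $SL$-orientation is a ring map $\varphi\colon \mr{MSL}_{\eta}\to \mr{KW}$ (or $H\mc W$), i.e.\ it points \emph{out of} $\mr{MSL}_{\eta}$; a computation of Euler classes performed in $\mr{KW}^{\bullet}(\mc BN)$ therefore only tells you the \emph{image} $\varphi(\alpha_m(0))$, and cannot be ``transferred to $\mr{MSL}_{\eta}$ via the universal orientation'' without an argument that $\alpha_m(0)$ is determined by that image. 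Likewise, \cref{ch4:_ABL_Lemma_7.6} only records vanishings of Euler classes on homogeneous spaces and says nothing about the coefficients of the power-series expansion, so it cannot ``pin down the leading coefficient.''

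The missing ingredient is the Bachmann--Hopkins computation $\mr{MSL}_{\eta}(\sk)\simeq W(\sk)[y_2,y_4,\dots]$ with $\deg y_{2i}=-4i<0$. This forces the degree-zero coefficient $\alpha_m(0)$ to lie in $W(\sk)$ itself (no nonconstant monomial in the $y_{2i}$ has degree zero), and on this summand the comparison map to $H\mc W^{0}(\sk)=W(\sk)$ is the identity, since it kills the $y_{2i}$. Only then does the known value $\pm m\cdot e^{H\mc W}$ of the Euler class in Witt-sheaf cohomology (Levine's computation, not the twisted-symplectic recursive formulas, which in any case are not part of the present paper) force $\alpha_m(0)=\pm m$ in $\mr{MSL}_{\eta}$ itself. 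Without this degree argument your proof has a genuine gap precisely at its crux.
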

\begin{proof}
	For this proof we will make use of the ind-scheme $ BN $ approximating $ \mc BN $. To prove our claim, it is enough to consider the universal case $ \mr{MSL}_{\eta} $: the general case will follow considering the $ SL $-orientation map $ \varphi_{\eta}^{A}: \mr{MSL}_{\eta} \rightarrow A  $ associated to the $ SL_{\eta} $-oriented ring spectrum.\\
	Since we are working in $ \mr{SH}(\sk) $, by \cite[Theorem 8.8]{Bachmann_Hopkins}, we have:
	\[ \mr{MSL}_{\eta}(\sk)\simeq W(\sk)[y_2,y_4,y_6,\ldots] \]
	\noindent where $ y_{2i} $ has degree $ -4i $. But then by \cite[Theorem 10]{Ananyevskiy_PhD_Thesis} and our \cref{ch2:_5.3_MEC}, we get:
	\[ \mr{MSL}_{\eta}^{\bullet}(BN)\simeq \left(W(\sk)[y_2,y_4,\ldots]\right)\llbracket e \rrbracket \oplus q_0^{\mr{MSL}} \cdot W(\sk)[y_2,y_4,\ldots]   \]
	Let us suppose $ m $ is odd. Since $ \tilde{e}^{+}(m) $ has degree two, we have:
	\[ \tilde{e}^{+}(m) =\sum_{i=0}^{\infty} a_i \cdot e^{2i+1} \]
	\noindent where $ a_i =a_i(y_2,y_4,\ldots) \in \left(W(\sk)[y_2,y_4,\ldots]\right)^{-4i}$. Notice that $ a_0 \in W(\sk) $, since it has degree zero. The map $ \varphi_{\eta}^{H\mc W}: \mr{MSL}_{\eta} \rightarrow H\mc W $, induced by the $ SL $-orientation, is a map of ring spectra given by sending each $ y_i \mapsto 0 $. Thus we have:
	\[ \varphi_{\eta}^{H\mc W}(a_0\cdot e)=\overline{a_0}\cdot\varphi_{\eta}^{H\mc W}(e^{\mr{MSL}_{\eta}}) \]
	\noindent where $ \overline{a_0}:=\varphi_{\eta}^{H\mc W}(a_0)=a_0(0,0,\ldots) $. But comparing the Euler classes in $ \mr{MSL}_{\eta} $ and $ H\mc W $, we get:
	\[ \varphi_{\eta}^{H\mc W}(\tilde{e}^{+}(m))=\overline{a_0}\cdot e^{H\mc W}=\pm m \cdot e^{H\mc W} \]
	\noindent thus $ a_0(0,0,\ldots)=\overline{a_0}=\pm m $ by the computations made in \cite[Theorem 7.1]{Motivic_Euler_Char}. But $ a_0 $, as we said before, as degree zero, hence $ a_0=\pm m $. But this implies that, in $ \mr{MSL}_{\eta}^{\bullet}(BN)\left[  m^{-1} \right] $, we can write:
	\[ \tilde{e}^{+}(m)=\pm m \cdot e(1+\sum_{j=1} b_j \cdot e^{j}) \]
	\noindent and  $ (1+\sum_{j=1} b_j \cdot e^{j}) $ is already invertible in $ \mr{MSL}_{\eta}^{\bullet}(BN)\left[  m^{-1} \right]  $. Hence $ \tilde{e}^{+}(m) $ is invertible in $ \mr{MSL}_{\eta}^{\bullet}(BN)\left[ \left( m\cdot e \right)^{-1} \right]  $. The even case $ m=2n $ is completely analogous. We still have that:
	\[ \mr{MSL}_{\eta}^{\bullet}(BN; \gamma_N)\simeq e(\oocatname{T}) \cdot \left(W(\sk)[y_2,y_4,\ldots]\right)\llbracket e \rrbracket \oplus q_1^{\mr{MSL}} \cdot W(\sk)[y_2,y_4,\ldots]  \]
	\noindent  by \cref{ch2:_5.3_MEC} (and \cite[Theorem 10]{Ananyevskiy_PhD_Thesis}), where $ \oocatname{T} $ is the pullback of the tangent bundle of $ \pro(\sym^2(F))\times^{SL_2}ESL_2 $ relative to $ BSL_2 $. From  \cite[Lemma 6.1]{Motivic_Euler_Char}, we know $ \tilde{e}:=\tilde{e}^+(2)=e(\oocatname{T}) $.\\
	
	As before we have:
	\[ \tilde{e}^{+}(2n)= \tilde{e}\cdot \left( \sum_{i=0}^{\infty} c_i \cdot e^{2i} \right) \]
	\noindent where $ c_i=c_i(y_2, y_4,\ldots) \in\left( W(\sk)[y_2, y_4,\ldots]\right)^{-4i} $. Again we can make a comparison between the Euler classes of $ \mr{MSL}_{\eta} $ and $ H\mc W $, and we get that $ \varphi^{H\mc W}_{\eta}(c_0)=c_0(0,0,\ldots)=\pm n $ by \cite[Theorem 7.1]{Motivic_Euler_Char}. But then $ c_0=\pm n $, since it is a degree zero element. Thus, in $ \mr{MSL}_{\eta}^{\bullet}(BN)\left[  n^{-1} \right]  $,we can write:
	\[  \tilde{e}^{+}(2n)= \tilde{e}\cdot (\pm n \cdot e)\left( 1+\sum_{i=1}^{\infty} d_i \cdot e^{2i-1} \right) \]
	\noindent where the element $ \left( 1+\sum_{i=1}^{\infty} d_i \cdot e^{2i-1} \right)  $ is already invertible in $ \mr{MSL}_{\eta}^{\bullet}(BN)\left[  n^{-1} \right] $. If we consider $ \left( \tilde{e}^{+}(2n) \right)^2 \in \mr{MSL}_{\eta}^{4}(BN) $, we have:
	\[ \left(\tilde{e}^{+}(2n)\right)^2= \tilde{e}^2\cdot (\pm n \cdot e)^2\left( 1+\sum_{i=1}^{\infty} d_i \cdot e^{2i-1} \right)^2 \]
	Using the fact that $ (\tilde{e}^{H\mc W})^2=-4e \in H\mc W^{4}(BN) $ (cf. \cite[Theorem 7.1]{Motivic_Euler_Char}), again by looking at the formal power series of $ \tilde{e}^2 $ compared to $ (\tilde{e}^{H\mc W})^2 $, we get that:
	\[ \left(\tilde{e}^{+}(m)\right)^2= {e}^2\cdot (\pm m \cdot e)^2\left( 1+\ldots \right) \in \mr{MSL}^{4}_{\eta}(BN) \]
	So in the end it is enough to invert $ m\cdot e $ and we also get that $ \tilde{e}^{+}(m) $ is invertible in $ \mr{MSL}^{\bullet}_{\eta}(BN; \gamma_N)\left[ \left( m\cdot e \right)^{-1} \right] $.
\end{proof}

\begin{pr}\label{ch4:_ABL_7.7}
	Let $ X $ be an $ N $-homogeneous space, and let $ \mr A $ be an $ SL_{\eta} $-oriented ring spectrum. Then there exists an integer $ M>0 $ such that:
	\[ \mr{A}^{\bullet}_{N}(X)[(Me)^{-1}]\simeq 0 \]
	
\end{pr}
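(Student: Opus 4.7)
The plan is to combine the classification of $N$-homogeneous spaces (Lemma \ref{ch4:_ABL_Lemma_7.4}) with the vanishing result (Lemma \ref{ch4:_ABL_Lemma_7.6}) and the Key Lemma (\ref{ch4:_key_lemma_Bachmann_Hopkins}). By Lemma \ref{ch4:_ABL_Lemma_7.4}, $X$ falls into one of the four cases (a), (b), (c+), (c-), each associated with an integer $m\geq 1$ coming from the $\mb G_m$-weight $\chi(t)=t^m$. Lemma \ref{ch4:_ABL_Lemma_7.6} then supplies, in each case, an Euler class of the form $\tilde e^+(m)$ or $e$ which vanishes after pullback along $\pi_X^N$, either in $\mr{A}^\bullet_N(X)$ itself (cases (a), (b), (c+), where $\pi_X^*\gamma_N$ is trivial) or in $\mr{A}^\bullet_N(X;\pi_X^*\gamma_N)$ (case (c$-$)). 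The core observation is that $\mr{A}^\bullet_N(X)$ is a module over $\mr{A}^\bullet(\mc BN)$ via the pullback $(\pi_X^N)^*$, so if an element of $\mr{A}^\bullet(\mc BN)$ becomes simultaneously invertible (upstairs, after inverting $Me$) and zero (downstairs, by Lemma \ref{ch4:_ABL_Lemma_7.6}), then $1=0$ in the localised module and we are done.

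Concretely, I would treat the four cases in order. In case (a), Lemma \ref{ch4:_ABL_Lemma_7.6}(a) gives $(\pi_X^N)^*e=0$ in $\mr{A}^\bullet_N(X)$, so taking $M=1$ the element $e$ becomes both zero and invertible in $\mr{A}^\bullet_N(X)[e^{-1}]$, which forces the module to vanish. In cases (b) and (c+), the parameter $m$ is even, the line bundle $\pi_X^*\gamma_N$ is trivialised over $X$, and Lemma \ref{ch4:_ABL_Lemma_7.6} yields $(\pi_X^N)^*\tilde e^+(m)=0$ in $\mr{A}^\bullet_N(X)\simeq \mr{A}^\bullet_N(X;\pi_X^*\gamma_N)$. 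The Key Lemma says that $\tilde e^+(m)$ is invertible in $\mr{A}^\bullet(\mc BN;\gamma_N)[(me)^{-1}]$, equivalently that $(\tilde e^+(m))^2\in \mr{A}^\bullet(\mc BN)$ is invertible in $\mr{A}^\bullet(\mc BN)[(me)^{-1}]$; pulling this identity back along $(\pi_X^N)^*$ and using $(\pi_X^N)^*(\tilde e^+(m))^2=0$ gives $\mr{A}^\bullet_N(X)[(me)^{-1}]=0$, so we take $M=m$.

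Case (c$-$) is the subtlest one, and is where some care is needed: here $m$ is odd, $\pi_X^*\gamma_N$ need \emph{not} be trivial, and Lemma \ref{ch4:_ABL_Lemma_7.6}(c$-$) only provides vanishing of $\tilde e^+(2m)$ in the $\pi_X^*\gamma_N$-twisted theory. To push the conclusion into the untwisted theory, I would square: $(\tilde e^+(2m))^2\in\mr{A}^\bullet(\mc BN)$ still vanishes after pullback to $\mr{A}^\bullet_N(X)$, and by the even case of the Key Lemma it is invertible in $\mr{A}^\bullet(\mc BN)[(2m\cdot e)^{-1}]$. Combining again, $\mr{A}^\bullet_N(X)[(2me)^{-1}]=0$ with $M=2m$.

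The main conceptual obstacle is purely bookkeeping in case (c$-$): one must be careful that the product pairing $\mr{A}^\bullet_N(X;\pi_X^*\gamma_N)\otimes \mr{A}^\bullet_N(X;\pi_X^*\gamma_N)\to \mr{A}^\bullet_N(X)$ is compatible with pullback from the analogous pairing on $\mr{A}^\bullet(\mc BN;\gamma_N)$, so that the image of $(\tilde e^+(2m))^2$ really is the square in the untwisted theory. This compatibility is built into the module structure of twisted Borel-Moore/cohomology theories over $SL_\eta$-oriented spectra and follows from naturality of Thom classes, so no new input is required beyond what has been developed in Sections 2 and 3.
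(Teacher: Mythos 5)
Your proposal is correct and follows essentially the same route as the paper's proof: combine Lemma \ref{ch4:_ABL_Lemma_7.6} (vanishing of the relevant Euler class over $X$) with the Key Lemma \ref{ch4:_key_lemma_Bachmann_Hopkins} (invertibility over $\mc BN$ after inverting $M\cdot e$), and pass to the square $\tilde e^+(2k)^2$ to move from the $\gamma_N$-twisted theory to the untwisted one in the cases where the class lives in a twist. The paper's argument is just a terser version of your case-by-case analysis.
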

\begin{proof}
	Using \cref{ch4:_ABL_Lemma_7.6}, if $ X $ is of type $ (a) $, then we just invert $ e $ and $ M=1 $. For case $ (b) $ and $ (c\pm) $, again  by \cref{ch4:_ABL_Lemma_7.6} we have that $ \tilde{e}^{+}(m)  $ will go to zero for an appropriate $ m $ depending on the type of $ X $.  Then we conclude by \cref{ch4:_key_lemma_Bachmann_Hopkins}, noticing that if  the Euler class going to zero is some $ \tilde{e}^{+}(2k) $ living in the twisted theory, then its square $ \tilde{e}^{+}(2k)^2 \in \mr A^{\bullet}_N(X) $ will also go to zero.
	
	%
\end{proof}

\begin{rmk}
	Let $ T $ be the standard torus inside $ N $. Since $ \bigslant{N}{T}\simeq \bigslant{\Z}{2\Z} $, given $ X $ an $ N $-equivariant scheme over $ \sk $, we have that its $ T $-fixed points $ X^T\sseq X $ are the union of the $ 0 $-dimensional $ N $-orbits.
\end{rmk}
\begin{thm}[{Torus Localization, cf. \cite[Theorem 7.10]{Motivic_Euler_Char}}]\label{ch4:_ABL_7.10}
	Let $ X \in \catname{Sch}_{\bigslant{}{\sk}}^{N} $ and let $ \iota: X^T \into X $ the the associated closed immersion. Then for any $ SL_{\eta} $-oriented ring spectrum $ \mr A $, and for any $ L \in \mr{Pic}(X) $ with an $ N $-linearization, there exists an integer $ M>0 $ such that:
	\[ \iota_*: \mr{A}^{\mr{BM}}_{\bullet, N}\left( \bigslant{X^T}{\sk}; \iota^*L \right)\left[ \left( M\cdot e \right)^{-1} \right] \stackrel{}{\longrightarrow}  \mr{A}^{\mr{BM}}_{\bullet, N}\left( \bigslant{X}{\sk}; L \right) \left[ \left( M\cdot e \right)^{-1} \right]   \]
	\noindent is an isomorphism.
	
\end{thm}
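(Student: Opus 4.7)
The plan is to deduce the theorem from the vanishing Proposition~\ref{ch4:_ABL_7.7} via the localization sequence for equivariant Borel--Moore homology, together with a Noetherian induction along an $N$-invariant stratification of the open complement $U := X \setminus |X|^N$. (Note that for $N$-schemes, $|X|^N = |X|^T$ since $N/T \simeq \Z/2\Z$ acts on the $T$-fixed locus.) First, let $j\colon U \hookrightarrow X$ denote the open complement of $\iota$, which is $N$-invariant. The six-functor formalism on algebraic stacks from~\cite{ChoDA24} gives the localization fiber sequence
\[ \mr{A}^{\mr{BM}}_{\bullet,N}(|X|^N/\sk;\iota^*L) \xrightarrow{\iota_*} \mr{A}^{\mr{BM}}_{\bullet,N}(X/\sk;L) \xrightarrow{j^*} \mr{A}^{\mr{BM}}_{\bullet,N}(U/\sk;j^*L). \]
Since inverting an element is exact, it suffices to exhibit an integer $M>0$ such that $\mr{A}^{\mr{BM}}_{\bullet,N}(U/\sk; j^*L)[(M\cdot e)^{-1}] = 0$.

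To produce such an $M$, I will stratify $U$ by $N$-invariant locally closed subschemes whose successive complements give, generically, disjoint unions of $N$-orbits. Concretely, by Noetherian induction on the $N$-invariant closed subschemes of $U$, and using the localization sequence for an open $N$-orbit $V \subseteq U^{\mr{red}}$ together with its closed complement $U \setminus V$, one reduces the vanishing to: for every generic point $\eta$ of every $N$-orbit stratum $V$ in $U$, the localized theory
\[ \mr{A}^{\mr{BM}}_{\bullet,N}\bigl(V/\sk;\, j^*L|_V\bigr)\bigl[(M\cdot e)^{-1}\bigr] \]
vanishes for some $M$ depending only on $V$. Since $V$ is the complement of $|X|^N$ restricted to its stratum, the stabiliser of $\eta$ is a proper subgroup of $N$, so $V$ is $N$-equivariantly of the form $\mb G_{m,Z}$ fibered over some $\bigl[\bigslant{Z}{\bigslant{\Z}{2\Z}}\bigr]$-stack via Lemma~\ref{ch4:_ABL_Lemma_7.4}, i.e.\ étale-locally on $Z$ it is an $N$-homogeneous space of type $(a)$, $(b)$, $(c+)$, or $(c-)$.

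For each orbit $V$, the BM-homology with its twist is a module over $\mr{A}^{\bullet}_N(V)$ through the cap-product action, and via the appropriate Thom isomorphism (using the $SL_\eta$-orientation and, for the twist $j^*L|_V$, its determinant reduction as in Section~\ref{ch2:_SL_orientations}), one can identify $\mr{A}^{\mr{BM}}_{\bullet,N}(V/\sk; j^*L|_V)$ with an invertible shift of $\mr{A}^{\bullet}_N(V)$-modules, or a direct sum of such shifts. By Proposition~\ref{ch4:_ABL_7.7}, there is an integer $M_V$ such that $\mr{A}^{\bullet}_N(V)[(M_V\cdot e)^{-1}] = 0$ (the integer $M_V$ being $1$ in case $(a)$, and, via the Key Lemma~\ref{ch4:_key_lemma_Bachmann_Hopkins}, the relevant $m$ or $2m$ attached to the character $\chi_V$ of the stabiliser in the cases $(b)$ and $(c\pm)$). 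Taking $M$ to be the product of the finitely many $M_V$ arising from the finite stratification yields the desired global integer.

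The main obstacle I expect is the Noetherian reduction itself when $U$ and its orbits are not \emph{a priori} smooth or when $L$ restricted to a stratum is not trivialised: one must be careful that the stratification can indeed be chosen $N$-invariant and locally closed with generic $N$-orbits, and that the twist $j^*L$ restricted to each orbit is compatible with the twist that appears in Lemma~\ref{ch4:_ABL_Lemma_7.6}. Handling $(c-)$ is the most delicate, since there one only gets vanishing of $\tilde e^{+}(2m)$ in a twisted theory, and one must pass to its square in the untwisted theory to ensure it lies in $\mr{A}^{\bullet}_N(V)$; this is exactly what the Key Lemma is engineered to control, so everything will funnel through Proposition~\ref{ch4:_ABL_7.7} and Lemma~\ref{ch4:_key_lemma_Bachmann_Hopkins} as above.
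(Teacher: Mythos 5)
Your overall strategy — reduce via the localization sequence to showing that the localized Borel--Moore homology of the complement of the fixed locus vanishes, then reduce to $N$-homogeneous spaces and invoke Proposition~\ref{ch4:_ABL_7.7} — is indeed the skeleton of the paper's argument. But two of your reduction steps hide the actual content of the proof, and one of them as written would fail. First, the theorem concerns $X^T$, not $\abs{X}^N$: these differ in general ($\abs{X}^N$ is only defined later, in Definition~\ref{ch4:_ABL_def_8.2}, as the union of components of $X^T$ generically fixed by $\bar\sigma$). If you take $U = X\setminus \abs{X}^N$, then $U$ may still contain $T$-fixed points, whose $N$-orbits are zero-dimensional; Lemma~\ref{ch4:_ABL_Lemma_7.4} requires $\mb G_m$ to act non-trivially and does not apply to them, and Proposition~\ref{ch4:_ABL_7.7} gives no vanishing there (handling $X^T_{ind}$ is the business of Proposition~\ref{ch4:_ABL_8.5} and Theorem~\ref{ch4:_ABL_8.6}, not of this theorem). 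The correct reduction is to: $X^T=\emptyset$ implies $\mr{A}^{\mr{BM}}_{\bullet,N}(\bigslant{X}{\sk};L)[(Me)^{-1}]=0$.

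Second, your ``Noetherian induction over strata that are generically $N$-orbits'' cannot produce a finite stratification into homogeneous spaces: since $N$ is one-dimensional, a positive-dimensional $X$ with no $T$-fixed points is a positive-dimensional \emph{family} of orbits, not a finite union of them. The paper instead invokes \cite[Proposition 1.1]{Levine_Atiyah-Bott} to stratify $X$ so that each stratum admits a quasi-projective quotient $Z=\bigslant{X_\alpha}{N}$ with $X_\alpha\to Z$ smooth, and then uses the homological Leray (Gersten) spectral sequence over $Z$, at each finite level $E_mN$, to reduce the vanishing to the fibers $X_z$ over points $z\in Z$ — which are the $N$-homogeneous spaces over $\kappa(z)$ to which Proposition~\ref{ch4:_ABL_7.7} applies. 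Two further points you omit are genuinely needed there: (i) uniformity of $M$ over $z\in Z$, obtained by refining the stratification so that all fibers have the same homogeneous-space type (the integer $M$ of Proposition~\ref{ch4:_ABL_7.7} depends only on the type); and (ii) the passage from the finite-level groups $\mr{A}^{\mr{BM}}_{\bullet,N^{(m)}}$ to the limit defining $\mr{A}^{\mr{BM}}_{\bullet,N}$, which requires a Mittag--Leffler argument — this is not automatic precisely because $SL_\eta$-oriented spectra such as $\mr{KW}$ are not bounded. (The paper also first passes to the perfect closure, inverting the exponential characteristic, so that the residue fields $\kappa(z)$ are separable over $\sk$ and purity applies.) Without the quotient-plus-spectral-sequence mechanism and the Mittag--Leffler step, the reduction to Proposition~\ref{ch4:_ABL_7.7} does not go through.
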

\begin{proof}
	Remember that $ \mr{char}(\sk)>2 $.  If we invert the exponential characteristic $ p $ of $ \sk $, then taking the perfect closure of $ \sk^{perf} $ of $ \sk $ induces an isomorphism $ \mr{A}^{\mr{BM}}_{\bullet, N}\left( \bigslant{\cdot}{\sk}; \cdot \right) \stackrel{\sim}{\rightarrow} \mr{A}^{\mr{BM}}_{\bullet, N}\left( \bigslant{\cdot}{\sk^{perf}}; \cdot \right)  $ by \cite[2.1.5]{Elmanto_Khan_Perfection_MHT}, so we can assume $ p\ |\  M $ and $ \sk $ to be perfect.\\
	Using the localization sequence for equivariant Borel-Moore homology, we reduce ourselves to show that if $ X $ has no zero-dimensional orbits, that is, $ X^{T}=\emptyset $, then $ \mr{A}^{\mr{BM}}_{\bullet, N}\left( \bigslant{X}{\sk}; L \right)\simeq 0  $. By \cite[Proposition 1.1]{Levine_Atiyah-Bott}, there exists a finite stratification $ X=\amalg_{\alpha} X_{\alpha} $ such that  for each $ \alpha $ we have that $ \bigslant{X_{\alpha}}{N} $ exists as a quasi-projective $ \sk $-scheme and we can take $ \pi_{\alpha}: X_{\alpha}\rightarrow \bigslant{X_{\alpha}}{N} $ to be smooth. So applying again a localization sequence argument, we can reduce to the case of $ Z:=\bigslant{X}{N} $ is a quasi-projective $ \sk $-scheme and $ \pi: X \longrightarrow Z $ is smooth. Again by localization sequence argument, we can assume $ Z $ is integral and $ \pi $ is equi-dimensional. We have to show that if $ \pi $ has strictly positive relative dimension, then $ \mr{A}^{\mr{BM}}_{\bullet, N}\left( \bigslant{X}{\sk}; L \right)\simeq 0  $, so we can assume $ \pi $ has strictly positive dimension.\\
	Let us consider the finite level approximation for $ N $-equivariant Borel-Moore homology:
	\[ \mr{A}^{\mr{BM}}_{\bullet, N^{(m)}}\left( \bigslant{X}{\sk}; L \right):=\mr{A}^{\mr{BM}}_{\bullet}\left( \bigslant{X\times^{N} E_mN}{\sk}; L \right)   \]
	\noindent For each of these we have a Leray homological spectral sequence (cf. \cite[4.1.2]{Asok-Deglise-Nagel}):
	\[ E^{1}_{p,q}=\bigoplus_{z \in Z_{(q)}} \mr{A}^{\mr{BM}}_{p+q, N^{(m)}}\left( \bigslant{X_z}{\sk}; L_z \right) \Rightarrow \mr{A}^{\mr{BM}}_{p+q, N^{(m)}}\left( \bigslant{X}{\sk}; L \right)   \]
	\noindent But by purity for $ \sk \rightarrow \kappa(z) $, since $ z \in Z_{(q)} $, we have:
	\[ \mr{A}^{\mr{BM}}_{p, N^{(m)}}\left( \bigslant{X_z}{\kappa(z)}; L_z \right) \simeq \mr{A}^{\mr{BM}}_{p+q, N^{(m)}}\left( \bigslant{X_z}{\sk}; L \right)  \]
	\noindent and hence the spectral sequence becomes:
	\[ E^{1}_{p,q}=\bigoplus_{z \in Z_{(q)}} \mr{A}^{\mr{BM}}_{p, N^{(m)}}\left( \bigslant{X_z}{\kappa(z)}; L_z \right) \Rightarrow \mr{A}^{\mr{BM}}_{p+q, N^{(m)}}\left( \bigslant{X}{\sk}; L \right)   \]
	If $ \xi $ is a generic point of $ Z $, then there exists an open neighbourhood $ U_{\xi} $ of $ \xi $ such that each fiber $ X_z $ of $ \pi:X \rightarrow Z $ has the same same type, as homogeneous space for $ N $ over $ \kappa(z) $, as does $ X_{\xi} $. So after taking a further stratification, we can assume that all the fibers $ X_z $, for $ z \in Z $, have the same type. So by \cref{ch4:_ABL_7.7} there exists an $ M $ such that $ \mr{A}^{\mr{BM}}_{p, N^{(m)}}\left( \bigslant{X_z}{\kappa(z)}\right)\left[ \left( Me \right)^{-1} \right]\simeq 0 $, since $ \mr{A}^{\mr{BM}}_{p, N^{(m)}}\left( \bigslant{X_z}{\kappa(z)}\right) $ is a module over cohomology $ \mr{A}^{\bullet}_{N}\left(  X_z \right) \simeq \mr{A}^{\mr{BM}}_{\bullet, N}\left( \bigslant{X_z}{\kappa(z)}\right) $. The module map is obtained through the pullback induced by the natural map $ E_mN\rightarrow \spec(\sk) $, after identifying Borel-Moore Witt homology and Witt theory (up to some re-indexing) via purity, relative to $ X_z \rightarrow z $  (which is a smooth map, since it is a fiber of $ X \rightarrow Z $).  Notice that the integer $ M $ only depends on the type of $ X_z $, so it is the same integer for any $ z \in Z $. But again, since $ \mr{A}^{\mr{BM}}_{p, N^{(m)}}\left( \bigslant{X_z}{\kappa(z)}; L_z\right) $ is a also module over cohomology $ \mr{A}^{\bullet}_{N}\left(  X_z \right)  $, we have $ \forall \ z \in Z $:
	\[ \mr{A}^{\bullet}_{N}\left(  X_z \right) \left[ \left( Me\right)^{-1} \right]\simeq 0  \Rightarrow \mr{A}^{\mr{BM}}_{p, N^{(m)}}\left( \bigslant{X_z}{\kappa(z)}; L_z\right)\left[ \left( Me \right)^{-1} \right]\simeq 0  \]
	\noindent and hence the spectral sequence $ E_{p,q}^1 $ tells us that:
	\[ \mr{A}^{\mr{BM}}_{p+q, N^{(m)}}\left( \bigslant{X}{\sk}; L \right))\left[ \left( Me \right)^{-1} \right]\simeq 0 \ \forall \ p,q \ \forall \ m  \]
	In particular, this means that:
	\[ \set{\mr{A}^{\mr{BM}}_{p+q, N^{(m)}}\left( \bigslant{X}{\sk}; L \right))\left[ \left( Me \right)^{-1} \right]}_{m \in \N} \]
	\noindent satisfies the Mittag-Leffler condition. Therefore the limit of those groups actually computes $ \mr{A}^{\mr{BM}}_{p+q, N}\left( \bigslant{X}{\sk}; L \right))\left[ \left( Me \right)^{-1} \right] $. Hence we get:
	\[ \mr{A}^{\mr{BM}}_{p+q, N}\left( \bigslant{X}{\sk}; L \right))\left[ \left( Me \right)^{-1} \right] \simeq 0 \]
	\noindent as we wanted to prove.
	
\end{proof}

Before we finally get the Atiyah-Bott localization for an $ N $-action, we need to recall some more  theorems and definitions from \cite{Levine_Atiyah-Bott}:

\begin{defn}[{\cite[Def. 8.2]{Levine_Atiyah-Bott}}] \label{ch4:_ABL_def_8.2}
	Let $ \bar \sigma $ be the image of $ \sigma \in N $ in $ \bigslant{N}{T}\simeq \bigslant{\Z}{2\Z}=\langle \bar \sigma \rangle $.\\\begin{enumerate}
		\item Let us denote $ \abs{X}^{N} $ the union of the irreducible components $ Z \sseq X^{T} $ such that the generic point $ \xi_Z $ is fixed by $ \bar \sigma $, and let us denote $ {X}_{ind}^{T} $ the union of irreducible components $ W \sseq X^{T} $ such that $ \bar \sigma \cdot W \cap W=\emptyset $.
		\item We say that the $ N $-action is \textit{semi-strict} if $ X^{T}_{red}=\abs{X}^{N}\cup {X}^{T}_{ind} $.
		\item A semi-strict $ N $-action is said to be \textit{strict} if $ \abs{X}^{N}\cap {X}^{T}_{ind}=\emptyset  $ and we can decompose $ \abs{X}^{N} $ as disjoint union of two $ N $-stable closed subschemes:
		\[ \abs{X}^{N}=X^{N}\amalg X^{T}_{fr} \]
		\noindent where the $ \bigslant{N}{T} $-action on $ X^{N}_{fr} $ is free.
	\end{enumerate}
	
\end{defn}

\begin{notation}
	For any group $ G $, a character $ \chi $ will correspond to a line bundle over $ BG $ that we will denote as $ L_{\chi} $.
\end{notation}

\begin{pr}\label{ch3:_finite_Kunneth_BN}
	Let $ X \in \catname{Sch}_{\bigslant{}{\sk}}^{N} $ with a trivial $ N $-action, and let $ L $ be a line bundle on $ X $ also with trivial $ N $-action. Let $ \chi: N \rightarrow \mb G_m $ be a character with associated line bundle $ L_{\chi} $. Let $ \mr A $ be an $ SL_{\eta} $-oriented ring spectrum. Then, up to inverting the exponential characteristic of $ \sk $, we have an isomorphism:
	\[ \mr A^{-\bullet}(\mc BN; L_{\chi})\otimes \mr A^{\mr{BM}}_{\bullet}\left( \bigslant{X}{\sk}; L \right)\stackrel{\sim}{\longrightarrow} \mr A^{\mr{BM}}_{N,\bullet}(\bigslant{X}{\sk}; L_{\chi}\otimes L) \]
\end{pr}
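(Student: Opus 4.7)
The key observation is that, since $N$ acts trivially on both $X$ and $L$, we have an equivalence of quotient stacks $[\bigslant{X}{N}] \simeq X \times_{\sk} \mc BN$, and consequently
\[
\mr A^{\mr{BM}}_{N,\bullet}\!\left(\bigslant{X}{\sk};\, L_\chi \otimes L\right) \simeq \mr A^{\mr{BM}}_\bullet\!\left(\bigslant{X \times \mc BN}{\sk};\, L \boxtimes L_\chi\right).
\]
The stated map is then the external cup product with the pullback classes from $\mc BN$. The plan is to establish the isomorphism first on each finite level approximation $X \times B_m N$ of $X \times \mc BN$, and then pass to the limit in $m$.

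At finite level, I would pull back the localisation triangle used in the proof of \cref{ch2:_5.3_MEC} associated to $\widetilde{\pro(F)}_m \stackrel{\iota_m}{\hookrightarrow} \widetilde{\pro(\sym^2 F)}_m$ along the smooth projection $X \times \widetilde{\pro(\sym^2 F)}_m \to \widetilde{\pro(\sym^2 F)}_m$. Since this projection is Tor-independent (it is a product), smooth base change applies and the splitting classes $q_0, q_1, e(\oocatname{T})$ of \cref{ch2:_5.3_MEC} pull back along $p_{\mc BN}^*$ to classes on $X \times \mc BN$ that still split the corresponding localisation sequence in BM homology. Combined with the already established Künneth-type isomorphism for $X \times \mc BSL_2$ — which follows because $\mr A^\bullet(\mc BSL_2) \simeq \mr A^\bullet(\sk)\llbracket e \rrbracket$ is freely generated by powers of $e$ by \cite[Theorem 10]{Ananyevskiy_PhD_Thesis}, so that the Gersten spectral sequence for $X$ applied to the representing spectrum $\mr A_{\mc BSL_2}$ (as in \cref{ch2:_Repr_Stack-Coh} and the final argument of \cref{ch2:_5.3_MEC}) gives freeness of $\mr A^{\mr{BM}}_\bullet(\bigslant{X \times \mc BSL_2}{\sk}; L)$ as a module over $\mr A^\bullet(\mc BSL_2)$ — this yields the desired Künneth decomposition at each finite level $m$.

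The main obstacle is the passage to the limit $m \to \infty$. Since BM homology is covariant for proper maps but the transition maps $B_mN \into B_{m+1}N$ are only open immersions, one must interpret $\mr A^{\mr{BM}}_\bullet(\bigslant{X \times \mc BN}{\sk};\cdot)$ via the appropriate colimit of BM motives, or equivalently as a limit of mapping spectra, and verify that the resulting pro-system of finite-level Künneth decompositions satisfies Mittag-Leffler. Inverting the exponential characteristic is exactly what allows a reduction to the perfect base case via \cite[2.1.5]{Elmanto_Khan_Perfection_MHT} and guarantees the finiteness needed to kill $\lim^1$ terms, in perfect analogy with the stratification-plus-Mittag-Leffler argument carried out in the proof of \cref{ch4:_ABL_7.10}. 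Once this is secured, taking limits of the finite-level Künneth isomorphisms delivers the stated equivalence.
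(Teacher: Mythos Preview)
Your approach is viable but takes a more laborious route than the paper's. The paper does not work at finite levels $B_mN$ and then take a limit; instead it applies the Leray/Gersten spectral sequence for $X$ once, with coefficients in the full stacky theory, and exploits the crucial fact (from \cref{ch2:_5.3_MEC_field}) that $\mr A^{\bullet}(\mc BN)$ is a \emph{free} $\mr A^{\bullet}(\sk)$-module. Freeness means that tensoring the Gersten spectral sequence for $\mr A^{\mr{BM}}_\bullet(\bigslant{X}{\sk})$ with $\mr A^{\bullet}(\mc BN)$ yields a spectral sequence converging to the tensor product, and the K\"unneth map $\psi$ induces a map of spectral sequences whose $E_1$-terms are indexed by points $x\in X$. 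At each such point the required identification $\mr A^{\bullet}(\kappa(x))\otimes_{\mr A^{\bullet}(\sk)}\mr A^{\bullet}(\mc BN)\simeq \mr A^{\bullet}(\mc BN_{\kappa(x)})$ is precisely \cref{ch2:_5.3_MEC} over $S=\spec(\kappa(x))$; the perfectness of $\sk$ (obtained by inverting the exponential characteristic) is used here so that $\spec(\kappa(x))\to\spec(\sk)$ is smooth and purity applies. This sidesteps entirely the Mittag-Leffler passage you anticipate. Two small corrections to your sketch: the projection $X\times\widetilde{\pro(\sym^2 F)}_m\to\widetilde{\pro(\sym^2 F)}_m$ is \emph{not} smooth when $X$ is singular, so ``smooth base change'' is the wrong justification (the localisation triangle in BM homology exists directly from the closed/open decomposition); and inverting the exponential characteristic is used for purity at points of $X$, not to kill $\lim^1$ terms.
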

\begin{proof}
	We can assume that $ \sk $ is perfect, if it is not we will invert its exponential characteristic and use \cite[Corollary 2.1.7]{Elmanto_Khan_Perfection_MHT}. Notice that since $ \mr A $ is $ SL $-oriented, if we twist or untwist $ \mr A^{\mr{BM}}_{N,\bullet}(\bigslant{X}{\sk}; L_{\chi}\otimes L)  $ by the determinant of the co-Lie algebra $ \mf n^{\vee} $ of $ N $, nothing changes (up to isomorphism). 
	To simplify the notation, we give the proof in the untwisted case (i.e. for trivial $ L $ and $ L_{\chi} $); the proof in the twisted case is the same and is left to the reader.
	Notice that $ \left[ \quot{X}{N} \right]\simeq X \times \mc BN $. Consider:
	\begin{center}
		\begin{tikzpicture}[baseline={(0,0)}, scale=2]
			\node (a) at (0,1) {$ X\times \mc BN$};
			\node (b) at (1, 1) {$ \mc BN$};
			\node (c)  at (0,0) {$ X $};
			\node (d) at (1,0) {$ \spec(\sk) $};
			\node (e) at (0.2,0.8) {$  \ulcorner $};
			\node (f) at (0.5,0.5) {$ \Delta $};

			\path[font=\scriptsize,>= angle 90]
			
			(a) edge [->] node [above ] {$  $} (b)
			(a) edge [->] node [left] {$  $} (c)
			(b) edge[->] node [right] {$  $} (d)
			(c) edge [->] node [below] {$  $} (d);
		\end{tikzpicture}
	\end{center}
	
	Using the composition product, under the identification $  \mr A^{-\bullet}(\mc BN)\simeq\mr A^{\mr{BM}}_{\bullet}\left( \bigslant{\mc BN}{\mc BN}\right) $, together with the base change $ \Delta^*: \mr A^{\mr{BM}}_{\bullet}\left( \bigslant{X}{\sk}, v \right)\rightarrow \mr A^{\mr{BM}}_{\bullet}\left( \bigslant{X\times \mc BN}{\mc BN} \right)  $, we get a map:
	\[ \psi:= (-\odot-)\circ (\Delta^*\times Id): \mr A^{\mr{BM}}_{\bullet}(\bigslant{X}{\sk})\otimes \mr A^{-\bullet}(\mc BN) \longrightarrow \mr A^{\mr{BM}}_{\bullet}(\bigslant{X\times \mc BN}{\mc BN})    \]
	Using purity (cf. \cite[Corollary 4.25]{ChoDA24}), the target of $ \psi $ is just $ \mr A^{\mr{BM}}_{N,\bullet}(\bigslant{X}{\sk}, -\mf n^{\vee}) $. \\


	Using the Leray spectral sequence (\cite[Theorem 4.1.2]{Asok-Deglise-Nagel}), we can show that $ \psi $ is an equivalence. Indeed, we have:
	
	\[ E_1^{p,q}=\bigoplus_{x \in X^{(p)}} \mr A_{p+q}^{\mr{BM}}(\bigslant{\kappa(x)}{\sk}) \allora \mr A_{p+q}^{\mr{BM}}(\bigslant{X}{\sk}) \]
	
	\[ 'E_1^{p,q}=\bigoplus_{x \in X^{(p)}} \mr A_{p+q}^{\mr{BM}}(\bigslant{\mc BN_{\kappa(x)}}{\mc BN}) \allora \mr A_{p+q}^{\mr{BM}}(\bigslant{X\times \mc BN}{\mc BN}) \]
	But $ \mr A^{\bullet}(\mc BN_{\sk}) $ is a free $ \mr A^{\bullet}(\sk) $-module by \cref{ch2:_MEC_5.5_field} and hence from $ E_1 $ we get:
	
	\[ ''E_1^{p,q}=\bigoplus_{x \in X^{(p)}} \mr A_{p+q}^{\mr{BM}}(\bigslant{\kappa(x)}{\sk})\otimes \mr A^{\bullet}(\mc BN) \allora \mr A_{p+q}^{\mr{BM}}(\bigslant{X}{\sk})\otimes \mr A^{\bullet}(\mc BN) \]
	
	But the map $ \psi $ at the level of spectral sequences induces a map $ ''E_1^{*,*}\rightarrow 'E_1^{*,*} $ that becomes an isomorphism. Let us go into more details on why that is true. Since we assumed that $ \sk $ is perfect, for any $ x\in X^{(p)} $ the extension $ \quot{\kappa(x)}{\sk} $ is separable, that is, $ \spec(\kappa(x)) $ is smooth over $ \sk $. Then we can use purity to see that $ \mr A^{\mr{BM}}(\bigslant{\kappa(x)}{\sk})\simeq \mr A(\kappa(x)) $ and $ \mr A^{\mr{BM}}(\bigslant{\mc BN_{\kappa(x)}}{\mc BN})\simeq \mr A(\mc BN_{\kappa(x)}) $. But by \cref{ch2:_5.3_MEC} for $ S=\spec(\kappa(x)) $ with $ x\in X^{(p)} $, we have that:
	\begin{equation}\label{ch4:_eq_Finite_Kunneth_BN}
		\mr A^{\bullet}(\kappa(x))\otimes_{\mr A^{\bullet}(\sk)} \mr A^{\bullet}(\mc BN)\simeq \mr A^{\bullet}(\mc BN_{\kappa(x)})
	\end{equation}
	
	To see why this equivalence it is true, \cref{ch2:_5.3_MEC} tells us that $ \mr A^{\bullet}(\mc BN)\simeq \mr A^{\bullet}(\mc BSL_2)\oplus \mr A^{\bullet}(\sk) $. Then to check that \eqref{ch4:_eq_Finite_Kunneth_BN} holds, it is equivalent to check that the same equality holds for $ \mc BSL_2 $ instead of $ \mc BN $. By \cite[Theorem 9]{Ananyevskiy_PhD_Thesis}, we know that we can compute $ \mr A^{\bullet}(\mc BSL_{2,k}) $ using the finite approximations $ B_mSL_{2,k} $ for any field $ k $. Since filtered limits commute with finite colimits, we have:
	\[ \mr A^{\bullet}(\kappa(x))\otimes_{\mr A^{\bullet}(\sk)} \left( \lim_m \mr A^{\bullet}(B_mSL_2) \right)\simeq \lim_m \mr A^{\bullet}(\kappa(x))\otimes_{\mr A^{\bullet}(\sk)}  \mr A^{\bullet}(B_mSL_2) \]
	\noindent Clearly the right hand side is just $ \mr A^{\bullet}(\mc BSL_{2,\kappa(x)}) $ and we have our claim.\\
	
	Therefore $ \psi $ induces an isomorphism on spectral sequences and thus it is an isomorphism itself and we are done.

\end{proof}

\begin{pr}\label{ch4:_ABL_8.5}
	Let $ X \in \catname{Sch}_{\bigslant{}{\sk}} $ with an $ N $-action, let $ L \in \mr{Pic}(X) $ an $ N $-linearised line bundle, and  let $ \mr A\in \mr{SH}(\sk) $ be an $ SL_{\eta} $-oriented ring spectrum.
	\begin{enumerate}
		\item Let $ \iota_0: X^{N} \into X $ the closed immersion. For each connected component $ X_i^{N} $ of $ X^{N} $ there is a line bundle $ L_i $ on $ X_i^{N} $ with trivial $ N $-action and a character $ \chi_i $ such that $ \restrict{\iota_0^* L}{X_i^{N}}\simeq L_i \otimes L_{\chi_i} $ as $ N $-linearised line bundle. Moreover:
		\[ \mr{A}^{\mr{BM}}_{N,k}\left( X^{N}; L \right)  \simeq \bigoplus_{p+q=k} \mr{A}^{\mr{BM}}_{p}(X_i^{N}; L_i) \otimes_{W(\sk)} \mr{A}^{-q}\left( BN; L_{\chi_i} \right)  \]
		\item Let $ \iota_{ind}: X^{T}_{ind} \into X $ be the inclusion, then:
		\[ \mr{A}^{\mr{BM}}_{N}\left( \bigslant{X_{ind}^{T}}{\sk}; \iota_{ind}^*L \right) \left[ e^{-1} \right] =0 \]
		\noindent where $ e $ is the Euler class associated to (the pullback of) $ \widetilde{\mo{}}^{+}(1) $.
	\end{enumerate}
\end{pr}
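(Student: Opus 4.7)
For part (1), the plan is to exploit the triviality of the $N$-action on $X^N$ to extract a character on each connected component. The $N$-linearization of $\iota_0^*L|_{X_i^N}$ provides a homomorphism from $N$ to $\Gamma(X_i^N, \mb G_m)$; since $X_i^N$ is connected and $\Hom(N, \mb G_m) \simeq \bigslant{\Z}{2\Z}$ is discrete, this factors through a single character $\chi_i: N \to \mb G_m$. Setting $L_i := \iota_0^*L|_{X_i^N} \otimes L_{\chi_i}^{-1}$ equipped with the induced trivial $N$-linearization then yields the decomposition $\iota_0^*L|_{X_i^N} \simeq L_i \otimes L_{\chi_i}$ as $N$-linearized bundles. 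Since $X^N$ is a disjoint union of the $X_i^N$, the BM-homology decomposes accordingly, and for each component one applies Proposition \ref{ch3:_finite_Kunneth_BN} (with trivial $N$-scheme $X_i^N$, trivially-linearised bundle $L_i$, and character $\chi_i$) to obtain the claimed Künneth decomposition.

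For part (2), the strategy is to decompose $X^T_{ind}$ into induced $N$-schemes and then invoke the vanishing of line-bundle Euler classes. By definition, the irreducible components of $X^T_{ind}$ come in $\bar\sigma$-pairs $\{W, \bar\sigma \cdot W\}$ with $\bar\sigma \cdot W \cap W = \emptyset$, and each such pair is $N$-equivariantly isomorphic to the induced scheme $N \times^T W$, with $T$ acting trivially on $W$. Taking the $N$-quotient stack gives $[(N \times^T W)/N] \simeq [W/T] \simeq W \times \mc BT$, and consequently an identification $\mr{A}^{\mr{BM}}_N(N \times^T W; L) \simeq \mr{A}^{\mr{BM}}(W \times \mc BT; L)$ that is compatible with twists. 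Under this identification, the element $e \in \mr{A}^*(\mc BN)$ acts through the pullback along $\mc BT \hookrightarrow \mc BN$; since $\widetilde{\mo{}}^+(1)|_{\mc BT} \simeq \mo{\mc BT}(1) \oplus \mo{\mc BT}(-1)$, its Euler class becomes $e(\mo{}(1)) \cdot e(\mo{}(-1)) = 0$, the vanishing of line-bundle Euler classes in any $SL_\eta$-oriented theory being \cite[Lemma 4.3]{Motivic_Euler_Char}. Hence $e$ acts as zero after restriction and inverting it annihilates the module.

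The principal obstacle will be to rigorously justify the identification $\mr{A}^{\mr{BM}}_N(N \times^T W; L) \simeq \mr{A}^{\mr{BM}}(W \times \mc BT; L)$ compatibly with twists in the stacky six-functor formalism of \cite{ChoDA24}. The map $\mc BT \to \mc BN$ is a degree-two étale cover, so its relative cotangent complex vanishes and no spurious twists are introduced; this should reduce the matter to standard base-change along the atlas of $[W/T] \to \mc BN$. A secondary, routine subtlety in part (1) is the verification that the character $\chi_i$ obtained above is truly constant on $X_i^N$: this uses connectedness of $X_i^N$ together with the discreteness of $\Hom(N, \mb G_m) \simeq \bigslant{\Z}{2\Z}$.
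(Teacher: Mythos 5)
Your argument follows the paper's proof in all essentials. For part (1) you make the same reduction: connectedness of $X_i^{N}$ plus discreteness of $\Hom(N,\mb G_m)\simeq \bigslant{\Z}{2\Z}$ forces the fiberwise character of the linearisation to be constant, so $\iota_0^*L|_{X_i^{N}}\simeq L_i\otimes L_{\chi_i}$ with $L_i$ trivially linearised, and the Künneth statement is then exactly \cref{ch3:_finite_Kunneth_BN}. For part (2) your key computation is also the paper's: a $\bar\sigma$-pair $W\amalg\bar\sigma W$ is induced from $T$, its $N$-quotient is $W\times\mc BT$, and $e$ dies there because $q^*\widetilde{\mo{}}^{+}(1)\simeq \mo{}(1)\oplus\mo{}(-1)$ has vanishing Euler class (product of line-bundle Euler classes, each zero for an $SL_\eta$-oriented theory), so the $\mr A^{\bullet}(\mc BT)$-module structure kills everything after inverting $e$.

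The one genuine gap is that in part (2) you treat $X^{T}_{ind}$ as if it were the \emph{disjoint} union of its $\bar\sigma$-pairs. By definition it is only the union of the irreducible components $W$ with $\bar\sigma W\cap W=\emptyset$; distinct pairs may intersect, and Borel--Moore homology of a union is not the direct sum of that of the pieces. The paper closes this by induction on the number of pairs $r$: writing $X^{T}_{ind}=C\cup C'$ with $C=C_1\amalg C_2$ one pair and $C'$ the remaining components, it observes that $C\cap C'=(C_1\cap C')\amalg \bar\sigma\cdot(C_1\cap C')$ is again induced from $T$ (since $C_1\cap C_2=\emptyset$), so the same $e$-vanishing applies to the intersection, and the localization/Mayer--Vietoris sequence then propagates the vanishing to the union. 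You should add this step; without it your argument only covers the case where the pairs are pairwise disjoint. It is a routine repair, but it is a real omission as written.
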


\begin{proof}
	For the first assertion we may assume $ X^{N} $ is connected. The line bundle $ \iota_0^*L $ lives over a space with trivial $ N $-action, so $ N $ will act with a character $ \chi $ on $ \iota_0^*L $ (corresponding to a line bundle $ L_{\chi} $) so that we will have an isomorphism of $ N $-linearised line bundles $ \iota_0^*L\simeq L_0 \otimes L_{\chi} $ for some line bundle $ L_0 $ having the trivial $ N $-action. Then the isomorphism of the first assertion in the proposition will follow from \cref{ch3:_finite_Kunneth_BN}.\\
	For the second assertion, consider $ C_1,\ldots,C_{2r} $ the irreducible components of $ X_{ind}^{T} $ with $ \bar \sigma C_{2i-1}=C_{2i} $. We will proceed by induction on $ r $. Let us start with $ r=1 $, as an $ N $-scheme $ X^{T}_{ind}=C_1 \amalg C_2\simeq \left( \bigslant{N}{T} \right) \times C_1 $ with the trivial action on $ C_1 $. Hence:
	\[ \left[ \quot{X^{T}_{ind}\ \!}{\ \!N} \right]\simeq C_1\times \mc BT \]
	\noindent and we get:
	\[ \mr{A}^{\mr{BM}}_{\bullet, N}\left( \bigslant{X^{T}_{ind}}{\sk}; \iota_{ind}^*L \right)\simeq \mr{A}^{\mr{BM}}_{\bullet}\left( \bigslant{C_1}{\sk}; L_1 \right) {\otimes}_{W(\sk)} \mr{A}^{-\bullet}\left( BT; L_{\chi_{C_1}} \right)  \]
	\noindent where $ L_1 $ is a line bundle over $ C_1 $ and $ L_{\chi_{C_1}} $ is the bundle associated to a character $ \chi_{C_1} $ of $ T $. Let  $ q: B\mb G_m \rightarrow BN  $, we have $ q^*\widetilde{\mo{}}^{+}(1)\simeq \mo{}(1)\oplus \mo{}(-1)  $, and so $ e=q^*e(\widetilde{\mo{}}^{+}(1))\simeq e\left( q^*\widetilde{\mo{}}^{+}(1)\right)=e\left( \mo{}(1)\oplus \mo{}(-1) \right)=0 $. But $\mr{A}^{-\bullet}\left( BT; L_{\chi_{C_1}} \right)   $ is a $ \mr{A}^{-\bullet}\left( BT\right)   $-module and $ \mr{A}^{-\bullet}\left( BT\right) \left[ e^{-1} \right]  =0 $, hence $ \mr{A}^{-\bullet}\left( BT; L_{\chi_{C_1}} \right)[e^{-1}]=0  $ and we are done for the initial inductive step. 
	
	For $ r>1 $, consider $ X^{T}_{ind}=C \cup C' $ where $ C:= C_1 \amalg C_2 $ and $ C':=C_3 \cup \ldots \cup C_{2r} $. By induction:
	\[ \mr{A}^{\mr{BM}}_{N}\left( \bigslant{C}{\sk}; \iota_{ind}^*L \right)\left[ e^{-1} \right] =\mr{A}^{\mr{BM}}_{N}\left( \bigslant{C'}{\sk}; \iota_{ind}^*L \right)\left[ e^{-1} \right] =0 \]
	Moreover $ C\cap C'=\left( C_1\cap C'  \right)\amalg \left( C_2 \cap C' \right)\simeq \left( C_1\cap C' \right)\amalg \bar \sigma \cdot \left( C_1 \cap C' \right) $, hence $ \left( C \cap C' \right)\times^{N} EN \simeq \left( C_1\cap C' \right)\times BT $ similar to the computation made for $ r=1 $. But using the same argument as before, we get in this way that $ \mr{A}^{\mr{BM}}_{N}\left( \bigslant{C\cap C'}{\sk}; \iota_{ind}^*L \right)\left[ e^{-1} \right] =0  $. Then using the localization sequence, it easy to see that:
	\[ \mr{A}^{\mr{BM}}_{N}\left( \bigslant{X^{T}_{ind}}{\sk}; \iota_{ind}^*L \right)\left[ e^{-1} \right] =0 \]
\end{proof}

\begin{thm}[Atiyah-Bott Localization for N-action]\label{ch4:_ABL_8.6}
	Let $ X \in \catname{Sch}_{\bigslant{}{\sk}}^{N} $ be a scheme with an $ N $-action and let $ \mr A \in \mr{SH}(\sk) $ be an $ SL_{\eta} $-oriented spectrum. Let $ \iota: \abs{X} ^{N} \into X  $ be the closed immersion. Let $ L \in \mr{Pic}(X) $ be an $ N $-linearised line bundle. Suppose the $ N $-action  is semi-strict. Then there is a non-zero integer $ M $ such that:
	%
	\[ \iota_*: \mr{A}^{\mr{BM}}_{\bullet, N}\left( \abs{X}^{N}; \iota^*L \right)\left[ \left( M \cdot e \right)^{-1} \right] \stackrel{}{\longrightarrow} \mr{A}^{\mr{BM}}_{\bullet, N}\left( X ; L \right)\left[ \left( M \cdot e \right)^{-1} \right]   \]
	\noindent is an isomorphism. 
\end{thm}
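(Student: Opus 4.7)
The plan is to reduce the statement to a vanishing statement on the complement, using the localization triangle together with the torus-localization theorem \cref{ch4:_ABL_7.10} and the vanishing \cref{ch4:_ABL_8.5}(2). Concretely, let $j \from U \into X$ be the open complement of the closed immersion $\iota \from \abs{X}^{N}\into X$, so that $U=X\setminus \abs{X}^{N}$. The associated localization long exact sequence
\[ \ldots \to \mr{A}^{\mr{BM}}_{\bullet, N}(\abs{X}^{N};\iota^*L) \xrightarrow{\iota_*} \mr{A}^{\mr{BM}}_{\bullet, N}(X;L) \xrightarrow{j^*} \mr{A}^{\mr{BM}}_{\bullet, N}(U;j^*L) \to \ldots \]
is compatible with localization at any element of $\mr{A}^{\bullet}_{N}(X)$. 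Hence, to prove the theorem, it is enough to exhibit an integer $M>0$ such that
\[ \mr{A}^{\mr{BM}}_{\bullet, N}(U;j^*L)\bigl[(M\cdot e)^{-1}\bigr]=0. \]

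First I would apply \cref{ch4:_ABL_7.10} to the $N$-scheme $U$ equipped with the restricted $N$-linearised line bundle $j^*L$: this produces an integer $M_1>0$ such that the closed pushforward $\iota^{U}_{*}$ induces an isomorphism
\[ \mr{A}^{\mr{BM}}_{\bullet, N}\bigl(U^{T};(j\iota^{U})^*L\bigr)\bigl[(M_1\cdot e)^{-1}\bigr] \xrightarrow{\sim} \mr{A}^{\mr{BM}}_{\bullet, N}(U;j^*L)\bigl[(M_1\cdot e)^{-1}\bigr]. \]
So the problem reduces to showing that $\mr{A}^{\mr{BM}}_{\bullet, N}(U^{T};\cdot)[e^{-1}]$ vanishes. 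Here I would use the semi-strictness assumption \cref{ch4:_ABL_def_8.2}: since $X^{T}_{red}=\abs{X}^{N}\cup X^{T}_{ind}$, taking the complement of $\abs{X}^{N}$ yields
\[ U^{T}_{red}=X^{T}_{ind}\setminus \abs{X}^{N}, \]
which is an open $N$-invariant subscheme of $X^{T}_{ind}$. In particular, the irreducible components of $U^{T}$ still come in pairs swapped by $\bar\sigma$, and each such pair is of the form $(N/T)\times C$ for some $C$ with trivial $T$-action, exactly as in the proof of \cref{ch4:_ABL_8.5}(2).

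I would then conclude by running the same inductive argument as in \cref{ch4:_ABL_8.5}(2): using the K\"unneth-type decomposition from \cref{ch3:_finite_Kunneth_BN} applied pair-by-pair, one reduces to the observation that the equivariant cohomology of $BT$ is killed by inverting $e$, because under the double cover $q\from B\mb G_m \to BN$ one has $q^{*}\widetilde{\mo{}}^{+}(1)\simeq \mo{}(1)\oplus\mo{}(-1)$, whose Euler class is zero. This gives $\mr{A}^{\mr{BM}}_{\bullet, N}(U^{T};\cdot)[e^{-1}]=0$ and hence the vanishing of $\mr{A}^{\mr{BM}}_{\bullet, N}(U;j^*L)$ after inverting $M_1\cdot e$, proving the theorem with $M=M_1$.

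The main technical obstacle is checking that the inductive argument in the proof of \cref{ch4:_ABL_8.5}(2) goes through when $U^{T}$ is only an open (and not necessarily closed) $N$-invariant subscheme of $X^{T}_{ind}$: one must verify that the intersections $C_i\cap C_j$ occurring in the Mayer--Vietoris/localization step still decompose as $\bar\sigma$-permuted pairs, so that \cref{ch3:_finite_Kunneth_BN} applies and the Euler class $e$ continues to act as zero on every piece. Once this combinatorial bookkeeping is settled, assembling the localization sequence gives the desired result. A minor bookkeeping point, handled as in the proof of \cref{ch4:_ABL_7.10}, is that one may replace $\sk$ by its perfect closure after inverting the exponential characteristic (absorbing this prime into $M$), so that purity for residue-field extensions can be freely invoked.
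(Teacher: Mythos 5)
Your strategy is in essence the paper's: both arguments come down to the torus localization theorem \cref{ch4:_ABL_7.10} plus the vanishing of the localized equivariant Borel--Moore homology of the ``independent'' part of the torus-fixed locus, and both ultimately need
\[ \mr{A}^{\mr{BM}}_{\bullet,N}\left( \bigslant{X^{T}_{ind}\setminus \abs{X}^{N}}{\sk};\, \cdot \right)\left[ e^{-1} \right]=0 . \]
You only permute the order of the reductions: you first pass to the open complement $U=X\setminus\abs{X}^{N}$ and then torus-localize $U$, whereas the paper first torus-localizes $X$ (via the closed immersion $X^{T}\into X$, using semi-strictness to write $X^{T}_{red}=\abs{X}^{N}\cup X^{T}_{ind}$) and then compares $\abs{X}^{N}$ with $X^{T}$.

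The one place where your write-up stops short is exactly the step you flag as ``combinatorial bookkeeping to be settled'': you propose to re-run the induction of \cref{ch4:_ABL_8.5}(2) on the open subscheme $U^{T}=X^{T}_{ind}\setminus\abs{X}^{N}$ but do not carry it out. This is a real gap as written, but it is closable in either of two ways. The quickest is to note that $U$ is itself an object of $\catname{Sch}_{\bigslant{}{\sk}}^{N}$ and that $U^{T}=U^{T}_{ind}$ (each irreducible component of $U^{T}$ is a dense open of a component $W$ of $X^{T}_{ind}$, so its generic point is still moved by $\bar\sigma$ and $\bar\sigma W\cap W=\emptyset$ persists); then \cref{ch4:_ABL_8.5}(2) applied verbatim to $U$ gives the vanishing with no new induction. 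The paper instead applies \cref{ch4:_ABL_8.5}(2) twice --- once to $X$, giving the vanishing for $X^{T}_{ind}$, and once to the $N$-scheme $\abs{X}^{N}$, giving the vanishing for $(\abs{X}^{N})^{T}_{ind}=X^{T}_{ind}\cap\abs{X}^{N}$ --- and then deduces the vanishing for the open difference $X^{T}_{ind}\setminus\abs{X}^{N}$ from the localization sequence of the pair. Either device replaces your unfinished induction; with that inserted, your proof is complete and equivalent to the paper's.
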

\begin{proof}
	Since the action is semi-strict, we can consider the closed immersion $ X^{T}\simeq X^{T}_{ind}\cup \abs{X}^{N} \into X $, that has $ X \setminus X^{T} $ as open complement. By \cref{ch4:_ABL_8.5}, we have $ \mr{A}^{\mr{BM}}_{N}\left( \bigslant{X^{T}_{ind}}{\sk}; \iota_{ind}^*L \right)\left[ e^{-1} \right] =0 $. Applying again \cref{ch4:_ABL_8.5}, this time to the scheme $ \abs{X}^{N} $, we get:
	\[ \mr{A}^{\mr{BM}}_{N}\left( \bigslant{X^{T}_{ind}\cap \abs{X}^{N}}{\sk}; \iota_{ind}^*L \right)\left[ e^{-1} \right] =0 \]
	\noindent since $ \left( \abs{X}^{N} \right)^{T}_{ind}\simeq X^{T}_{ind}\cap \abs{X}^{N} $. But using a localization sequence, this implies that:
	\[ \mr{A}^{\mr{BM}}_{N}\left( \bigslant{X^{T}_{ind} \setminus \abs{X}^{N}}{\sk}; \iota_{ind}^*L \right)\left[ e^{-1} \right] =0 \]
	\noindent as well. So we have that the inclusion $ \abs{X}^{N}\into X^{T} $ induces an isomorphism on $ N $-equivariant Borel-Moore homologies, and so we get the final claim using \cref{ch4:_ABL_7.10}.
\end{proof}

\subsection{Bott Residue Formula}

Consider now $ \iota: Y \into X $ a regular embedding in $ \catname{Sch}_{\bigslant{}{\sk}}^{N} $. Let $ N_{\bigslant{Y}{X}} $  be the normal bundle associated to $ \iota $, it has a natural $ N $-linearisation, so we can consider $ e_N(N_{\bigslant{Y}{X}}) \in \mr{A}^{\mr{BM}}_{r,N}\left( \bigslant{Y}{\sk}; \det\!{}^{-1}\!\!\,\left( N_{\bigslant{Y}{X}} \right) \right) $, where $ r $ is the rank of $ N_{\bigslant{Y}{X}} $.
\begin{lemma}\label{ch4:_ABL_9.1}
	Let $ L \in \mr{Pic}(X) $ and  let $ \mr A\in \mr{SH}(\sk) $ be an $ SL_{\eta} $-oriented ring spectrum. Then:
	\[ \iota^!\iota_*:  \mr{A}^{\mr{BM}}_{\bullet,N}\left( \bigslant{Y}{\sk};  \iota^* L \right)  \longrightarrow \mr{A}^{\mr{BM}}_{\bullet-r,N}\left( \bigslant{Y}{\sk}; \iota^*L \otimes\det\!{}^{-1}\!\!\,\left( N_{\bigslant{Y}{X}} \right) \right)   \]
	\noindent is the cup product with  $ e_N\left(N_{\bigslant{Y}{X}}\right) $.
\end{lemma}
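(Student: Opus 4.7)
The strategy is the classical self-intersection argument, adapted to the equivariant six functor formalism of [ChoDA24] used throughout the paper. First I would reduce to the case of the zero section of the normal bundle via deformation to the normal cone: the Kan extension of the usual construction gives an $N$-equivariant deformation $M_Y X \to \mb A^1$ with generic fibre $X$ (containing $Y$ via $\iota$) and special fibre the total space of $N_{\bigslant{Y}{X}}$ (containing $Y$ via the zero section $s_0$). Applying the localisation sequence associated to $\set{0} \into \mb A^1 \hookleftarrow \mb G_m$ in equivariant BM homology, and using homotopy invariance along the projection to $\mb A^1$, both sides of the claimed identity deform from the $(\iota, N_{\bigslant{Y}{X}})$-picture on the generic fibre to the $(s_0, N_{\bigslant{Y}{X}})$-picture on the special fibre. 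This reduces the statement to the case where $X = V$ is the total space of an $N$-equivariant vector bundle $p: V \rightarrow Y$ of rank $r$ and $\iota = s_0$ is the zero section, with $N_{\bigslant{Y}{V}} \simeq V$.

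In this reduced situation, the map $p$ is smooth with relative cotangent $\mb L_p = -[p^*\mc V]$, so by homotopy invariance the smooth pushforward (equivalently the Gysin pullback)
\[ p_!: \left(\bigslant{V}{\sk}\right)^{\mr{BM}}(p^*L + p^*\mc V) \stackrel{\sim}{\longrightarrow} \left(\bigslant{Y}{\sk}\right)^{\mr{BM}}(L) \]
is an equivalence of BM motives with inverse $s_0^!$, as recorded in Definition 1.X of the paper (GPf item). Now, using the $SL$-orientation of $\mr A$ together with the construction of the twisted Thom isomorphism (Proposition 2.X), the pushforward $s_{0*}$ on BM homology is identified, under Thom, with multiplication by the canonical Thom class $\mr{th}(V) \in \mr A^{2r,r}(\Th{Y}{V}; \det^{-1}(V))$. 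Applying $s_0^!$ then amounts, via the same Thom identifications, to pulling back $\mr{th}(V)$ along the zero section. By definition of the Euler class, $s_0^*\mr{th}(V) = e_N(V) \in \mr A(Y; \det^{-1}(V))$, and the projection formula yields $s_0^!(s_{0*}\alpha) = \alpha \cup e_N(V)$.

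All the ingredients — deformation to the normal cone, homotopy invariance, the Thom isomorphism, and the projection formula — are available $N$-equivariantly in the six-functor formalism of [ChoDA24] on $\oocatname{ASt}_{\bigslant{}{\sk}}$, so the entire argument lifts verbatim to the equivariant setting after replacing $\bigslant{Y}{\sk}$ and $\bigslant{X}{\sk}$ by $\bigslant{[Y/N]}{\mc BN}$ and $\bigslant{[X/N]}{\mc BN}$ respectively.

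The main technical obstacle is the compatibility of the various transformations (Gysin refined pullback $\iota^!$, Thom class, smooth pushforward $p_!$) through the deformation to the normal cone: one must verify that the construction of the deformation space is functorial enough in the $N$-equivariant setting so that the relevant localisation sequence commutes with all these operations. This is ensured by the exchange transformations $Ex_!^*, Ex^{*!}$ stated in Theorem 1.X of [ChoDA24] together with the purity equivalence $f_{\#} \simeq f_!\Sigma^{\mb L_f}$, which guarantee that the identifications used in the scheme-theoretic argument pass without change to NL-stacks.
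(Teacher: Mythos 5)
Your argument is correct and is essentially the route the paper takes: the paper's entire proof is the single sentence that the claim follows by the same arguments as \cite[Corollary 4.2.3]{DJK}, i.e.\ the standard self-intersection formula obtained from deformation to the normal cone plus the zero-section/Thom-class identification, which is exactly what you have unwound and checked against the equivariant six-functor formalism. (One cosmetic slip: for the bundle projection $p$ one has $\mb L_p=[p^*\mc V^{\vee}]$, not $-[p^*\mc V]$; the twist in your displayed smooth pushforward is nonetheless the correct one.)
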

\begin{proof}
	The claim follows using the same arguments as in \cite[Corollary 4.2.3]{DJK}.
\end{proof}

Following \cite[Lemma 9.3, Construction 2.7]{Levine_Atiyah-Bott}, we have that the Euler class of an $ N $-linearised vector bundle $ V $ on some connected $ Y\in \catname{Sch}_{\bigslant{}{\sk}}^{N} $ gets inverted once we invert some Euler classes coming from representations of $ N $ over $ \sk $. Let us briefly recall such construction  (for details refer to \textit{loc. cit.}). Let $ \mc V $ be an $ N $-linearised locally free sheaf on some connected scheme $ Y\in \catname{Sch}_{\bigslant{}{\sk}}^{N} $.

\begin{enumerate}
	\item [(\textit{Case 1})] Suppose $ N $ acts trivially on $ Y $. Then for each point $ y \in Y $, we can find $ N $-stable open neighbourhoods $ j_{U_y}:U_y\into Y $ of $ y $ such that our locally free sheaf trivialises as $ \psi_y: j_{U_y}^*\mc V \stackrel{\sim}{\rightarrow} \mo{U_y }\otimes_{\sk} V(f) $ for some $ \sk $-representation $ f $ of $ N $. Taking the decomposition of $ V(f) $ into isotypical components, we get the corresponding decomposition of $ j_{U_y}^*\mc V $. We can actually find a global decomposition of $ \mc V $ into isotypical components, indexed by $ \sk $-irreducible representations of $ N $:
	\[ \mc V=\bigoplus_{\phi} \mc V_{\phi} \]
	\noindent in such a way that on each trivialising open subset $ j_U: U \into Y $ we have:
	\[ j_U^*\mc V_{\phi}= \mo{U}\otimes V(\phi)^{n_{\phi}} \]
	\noindent where $ V(\phi) $ is a $ \sk $-irreducible representation of $ N $. The $ \bigoplus_{\phi} V(\phi)^{n_{\phi}} $ turns out to be completely determined by $ \mc V $, up to isomorphism. We denote its isomorphism class as $ [\mc V^{gen}] $.
	
	\item [\textit{(Case 2)}] Suppose $ T=\mb G_m\sseq N $ acts trivially on $ Y $. We can decompose $ \mc V $ into weight spaces $ \mc V=\bigoplus_{m} \mc V_m $, and let $ \mc V^{\mf m}:=\bigoplus_{m\neq 0} \mc V_m $. Then for any $ N $-trivialised open $ j_U: U \into Y $, we have:
	\[ j_U^*\mc V\simeq \mc V_0 \oplus \bigoplus_{m>0} \mo{U}\otimes^{\sigma,\tau} V(\rho_m)^{n_m} \]
	\noindent where $ \rho_i $'s are the rank two $ N $-representations introduced in Chapter 3, and $ \cdot \otimes^{\sigma,\tau} V(\rho_i) $ denotes the $ \mo{U} $-semi-linear extension of the representations in the sense of \cite[Def.2.5]{Levine_Atiyah-Bott}. Then isomorphism class of the representation $ \bigoplus_{m>0} V(\rho_m)^{n_m} $ is uniquely determined by $ \mc V $ and hence we denote this class by $ [\mc V^{gen}] $.
	
	\item [\textit{(Case 3)}]  Suppose $ \mb G_m=T\sseq N $ acts trivially on $ Y $ and that $ q: Y \rightarrow \bigslant{Y}{N}\simeq \bigslant{Y}{\langle \bar \sigma \rangle} $ is a degree 2 étale cover, where $ \bar \sigma $ is the image of $ \sigma $ in $ \bigslant{N}{T}\simeq \bigslant{\Z}{2\Z}\simeq \langle \bar{\sigma} \rangle $. Then for any $ N $-trivialised open $ j_U: U \into Y $, we have:
	\[ j_U^*\mc V\simeq \mo{U}\otimes^{\sigma,\tau} V(\rho_0)^{n_0} \oplus \bigoplus_{m>0} \mo{U}\otimes^{\sigma,\tau} V(\rho_m)^{n_m} \]
	\noindent where the notation is the same as in \textit{Case 2}. This time the isomorphism class of $ V(\rho_0)^{n_0} \oplus \bigoplus_{m>0} V(\rho_m)^{n_m} $ is uniquely determined by $ \mc V $, and we denote this class by $ [\mc V^{gen}] $.
\end{enumerate}

\begin{defn}[{\cite[Def. 4.7]{Levine_Atiyah-Bott}}]
	Let $ V $ an $ N $-linearised vector bundle on a connected $ Y\in \catname{Sch}_{\bigslant{}{\sk}}^{N} $. Suppose we are in one of the three cases when $ [\mc V^{gen}] $ is defined. Choose a representative $ V^{gen} \in [\mc V^{gen}] $ and suppose it has even rank $ 2r $ (this is always true in \textit{Case 2}).  Let $ \mr A\in \mr{SH}(\sk) $ be an $ SL_{\eta} $-oriented ring spectrum. Then we have:
	\[ e_N(V^{gen}) \in \mr{A}^{2r}\left( BN; \det\!{}^{-1}\!\!\,(V^{gen}) \right) \]
	\[ e_N(V^{gen})^2 \in \mr{A}^{4r}\left( BN \right) \]
	We then define the \textit{generic Euler class} as the subset:
	\[ [e_N(V^{gen})]:=\set{u\cdot e_N(V^{gen})^2\st{0}{3}  \ u \in\left( \mr{A}^0(\sk) \right)^{\times} }\sseq \mr{A}^{4r}(BN) \]
	\noindent and this depends by construction only on the isomorphism class of $ V $ as an $ N $-linearised bundle. The localization:
	\[ \mr{A}^{\bullet}_N\left( Y \right)\left[  [e_N(V^{gen})]^{-1}  \right] \]
	\noindent will denote the localization with respect to any element $ y \in [e_N(V^{gen})] $ seen as an element in $ \mr{A}_N(Y) $ through the $ \mr{A}(BN) $-module map. If the representative $ V^{gen} $ has a trivialization of its determinant, then it will give us an actual class $ e_N(V^{gen}) \in \mr{A}^{2r}(BN) $ and then it becomes a localization by $ e_N(V^{gen}) $ in the usual sense.
\end{defn}

\begin{lemma}[{\cite[Lemma 9.3]{Levine_Atiyah-Bott}}]\label{ch4:_ABL_9.3}
	Let $ V $ an $ N $-linearised vector bundle on a connected scheme $ Y \in \catname{Sch}_{\bigslant{}{\sk}}^{N} $ of rank $ 2r $. Let $ \mr A\in \mr{SH}(\sk) $ be an $ SL_{\eta} $-oriented ring spectrum. Let us suppose that assumptions of \cite[Construction 2.7]{Levine_Atiyah-Bott} are satisfied, so we are in Case 1,2,3 in \textit{loc.cit.}  and hence we get a generic Euler class $ [e_{N}^{gen}(V) ]\sseq \mr{A}^{4r}(BN) $. Choose an element $ e_N(V^{gen}) \in [e_N(V^{gen})] $. Then $ e_{N}(V) \in \mr{A}^{2r}_{N}(Y; \det\!{}^{-1}\!\!\,\,(V)) $ is invertible in $ \mr{A}^{2\bullet}_{N}(Y;\det\!{}^{-1}\!\!\,\,(V))\left[ \left( e_{N}^{gen}(V) \right)^{-1} \right] $.
\end{lemma}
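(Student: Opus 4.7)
The plan is to show that once the chosen representative $e_N(V^{gen}) \in [e_N^{gen}(V)]$ is inverted, the class $e_N(V)^2 \in \mr A^{4r}_N(Y)$ becomes a unit; then $e_N(V) \cdot (e_N(V)^2)^{-1}$ supplies the required inverse to $e_N(V)$ inside $\mr A^{2\bullet}_N(Y; \det^{-1}(V))[(e_N^{gen}(V))^{-1}]$. This reduces the problem from the twisted ring to an invertibility statement in the untwisted $\mr A^\bullet_N(Y)$-module.

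First I would exploit the trivialisations recalled in the three Cases above Construction~4.7: on any sufficiently small $N$-stable open $j_U \colon U \into Y$, the $N$-linearised locally free sheaf $\mc V$ becomes isomorphic, as an $N$-linearised sheaf, to $\mo{U} \otimes_\sk V^{gen}$ for an $N$-representation $V^{gen}$ of class $[\mc V^{gen}]$. By naturality of Thom/Euler classes under pullback, together with multiplicativity for direct sums of isotypical components, one gets
\[ j_U^* e_N(V) = \pi_U^* e_N(V^{gen}) \in \mr A^{2r}_N\bigl(U; \det{}^{-1}(V|_U)\bigr), \]
where $\pi_U \colon [U/N] \to \mc BN$ is the structure map. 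Squaring and untwisting by $\det^{-1}(V)$ shows that $j_U^* e_N(V)^2$ is the image of a representative of $[e_N^{gen}(V)]$ under $\pi_U^*$, hence is invertible in $\mr A^{2\bullet}_N(U)[(e_N^{gen}(V))^{-1}]$.

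To globalise, I would apply the homotopy Leray (Gersten) spectral sequence of Asok--Déglise--Nagel to the motivic spectrum representing $N$-equivariant cohomology of $Y$-schemes, whose existence follows from the representability argument of \cref{ch2:_Repr_Stack-Coh} applied to the stack $[Y/N]$. This yields
\[ E_1^{p,q} = \bigoplus_{y \in Y^{(p)}} \mr A^{q}_N\bigl(\kappa(y); \det{}^{-1} V|_y\bigr) \Longrightarrow \mr A^{p+q}_N(Y; \det{}^{-1} V). \]
At each residue point $y$ the $N$-linearised fibre $V|_y$ has isotypical/weight type $[\mc V^{gen}]$ by construction, so the local computation of the previous step applies over $\kappa(y)$ and shows that $e_N(V|_y)^2$ is a unit multiple of the pullback of $e_N^{gen}(V)$. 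Because localisation at a pulled-back cohomology class is exact, the localised sequence $E_1[(e_N^{gen}(V))^{-1}]$ consists of modules in which the action of $e_N(V)^2$ is invertible; a standard filtration argument then transports this invertibility to the abutment, yielding the desired inverse of $e_N(V)$.

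The main obstacle is the intrinsic ambiguity of $[e_N^{gen}(V)]$: the representative $V^{gen}$ is only well-defined up to isomorphism of $N$-representations, so $e_N(V^{gen})^2$ is determined only up to an element of $(\mr A^0(\sk))^\times$. One must check that the set $[e_N^{gen}(V)] \subseteq \mr A^{4r}(\mc BN)$ is preserved under pullback to each $\kappa(y)$, and that the unit discrepancies between different local trivialisations cohere on overlaps, so that the single localisation $[(e_N^{gen}(V))^{-1}]$ has an unambiguous meaning. Once this coherence is in place, the remaining ingredients (Gersten convergence and compatibility of the computation of $\mr A^\bullet(\mc BN_{\kappa(y)})$ via the finite-level approximations used in the proof of \cref{ch2:_5.3_MEC}) are already available.
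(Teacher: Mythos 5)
Your key local step---on an $N$-stable trivialising open $U\sseq Y$ one has $\restrict{V}{U}\simeq \pi_U^*V^{gen}$ as $N$-linearised bundles, so the restriction of $e_N(V)$ is the pullback of $e_N(V^{gen})$ and becomes invertible after localising at $e_N^{gen}(V)$---is exactly the heart of the paper's argument, and your reduction to the invertibility of $e_N(V)^2$ via $e_N(V)\cdot\left(e_N(V)^2\right)^{-1}$ is the right way to handle the twist by $\det^{-1}(V)$. The ``obstacle'' you raise about the ambiguity of $[e_N^{gen}(V)]$ is already absorbed by the definition: any two representatives differ by a unit of $\mr A^0(\sk)$, so they determine the same localisation, and nothing further needs to be checked.

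The genuine problem is your globalisation. You invoke a coniveau/Gersten spectral sequence with $E_1^{p,q}=\bigoplus_{y\in Y^{(p)}}\mr A^q_N(\kappa(y);\det^{-1}V|_y)$, but identifying the $E_1$-terms with (twisted) cohomology of the residue fields uses purity for the points $y\into Y$ and therefore requires $Y$ to be smooth; the lemma assumes only that $Y$ is a connected quasi-projective $N$-scheme. In addition, in the equivariant setting this spectral sequence would have to be run on each finite-level approximation $Y\times^N E_mN$ and the passage to the limit controlled, which your appeal to ``a standard filtration argument'' does not address. Both difficulties vanish if you globalise by Mayer--Vietoris instead, which is what the paper does: since $Y$ is quasi-projective, finitely many $N$-stable trivialising opens cover $Y$; on every finite intersection the bundle is still of the form $\pi^*V^{gen}$, so multiplication by $e_N(V)$ is an isomorphism of the localised twisted modules there; and since localisation at $e_N^{gen}(V)$ is exact, induction over the cover via the Mayer--Vietoris sequences transports the isomorphism to $Y$. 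With that replacement your proof closes; as written, the spectral-sequence step does not apply under the stated hypotheses.
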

\begin{proof}
	We need to show that $ e_N(V) $ is invertible in $ \mr{A}^{2\bullet}_{N}(Y;\det\!{}^{-1}\!\!\,\,(V))\left[ \left( e_{N}^{gen}(V) \right)^{-1} \right]  $,  but this is equivalent to show that multiplication by $ e_N(V) $:
	\[ \cdot e_V(N): \mr{A}^{2\bullet}_{N}(Y;\det\!{}^{-1}\!\!\,\,(V))\left[ \left( e_{N}^{gen}(V) \right)^{-1} \right]  \longrightarrow \mr{A}^{2\bullet+2r}_{N}(Y;\det\!{}^{-1}\!\!\,\,(V))\left[ \left( e_{N}^{gen}(V) \right)^{-1} \right]  \]
	\noindent is an isomorphism. By assumptions, we can find an open cover made by $ N $-stable opens $ U_i \sseq Y $ such that $ V $ trivialises over each $ U_i $. Then by a Mayer-Vietoris argument, we can reduce ourselves to prove our claim on all the intersections of our $ U_i $'s. But there the claim is obvious, since on each $ U_i $ we have an isomorphism of vector bundles (with an $ N $-action) $ \restrict{V}{U_i}\simeq \pi_Y^*V^{gen} $, where $ \pi_Y: Y\rightarrow \spec(\sk) $ is the structure map.

\end{proof}

\begin{thm}[Bott Residue Formula]\label{ch4:_ABL_9.5}
	Let $ X \in \catname{Sch}_{\bigslant{}{\sk}}^{N} $, $ L \in \mr{Pic}(X) $ an $ N $-linearised line bundle, and $ \mr A \in \mr{SH}(\sk) $ an $ SL_{\eta} $-oriented ring spectrum. Let us suppose that each connected component $ \iota_j: \abs{X}^{N}_{j} \into X $ of $ \abs{X}^{N} $  is a regular embedding. Moreover for each normal bundle $ N_j $ associated to $ \iota_j $, assume that the hypothesis of Case 1,2,3 in \cite[Construction 2.7]{Levine_Atiyah-Bott} are satisfied for $ V=N_j $. Denote by $ e_N^{gen}(N_{fix}) $ the products of $ e_N^{gen}(N_j) $'s. Finally assume that the $ N $-action on $ X $ is semi-strict.\\
	Denote $ P:=p\cdot M\cdot e$, where $ p $ is the exponential characteristic of the ground field $ \sk $ and $ M $ is the same integer as in \cref{ch4:_ABL_7.10}.\\
	Under the identification induced by the decomposition $ \abs{X}^{N}=\bigcup_j \abs{X}^{N}_{j} $ in its connected components:
	\[ \mr{A}^{\mr{BM}}_{N}\left( \bigslant{\abs{X}^{N}}{\sk}; \iota^*L \right)\simeq \prod_{j}  \mr{A}^{\mr{BM}}_{N}\left( \bigslant{\abs{X}^{N}_{j}}{\sk}; \iota^*_jL \right)  \]
	\noindent the inverse of the isomorphism:
	\[ \iota_*:  \mr{A}^{\mr{BM}, \bullet}_{N}\left( \bigslant{\abs{X}^{N}}{\sk}; \iota^*L \right)\left[ \left( P\cdot e_N^{gen}(N_{fix}) \right)^{-1} \right] \longrightarrow  \mr{A}^{\mr{BM}, \bullet}_{N}\left( \bigslant{X}{\sk}; L \right) \left[ \left( P\cdot e_N^{gen}(N_{fix}) \right)^{-1} \right]  \]
	\noindent is given by:
	\[ x \mapsto \prod_j \iota^!_j(x)\cap e_N(N_j)^{-1} \]
\end{thm}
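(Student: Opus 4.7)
The plan is to verify that the candidate map
\[ \Phi: x \longmapsto \prod_j \iota^!_j(x)\cap e_N(N_j)^{-1} \]
is a two-sided inverse to $\iota_*$ after inverting $P\cdot e_N^{gen}(N_{fix})$. By Theorem \ref{ch4:_ABL_8.6}, $\iota_*$ is already an isomorphism after inverting $P$, so to show $\Phi=\iota_*^{-1}$ it suffices to exhibit one composition, say $\Phi\circ\iota_*$, as the identity. Note also that $\Phi$ is well-defined: by Lemma \ref{ch4:_ABL_9.3} applied to each $N_j$, the Euler class $e_N(N_j)$ becomes invertible once $e_N^{gen}(N_{fix})$ is inverted (squaring if needed to land in the untwisted ring), so the cap product with $e_N(N_j)^{-1}$ makes sense in the localized theory.

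First I would unpack the decomposition $|X|^N=\coprod_j |X|^N_j$ into its connected components, which gives the identification
\[ \iota_* = \sum_j (\iota_j)_*:\ \bigoplus_j \mr A^{\mr{BM}}_N\!\left(\bigslant{|X|^N_j}{\sk};\iota_j^*L\right)\;\stackrel{\sim}{\longrightarrow}\; \mr A^{\mr{BM}}_N\!\left(\bigslant{X}{\sk};L\right) \]
in the localized cohomology. Then for a class $y = \sum_k (\iota_k)_*(y_k)$ with $y_k\in \mr A^{\mr{BM}}_N(|X|^N_k/\sk;\iota_k^*L)$ I would compute $\iota_j^!\iota_*(y) = \sum_k \iota_j^! (\iota_k)_*(y_k)$ component by component. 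The diagonal terms $k=j$ compute, by Lemma \ref{ch4:_ABL_9.1} (self-intersection for the regular embedding $\iota_j$ with normal bundle $N_j$), to
\[ \iota_j^!(\iota_j)_*(y_j) = y_j\cap e_N(N_j). \]
Capping with $e_N(N_j)^{-1}$ (which is licit by Lemma \ref{ch4:_ABL_9.3}) therefore returns $y_j$, giving the $j$-th factor of the image under $\Phi\circ\iota_*$.

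The key point, and what I expect to be the main technical obstacle, is the vanishing of the off-diagonal terms $\iota_j^!(\iota_k)_*$ for $j\neq k$. Here I would use the fact that the connected components $|X|^N_j$ and $|X|^N_k$ of $|X|^N$ are disjoint as closed subschemes of $X$, so that the cartesian square
\[ \begin{tikzpicture}[baseline=(current bounding box.center),scale=1.5]
\node (a) at (0,1) {$\emptyset$};
\node (b) at (1,1) {$|X|^N_k$};
\node (c) at (0,0) {$|X|^N_j$};
\node (d) at (1,0) {$X$};
\path[font=\scriptsize,>=angle 90]
(a) edge[->] (b) (a) edge[->] (c)
(b) edge[closed] node[right]{$\iota_k$} (d)
(c) edge[closed] node[below]{$\iota_j$} (d);
\end{tikzpicture} \]
has empty pullback. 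Applying the base change $\iota_j^!(\iota_k)_*\simeq (\pi_k)_*\pi_j^!$ coming from the six-functor formalism recalled at the start of Section 1 (and valid for lci maps by $Ex^{!*}$), both factors pass through $\mr{SH}(\emptyset)=0$, so the composite is zero. Assembling these computations yields $\iota_j^!\iota_*(y) = y_j\cap e_N(N_j)$, whence $\Phi\circ\iota_*(y)=\prod_j y_j = y$, completing the proof. The only subtlety worth double-checking is the compatibility of the normal-bundle twists with the line bundle twist $L$, but these are handled uniformly by the $SL$-orientation together with the convention identifying $L$- and $L^{-1}$-twisted theories up to $\det^{\pm 1}$ on each component.
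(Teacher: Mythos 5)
Your proposal is correct and follows essentially the same route as the paper: use Theorem \ref{ch4:_ABL_8.6} to write any class as $\sum_j \iota_{j*}y_j$, apply the self-intersection formula of Lemma \ref{ch4:_ABL_9.1} componentwise, and invert the Euler classes via Lemma \ref{ch4:_ABL_9.3}. The only difference is that you make explicit the vanishing of the off-diagonal terms $\iota_j^!(\iota_k)_*$ for $j\neq k$ (by base change over the empty intersection), which the paper's proof leaves implicit; this is a worthwhile clarification but not a different argument.
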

\begin{proof}
	After inverting $ P $, we can use \cref{ch4:_ABL_8.6} and hence see $ e_N^{gen}(N_{fix}) $ and $ e_N^{gen}(N_j) $ as  elements in $ \mr{A}^{\mr{BM}}_{N}\left( \bigslant{X}{\sk}; L \right)  $. Inverting said elements will invert also $ e_N(N_j) $ by \cref{ch4:_ABL_9.3}. By \cref{ch4:_ABL_8.6} we can find elements:
	\[ y_j\in  \mr{A}^{\mr{BM}}_{N}\left( \bigslant{\abs{X}^{N}_{j}}{\sk}; \iota^*_jL \right)\left[ \left( P\cdot e_N^{gen}(N_{j}) \right)^{-1} \right]  \]
	\noindent such that:
	\[  x=\sum_j \iota_j{}_* y_j \]
	\noindent for any $ x \in   \mr{A}^{\mr{BM}}_{N}\left( \bigslant{X}{\sk}; L \right) \left[ \left( P\cdot e_N^{gen}(N_{fix}) \right)^{-1} \right]  $.  Since by \cref{ch4:_ABL_9.1} we have:
	\[ \iota_j^!(x)=\iota_j^!\iota_j{}_*y_j=y_j\cap e_N(N_j) \]
	\noindent the map we defined sending $ x $ to $\prod_j  \iota_j^!(x)\cap e_N(N_j)^{-1} $ will be indeed the inverse to $ \iota_* $.
\end{proof}
\begin{rmk}\label{ch4:_ABL_9.6}
	If $ X \in \catname{Sch}_{\bigslant{}{\sk}}^{N} $ is smooth, then $ X^{T} $ is smooth too, the action will be semi-strict, and each $ \iota_j: \abs{X}^{N}_j \into X $ will be a regular embedding. Moreover the respective normal bundles will be of the form $ N_{j}=N_j^{\mf m} $, so we are in Case 2 of \cite[Construction 2.7]{Levine_Atiyah-Bott} and we can indeed apply the previous theorem.
\end{rmk}

\subsection{Virtual Localization Formula}
We now have all the formal properties we need to prove the virtual localization formula of virtual fundamental classes of $ N $-equivariant schemes following \cite{VLF_Levine} and hence \cite{Graber-Pandharipande}.\\
For this section, let us fix $ X \in \catname{Sch}\bigslant{}{\sk}^{N} $ with a closed immersion $ \iota: X \into Y $ in $ \catname{Sch}_{\bigslant{}{\sk}}^{N} $, where $ Y $ is a smooth $ \sk $-scheme. Let us also suppose $ X $ is equipped with an $ N $-equivariant perfect obstruction theory represented by a two term complex $ \mc E_{\bullet}:=(\mc E_1 \rightarrow \mc E_0) $, of $ N $-linearised locally free sheaves, together with an $ N $-equivariant map $ \varphi_{\bullet}: \mc E_{\bullet} \longrightarrow \mb L_{\bigslant{X}{\sk}} $.  \\
Now consider the maximal subtorus $ T:=\mb G_m \sseq N $. We have the fixed $ T $-schemes $ X^{T}\sseq Y^{T} $, where we give $ X^{T} $ the scheme structure $ X^{T}:=X \cap Y^{T} $. Notice that $ Y^{T} $ is smooth (this is  true in much greater generality, for example see \cite[Theorem 4.3.6]{romagny2022algebraicity}); consider its connected  components $ Y_1,\ldots, Y_s $ and inclusion maps $ \iota_i^{Y}: Y_i \into Y $. Let us denote $ \iota_j: X_j:=Y_j \cap X \into X $ so that $ X^{T}=\coprod_j X_j $.\\
Let $ \mc F $ be a $ T $-linearised coherent sheaf on $ X_j $. The $ T $-action on the $ X_j $ is trivial, so we can decompose $ \mc F $ into its weight spaces for the $ T $-action:
\[ \mc F \simeq \bigoplus_{m\in \Z} \mc F_m \]
\noindent If $ \mc F $ is locally free, then so are the $ \mc F_m $'s. 
\begin{notation}
	Let $ \mc F $ be a $ T $-linearised coherent sheaf on some scheme $ Z $ with a trivial $ T $-action. Let $ \mc F\simeq \bigoplus_{m \in \Z}\mc F_m $ be its decomposition in weight spaces, then we will denote:
	\[ \mc F^{\mf m}:=\bigoplus_{m\in \Z\setminus \set{0}} \mc F_m \]
	\noindent the \textit{ moving part}, and by:
	\[ \mc F^{\mf f}:= \mc F_0 \]
	\noindent the \textit{fixed part} of $ \mc F $.
\end{notation}

In the situation described above where we have $ \iota_j: X_j \into X $ and a perfect obstruction theory  $ \varphi_{\bullet}: \mc E \rightarrow \mb L_{\bigslant{X}{\sk}} $ on $ X $, then $ \varphi_{\bullet} $ induces maps:
\[ \varphi_{\bullet}^{(j)}: \iota_j^*\mc E_{\bullet}^{\mf f} \longrightarrow \mb L_{\bigslant{X_j}{\sk}} \]
\noindent that by \cite[Proposition 1]{Graber-Pandharipande} are perfect obstruction theories for the $ X_j $'s.\\

\begin{defn}
	The virtual conormal sheaf of each $ X_j $ is defined to be the perfect complex $ \mc N_j^{vir}:=\iota_j^*\mc E_{\bullet}^{\mf m} $.
\end{defn}

By \cite[Lemma 6.2]{VLF_Levine} we have:
\begin{lemma}[{\cite[Lemma 6.2]{VLF_Levine}}] \label{ch4:_VLF_6.2}
	For each $ j $, the perfect obstruction theory $ \varphi_{\bullet}^{(j)}: \iota_j^{*}\mc E_{\bullet}^{\mf f} \rightarrow \mb L_{\bigslant{X_j}{\sk}} $ and $\mc N_j^{vir}$ have a natural $ N $-linearisation.
\end{lemma}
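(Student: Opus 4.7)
The plan is to exploit the structural fact that $T \subseteq N$ is normal with quotient $N/T \simeq \Z/2$, together with the crucial observation that conjugation by $\sigma \in N$ induces inversion $t \mapsto t^{-1}$ on $T$. This means that the induced $N/T$-action on $T$-weights is $m \mapsto -m$, which preserves the subsets $\{0\}$ and $\Z \setminus \{0\}$. All the relevant decompositions will therefore be $N$-stable, not merely $T$-stable.

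First I would observe that since $T$ is normal in $N$, the $N$-action on $X$ restricts to $X^T$, and by hypothesis each $X_j$ is stable under this induced action. Consequently $\iota_j^*\mc E_\bullet$ inherits an $N$-linearization from the one on $\mc E_\bullet$. For a $T$-linearized locally free sheaf $\mc F$ on a scheme with trivial $T$-action, the weight decomposition $\mc F = \bigoplus_m \mc F_m$ is functorial; the $N/T$-action intertwines $\mc F_m$ with $\mc F_{-m}$ by the remark above. Hence both $\mc F^{\mf f} = \mc F_0$ and $\mc F^{\mf m} = \bigoplus_{m \neq 0} \mc F_m$ are $N$-stable subobjects with induced $N$-linearizations. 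Applying this to $\iota_j^*\mc E_0$ and $\iota_j^*\mc E_1$ and using that the differential is $N$-equivariant, I obtain $N$-linearizations on $\iota_j^*\mc E_\bullet^{\mf f}$ and on $\mc N_j^{vir} := \iota_j^*\mc E_\bullet^{\mf m}$.

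Next I would handle the $N$-equivariance of the morphism $\varphi_\bullet^{(j)}$. The cotangent complex $\mb L_{X/\sk}$ is naturally $N$-linearized since $X$ carries an $N$-action and $\mb L$ is functorial. Pulling back along $\iota_j$ gives an $N$-linearized complex $\iota_j^*\mb L_{X/\sk}$ which decomposes into its fixed and moving parts. The standard identification (as in Graber--Pandharipande) realises $\mb L_{X_j/\sk}$ as the fixed part $(\iota_j^*\mb L_{X/\sk})^{\mf f}$ via the exact triangle relating the cotangent complexes of $X_j$, $X$ and the conormal of $\iota_j$; the same functorial argument as above shows this identification is $N$-equivariant. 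Then $\varphi_\bullet^{(j)}$ is defined as the composition of the $N$-equivariant restriction of $\varphi_\bullet$ with the projection onto fixed parts, both of which are $N$-equivariant by the previous paragraph.

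The main subtlety I anticipate is verifying that the identification $\mb L_{X_j/\sk} \simeq (\iota_j^*\mb L_{X/\sk})^{\mf f}$ really is $N$-equivariant (and not just $T$-equivariant), since it goes through a triangle involving the conormal of $\iota_j$ inside $X$. But this reduces to noticing that every morphism in the exact triangle is produced by a functor applied to $N$-equivariant data, so $N$-equivariance is automatic; all the work is in setting up the correct $N$-equivariant model for the cotangent complex, which is standard.
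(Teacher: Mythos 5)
Your argument is correct. Note that the paper itself gives no proof of this lemma --- it is imported wholesale from \cite[Lemma 6.2]{VLF_Levine} --- so the comparison is really with that reference, and your proof reproduces its mechanism: the single non-formal input is that conjugation by $\sigma$ acts on $T$ as $t\mapsto t^{-1}$ and hence negates $T$-weights, so that $\mc F_0$ and $\bigoplus_{m\neq 0}\mc F_m$ are $N$-stable even though the individual nonzero weight spaces are only permuted in pairs; everything else (equivariance of the differential, of $\iota_j^*\varphi_\bullet$, and of the comparison with $\mb L_{\bigslant{X_j}{\sk}}$) is functoriality. Incidentally, the subtlety you flag at the end is avoidable: for the equivariance of $\varphi^{(j)}_\bullet$ you do not need the identification $\mb L_{\bigslant{X_j}{\sk}}\simeq(\iota_j^*\mb L_{\bigslant{X}{\sk}})^{\mf f}$, since $\varphi^{(j)}_\bullet$ is the composite $\iota_j^*\mc E_\bullet^{\mf f}\into\iota_j^*\mc E_\bullet\to\iota_j^*\mb L_{\bigslant{X}{\sk}}\to\mb L_{\bigslant{X_j}{\sk}}$ and each factor is $N$-equivariant on the nose; the identification of the target with a fixed part is only needed later, to see that $\varphi^{(j)}_\bullet$ is again an obstruction theory.

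One caveat: you assert that ``by hypothesis each $X_j$ is stable under the induced action,'' but no such hypothesis is stated. Since $T$ fixes $X^{T}$ pointwise, the $N$-action on $X^{T}$ factors through $\bigslant{N}{T}\simeq\bigslant{\Z}{2\Z}$, and $\bar\sigma$ may permute the components $Y_j$ of $Y^{T}$ and hence the $X_j$; for such $j$ an $N$-linearisation only makes sense on the union $X_j\cup\bar\sigma\cdot X_j$. This is harmless for the way the lemma is used later (only the locus $\abs{X}^{N}$ enters the localization formula, and its irreducible components are $\bar\sigma$-stable by definition), but the stability of $X_j$ is something to verify in each case, not a standing hypothesis.
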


\begin{rmk}
	If $ Y $ is smooth then the $ N $-action is semi-strict \cite[Remark 9.6]{Levine_Atiyah-Bott}. By \cite[Remark 6.4]{VLF_Levine}, if the $ N $-action on $ Y $ is strict, then the $ N $ action on $ X $ will also be strict.
\end{rmk}

We will assume the $ N $-action on $ X $ is strict.

\begin{defn}
	By conventions set before $ X_j:=X\cap Y_j $ with $ Y_j $ connected components of $ Y^{T} $. Let us denote:
	\[ \abs{X}^{N}_j:=\abs{X}^{N}\cap X_j \]
	\noindent and thus we have:
	\[ \abs{X}^{N}\simeq \coprod_j \abs{X}^{N}_j \]
	\noindent where each $ \abs{X}^{N}_j $ has its own perfect obstruction theory $ \varphi_{\bullet}^{(j)} $ with associated virtual normal cone given by $ \mc  N_j^{vir} $. Each $ \abs{X}^{N}_j $ will decompose in connected component that we will denote as $ \abs{X}^{N}_{j,k} $
\end{defn}

\begin{rmk}
	In the case of a strict $ N $-action, consider an $ N $-linearised locally free  sheaf $ \mc V $ on some connected components of $ X^{T} $. Suppose that $ \mc V=\mc V^{\mf m} $. Then $ \mc V $ admits a \textit{generic representation type} $ [\mc V^{gen}] $ in the sense of \cite[Construction 2.7]{Levine_Atiyah-Bott}, which is an isomorphism class of $ N $-representations over $ \sk $. 
\end{rmk}

\begin{lemma}\label{ch4:_VLF_6.5}
	Let $ \mr A\in \mr{SH}(\sk) $ be an $ SL_{\eta} $-oriented ring spectrum. Consider $ X \in \catname{Sch}_{\bigslant{}{\sk}}^{N} $. Let $ \mc V $ an $ N $-linearised sheaf on $ X $ and suppose the $ N $-action on $ X $ is strict and we have $ \mc V=\mc V^{\mf m} $ on $ \abs{X}^{N} $. The restriction of $ \mc V $ on the connected components $ \abs{X}_{j,k}^{N} $ will be denoted by $ \mc V_{j,k} $. For fixed $ j,k $ we have:
	\begin{enumerate}
		\item there exists integers $ M,n $ such that $ e_N(\mc V^{gen}_{j,k}) $ is invertible in  $ \mr{A}^{\bullet}\left( BN \right)\left[ \left( M\cdot e^n \right)^{-1} \right]  $.
		\item the class $ e_N\left(  V_{j,k} \right) $ is invertible in $ \mr{A}^{\bullet}_N\left( \abs{X}^{N}_{j,k}; \det\!{}^{-1}\!\!\,\left(  V_{j,k} \right) \right)\left[ \left( M\cdot e^n \right)^{-1} \right] $
	\end{enumerate}
\end{lemma}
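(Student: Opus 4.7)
The plan is to reduce the invertibility of $e_N(\mc V^{gen}_{j,k})$ to the invertibility of Euler classes of the irreducible $N$-representations appearing in its isotypical decomposition, and then conclude via the Key Lemma \ref{ch4:_key_lemma_Bachmann_Hopkins}.

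First I would unpack the hypothesis. Since the $N$-action on $X$ is strict and $\mc V = \mc V^{\mf m}$ on $\abs{X}^N$, the restriction $\mc V_{j,k}$ falls into one of the Cases 1 or 3 of \cite[Construction 2.7]{Levine_Atiyah-Bott}, so the generic representation type $[\mc V^{gen}_{j,k}]$ is well defined. Because all $T$-weights of $\mc V_{j,k}$ are nonzero, each irreducible summand must be one of the rank two induced representations $V(\rho_{m_i})$ with $m_i \neq 0$; in particular the rank of $\mc V^{gen}_{j,k}$ is automatically even, and any chosen representative decomposes as
\[ \mc V^{gen}_{j,k} \simeq \bigoplus_{i=1}^{r} V(\rho_{m_i})^{\oplus n_i}. \]
By multiplicativity of Euler classes,
\[ e_N(\mc V^{gen}_{j,k}) \;=\; \prod_{i=1}^{r} e_N\bigl(V(\rho_{m_i})\bigr)^{n_i} \]
in the appropriate (possibly $\gamma_N$-twisted) $\mr A$-cohomology of $BN$.

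Next I would invoke the Key Lemma \ref{ch4:_key_lemma_Bachmann_Hopkins}, together with the identification $e_N(V(\rho_m)) = \tilde e^{+}(m)$ as classes in $\mr A^\bullet(BN)$ when $m$ is odd and in $\mr A^\bullet(BN;\gamma_N)$ when $m$ is even. For each $m_i$ the Key Lemma yields invertibility of $\tilde e^{+}(m_i)$ (or of $\tilde e^{+}(m_i)^2$ in the untwisted theory, for even $m_i$) after inverting $m_i \cdot e$. Setting $M := \prod_{i=1}^{r} m_i$ and $n := r$, the divisibility identity
\[ (m_i \cdot e)\cdot\Bigl(\tfrac{M}{m_i}\cdot e^{\,n-1}\Bigr) \;=\; M \cdot e^{n} \]
shows that each $m_i \cdot e$ becomes invertible in $\mr A^\bullet(BN)[(M\cdot e^n)^{-1}]$. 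Consequently each factor in the product above, and hence $e_N(\mc V^{gen}_{j,k})$ itself (after squaring, if necessary, to pass to the untwisted theory whenever $\sum_{m_i\text{ even}} n_i$ is odd), becomes invertible in this localization. This proves (1).

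For (2) the conclusion follows directly from Lemma \ref{ch4:_ABL_9.3}, which was tailored exactly for this situation: once a representative of the generic Euler class $[e_N(\mc V^{gen}_{j,k})]$ has been inverted, the honest Euler class $e_N(\mc V_{j,k})$ becomes invertible in $\mr A^\bullet_N(\abs{X}^N_{j,k};\det{}^{-1}(\mc V_{j,k}))$, via a Mayer--Vietoris argument on $N$-stable trivialising opens where $\mc V_{j,k}$ agrees with the pullback of its generic type. The main technical friction I foresee is a clean bookkeeping of the $\gamma_N$-twists that arise from the even-weight summands; this is handled uniformly by squaring whenever needed so that all products are carried out in the untwisted theory of $BN$ before localising, and by invoking \cref{ch2:_5.3_MEC} to guarantee that these squares do sit inside the untwisted cohomology.
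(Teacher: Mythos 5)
Your argument is correct and follows essentially the same route as the paper: decompose $\mc V^{gen}_{j,k}$ into the irreducible rank-two representations $V(\rho_{m_i})$, invert their Euler classes via the Key Lemma \ref{ch4:_key_lemma_Bachmann_Hopkins}, and deduce claim (2) directly from \cref{ch4:_ABL_9.3}. The only difference is that you make explicit the bookkeeping of $M$, $n$ and the $\gamma_N$-twists (via squaring), which the paper's proof leaves implicit.
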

\begin{proof}
	The second claim is a consequence of  \cref{ch4:_ABL_9.3}. For the first claim it is enough to notice that $ \mc V^{gen}_{j,k} $ is given by irreducible $ \sk $-representations of $ N $, and  we know that we can recover any irreducible representation of $ N $ by tensor products of the representations $ \rho_m^{+}, \rho_0^{-} $. Then we conclude by applying multiple times \cref{ch4:_key_lemma_Bachmann_Hopkins}. 
\end{proof}

Let us recall again our setting: we are working with $ \iota: X \into Y $ an $ N $-equivariant closed immersion with $ Y $ smooth over $ \sk $; we are given an $ N $-linearised perfect obstruction theory $ \varphi_{\bullet}: \mc E_{\bullet} \longrightarrow \mb L_{\bigslant{X}{\sk}} $. We suppose that the $ N $-action on $ X $ is strict.

\begin{defn}\label{ch4:_VLF_6.6}
	Let $ \mr A\in \mr{SH}(\sk) $ be an $ SL_{\eta} $-oriented ring spectrum. Then we have:
	\begin{enumerate}
		\item [$ (i) $] For each connected component $ \iota_{j}^{Y}: {Y}_{j} \into Y $ of $ Y^{T} $, by \cref{ch4:_VLF_6.5}, there exists an integer $ M_{j}^{Y} $ such that 
		$ e_N(\iota_j^{Y}{}^*T_Y^{\mf m}) $ will be invertible in:
		\[ \mr{A}_N\left( Y_j; \det\!{}^{-1}\!\!\,\left( \iota_j^Y{}^*T_Y^{\mf m} \right) \right)\left[ \left( M_j^{Y}\cdot e \right)^{-1} \right]  \]
		
		\item [$ (ii) $] For each component $ \iota_{j,k}: \abs{X}_{j,k}^{N} \into X $ of $ \abs{X}^{N} $, by \cref{ch4:_VLF_6.5}, there exists an integer $ M_{j,k}^{X} $ such that 
		$e_N\left(\left(\iota_{j,k}^*E_1^{\mf m}\right)^{gen}\right)$ is invertible in:
		\[ \mr{A}_N\left( \abs{X}_{j,k}^{N}; \det\!{}^{-1}\!\!\,\left( \iota_{j,k}^*E_1^{\mf m}\right) \right)\left[ \left( M_{j,k}^{X}\cdot e \right) ^{-1}\right]  \]
		Hence we can define:
		\[ e_N\left( N_{\iota_{j,k}}^{vir} \right):= e_N(\left(\iota_{j,k}^*E_0^{\mf m}\right))\cdot e_N(\left(\iota_{j,k}^*E_1^{\mf m}\right))^{-1}  \]
		\noindent living in $ \mr{A}_N\left( \abs{X}_{j,k}^{N}; \det\!{}^{-1}\!\!\,\left( N_{\iota_{j,k}}^{vir} \right) \right)\left[ \left( M_{j,k}^{X}\cdot e \right) ^{-1}\right]  $.\\
		Denoting $ M^{X}_{j}:=\prod_k M^{X}_{j,k} $ we can also define:
		\[ e_N\left( N_{\iota_j}^{vir} \right):=\set{e_N\left( N_{\iota_{j,k}}^{vir} \right)}_{k} \in \mr{A}_N\left( \abs{X}_j^{N}; \det\!{}^{-1}\!\!\,\left( N_{\iota_j} ^{vir}\right) \right)\left[ \left(M^{X}_j\cdot e\right)^{-1} \right] \]
		\noindent where we use the identification:
		\begin{align*}
			\mr{A}_N\left( \abs{X}_j^{N}; \det\!{}^{-1}\!\!\,\left( N_{\iota_j} ^{vir}\right) \right)&\left[ \left(M^{X}_j\cdot e\right)^{-1} \right]\simeq\\
			\simeq & \prod_k \mr{A}_N\left( \abs{X}_{j,k}^{N}; \det\!{}^{-1}\!\!\,\left( N_{\iota_{j,k}} ^{vir}\right) \right)\left[ \left(M^{X}_{j,k}\cdot e\right)^{-1} \right] 
		\end{align*}
		
	\end{enumerate}
	
\end{defn}

\begin{rmk}
	From \cref{ch4:_ABL_8.6}, we also have an integer used in the Atiyah-Bott localization theorem for $ X $ that we will denote as $ M_0 $.
\end{rmk}

\begin{thm}[Virtual Localization Formula]\label{ch4:_VLF_for_general_A}
	Let $ \mr A\in \mr{SH}(\sk) $ be an $ SL_{\eta} $-oriented ring spectrum. Let $ \iota: X \into Y $ be a closed immersion in $ \catname{Sch}_{\bigslant{}{\sk}}^{N} $, with $ Y $ a smooth $ N $-scheme. Let $ \varphi_{\bullet}: \mc E_{\bullet} \rightarrow \mb L_{\bigslant{X}{\sk}} $ be an $ N $-linearised perfect obstruction theory. Suppose the $ N $-action on $ X $ is strict. With the notation introduced in \cref{ch4:_VLF_6.6}, we set:
	\[ M:=M_0 \cdot \prod_{i,j} M_i^{X}\cdot M_j^{Y} \] 
	Let $ \left[ \abs{X}_j^{N}, \varphi_{\bullet}^{(j)} \right]_N^{vir} \in \mr{A}^{\mr{BM}}_{\bullet, N}\left( \bigslant{\abs{X}_{j}^{N}}{\sk}, \iota^*_j \mc E_{\bullet}^{\mf f} \right)  $ the $ N $-equivariant virtual fundamental class for the $ N $-linearised perfect obstruction theory $ \varphi_{\bullet}^{(j)} $ on $ \abs{X}_j^{N} $. Then we have:
	\[ \left[X, \varphi \right]_N^{vir}=\sum_{j=1}^{s} \iota_j{}_*\left( \left[ \abs{X}_{j}^{N}, \varphi^{(j)} \right]_N^{vir} \cap e_N\left( N_{\iota_j}^{vir} \right)^{-1} \right) \in \mr{A}^{\mr{BM}}_N\left( X, \mc E_{\bullet} \right)\left[ \left(M \cdot e\right)^{-1} \right] \]
\end{thm}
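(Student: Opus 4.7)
The strategy is to follow the Graber--Pandharipande argument as adapted by Levine in \cite{VLF_Levine}, substituting our Atiyah--Bott theorem (\cref{ch4:_ABL_8.6}) and Bott residue formula (\cref{ch4:_ABL_9.5}) at every step where Levine used the Witt-cohomology versions. The three moving pieces are: (i) existence of a decomposition via localisation, (ii) identification of the summands through virtual pullback, and (iii) self-intersection.

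First, I would apply the Atiyah--Bott localisation theorem for the strict $N$-action on $X$, with twist coming from $\mc E_\bullet$. Since the $N$-action on $X$ is strict (hence semi-strict, by \cite[Remark 6.4]{VLF_Levine}), \cref{ch4:_ABL_8.6} tells us that after inverting $M \cdot e$ the pushforward $\iota_* = \bigoplus_j \iota_{j*}$ from $\bigoplus_j \mr A^{\mr{BM}}_{\bullet, N}(\abs{X}^N_j, \iota_j^*\mc E_\bullet)$ to $\mr A^{\mr{BM}}_{\bullet, N}(X, \mc E_\bullet)$ is an isomorphism. Thus there exist unique classes $y_j$ in the localised groups such that $[X,\varphi]^{vir}_N = \sum_j \iota_{j*}(y_j)$, and the problem reduces to identifying the $y_j$.

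Second, to compute each $y_j$ I would apply the virtual Gysin map $\iota_j^!$ (obtained by deformation to the normal cone of $\abs{X}^N_j \hookrightarrow X$ combined with the restricted perfect obstruction theory) to the identity above. On the one hand, for $i \neq j$ the subschemes $\abs{X}^N_i$ and $\abs{X}^N_j$ are disjoint, so $\iota_j^! \iota_{i*} = 0$, while the self-intersection computation of \cref{ch4:_ABL_9.1} in its virtual incarnation yields $\iota_j^! \iota_{j*}(y_j) = y_j \cap e_N(N^{vir}_{\iota_j})$. On the other hand, using \cref{ch4:_VLF_6.2} which gives the $N$-equivariant splitting $\iota_j^* \mc E_\bullet \simeq \mc E^{(j)}_\bullet \oplus N^{vir}_{\iota_j}$ into fixed and moving parts, the virtual pullback formalism produces the key identity $\iota_j^! [X,\varphi]^{vir}_N = [\abs{X}^N_j, \varphi^{(j)}]^{vir}_N \cap e_N(N^{vir}_{\iota_j})$. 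Combining these and invoking \cref{ch4:_VLF_6.5} to ensure $e_N(N^{vir}_{\iota_j})$ is invertible after inverting $M \cdot e$ gives the desired formula $y_j = [\abs{X}^N_j, \varphi^{(j)}]^{vir}_N \cap e_N(N^{vir}_{\iota_j})^{-1}$.

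The hardest step is the virtual pullback identity $\iota_j^![X,\varphi]^{vir}_N = [\abs{X}^N_j, \varphi^{(j)}]^{vir}_N \cap e_N(N^{vir}_{\iota_j})$. This requires the full deformation-to-the-normal-cone machinery for virtual classes (in the style of Manolache) in our $SL_\eta$-oriented setting, with an $N$-equivariant enhancement compatible with the splitting into fixed and moving parts. The technical content is that the specialisation map commutes with the equivariant decomposition of $\mc E_\bullet$ and that the resulting class on the central fibre is captured by $[\abs{X}^N_j, \varphi^{(j)}]^{vir}_N$ times the Euler class of the moving part. Once this is verified by transposing the arguments of \cite[\S6--7]{VLF_Levine}, replacing Witt cohomology by a general $SL_\eta$-oriented $\mr A$ and using \cref{ch4:_key_lemma_Bachmann_Hopkins} and \cref{ch4:_MEC_5.5} to control the Euler classes appearing, the theorem follows.
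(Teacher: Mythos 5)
Your overall skeleton (localise, then identify the local contributions) is the Graber--Pandharipande blueprint that the paper also follows, but your implementation of the identification step has a genuine gap. You propose to extract the classes $y_j$ by applying a ``virtual Gysin map'' $\iota_j^!$ along $\abs{X}^N_j \into X$, built by deformation to the normal cone of that inclusion. But $X$ is only virtually smooth and $\abs{X}^N_j \into X$ is in general not a regular (or even lci) embedding, so no such Gysin map exists off the shelf; making sense of it would require a Manolache-style compatible triple of obstruction theories for $\abs{X}^N_j \rightarrow X \rightarrow \spec(\sk)$, which you neither construct nor have available in the paper. The two identities you then rely on --- the virtual self-intersection formula $\iota_j^!\iota_{j*}(y_j) = y_j \cap e_N(N^{vir}_{\iota_j})$, which you attribute to a ``virtual incarnation'' of \cref{ch4:_ABL_9.1} (a lemma proved only for regular embeddings with genuine normal bundles), and the key identity $\iota_j^![X,\varphi]^{vir}_N = [\abs{X}^N_j,\varphi^{(j)}]^{vir}_N \cap e_N(N^{vir}_{\iota_j})$ --- are exactly the hard content of the theorem, and you assert both rather than prove them.

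The paper's proof never defines a Gysin map along $\abs{X}^N_j\into X$. After discarding $X^T_{ind}$ via \cref{ch4:_ABL_8.5}, it decomposes the \emph{ambient} fundamental class $[Y]_N$ using the Bott residue formula (\cref{ch4:_ABL_9.5}, \cref{ch4:_ABL_9.6}) applied to the regular embeddings $Y_j\into Y$ of the smooth fixed components, and then intersects $[X,\varphi]^{vir}_N$ with this decomposition via refined Gysin maps $*_{\iota,\iota_j^{Y}}$ taken relative to those regular embeddings. The local identification (\cref{ch4:_VLF_final_claim} in the paper) is then verified by hand: both sides are written as zero-section refined pullbacks of explicit cones $D=C_{\bigslant{X}{Y}}\times E_0$ and $D_j^{vir}\times i_j^*E_0^{\mf m}$, compared using the equivariant Vistoli lemma, and the moving parts of $T_Y$, $E_0$ and $E_1$ are traded for Euler classes via the excess intersection formula, with \cref{ch4:_VLF_6.5} and \cref{ch4:_key_lemma_Bachmann_Hopkins} guaranteeing invertibility. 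To salvage your route you would have to supply that cone-level argument (or first build the equivariant virtual pullback formalism you invoke); it cannot be reduced to citing \cref{ch4:_ABL_9.1} and \cref{ch4:_ABL_8.6}.
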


\begin{proof}
	We have developed along the way all the necessary tools used in the proof \cite[Theorem 6.7]{VLF_Levine}, upgrading them to the case of an $ SL_{\eta} $-oriented ring spectrum. Then the very same strategy used in \textit{loc. cit.} works also in our case. For completeness, we will sketch now the proof for a general $ SL_{\eta} $-oriented ring spectrum, but no claim of originality is made here.\\
	
	Now consider $ X_j=Y_j \cap X^{T} $. By \cref{ch4:_ABL_8.5}, the localised Borel-Moore homology of $ X^{T}_{ind} $ vanishes. By assumption, our action is strict and thus we have $ X_j=\abs{X}_j^{N}\amalg X_j \cap X^{T}_{ind} $. By a localization sequence argument, we can replace $ X $ with $ X \setminus X^{T}_{ind} $ and $ Y $ with $ Y\setminus X^{T}_{ind} $, so without loss of generality we can assume $ X^{T}_{ind}=\emptyset $ and $ X_j=\abs{X}^{N}_j $. Let us denote the following inclusions: \\
	\begin{minipage}{0.5\textwidth}
		\begin{center}
			$ \iota: X\into Y $\\
			$ \iota_j: X_j \into Y_j $
		\end{center}
	\end{minipage}
	\hfill
	\begin{minipage}{0.45\textwidth}
		\begin{center}
			$ i_j: X_j \into X $\\
			$ i_j^{Y}: Y_j\into Y $
		\end{center}
	\end{minipage}\\
	The $ N $-equivariant fundamental class $ [Y]_N $ of $ Y $  lives in $ \mr A^{\mr{BM}}_{N}\left(\bigslant{Y}{S}, \Omega_{\bigslant{Y}{\sk}}\right) $. The fixed locus $ Y^{T} $ is smooth over $ \sk $, with connected components $ Y_1,\ldots, Y_s $. Let $ [Y_j]_N \in \mr A^{\mr{BM}}_N(\bigslant{Y_j}{\sk}, \Omega_{\bigslant{Y_j}{\sk}}) $ be the fundamental classes of the connected components. Since the normal bundle associated to $ \iota_j^{Y}: Y_j \into Y $ is given by $ (\iota_j^{Y})^*T_{\bigslant{Y}{\sk}}^{\mf m} $, by \cref{ch4:_ABL_9.5} and \cref{ch4:_ABL_9.6} we have that:
	\[ [Y]_N=\sum_{j=1}^{s} (\iota_j^{Y})_*\left([Y_j]_N \cap e_N\left( (\iota_j^{Y})^*T_{\bigslant{Y}{\sk}}^{\mf m} \right)^{-1} \right)\in \mr A^{\mr{BM}}_{N}\left( \bigslant{Y}{\sk}, \Omega_{\bigslant{Y}{\sk}} \right)\left[ \left( M_Ye \right)^{-1} \right] \]
	\noindent where we used the functoriality of lci fundamental classes (cf. \cite[Theorem 4.2.1]{DJK}) to identify $ (\iota_j^{Y})^{!}[Y]_N=[Y_j]_N $. Consider the following cartesian square:
	\begin{center}
		\begin{tikzpicture}[baseline={(0,0)}, scale=1.5]
			\node (a) at (0,1) {$  X $};
			\node (b) at (1, 1) {$ Y $};
			\node (c)  at (0,0) {$ Y_j  $};
			\node (d) at (1,0) {$ Y $};
			\node (e) at (0.2,0.8) {$ \ulcorner $};
			\node (f) at (0.5,0.5) {$ \Delta $};

			\path[font=\scriptsize,>= angle 90]
			
			(a) edge [closed] node [above ] {$ \iota $} (b)
			(a) edge [->] node [left] {$  $} (c)
			(b) edge[->] node [right] {$ Id_Y $} (d)
			(c) edge [closed] node [below] {$ \iota_j^{Y} $} (d);
		\end{tikzpicture}
	\end{center}
	If we take the refined intersection product with respect to $ \iota: X \into Y $ and $ Id_Y: Y\rightarrow Y $, by compatibility with proper pushforwards (\cite[3.16]{Motivic_Vistoli}), from the equation above we get:
	\[ [X,\varphi]_N^{vir}=[X,\varphi]_N^{vir}*_{\iota, Id_Y} [Y]_N=\sum_{j=1}^{s} (\iota_j^{Y})_* \left([X, \varphi]_N^{vir}*_{\iota, \iota_j^{Y}} [Y_j]_N\cap e_N\left( (\iota_j^{Y})^* T_{\bigslant{Y}{\sk}}^{\mf m} \right)^{-1}  \right)  \]
	We will then only need to show that:
	\begin{claim}\label{ch4:_VLF_final_claim}
		\[ [X, \varphi]_N^{vir}*_{\iota, \iota_j^{Y}} [Y_j]_N\cap e_N\left( (\iota_j^{Y})^* T_{\bigslant{Y}{\sk}}^{\mf m} \right)^{-1} = [X_j, \varphi^{(j)}]_N^{vit}\cap e_N\left( N_{\iota_j}^{vir} \right)^{-1} \]
	\end{claim}

	\begin{notation}
		To make the rest of the proof easier to read, we will change our notation a bit. Given a cartesian square:
		\begin{center}
			\begin{tikzpicture}[baseline={(0,0)}, scale=1.5]
				\node (a) at (0,1) {$  X $};
				\node (b) at (1, 1) {$ Y $};
				\node (c)  at (0,0) {$ Z $};
				\node (d) at (1,0) {$ W $};
				\node (e) at (0.2,0.8) {$ \ulcorner $};
				\node (f) at (0.5,0.5) {$ \Delta $};

				\path[font=\scriptsize,>= angle 90]
				
				(a) edge [closed] node [above ] {$ g $} (b)
				(a) edge [->] node [left] {$  $} (c)
				(b) edge[->] node [right] {$  $} (d)
				(c) edge [closed] node [below] {$ f $} (d);
			\end{tikzpicture}
		\end{center}
		\noindent where $ f $ is lci, we will denote the refined Gysin map as $ f^!:=g_{\Delta}^! $. And for any given vector bundle $ E \rightarrow W $, we will denote its zero section as $ s_{E}: W \into E $.
	\end{notation}

	Let us briefly recall our construction of the equivariant virtual fundamental class from \cref{ch1:_EGT_VFC_Construction}. We have the cone $ D:=\mf C_{\bigslant{X}{Y}}\times \mb V(\mc E_0)=C_{\bigslant{X}{Y}}\times E_0 $, with quotient $ D^{vir}:=\bigslant{D}{\iota^*T_{\bigslant{Y}{\sk}}} $. The virtual cone $ D^{vir} $ has a closed immersion $ \iota_{\varphi}: D^{vir} \into E_1:=\mb V(\mc E_1) $ in $ \catname{Sch}_{\bigslant{}{\sk}}^{N} $. Then the virtual class was defined as:
	\[ [X, \varphi]_N^{vir}:=s_{E_1}^!\left( (\iota_{\varphi})_* [D^{vir}]_N \right)\in \mr A^{\mr{BM}}_{N}\left( \bigslant{X}{\sk}, E_{\bullet} \right) \]
	For each $ X_j $ the $ N $-linearised obstruction theory is given by $ \varphi^{(j)}: i_j^*\mc E_{\bullet}^{\mf f}\rightarrow \tau_{\leq 1} \mb L_{\bigslant{X_j}{\sk} }$. Denoting by $ D_j:=\mf C_{\bigslant{X_j}{Y_j}} \times \iota_j^*E_0^{\mf f} $ and $ D_j^{vir}:=\bigslant{D_j^{vir}}{\iota_j^*T_{Y_j}} $ the corresponding cones, we have:
	\[ [X_j, \varphi^{(j)}]_N^{vir}:=s_{i_j^*E_1^{\mf f}}^{!}\left( (\iota_{\varphi^{(j)}})_* [D_j^{vir}]_N \right) \in \mr A^{\mr{BM}}_{N}\left( \bigslant{X_j}{\sk}, E_{\bullet}^{\mf f} \right)  \]
	
	Let us start with the proof of the following:
	\begin{claim}\label{ch4:_VLF_claim_1}
		Consider the cartesian squares:\\
		\begin{minipage}{0.23\textwidth}
			\begin{center}
				\begin{tikzpicture}[baseline={(0,0)}, scale=2]
					\node (a) at (0,1) {$ \iota^*T_Y $};
					\node (b) at (1.5, 1) {$ D $};
					\node (c)  at (0,0) {$  X  $};
					\node (d) at (1.5,0) {$ E_1 $};
					\node (e) at (0.2,0.8) {$ \ulcorner $};
					\node (f) at (0.75,0.5) {$ \Delta_{1} $};

					\path[font=\scriptsize,>= angle 90]
					
					(a) edge [->] node [above ] {$ t_Y $} (b)
					(a) edge [->] node [left] {$  $} (c)
					(b) edge[->] node [right] {$  $} (d)
					(c) edge [->] node [below] {$ $} (d)
					(c) edge [bend left=30, ->] node [left] {$ {s}_{\iota^*T_Y} $} (a);
				\end{tikzpicture}
			\end{center}
		\end{minipage}
		\hfill
		\begin{minipage}{0.13\textwidth}
			\begin{center}
				\begin{tikzpicture}[baseline={(0,0)}, scale=2]
					\node (a) at (0,1) {$ \iota_j^*T_{Y_j}$};
					\node (b) at (1.5, 1) {$  D_j  $};
					\node (c)  at (0,0) {$  X_j  $};
					\node (d) at (1.5,0) {$ i_j^*E_1^{\mf f} $};
					\node (e) at (0.2,0.8) {$ \ulcorner $};
					\node (f) at (0.75,0.5) {$ \Delta_{2} $};

					\path[font=\scriptsize,>= angle 90]
					
					(a) edge [->] node [above ] {$ t_{Y_j} $} (b)
					(a) edge [->] node [left] {$ $} (c)
					(b) edge[->] node [right] {$  $} (d)
					(c) edge [->] node [below] {$  $} (d);
				\end{tikzpicture}
			\end{center}
		\end{minipage}
		\hfill
		\begin{minipage}{0.33\textwidth}
			\begin{center}
				\begin{tikzpicture}[baseline={(0,0)}, scale=2]
					\node (a) at (0,1) {$ i_j^*\iota^*T_{Y} $};
					\node (b) at (1.5, 1) {$  D_j \times i_j^*E_0^{\mf m} $};
					\node (c)  at (0,0) {$  X_j $};
					\node (d) at (1.5,0) {$ i_j^*E_1$};
					\node (e) at (0.2,0.8) {$ \ulcorner $};
					\node (f) at (0.75,0.5) {$ \Delta_{Y_j} $};

					\path[font=\scriptsize,>= angle 90]
					
					(a) edge [->] node [above ] {$  $} (b)
					(a) edge [->] node [left] {$ $} (c)
					(b) edge[->] node [right] {$  $} (d)
					(c) edge [->] node [below] {$  $} (d)
					(c) edge [bend left=30, ->] node [left] {$  $} (a);
				\end{tikzpicture}
			\end{center}
		\end{minipage}\\
		The refined Gysin pullbacks associated to the squares above will give us:
		\[ [X, \varphi]^{vir}_N=s_{\iota^*T_Y}^! s_{E_1}^![D]_N \]
		\[ [X_j, \varphi^{(j)}]^{vir}_N=s_{\iota_j^*T_{Y_j}}^!s_{i_j^*E_1^{\mf f}}^![D_j]_N \]
		\[ [X, \varphi]_N^{vir}*_{\iota, i_j^{Y}} [Y_j]_N= s_{i_j^*\iota^*T_Y}^!s_{i_j^*E_1}^!\left[ D_j \times i_j^*E_0^{\mf m} \right]  \]
		
	\end{claim}
	The first two equations in \cref{ch4:_VLF_claim_1} follow from the squares $ \Delta_1, \Delta_2 $ and the functoriality of (refined) Gysin pullbacks (cf. \cite[Remark 3.13]{Motivic_Vistoli}). Let $ \beta_j^{D}: D_j \times i_j^*E_0^{\mf m} \into D $ be the closed immersion of cones, then the equivariant Vistoli's lemma \cite[Proposition 4.16]{Motivic_Vistoli} tells us that:
	\begin{equation}\label{ch4:_Vistoli's_eq}
		(\beta_j^{D})_*\left[ D_j \times i_j^*E_0^{\mf m} \right]_N=\pi_{Y_j}^![D]_N
	\end{equation}
	\noindent where $ \pi_{Y_j}^! $ denotes the refined Gysin pullback with respect to $ \pi_{Y_j}: Y_j \rightarrow S $. 
	Then using \cref{ch4:_Vistoli's_eq} and the commutativity of refined Gysin pullbacks, we get the third equation in \cref{ch4:_VLF_claim_1} (see \cite[Theorem 6.7, Proof: Step 4]{VLF_Levine} for more details). \\
	\begin{flushright}
		$ \stackrel{\text{(Claim 2)}}{\blacksquare} $
	\end{flushright}
	
	Notice that $ (i_j^{Y})^*T_{Y}^{\mf f}\simeq T_{Y_j}  $, hence $ (i_j^{Y})^*T_Y\simeq T_{Y_j}\oplus (i_j^{Y})^*T_{Y}^{\mf m}  $. Then we have:
	\[ s_{i_j^*\iota^*T_Y}^! s_{i_j^*E_1}^!\left[ D_j \times i_j^*E_0^{\mf m} \right] = {s}_{i_j^*\iota^*T_{Y}^{\mf m}}^{!}s_{i_j^*E_1}^!\left[ D_j^{vir} \times i_j^*E_0^{\mf m} \right]   \]
	\begin{claim}\label{ch4:_VLF_Claim_2}
		We have that:
		\[ [X, \varphi]_N^{vir}*_{\iota, i_j^Y}[Y_j]_N \cap e_N\left( i_j^*E_0^{\mf m} \right)=s_{i_j^*E_0^{\mf m}}^!\left( s_{i_j^*E_1}^![D_j^{vir}\times i_j^*E_0^{\mf m}]_N \right)\cap e_N\left( i_j^*\iota^*T_{Y}^{\mf m} \right) \]
	\end{claim}
	
	We have a closed immersion $ D_j^{vir}\times_{X_j} i_j^*E_0^{\mf m}\into \bigslant{i_j^*D}{\iota^*_jT_{Y_j}} $ and composing this map with the natural map $ \bigslant{i_j^*D}{\iota^*_jT_{Y_j}} \rightarrow i_j^*E_1  $, we get a map $ \sigma: D_j^{vir}\times_{X_j} i_j^*E_0^{\mf m} \rightarrow i_j^*E_1 $. Denote by $ \mc Z_{\sigma}\left( D_j^{vir}\times_{X_j} i_j^*E_0^{\mf m} \right) $ the scheme-theoretic pullback of $ D_j^{vir}\times_{X_j} i_j^*E_0^{\mf m} $ along the zero section of $ i_j^*E_1 $. There exists a commutative diagram in $ \catname{Sch}_{\bigslant{}{\sk}}^{N} $:
	\begin{center}
		\begin{tikzpicture}[baseline={(0,0)}, scale=1.75]
			\node (a) at (0,1) {$ \mc Z_{\sigma}\left( D_j^{vir}\times_{X_j} i_j^*E_0^{\mf m} \right) $};
			\node (b) at (2, 1) {$  i_j^*E_0^{\mf m}$};
			\node (c)  at (0,0) {$  i_j^*\iota^*T_{Y}^{\mf m} $};
			\node (d) at (2,0) {$ X_j $};
			\node (e) at (0.2,0.68) {$  $};
			\node (f) at (1,0.5) {$  $};

			\path[font=\scriptsize,>= angle 90]
			
			(a) edge [closed] node [above ] {$ f $} (b)
			(a) edge [closed] node [left] {$ g $} (c)
			(b) edge[->] node [right] {$  $} (d)
			(c) edge [->] node [below] {$  $} (d);
		\end{tikzpicture}
	\end{center}
	Let $ \alpha:=s_{i_j^*E_1}^!\left[ D_j^{vir}\times_{X_j} i_j^*E_0^{\mf m} \right]  $.
	By the (equivariant) excess intersection formula (cf. \cite[Prop. 3.3.4]{DJK}), we have:
	\[ s_{i_j^*\iota^*T_{Y}^{\mf m}}\left( f_*(\alpha)\cap e_N\left( i_j^*E_0^{\mf m} \right) \right)=s_{i_j^*E_0^{\mf m}}\left( g_*(\alpha)\cap e_N\left( i_j^*\iota^*T_{Y}^{\mf m} \right) \right) \]
	\noindent and this gives us the formula of \cref{ch4:_VLF_Claim_2}. \begin{flushright}
		$ \stackrel{\text{(Claim 3)}}{\blacksquare} $
	\end{flushright}
	Now we can finally prove \cref{ch4:_VLF_final_claim}. We have a natural map $ (\iota_{\varphi^{(j)}}, d^{\mf m}): D_j^{vir}\times i_j^*E_{0}^{\mf m} \rightarrow i_j^*E_1^{\mf m} $, induced by the inclusion $ \iota_{\varphi^{(j)}} $ and by the "moving" differential $ d^{\mf m} $ on $ \mc E_{\bullet}^{\mf m} $. By $ \A^1 $-homotopy invariance, we can suppose that $  (\iota_{\varphi^{(j)}}, d^{\mf m}) $  factors via the first coordinate projection $ D_j^{vir}\times i_j^*E_{0}^{\mf m}\stackrel{p_1}{\rightarrow} D_j^{vir} $, and the natural inclusion map $ D_j^{vir}\into i_j^*E_1^{\mf m}\sseq i_j^*E_1 $. By the equivariant excess intersection formula, we have:
	\begin{align*}
		s_{i_j^*E_0^{\mf m}}^!s_{i_j^*E_1^{\mf m}}^!\left[D_j^{vir}\times i_j^*E_0^{\mf m}\right]_N&=\left( s_{i_j^*E_1^{\mf m}}^!\left[ D_j^{vir}\times i_j^*E_0^{\mf m} \right] \right)\cap e_N\left( i_j^*E_0^{\mf m} \right)=\\
		&=\left[X_j, \varphi^{(j)}\right]_N^{vir}\cap e_N\left( i_j^*E_1^{\mf m} \right)
	\end{align*}
	Putting together all the statements we proved, we got:
	\begin{align*}
		[X, \varphi]_N^{vir}*_{\iota, \iota_j^{Y}} [Y_j]_N &\cap e_N\left( (\iota_j^{Y})^* T_{\bigslant{Y}{\sk}}^{\mf m} \right)^{-1}=\\
		&\stackrel{}{=} s_{i_j^*\iota^*T_Y}^!s_{i_j^*E_1}^!\left[ D_j \times i_j^*E_0^{\mf m} \right]\cap e_N\left( (\iota_j^{Y})^* T_{\bigslant{Y}{\sk}}^{\mf m} \right)^{-1}=\\
		&={s}_{i_j^*\iota^*T_{Y}^{\mf m}}^{!}s_{i_j^*E_1}^!\left[ D_j^{vir} \times i_j^*E_0^{\mf m} \right]\cap e_N\left( (\iota_j^{Y})^* T_{\bigslant{Y}{\sk}}^{\mf m} \right)^{-1}  =\\
		&=s_{i_j^*E_0^{\mf m}}^!\left( s_{i_j^*E_1}^![D_j^{vir}\times i_j^*E_0^{\mf m}]_N \right)\cap \left( e_N\left( i_j^*E_0^{\mf m} \right)  \right)^{-1}=\\
		&=\left[X_j, \varphi^{(j)}\right]_N^{vir}\cap e_N\left( i_j^*E_1^{\mf m} \right)\cap \left( e_N\left( i_j^*E_0^{\mf m} \right)  \right)^{-1}
	\end{align*}
	\noindent that is exactly our \cref{ch4:_VLF_final_claim}. Thus we have just  proved our virtual localization theorem.
	
\end{proof}

\begin{co}[Virtual Localization For Witt Theory]\label{ch4:_VLF_for_KW}
	In the same situation as in \cref{ch4:_VLF_for_general_A}, for $ \mr A=\mr{KW} $ we get:
	\[ \left[X, \varphi \right]_N^{vir}=\sum_{j=1}^{s} \iota_j{}_*\left( \left[ \abs{X}_{j}^{N}, \varphi^{(j)} \right]_N^{vir} \cap e_N\left( N_{\iota_j}^{vir} \right)^{-1} \right) \in \mr{KW}^{\mr{BM}}_N\left( X, E_{\bullet} \right)\left[ \left(M \cdot e\right)^{-1} \right] \]
\end{co}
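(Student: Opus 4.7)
The plan is simply to invoke Theorem \ref{ch4:_VLF_for_general_A} with $\mr A = \mr{KW}$. All the work has already been done in the general case; the corollary amounts to observing that $\mr{KW}$ satisfies the hypotheses of that theorem, namely that it is an $SL_\eta$-oriented ring spectrum in $\mr{SH}(\sk)$.

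First I would recall that by construction $\mr{KW} = \mr{BO}_{S,\eta}$ is obtained from the Hermitian K-theory spectrum $\mr{BO}_S$ by inverting $\eta$, so $\mr{KW}$ is tautologically $\eta$-invertible. Next, since $\mr{BO}_S$ is $Sp$-oriented after Panin--Walter, it is in particular $SL$-oriented, and inverting $\eta$ preserves the $SL$-orientation (the unit map $\mr{BO}_S \to \mr{KW}$ is a map of ring spectra and the Thom classes pull back). Thus $\mr{KW} \in \mr{SH}(\sk)$ is an $SL_\eta$-oriented ring spectrum as noted in the remark following the definition of $\mr{KW}$.

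With the hypotheses verified, the conclusion is a direct substitution: under the same data as in Theorem \ref{ch4:_VLF_for_general_A} (a closed immersion $\iota: X \into Y$ in $\catname{Sch}_{\bigslant{}{\sk}}^{N}$ with $Y$ smooth, an $N$-linearised perfect obstruction theory $\varphi_\bullet: \mc E_\bullet \to \mb L_{\bigslant{X}{\sk}}$, and a strict $N$-action on $X$), taking $\mr A = \mr{KW}$ yields the integer $M$ of \cref{ch4:_VLF_6.6} together with the identity
\[ \left[X, \varphi \right]_N^{vir}=\sum_{j=1}^{s} \iota_j{}_*\left( \left[ \abs{X}_{j}^{N}, \varphi^{(j)} \right]_N^{vir} \cap e_N\left( N_{\iota_j}^{vir} \right)^{-1} \right) \]
in $\mr{KW}^{\mr{BM}}_N\left( X, \mc E_\bullet \right)\left[ \left(M \cdot e\right)^{-1} \right]$, which is precisely the statement of the corollary. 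There is no obstacle; this is a direct specialisation, and the only thing worth highlighting is that the localisations and Euler class inversions from \cref{ch4:_VLF_6.5} and \cref{ch4:_key_lemma_Bachmann_Hopkins} apply to $\mr{KW}$ because it factors through the universal $SL_\eta$-oriented spectrum $\mr{MSL}_\eta$ via the $SL$-orientation map $\varphi_\eta^{\mr{KW}}: \mr{MSL}_\eta \to \mr{KW}$.
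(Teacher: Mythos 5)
Your proposal is correct and matches the paper exactly: the corollary is stated as an immediate specialisation of Theorem \ref{ch4:_VLF_for_general_A} to $\mr A=\mr{KW}$, with no further argument given, and the verification that $\mr{KW}$ is an $SL_\eta$-oriented ring spectrum is precisely the content of the remark following the definition of $\mr{KW}$ that you cite. Nothing is missing.
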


	\cleardoublepage
	\addcontentsline{toc}{section}{References}

	\printbibliography	\thispagestyle{empty}

@article{Behrend1997,
	author={Behrend, K. and Fantechi, B.},
	title={The intrinsic normal cone},
	journal={Inventiones mathematicae},
	year={1997},
	month={Mar},
	volume={128},
	number={1},
	pages={45--88},
	issn={1432-1297},
	doi={10.1007/s002220050136}
}

@article{Motivic_Euler_Char, 
	title={Motivic Euler Characteristics and Witt-Valued Characteristic Classes}, 
	volume={236}, 
	DOI={10.1017/nmj.2019.6}, 
	journal={Nagoya Mathematical Journal}, 
	publisher={Cambridge University Press}, 
	author={Levine, Marc}, 
	year={2019}, 
	pages={251–310}}

@article{Morel-Voevodsky,
	Author = {Morel, Fabien and Voevodsky, Vladimir},
	Journal = {Publications Math{\'e}matiques de l'Institut des Hautes {\'E}tudes Scientifiques},
	Number = {1},
	Pages = {45--143},
	Title = {A1-homotopy theory of schemes},
	Volume = {90},
	Year = {1999}}

@article{Panin-Walter,
	author = {I. Panin and C. Walter },
	title = {On the Commutative Ring Spectrum {$ \mr{BO} $}},
	journaltitle = {Algebra i Analiz},
	date = {2018},
	number = {6},
	issue = {30},
	doi = {https://doi.org/10.1090/spmj/1578},
}

@thesis{Kumar_PhD,
	author = {A. Kumar},
	title = {On the Motivic Spectrum BO and Hermitian K-Theory},
	type = {PhD Thesis},
	institution = {Universit{\"a}t Osnabr{\"u}ck},
	date = {2020},
}

@article{Ananyevskiy_Witt_MSp_Reations,
	author = {Ananyevskiy, Alexey},
	title = {On the relation of special linear algebraic cobordism to Witt groups},
	journaltitle = {Homology, Homotopy and Applications},
	volume={18},
	issue={1},
	pages={205-230},
	publisher={International Press of Boston},
	date = {2016}
}

@misc{Asok-Deglise-Nagel,
	doi = {10.48550/arxiv.1812.09574},
	
	author = {Asok, Aravind and D{\'e}glise, Fr{\'e}d{\'e}ric and Nagel, Jan},
	
	keywords = {Algebraic Geometry (math.AG), Algebraic Topology (math.AT), K-Theory and Homology (math.KT), FOS: Mathematics, FOS: Mathematics, 14F42 55R20 19E15},
	
	title = {The homotopy Leray spectral sequence},
	
	publisher = {arXiv},
	
	year = {2018},
	
	copyright = {arXiv.org perpetual, non-exclusive license}
}

@article{Ananyevskiy_PhD_Thesis, 
	title={The special linear version of the projective bundle theorem}, 
	volume={151}, 
	DOI={10.1112/S0010437X14007702}, 
	number={3}, journal={Compositio Mathematica}, 
	publisher={London Mathematical Society}, 
	author={Ananyevskiy, Alexey}, 
	year={2015}, 
	pages={461–501}}

@Article{DJK,
	Author = {D{\'e}glise, Fr{\'e}d{\'e}ric and Jin, Fangzhou and Khan, Adeel A.},
	Title = {Fundamental classes in motivic homotopy theory},
	FJournal = {Journal of the European Mathematical Society (JEMS)},
	Journal = {J. Eur. Math. Soc. (JEMS)},
	ISSN = {1435-9855},
	Volume = {23},
	Number = {12},
	Pages = {3935--3993},
	Year = {2021},
	Language = {English},
	DOI = {10.4171/JEMS/1094},
	Keywords = {14F42,14C17,19E15},
	zbMATH = {7445598},
	Zbl = {1483.14040}
}

@misc{Ananyevskiy_SL_oriented,
	doi = {10.48550/arxiv.1901.01597},
	
	author = {Ananyevskiy, Alexey},
	
	title = {SL-oriented cohomology theories},
	
	publisher = {arXiv},
	
	year = {2019},
	
	copyright = {arXiv.org perpetual, non-exclusive license}
}

@misc{VLF_Levine,
	doi = {10.48550/arxiv.2203.15887},
	
	
	author = {Levine, Marc},
	
	
	title = {Virtual Localization in equivariant Witt cohomology},
	
	publisher = {arXiv},
	
	year = {2022},
	
	copyright = {Creative Commons Attribution Non Commercial Share Alike 4.0 International}
}

@article{Graber-Pandharipande,
	Author = {Graber, T. and Pandharipande, R.},
	Da = {1999/01/01},
	Date-Added = {2021-05-12 11:51:16 +0200},
	Date-Modified = {2021-05-12 11:51:16 +0200},
	Doi = {10.1007/s002220050293},
	Id = {Graber1999},
	Isbn = {1432-1297},
	Journal = {Inventiones mathematicae},
	Number = {2},
	Pages = {487--518},
	Title = {Localization of virtual classes},
	Ty = {JOUR},
	Url = {https://doi.org/10.1007/s002220050293},
	Volume = {135},
	Year = {1999},
	Bdsk-Url-1 = {https://doi.org/10.1007/s002220050293}}

@misc{levine2017intrinsic,
	doi = {10.48550/arxiv.1703.03056},
	author = {Levine, Marc},
	title = {The intrinsic stable normal cone},
	publisher = {arXiv},
	year = {2017}
}

@misc{Levine_Atiyah-Bott,
	doi = {10.48550/arxiv.2203.13882},
	author = {Levine, Marc},
	title = {Atiyah-Bott localization in equivariant Witt cohomology},
	publisher = {arXiv},
	year = {2022}
}

@article{Déglise_Fasel_Borel_Char, 
	title={The Borel Character}, 
	DOI={10.1017/S1474748021000281}, 
	journal={Journal of the Institute of Mathematics of Jussieu}, 
	publisher={Cambridge University Press}, 
	author={D{\'e}glise, Fr{\'e}d{\`{e}}ric and Fasel, Jean}, 
	year={2021}, 
	pages={1–51}}

@article{Ananyevskiy_Pushforwards_Eta_Inverted,
	Author = {Ananyevskiy, Alexey},
	Journal = {Manuscripta Mathematica},
	Number = {1},
	Pages = {21--44},
	Title = {On the push-forwards for motivic cohomology theories with invertible stable Hopf element},
	Volume = {150},
	Year = {2016}}

@misc{romagny2022algebraicity,
	title={Algebraicity and smoothness of fixed point stacks}, 
	author={Matthieu Romagny},
	year={2022},
	eprint={2205.11114},
	archivePrefix={arXiv},
}

@article{Levine_Raksit_Gauss_Bonnet,
	author = {Marc Levine and Arpon Raksit},
	title = {{Motivic Gauss–Bonnet formulas}},
	volume = {14},
	journal = {Algebra {\&} Number Theory},
	number = {7},
	publisher = {MSP},
	pages = {1801 -- 1851},
	keywords = {Chow ring, Euler characteristics, hermitian K-theory, motivic homotopy theory},
	year = {2020},
	doi = {10.2140/ant.2020.14.1801},
	URL = {https://doi.org/10.2140/ant.2020.14.1801}
}

@article{Hoyois_Quadratic_Lefschetz,
	author = {Marc Hoyois},
	title = {{A quadratic refinement of the Grothendieck–Lefschetz–Verdier trace formula}},
	volume = {14},
	journal = {Algebraic {\&} Geometric Topology},
	number = {6},
	publisher = {MSP},
	pages = {3603 -- 3658},
	keywords = {Grothendieck–Witt group, motivic homotopy theory, trace formula},
	year = {2014},
	doi = {10.2140/agt.2014.14.3603},
	URL = {https://doi.org/10.2140/agt.2014.14.3603}
}

@misc{Bachmann_Hopkins,
	title={$\eta$-periodic motivic stable homotopy theory over fields}, 
	author={Tom Bachmann and Michael J. Hopkins},
	year={2021},
	eprint={2005.06778},
	archivePrefix={arXiv},
}

@article{Elmanto_Khan_Perfection_MHT,
	doi = {10.1112/plms.12280},
	
	url = {https://doi.org/10.1112%2Fplms.12280},
	
	year = 2019,
	month = {aug},
	
	publisher = {Wiley},
	
	volume = {120},
	
	number = {1},
	
	pages = {28--38},
	
	author = {Elden Elmanto and Adeel A. Khan},
	
	title = {Perfection in motivic homotopy theory},
	
	journal = {Proceedings of the London Mathematical Society}
}

@unpublished{Marc_Notes_Gen,
	author = {M. Levine},
	howpublished = {Priv. Comm.},
	date = {2023},
}

@ARTICLE{Edidin-Graham_Atiyah-Bott,
	title     = "Localization in equivariant intersection theory and the Bott
	residue formula",
	author    = "Edidin, Dan and Graham, William",
	journal   = "Amer. J. Math.",
	publisher = "Johns Hopkins University Press",
	volume    =  120,
	number    =  3,
	pages     = "619--636",
	year      =  1998
}

@ARTICLE{Thomason_RR_and_Traces,
	title     = "{Lefschetz-Riemann-Roch} theorem and coherent trace formula",
	author    = "Thomason, R W",
	journal   = "Invent. Math.",
	publisher = "Springer Science and Business Media LLC",
	volume    =  85,
	number    =  3,
	pages     = "515--543",
	month     =  oct,
	year      =  1986,
	language  = "en"
}

@misc{aranha2022localization,
	title={Localization theorems for algebraic stacks}, 
	author={Dhyan Aranha and Adeel A. Khan and Alexei Latyntsev and Hyeonjun Park and Charanya Ravi},
	year={2022},
	eprint={2207.01652},
	archivePrefix={arXiv},
}

@article{Haution_Odd_VB, 
	title={Odd rank vector bundles in eta-periodic motivic homotopy theory}, 
	DOI={https://doi.org/10.48550/arXiv.2203.06021}, 
	journal={Journal of the Institute of Mathematics of Jussieu}, 
	author={Haution, Oliver}, 
	year={2023}, 
	pages={To appear}
}

@article {Brion_lin_pic,
	AUTHOR = {Brion, Michel},
	TITLE = {On linearization of line bundles},
	JOURNAL = {J. Math. Sci. Univ. Tokyo},
	FJOURNAL = {The University of Tokyo. Journal of Mathematical Sciences},
	VOLUME = {22},
	YEAR = {2015},
	NUMBER = {1},
	PAGES = {113--147},
	ISSN = {1340-5705},
}

@misc{ChoDA24,
	title={Non-representable six-functor formalisms}, 
	author={Chirantan Chowdhury and Alessandro D'Angelo},
	year={2024},
	eprint={2409.20382},
	archivePrefix={arXiv},
	primaryClass={math.AG},
	url={https://arxiv.org/abs/2409.20382}, 
	shorthand={CD{'}\!A24}
}

@article{Chowdhury24,
	author = {Chowdhury, Chirantan},
	year = {2024},
	month = {05},
	pages = {1-22},
	title = {Motivic homotopy theory of algebraic stacks},
	volume = {9},
	journal = {Annals of K-Theory},
	doi = {10.2140/akt.2024.9.1}
}

@unpublished{Motivic_Vistoli,
	title={A Motivic Vistoli's Lemma: after Levine}, 
	author={Alessandro D'Angelo},
	date={2024},
	url={https://sites.google.com/view/alessandro-dangelo/home/research}
}

@misc{KW-Euler,
	title={$KW$-Euler Classes},
	subtitle={via Twisted Symplectic Bundles}, 
	author={Alessandro D'Angelo},
	year={2024},
	eprint={},
	archivePrefix={arXiv},
	primaryClass={math.AG},
	url={https://sites.google.com/view/alessandro-dangelo/home/research}
}
	
\end{document}